\newtheorem{theorem}{Theorem}[section]
\newtheorem{corollary}[theorem]{Corollary}
\newtheorem{lemma}[theorem]{Lemma}
\newtheorem{proposition}[theorem]{Proposition}
\theoremstyle{definition}
\theoremstyle{remark}
\newtheorem{remark}[theorem]{Remark}
\numberwithin{equation}{section}
\newcommand{\nrm}[1]{\Vert#1\Vert}
\newcommand{\abs}[1]{\vert#1\vert}
\newcommand{\set}[1]{\{#1\}}
\newcommand{\aleq}{\lesssim}
\newcommand{\lap}{\Dlt}
\newcommand{\ud}{\mathrm{d}}
\newcommand{\rd}{\partial}
\newcommand{\bb}{\Big}
\newcommand{\alp}{\alpha}
\newcommand{\dlt}{\delta}
\newcommand{\Dlt}{\Delta}
\newcommand{\eps}{\epsilon}
\newcommand{\tht}{\theta}
\newcommand{\omg}{\omega}
\newcommand{\bfj}{{\bf j}}
\newcommand{\bfk}{{\bf k}}
\newcommand{\bfA}{{\bf A}}
\newcommand{\bbR}{\mathbb R}
\newcommand{\bbS}{\mathbb S}
\newcommand{\bbZ}{\mathbb Z}
\newcommand{\calC}{\mathcal C}
\newcommand{\calF}{\mathcal F}
\newcommand{\calH}{\mathcal H}
\newcommand{\calL}{\mathcal L}
\newcommand{\calM}{\mathcal M}
\newcommand{\calN}{\mathcal N}
\newcommand{\calP}{\mathcal P}
\newcommand{\calR}{\mathcal R}
\newcommand{\calS}{\mathcal S}
\newcommand{\mb}{\mathbb}
\newcommand{\be}{\begin{equation}}
\newcommand{\ee}{ \end{equation}}
\newcommand{\ep}{\varepsilon}
\newcommand{\sg}{\sigma}
\newcommand{\al}{\alpha}
\newcommand{\dd}{\,\mathrm{d}}
\newcommand{\vp}{\varphi}
\newcommand{\vn}[1]{\|#1\|}
\newcommand{\vm}[1]{\left|#1\right|}
\newcommand{\lpr}{ \left( }
\newcommand{\rpr}{ \right) }
\newcommand{\lng}{\langle}
\newcommand{\rng}{\rangle}
\newcommand{\lmd}{\lambda}
\newcommand{\lpp}{\left[}
\newcommand{\rpp}{\right]}
\newcommand{\la}{\Delta}
\newcommand{\ls}{\lesssim}
\newcommand{\pfstep}[1]{\vskip.5em \noindent {\bf #1.}}
\newcommand{\med}{\mathrm{med}}
\newcommand*\jb[1]{\left\langle #1 \right\rangle} 
\newcommand{\pt}{\partial}
\newcommand{\defeq}{\vcentcolon=}
\begin{document}

\title[Global well-posedness for massive MKG]{Global well-posedness for the massive Maxwell-Klein-Gordon equation with small critical Sobolev data}
\author{Cristian Gavrus}%
\address{Department of Mathematics, UC Berkeley, Berkeley, CA, 94720}%
\email{cristian@berkeley.edu}%

\thanks{The author thanks Daniel Tataru for many fruitful conversations and thanks Sung-Jin Oh for suggesting this problem. The author was supported in part by the NSF grant DMS-1266182.}


\begin{abstract}
In this paper we prove global well-posedness and modified scattering for the massive Maxwell-Klein-Gordon equation in the Coulomb gauge on $\bbR^{1+d}$ $(d \geq 4)$ for data with small critical Sobolev norm. This extends to the general case $ m^2 > 0 $ the results of Krieger-Sterbenz-Tataru ($d=4,5 $) and Rodnianski-Tao ($ d \geq 6 $), who considered the case $ m=0$.

We proceed by generalizing the global parametrix construction for the covariant wave operator and the functional framework from the massless case to the Klein-Gordon setting.  The equation exhibits a trilinear cancelation structure identified by Machedon-Sterbenz. To treat it one needs sharp $ L^2 $ null form bounds, which we prove by estimating renormalized solutions in null frames spaces similar to the ones considered by Bejenaru-Herr.   
To overcome logarithmic divergences we rely on an embedding property of $ \Box^{-1} $ in conjunction with endpoint Strichartz estimates in Lorentz spaces. 
\end{abstract}
\maketitle

\setcounter{tocdepth}{1}
\tableofcontents

\newpage

\section{Introduction} In this paper we consider the \emph{massive Maxwell-Klein-Gordon} (MKG) equation on the Minkowski space $ \mb{R}^{d+1} $ endowed with the metric $ g=\text{diag}(1,-1,-1,-1,-1) $. Throughout the paper we assume $ d \geq 4 $.

This equation arises as the Euler-Lagrange equations for the Lagrangian

$$ \calS[A_{\mu},\phi]= \iint_{\mb{R}^{d+1}} \frac{1}{2} D_{\al} \phi  \overline{D^{\al} \phi} + \frac{1}{4}F_{\al \beta} F^{\al \beta} - \frac{1}{2} m^2 \vm{\phi}^2   \dd x \dd t	$$

Here $ \phi : \mb{R}^{d+1} \to \mb{C} $ is a complex field, while $ A_{\al} : \mb{R}^{d+1} \to \mb{R} $ is a real $1$-form with curvature
$$ F_{\al \beta} \defeq \pt_{\al} A_{\beta}- \pt_{\beta} A_{\al}. $$
One defines the \emph{covariant derivatives}  and the \emph{covariant Klein-Gordon operator} by
$$ D_{\al} 	\phi \defeq (\pt_{\al}+i A_{\al}) \phi, \qquad   \Box_m^A \defeq D^{\al} D_{\al}+m^2
$$
A brief computation shows that the Euler-Lagrange equations take the form

\begin{equation} \label{MKG}
\left\{
\begin{aligned}
	& \pt^{\beta} F_{\al \beta} = \mathfrak{I}(\phi \overline{D_{\al}\phi}), \\
	& (D^{\al} D_{\al}+m^2) \phi = 0, \\
\end{aligned}
\right.
\end{equation}

The MKG system is considered the simplest classical field theory enjoying a nontrivial \emph{gauge invariance}. Indeed, for any potential function $ \chi $, replacing 
\be \label{gauge:inv} \phi \mapsto e^{i \chi} \phi, \quad A_{\al} \mapsto A_{\al}-\pt_{\al} \chi, \quad D_{\al} \mapsto e^{i \chi} D_{\al} e^{-i \chi}
\ee
one obtains another solution to \eqref{MKG}. To remove this gauge ambiguity we will work with the \emph{Coulomb gauge} 
\be \label{Coulomb}
 \text{div}_x A=\pt^j A_j=0
\ee
where Roman indices are used in sums over the spatial components. The system is Lorentz invariant and admits a \emph{conserved energy}, which we will not use here.

Under \eqref{Coulomb}, denoting $ J_{\al}=-\mathfrak{I}  (\phi \overline{D_{\al} \phi}) $, the MKG system \eqref{MKG} becomes
\begin{equation} \label{MKGCG}
\left\{
\begin{aligned}
	& \Box_m^A \phi = 0 \\
	& \Box A_i = \calP_i J_x \\
	& \Delta A_0=J_0, \ \Delta \pt_t A_0=\pt^i J_i
\end{aligned}
\right.
\end{equation}
where $ \calP $ denotes the Leray projection onto divergence-free vector fields
\be \label{Leray} \calP_j A \defeq \Delta^{-1} \pt^k (\pt_k A_j-\pt_j A_k ).  
\ee

When $ m=0 $ the equation is invariant under the \emph{scaling}
$$ \phi \mapsto \lmd \phi(\lmd t, \lmd x); \qquad A_{\al} \mapsto \lmd A_{\al}(\lmd t, \lmd x) $$
which implies that $ \sg=\frac{d}{2}-1 $ is the \emph{critical regularity}.  We shall refer to $ H^{\sg}\times H^{\sg-1} \times \dot{H}^{\sg} \times \dot{H}^{\sg-1} $ as the critical space for $ (A,\phi)[0] $ also when $ m\neq 0 $.

At this regularity, globally in time, the mass term $ m^2 \phi $ is not perturbative and must be considered as part of the operator $ \Box_m^A $.

The main result of this paper consists in extending the results in \cite{KST}, \cite{RT} to the case $ m\neq 0 $. For a more detailed statement, see Theorem \ref{thm:main-iter}.

\begin{theorem}[Critical small data global well-posedness and scattering] \label{thm:main}
Let $ d \geq 4 $ and $ \sg=\frac{d}{2}-1$.
The MKG equation \eqref{MKGCG} is well-posed for small initial data on $\bbR^{1+d}$ with $m^2 > 0$, in the following sense:  there exists a universal constant $\ep > 0$ such that: 

\begin{enumerate} [leftmargin=*]
\item Let $(\phi[0], A_{x}[0])$ be a smooth initial data set satisfying the Coulomb condition \eqref{Coulomb} and the smallness condition
\begin{equation} \label{eq:main:smalldata}
	\nrm{\phi[0]}_{H^{\sg}\times H^{\sg-1}} 
	+ \nrm{A_{x}[0]}_{ \dot{H}^{\sg} \times \dot{H}^{\sg-1}  } < \ep.
\end{equation}
Then there exists a unique global smooth solution $(\phi, A)$ to the system \eqref{MKGCG} under the Coulomb gauge condition \eqref{Coulomb} on $\bbR^{1+d}$ with these data. 
\item For any $T > 0$, the data-to-solution map $ (\phi[0], A_{x}[0]) \mapsto (\phi,\pt_t \phi, A_{x}, \rd_{t} A_{x})$ extends continuously to
\begin{align*}
	H^{\sg}\times H^{\sg-1} \times \dot{H}^{\sg} \times \dot{H}^{\sg-1} (\bbR^{d}) \cap \set{\hbox{\eqref{eq:main:smalldata}}} \to  C([-T, T]; H^{\sg}\times H^{\sg-1} \times \dot{H}^{\sg} \times \dot{H}^{\sg-1} (\bbR^{d})).
\end{align*}
\item The solution $(\phi, A)$ exhibits modified scattering as $t \to \pm \infty$: there exist a solution $(\phi^{\pm \infty}, A^{\pm \infty}_{j})$ to the linear system
\footnote{Here, $ A^{free} $ is the free solution $ \Box A^{free}=0 $ with $A^{free}_{x}[0] = A_{x}[0]$ and $ A^{free}_{0} = 0$.}
$$ \Box A_{j}^{\pm \infty} =0, \qquad \Box_m^{A^{free}} \phi=0, \qquad \text{such that}		$$
\begin{equation*}
	\nrm{(\phi - \phi^{\pm \infty})[t]}_{H^{\sg}\times H^{\sg-1}}
	+ \nrm{(A_{j} - A^{\pm \infty}_{j})[t]}_{\dot{H}^{\sg} \times \dot{H}^{\sg-1}  } 
	 \to 0 \quad \hbox{ as } t \to \pm \infty.
\end{equation*}

\end{enumerate}
\end{theorem}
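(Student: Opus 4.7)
The plan is to establish Theorem \ref{thm:main} via a frequency-envelope bootstrap on the Coulomb-gauge system \eqref{MKGCG}, but where the functional framework and the parametrix for the covariant Klein--Gordon operator $\Box_m^A$ are redesigned to respect the mass-shell geometry rather than the light-cone geometry that drives the massless construction of \cite{KST} and \cite{RT}. I would introduce an iteration norm $S^\sigma$ for $\phi$ and for $A_x$ together with a companion inhomogeneous norm $N^\sigma$, built from an $X^{s,b}$-type core, square-summed Strichartz pieces (with Lorentz refinements $L^q_t L^{r,2}_x$ at the wave admissible endpoint), and null-frame pieces adapted to the Klein--Gordon characteristic surface in the spirit of Bejenaru--Herr. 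The role of $S^\sigma$ is to capture every estimate that a free Klein--Gordon solution satisfies while simultaneously sitting on the right-hand side of the multilinear estimates that control the nonlinearities.

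Once the spaces are chosen, the proof reduces to three families of estimates. First, the linear bound
\[
	\|\phi\|_{S^\sigma} \lesssim \|\phi[0]\|_{H^\sigma \times H^{\sigma-1}} + \|(\Box + m^2) \phi\|_{N^\sigma}
\]
together with its analogue for $\Box$ applied to $A_x$. Second, bilinear estimates for the currents $\calP_i J_x$, $J_0$, and $\pt^i J_i$ driving the Maxwell half of \eqref{MKGCG}. Third, and most delicate, a trilinear estimate for the magnetic interaction term
\[
	\| 2i A^j \pt_j \phi - A^\alpha A_\alpha \phi \|_{N^\sigma} \lesssim (\|A\|_{S^\sigma} + \|A\|_{S^\sigma}^2) \|\phi\|_{S^\sigma}.
\]
The worst high-low piece of the first contribution exhibits the Machedon--Sterbenz cancellation: the connection coefficient at frequency $\ll 2^k$ is essentially longitudinal and is removed by a pseudodifferential renormalization $e^{-i \psi_{<k}}$ that approximately intertwines $\Box_m^A$ with $\Box + m^2$. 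This is where the massive theory diverges most seriously from the massless one, since the phase $\psi$ must be defined through transport along timelike Klein--Gordon rays rather than null rays, which alters every angular $\ell^1$-decoupling and every null-frame estimate that enters its mapping properties and the analysis of the conjugated operator.

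The main technical obstacle is closing the bilinear $L^2$ null form estimate for renormalized solutions at balanced frequencies. In the massless case this reduces, after angular localization, to two waves whose Fourier supports intersect the common light cone transversally, and one works in null coordinates adapted to the output. In the massive case the two mass shells do not foliate by null planes, the transversality degenerates near spatial frequency zero, and the output modulation can far exceed the individual input modulations. The strategy is to use null-frame spaces compatible with the mass shell, reducing the $L^2$ bound to a duality/orthogonality argument against atoms adapted to these frames; the residual logarithmic losses coming from summation over angular sectors are absorbed by upgrading to the Lorentz endpoint Strichartz estimate and by an embedding of $\Box^{-1}$ into an $L^2_t L^{2d/(d-2),1}_x$-type space, in analogy with the embedding step in \cite{KST}. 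Once all the estimates close, a standard contraction in $S^\sigma$ gives statements (1) and (2), while for (3) one observes that the $N^\sigma$ norm of the nonlinear contribution on $[T, \infty)$ tends to $0$ as $T \to \infty$, so the renormalized solution converges in $S^\sigma$ to a free Klein--Gordon profile driven by $A^{\mathrm{free}}$; undoing the renormalization produces the pair $(\phi^{\pm \infty}, A_j^{\pm \infty})$ with the claimed modified scattering.
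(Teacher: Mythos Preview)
Your outline assembles the right ingredients but distributes them incorrectly among your three estimate families, and as written the scheme would not close at critical regularity.

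The central problem is your third item. You cannot prove
\[
\| 2i A^j \partial_j \phi \|_{N^\sigma} \lesssim \|A\|_{S^\sigma}\|\phi\|_{S^\sigma}
\]
at $\sigma = \tfrac{d}{2}-1$; this estimate \emph{fails}, and this is precisely what ``non-perturbative'' means. The paradifferential low-high piece $\sum_k A^{free}_{<k-C}\cdot\nabla \bar P_k\phi$, with $A^{free}$ the free wave development of $A_x[0]$, cannot be placed in $\bar N^{\sigma-1}$ no matter how much null structure is exploited. The renormalization $e^{-i\psi_{<k}}$ is not a device for salvaging this nonlinear estimate; it instead upgrades your first item to a \emph{covariant} linear estimate
\[
\|\phi\|_{\bar S^\sigma} \lesssim \|\phi[0]\|_{H^\sigma\times H^{\sigma-1}} + \|\Box_m^{p,A^{free}}\phi\|_{\bar N^{\sigma-1}\cap L^2 H^{\sigma-3/2}},
\]
which is Theorem~\ref{main:parametrix}. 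Only after this piece has been absorbed into the propagator does the residual nonlinearity become perturbative, and the genuinely trilinear estimate one must prove is $\|\pi[\bfA(\phi^1,\phi^2)]\phi\|_{\bar N^{\sigma-1}}\lesssim \prod\|\phi^i\|_{\bar S^\sigma}$ for the contribution of $A-A^{free}$ (Proposition~\ref{trilinear}). The Machedon--Sterbenz cancellation enters \emph{here}, via the decomposition \eqref{Q:dec}--\eqref{Q:dec2}, not in the renormalization step; you have conflated two distinct mechanisms.

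Two further points. First, the $\Box^{-1}$ embedding that kills the logarithmic divergence at small angles (Proposition~\ref{Box:Embedding}) sends $L^1 L^{2,1}$ to $L^1 L^\infty$; the $L^1$ in time is what allows the $k'$-summation to be kept inside the norm, and an $L^2_t L^{2d/(d-2),1}_x$ target alone would not suffice. Second, ``a standard contraction in $S^\sigma$'' does not yield statement~(2): the data-to-solution map is only weakly Lipschitz in $\bar S^{\sigma-\delta}$ (see \eqref{eq:weak-lip}), and continuous dependence at the critical level requires the frequency-envelope bounds \eqref{eq:main-fe} together with the approximation-by-smooth-solutions argument of Lemma~\ref{apr:smoth:lm}.
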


\

The case $ d=4 $ is the most difficult. When $ d \geq 5 $ the argument simplifies, in particular the spaces $  NE_C^{\pm}, \ PW_C^{\pm}, \ L^2 L^{4,2} $  discussed below are not needed. To fix notation, the reader is advised to set $ d=4, \ \sg=1 $. 


\begin{remark} The theorem is stated for Coulomb initial data. However, it can be applied to an arbitrary initial data satisfying \eqref{eq:main:smalldata} by performing a gauge transformation.
Given a real 1-form $A_{j}(0)$ on $\bbR^{d}$, one solves the Poisson equation
$$	\lap \chi = \mathrm{div}_{x} A_{j}(0), \qquad \qquad \chi \in \dot{H}^{\frac{d}{2}} \cap \dot{W}^{1,d}(\bbR^{d}).	$$
Then $\tilde{A}(0) = A(0) - \ud \chi$ obeys the Coulomb condition \eqref{Coulomb}. For small $\ep$, the small data condition \eqref{eq:main:smalldata} is preserved up to multiplication by a constant.
\end{remark}
In what follows we set $ m=1 $, noting that by rescaling, the statements for any $ m \neq 0 $ can be obtained. Notation-wise, we will write $ \Box_m $ rather than $ \Box_1 $.

\

\subsection{Previous work} Progress on the Maxwell-Klein-Gordon equation has occurred in conjunction with the Yang--Mills(-Higgs) equations. An early result was obtained by Eardley and Moncrief \cite{eardley1982}.

On $ \mb{R}^{2+1} $ and $  \mb{R}^{3+1} $ the MKG system is energy subcritical. Klainerman-Machedon \cite{KlMc} and Selberg-Tesfahun \cite{Selb} (in the Lorenz gauge) have proved global regularity as a consequence of local well-posedness at the energy regularity. Further results were obtained by Cuccagna \cite{Cucc}. Machedon-Sterbenz \cite{MachedonSterbenz} proved an essentially optimal local well-posedness result. In \cite{Keel2011573} in $  \mb{R}^{3+1} $, global well-posedness below the energy norm was considered. 

On $  \mb{R}^{4+1} $, an almost optimal local well-posedness result was proved by Klainerman-Tataru \cite{KlaTat} for a model problem closely related to MKG and Yang-Mills. This was refined by Selberg \cite{SSel} for MKG and Sterbenz \cite{sterbenz2007global}.

At critical regularity all the existing results are for the massless case $ m=0 $. Rodnianski-Tao \cite{RT} proved global regularity for smooth and small critical Sobolev data in dimensions $ 6+1 $ and higher. This result was extended by Krieger-Sterbenz-Tataru \cite{KST} to  $ \mb{R}^{4+1} $.

The small data $ \mb{R}^{4+1} $ energy critical massless result in \cite{KST} has been extended to large data global well-posedness by Oh-Tataru (\cite{OT2},\cite{OT3},\cite{OT1})  and independently by Krieger-L\"uhrmann \cite{KL}. Proving a similar large data result for the massive case remains an open problem.

Recent works on Yang-Mills include: \cite{KS}, \cite{KT}, \cite{OhYM}, \cite{oh2015}. We also mention the related $ \mb{R}^{4+1} $ Maxwell-Dirac result \cite{MD} at critical regularity. 

\

\subsection{Main ideas}

\subsubsection*{Null structures in the Coulomb gauge.} Null structures arise in equations from mathematical physics where it manifests through the vanishing of resonant interactions. A classical null form refers to a linear combination of 
\be \label{cl:nf}
\calN_{ij}(\phi,\psi) =\pt_{i} \phi \pt_{j} \psi- \pt_{j}  \phi \pt_{i} \psi, \qquad \calN_{0}(\phi,\psi) =\pt_{\al} \phi \cdot \pt^{\al} \psi.  \ee
A key observation of Klainerman and Machedon in \cite{KlMc} was that quadratic nonlinearities of MKG consist of null forms of the type $ \calN_{ij} $ (see \eqref{phi:nf:identity}, \eqref{ax:nf:identity}). They used this to prove global well-posedness at energy regularity on $\bbR^{1+3}$.

Furthermore, in the proof of essentially optimal local well-posedness of MKG on $\bbR^{1+3}$ by Machedon and Sterbenz \cite{MachedonSterbenz}, secondary trilinear null structures involving $ \calN_{0}$ were identified in MKG after one iteration, see \eqref{Q:dec}, \eqref{Q:dec2}.

Both of these structures played an important role in \cite{KST}, and they also do so here. However, special care must be taken since the null form $ \calN_{0}$ is adapted to the wave equation while we work with Klein-Gordon waves; see subsections \ref{Mform} \ref{M0form}.

\subsubsection*{Presence of non-perturbative nonlinearity.}
As usual in works on low regularity well-posedness, we take a paradifferential approach in treating the nonlinear terms, exploiting the fact that the high-high to low interactions are weaker and that terms where the derivative falls on low frequencies are weaker as well. 

From this point of view, the worst interaction in MKG occurs in 
$$ \sum_{k} A^{\al}_{<k-C} \pt_{\al} \bar{P}_k \phi. $$
At critical regularity this term is \emph{nonperturbative}, in the sense that even after utilizing all available null structure, it cannot be treated with multilinear estimates for the usual wave and Klein-Gordon equations. Instead, following the model set in \cite{RT} and \cite{KST}, this term must be viewed as a part of the underlying linear operator.

The presence of a nonperturbative term is characteristic of geometric wave equations; examples include Wave Maps, MKG, Yang--Mills, Maxwell--Dirac.

\subsubsection*{Parametrix construction for paradifferential covariant wave equation}\

A key breakthrough of Rodnianski and Tao \cite{RT} was proving Strichartz estimates for the covariant wave equation by introducing a microlocal parametrix construction, motivated by the gauge covariance of $\Box_{A} =D^{\al} D_{\al} $ under $ \eqref{gauge:inv} $, i.e., $e^{-i \Psi} \Box_{A'} (e^{i \Psi} \phi)= \Box_A \phi$.
The idea was to approximately conjugate (or renormalize) the modified d'Alembertian $ \Box+ 2i A_{<k-c} \cdot \nabla_x P_k $ to $ \Box $ by means of a carefully constructed pseudodifferential gauge transformation
$$ \Box_A^p \approx e^{i \Psi_{\pm}}(t,x,D) \Box e^{-i \Psi_{\pm}} (D,s,y). $$
These Strichartz estimates were sufficient to prove global regularity of the Maxwell-Klein-Gordon equation at small critical Sobolev data in dimensions $ d \geq 6 $.
A summary of the construction in \cite{KST} can be found in Section~\ref{Constr:phase} below.

A renormalization procedure has been also applied to the Yang-Mills equation at critical regularity \cite{KS}, \cite{KT}.

The above idea was also implemented for covariant Dirac operators in \cite{MD} in the context of the Maxwell-Dirac equation.

\subsubsection*{Function spaces}

In \cite{KST}, Krieger, Sterbenz and Tataru further advanced the parametrix idea in $ d=4 $, showing that it interacts well with the function spaces previously introduced for critical wave maps in \cite{Tat}, \cite{Tao2}. In particular, the resulting solution obeys similar bounds as ordinary waves, yielding control of an $X^{s, b}$ norm, null-frame norms and square summed angular-localized Strichartz norms. 

In this paper we will follow their strategy, showing that both the spaces and the renormalization bounds generalize to the Klein-Gordon ($ m^2 > 0$) context. 

Critical $ X^{s,\pm \frac{1}{2}} $ spaces, 'null' energy $ L^{\infty}_{t_{\omega,\lmd}} L^2_{x_{\omega,\lmd}} $ and Strichartz $ L^{2}_{t_{\omega,\lmd}} L^{\infty}_{x_{\omega,\lmd}} $ spaces in adapted frames, were already developed for the Klein-Gordon operators by Bejenaru and Herr \cite{BH1} and we will use some of their ideas. The difficulty here consists in proving the bounds for renormalized solutions
$$ \vn{e^{-i \psi}(t,x,D)\phi}_{\bar{S}^1_k} \ls \vn{\phi[0]}_{H^1 \times L^2}+ \vn{\Box_m \phi}_{\bar{N}_k}. $$

We shall rely on $ TT^{*} $ and stationary phase arguments for both $ L^{\infty}_{t_{\omega,\lmd}} L^2_{x_{\omega,\lmd}} $ and $ L^{2}_{t_{\omega,\lmd}} L^{\infty}_{x_{\omega,\lmd}} $ bounds, as well for $ P_C L^2 L^{\infty} $, see Corollaries \ref{Cornullframe}, \ref{corPW} and \ref{Cor:L2Linf}.

However, at low frequency or at high frequencies with very small angle interactions, the adapted frame spaces do not work and we are confronted with logarithmic divergences. To overcome this we rely on Strichartz estimates in Lorentz spaces $ L^2 L^{4,2} $ and an embedding property of  $ \Box^{-1} $ into $ L^1 L^{\infty} $. 

Here we have been inspired by the paper \cite{ShSt} of Shatah and Struwe. The difference is that instead of inverting $ \Delta $ by a type of Sobolev embedding $ \vm{D_x}^{-1}: L^{d,1}_x \to L^{\infty}_x $, we have to invert $ \Box $ by 
$$  2^{\frac{1}{2}l} \sum_{k'} P_l^{\omega} Q_{k'+2l} P_{k'} \frac{1}{\Box} : L^1 L^{2,1} \to L^1 L^{\infty} $$
See Proposition \ref{Box:Embedding}.

\section{Preliminaries}

In what follows we normalize the Lebesgue measure so that factors of $ \sqrt{2 \pi} $ do not appear in the Fourier transform formulas. We summarize the conventions that we use in the following table.
\bgroup
\def\arraystretch{1.5}

\begin{tabular}{ |p{3cm}||p{3cm}|p{3cm}|p{3cm}|  }
 \hline

  & Klein-Gordon & Wave & Laplace\\
 \hline
 Functions   &  $ \qquad \phi  $   & $ \qquad A_x $ &  $ \qquad  A_0 $ \\ \hline
Operator &   $ \quad  \Box_m=\Box+I $ & $ \qquad \Box $   & $ \qquad \Delta $  \\ \hline
Frequency  &  $   \bar{P}_k , \     k \geq 0       $   & $ P_{k'} ,\ k' \in \mb{Z} $ & $ P_{k'} , \ k' \in \mb{Z}  $   \\
localization & $  \{ \jb{\xi} \simeq 2^k \} $ & $ \{ \vm{\xi} \simeq 2^k \} $ &  $ \{ \vm{\xi} \simeq 2^k \} $
\\ \hline
 Modulations    & \qquad $ \bar{Q}^{\pm}_j $    &  \qquad $ Q ^{\pm}_j $&   \\ 
 &  $ \{ \tau \mp \jb{\xi} \simeq 2^j \} $    & $ \{ \tau \mp \vm{\xi} \simeq 2^j \} $ & 
 \\ \hline
Spaces &   $ \bar{S}_k, \bar{N}_k, \quad \bar{S}^{\sg},  \bar{N}^{\sg-1}  $ & $ S_{k'}, N_{k'}, \quad S^{\sg}, N^{\sg-1} $   & \qquad  $ Y^{\sg} $ \\
 \hline
\end{tabular}
\egroup

\

\subsection{Notation}
We denote 
$$ \jb{\xi}_k=(2^{-2k}+\vm{\xi}^2)^{\frac{1}{2}}, \qquad \jb{\xi}=(1+\vm{\xi}^2)^{\frac{1}{2}}. $$
We define $ A \prec B $ by $ A \leq B-C $, $ A \ls B $ by $ A \leq C B $ and $ A=B+O(1) $ by $ \vm{A-B} \leq C $, for some absolute constant $ C $. We say $ A \ll B $ when $ A \leq \eta B $ for a small constant $ 0<\eta<1 $ and $ A \simeq B $ when he have both $ A \ls B $ and $ B \ls A $. 

\subsection{Frequency projections}

Let $ \chi $ be a smooth non-negative bump function supported on $ [2^{-2},2^2] $ which satisfies the partition of unity property
$$ \sum_{k' \in \mathbb{Z}} \chi \big( \vm{\xi}/2^{k'} \big)=1 $$
for $ \xi \neq 0 $. For $ k' \in \mb{Z},\ k \geq 0 $ we define the Littlewood-Paley operators $ P_{k'}, \bar{P}_k $ by 
$$ \widehat{P_{k'} f} (\xi)=\chi \big( \vm{\xi}/2^{k'} \big) \hat{f} (\xi), \quad \bar{P}_0=\sum_{k' \leq 0} P_{k'}, \quad \bar{P}_k=P_{k}, \ \text{for} \ k \geq 1.  $$
The modulation operators $ Q_j, Q_j^{\pm}, \ \bar{Q}_j, \bar{Q}_j^{\pm} $ are defined by 
$$  \mathcal{F} (\bar{Q}_j^{\pm}f) (\tau,\xi)= \chi \big( \frac{\vm{\pm \tau-\jb{\xi}} }{2^j} \big)  \mathcal{F}f (\tau,\xi), \quad  \mathcal{F} (Q_j^{\pm}f) (\tau,\xi)= \chi \big( \frac{\vm{\pm \tau-\vm{\xi}} }{2^j} \big)  \mathcal{F}f (\tau,\xi). $$ 
and $ Q_j=Q_j^{+}+Q_j^{-}, \ \bar{Q}_j=\bar{Q}_j^{+}+\bar{Q}_j^{-} $ for $ j \in \mathbb{Z} $, where $ \mathcal{F} $ denotes the space-time Fourier transform. 

Given  $ \ell \leq 0 $ we consider a collection of directions $ \omega $ on the unit sphere which is maximally $ 2^\ell $-separated. To each $ \omega $ we associate a smooth cutoff function $ m_{\omega} $ supported on a cap $ \kappa \subset \bbS^{d-1} $ of radius $ \simeq 2^\ell $ around $ \omega $, with the property that $ \sum_{\omega}  m_{\omega}=1 $. We define $ P_\ell^{\omega} $ (or $ P_{\kappa} $ )to be the spatial Fourier multiplier with symbol $ m_{\omega}(\xi/\vm{\xi}) $. In a similar vein, we consider rectangular boxes $ \mathcal{C}_{k'}(\ell') $ of dimensions $ 2^{k'} \times (2^{k'+\ell'})^{d-1} $, where the $ 2^{k'} $ side lies in the radial direction, which cover $\bbR^{d}$ and have finite overlap with each other. We then define $P_{\mathcal{C}_{k'}(\ell')}$ to be the associated smooth spatial frequency localization to $\calC_{k'}(\ell')$. For convenience, we choose the blocks so that $P_{k} P_{\ell}^{\omg} = P_{\calC_{k}(\ell)}$.

We will often abbreviate $ A_{k'} = P_{k'} f$ or $ \phi_k=\bar{P}_k \phi $. We will sometimes use the operators  $ \tilde{P}_k,  \tilde{Q}_{j/<j},  \tilde{P}^{\omg}_{\ell} $ with symbols given by bump functions which equal $ 1 $ on the support of the multipliers $ P_k, Q_{j/<j} $ and $ P^{\omg}_{\ell} $ respectively and which are adapted to an enlargement of these supports. 

We call multiplier disposable when it's convolution kernel is a function (or measure) with bounded mass. Minkowski's inequality insures that disposable operators are bounded on translation-invariant normed spaces. Examples include $ P_k, P_{\ell}^{\omega}, P_{\calC} $.

When $ j \geq k+2\ell-C $ the operator $ P_k P_{\ell}^{\omega} Q_{j/<j} $ is disposable \cite[Lemma 6]{Tao2}. Similar considerations apply to $ Q^{\pm}_j \bar{Q}_j, \bar{P}_k $ etc.

\subsection{Sector projections} For $ \omega \in \mb{S}^{d-1} $ and $ 0<\tht \ls 1 $ we define the sector projections $ \Pi^{\omega}_{>\tht} $ by
\be \label{sect:proj1} \widehat{\Pi^{\omega}_{>\tht} u} (\xi)=\big(1- \eta(\angle(\xi,\omega) \tht^{-1}) \big) \big(1- \eta(\angle(\xi,-\omega) \tht^{-1}) \big) \hat{u}(\xi) \ee
where $ \eta $ is a bump function on the unit scale. We define
\be \label{sect:proj2}  \Pi^{\omega}_{<\tht}=1-\Pi^{\omega}_{>\tht}, \qquad \Pi^{\omega}_{\tht}=\Pi^{\omega}_{>\tht/2}-\Pi^{\omega}_{>\tht}.
\ee

\subsection{Adapted frames} Following \cite{BH1}, for $ \lmd >0 $ and $ \omega \in \mb{S}^{d-1} $ we define the frame
\be \omega^{\lmd}=\frac{1}{\sqrt{1+\lmd^2}} (\pm \lmd,\omega), \quad \bar{\omega}^{\lmd}=\frac{1}{\sqrt{1+\lmd^2}} (\pm 1,-\lmd \omega), \quad \omega_i^{\perp} \in (0, \omega^{\perp})   \label{frame} \ee
and the coordinates in this frame
\be t_{\omega}=(t,x) \cdot \omega^{\lmd}, \quad  x^1_{\omega}=(t,x) \cdot \bar{\omega}^{\lmd}, \quad x_{\omega,i}'=x \cdot  \omega_i^{\perp}  \label{frame2} \ee
When $ \lmd=1 $ one obtains the null coordinates as in \cite{Tat}, \cite{Tao2}. 

For these frames we define the spaces $ L^{\infty}_{t_{\omega}} L^2_{x_{\omega}^1, x_{\omega}' } , L^{2}_{t_{\omega}} L^{\infty}_{x_{\omega}^1, x_{\omega}' } $ in the usual way, which we denote $ L^{\infty}_{t_{\omega,\lmd}} L^2_{x_{\omega},\lmd} , L^{2}_{t_{\omega,\lmd}} L^{\infty}_{x_{\omega,\lmd}} $ to emphasize the dependence on $ \lmd $.

\subsection{Pseudodifferential operators}
To implement the renormalization we will use pseudodifferential operators. For symbols $ a(x,\xi) : \mb{R}^d \times \mb{R}^d \to \mb{C} $ one defines the left quantization $ a(x,D) $ by
\be \label{left:quant}
a(x,D)u=\int_{\mb{R}^d} e^{i x \cdot \xi} a(x,\xi) \hat{u}(\xi) \dd \xi
\ee
while the right quantization $ a(D,y) $ is defined by 
\be \label{right:quant}
a(D,y)u=\iint_{\mb{R}^d \times \mb{R}^d} e^{i (x-y) \cdot \xi} a(y,\xi) u(y)  \dd y \dd \xi.
\ee
Observe that $ a(x,D)^*=\bar{a}(D,y) $. We will only work with symbols which are compactly supported in $ \xi $. 

\subsection{Bilinear forms}

We say that the translation-invariant bilinear form $ \calM(\phi^1,\phi^2) $ has symbol $ m(\xi_1,\xi_2) $ if 
$$ \calM(\phi^1,\phi^2) (x) = \int_{\mb{R}^d \times \mb{R}^d }  e^{i x \cdot (\xi_1+\xi_2)} m(\xi_1,\xi_2) \hat{\phi}^1(\xi_1) \hat{\phi}^2(\xi_2) \dd \xi_1 \dd \xi_2. $$
We make the analogous definition for functions defined on $ \mb{R}^{1+d} $ and symbols $ m(\Xi^1,\Xi^2) $ where $ \Xi^i=(\tau_i,\xi_i) $.

\subsection{Stationary/non-stationary phase}
We will bound oscillatory integrals using the stationary and non-stationary phase methods. For proofs of these two propositions as stated here see \cite{hormander2003introduction}.
\begin{proposition}
\label{nonstationary}
Suppose $ K \subset \mb{R}^n $ is a compact set, $ X $ is an open neighborhood of $ K $ and $ N \geq 0 $. If $ u \in C_0^N(K), f \in C^{N+1}(X) $ and $ f $ is real valued, then
\be \vm{ \int e^{i \lmd f(x)} u(x) \dd x } \leq C \frac{1}{\lmd^N}  \sup_{\vm{\al} \leq N}  \vm{D^{\al} u} \vm{f'} ^{\vm{\al}-2N} , \qquad \lmd >0 \ee 
where $ C $ is bounded when $ f $ stays in a bounded set in $ C^{N+1}(X) $.

\end{proposition}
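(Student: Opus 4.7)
The plan is to prove this by the standard integration-by-parts scheme for non-stationary phase. On the open set where $\vm{f'}>0$---the only region where the right-hand side is meaningful, since $\vm{\al} - 2N$ is negative---I would introduce the first-order operator
\[ L \defeq \frac{1}{i \lmd \vm{\nb f}^2} \nb f \cdot \nb, \]
whose key algebraic property is $L(e^{i \lmd f}) = e^{i \lmd f}$. Iterating $N$ times and using that $u \in C_0^N(K)$ to discard boundary terms yields the representation
\[ \int e^{i \lmd f(x)} u(x) \dd x = \int e^{i \lmd f(x)} \, (L^*)^N u(x) \dd x, \]
where $L^*$ is the formal adjoint of $L$.

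The bulk of the proof is to establish the pointwise bound
\[ \vm{(L^*)^N u(x)} \leq C \lmd^{-N} \sum_{\vm{\al} \leq N} \vm{D^{\al} u(x)} \, \vm{f'(x)}^{\vm{\al}-2N}, \]
with $C$ depending polynomially on $\nrm{f}_{C^{N+1}(X)}$. Once this is in hand, taking absolute values inside the integral and using compactness of $K$ would yield the stated estimate, and the continuous dependence on $f$ in the claimed form follows from the polynomial dependence of $C$ on $\nrm{f}_{C^{N+1}(X)}$. Replacing the sum by the supremum $\sup_{\vm{\al} \leq N}$ in the stated form is routine, with the combinatorial factor absorbed into $C$.

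The pointwise bound I would prove by induction on $N$. Each application of $L^*$ expands via the Leibniz rule into a sum of terms in which a derivative falls either on $u$ (raising the order of differentiation of $u$ by one) or on the coefficient $\vm{\nb f}^{-2} \nb f$ (producing a second derivative of $f$, absorbed into the $\nrm{f}_{C^{N+1}}$-dependent constant, and lowering the $\vm{f'}$-weight by one further power). Tracking the net homogeneity: each $L^*$ contributes a factor $\lmd^{-1}$ and lowers the $\vm{f'}$-exponent by $2$, while each derivative that lands on $u$ rather than on $f$ raises that exponent back by $1$; after $N$ applications, the term carrying $\vm{\al}$ derivatives of $u$ picks up precisely the weight $\vm{f'}^{\vm{\al}-2N}$. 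The main obstacle I expect is this bookkeeping of exponents: the work lies in verifying by induction that no spurious lower powers of $\vm{f'}$ appear and that the constant $C$ depends only polynomially on $\nrm{f}_{C^{N+1}(X)}$, which is most cleanly done by maintaining an explicit inductive hypothesis on the admissible shape of each summand produced by iteratively applying $L^*$.
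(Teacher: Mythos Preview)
Your proposal is correct and is precisely the classical integration-by-parts argument for non-stationary phase. The paper does not supply its own proof of this proposition but simply cites H\"ormander's \emph{The Analysis of Linear Partial Differential Operators I}, and what you have written is exactly the proof given there (Theorem~7.7.1).
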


\begin{proposition}[Stationary phase]
\label{stationary}
Suppose $ K \subset \mb{R}^n $ is a compact set, $ X $ is an open neighborhood of $ K $ and $ k \geq 1 $. If $ u \in C_0^{2k}(K), f \in C^{3k+1}(X) $ and $ f $ is real valued, $ f'(x_0)=0, \ det f''(x_0)\neq 0, \ f \neq 0 $ in $ K \setminus \{ x_0 \} $, then for $ \lmd >0 $ we have
\be \label{expansion} \vm{ \int e^{i \lmd f(x)} u(x) \dd x - e^{i \lmd f(x_0)} \lpr \frac{\det(\lmd f''(x_0))}{2 \pi i} \rpr^{-\frac{1}{2}} \sum_{j<k} \frac{1}{\lmd^j} L_j u } \leq C \frac{1}{\lmd ^k}  \sum_{\vm{\al} \leq 2k}  \sup \vm{D^{\al} u}  \ee 
where $ C $ is bounded when $ f $ stays in a bounded set in $ C^{3k+1}(X) $ and $ \vm{x-x_0}/ \vm{f'(x)} $ has a uniform bound. $ L_j $ are differential operators of order $ 2j $ acting on $ u $ at $ x_0 $.
\end{proposition}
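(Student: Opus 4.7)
The plan is to reduce the proposition to an explicit Gaussian integral via Morse's lemma and then compute by Fourier inversion, handling the remainder with the non-stationary phase bound from Proposition~\ref{nonstationary}. First, since $f'(x_0)=0$ and $\det f''(x_0)\neq 0$, Morse's lemma produces a smooth diffeomorphism $\varphi$ from a neighborhood of $0$ onto a neighborhood of $x_0$ with $\varphi(0)=x_0$ and
$$ f(\varphi(y)) = f(x_0) + \tfrac{1}{2} \langle Q y, y \rangle, \qquad Q=f''(x_0). $$
The required $C^{3k+1}$ regularity of $f$ is exactly what is needed for $\varphi$ to be $C^{2k+1}$ with quantitative bounds, which is what the induction on the Taylor remainder later consumes.

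Next I would fix a smooth cutoff $\chi$ supported in a small neighborhood of $x_0$ on which $\varphi$ is defined, and write $u = \chi u + (1-\chi) u$. On $\supp(1-\chi)u$ the phase $f$ has no stationary points; using the hypothesis that $|x-x_0|/|f'(x)|$ is uniformly bounded, Proposition~\ref{nonstationary} yields decay faster than any power of $\lambda^{-1}$, so this contribution is acceptable. For the $\chi u$ piece, change variables $x=\varphi(y)$ to reduce to
$$ e^{i\lambda f(x_0)} \int e^{i\lambda \langle Qy,y\rangle/2} v(y)\,dy, \qquad v(y) := (\chi u)(\varphi(y))\, |\det \varphi'(y)|, $$
with $v$ smooth and compactly supported.

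Now Taylor expand $v$ at $0$ to order $2k-1$, writing $v(y) = \sum_{|\alpha|<2k} \tfrac{1}{\alpha!} \partial^{\alpha} v(0)\, y^{\alpha} + R_{2k}(y)$. The monomial integrals are evaluated by differentiating the basic Gaussian identity
$$ \int e^{i\lambda\langle Qy,y\rangle/2 + i\eta\cdot y}\,dy = \Big( \det \tfrac{\lambda Q}{2\pi i} \Big)^{-1/2} e^{-\frac{i}{2\lambda}\langle Q^{-1}\eta,\eta\rangle} $$
in $\eta$ and setting $\eta=0$; only even $|\alpha|$ contribute, and a $y^{2j}$ term produces the factor $\lambda^{-j}$ multiplied by a differential expression of order $2j$ in the derivatives of $v$ at $0$, which (after unwinding $\varphi$) is precisely the operator $L_j u$ evaluated at $x_0$. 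The remainder $R_{2k}$ is handled by $k$-fold integration by parts using $Q^{-1}\nabla_y e^{i\lambda\langle Qy,y\rangle/2} = i\lambda y\, e^{i\lambda\langle Qy,y\rangle/2}$, which trades the $|y|^{2k}$ behavior of $R_{2k}$ for a factor $\lambda^{-k}$ and brings out derivatives of $v$ of order up to $2k$.

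The main obstacle is not the leading Gaussian computation but the bookkeeping needed to obtain uniform constants when $f$ varies within a bounded set in $C^{3k+1}(X)$: one needs Morse's change of variables, the cutoff construction, and the remainder estimate to all depend continuously on the higher derivatives of $f$. This is ultimately a quantitative implicit function theorem argument, and it is what forces the regularity threshold $3k+1$ and the specific sum $\sum_{|\alpha|\le 2k} \sup|D^{\alpha} u|$ on the right-hand side.
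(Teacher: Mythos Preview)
The paper does not give its own proof of this proposition: it is stated as a preliminary and the reader is referred to H\"ormander (\emph{The Analysis of Linear Partial Differential Operators I}) for the proof. Your sketch is essentially the classical argument found there---Morse lemma to reduce to a quadratic phase, cutoff plus non-stationary phase (Proposition~\ref{nonstationary}) for the region away from $x_0$, explicit Gaussian integration for the Taylor polynomial of the amplitude, and integration by parts for the remainder---so there is nothing to compare against in the paper itself, and your outline is correct in spirit and in its main steps.
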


Moreover, one controls derivatives in $ \lambda $ (see \cite[Lemma 2.35]{NaSch}):
\be \label{st-phase:est}
\vm{ \pt_{\lmd}^j  \int e^{i \lmd [f(x)-f(x_0)]} u(x) \dd x} \leq C \frac{1}{\lmd^{\frac{n}{2}+j}}, \qquad j \geq 1.
\ee

\subsection{$ L^p $ estimates} We will frequently use Bernstein's inequality, which states that 
$$ \vn{u}_{L_x^q} \ls \vm{V}^{\frac{1}{p}-\frac{1}{q}} \vn{u}_{L_x^p} $$
when $ \hat{u} $ is supported in a box of volume $ V $ and $ 1 \leq p \leq q \leq \infty $. In particular, 
$$ \vn{P_k u} _{L_x^q} \ls 2^{dk(\frac{1}{p}-\frac{1}{q})} \vn{P_k u}_{L_x^p}, \qquad  \vn{P_{\mathcal{C}_{k'}(\ell')}  u} _{L_x^q} \ls 2^{(dk'+(d-1) \ell')(\frac{1}{p}-\frac{1}{q})}  \vn{P_{\mathcal{C}_{k'}(\ell')} u}_{L_x^p}
$$

For $ L^2 $ estimates we will rely on 
\begin{lemma}[Schur's test] Let $ K: \mb{R}^n \times \mb{R}^n \to \mb{C} $ and the operator $ T $ defined by
$$ Tf(x)=\int_{\mb{R}^n} K(x,y) f(y) \dd y $$
which satisfies
$$ \sup_{x} \int \vm{K(x,y)} \dd y \leq M, \qquad  \sup_{y} \int \vm{K(x,y)} \dd x \leq M.   $$
Then
$$ \vn{T}_{L^2(\mb{R}^n) \to L^2(\mb{R}^n)} \leq M $$
\end{lemma}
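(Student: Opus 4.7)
The plan is to establish the $L^2 \to L^2$ bound by a classical duality-free argument based on the Cauchy-Schwarz inequality, splitting the kernel symmetrically as $\vm{K(x,y)} = \vm{K(x,y)}^{1/2} \cdot \vm{K(x,y)}^{1/2}$ so that one factor can be integrated against the row hypothesis and the other against the column hypothesis.

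Concretely, I would first fix $f \in L^2(\mb{R}^n)$ and estimate $\vm{Tf(x)}$ pointwise. Writing
$$ \vm{Tf(x)} \leq \int \vm{K(x,y)}^{1/2} \cdot \vm{K(x,y)}^{1/2} \vm{f(y)} \dd y, $$
an application of Cauchy-Schwarz in $y$ gives
$$ \vm{Tf(x)}^2 \leq \left( \int \vm{K(x,y)} \dd y \right) \left( \int \vm{K(x,y)} \vm{f(y)}^2 \dd y \right) \leq M \int \vm{K(x,y)} \vm{f(y)}^2 \dd y, $$
where the first factor is controlled by the row assumption $\sup_x \int \vm{K(x,y)} \dd y \leq M$.

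Then I would integrate in $x$ and apply Fubini (legitimate since the integrand is non-negative) to swap the order of integration:
$$ \vn{Tf}_{L^2}^2 \leq M \int \vm{f(y)}^2 \left( \int \vm{K(x,y)} \dd x \right) \dd y \leq M^2 \vn{f}_{L^2}^2, $$
where the last inequality uses the column assumption $\sup_y \int \vm{K(x,y)} \dd x \leq M$. Taking square roots yields the desired bound $\vn{T}_{L^2 \to L^2} \leq M$. There is no substantive obstacle here — the only technical point is to justify Fubini, which follows from the nonnegativity of $\vm{K(x,y)} \vm{f(y)}^2$, so Tonelli's theorem applies unconditionally.
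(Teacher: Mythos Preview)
Your proof is correct and is the standard Cauchy--Schwarz argument for Schur's test. The paper does not supply its own proof of this lemma --- it is stated as a classical result without proof --- so there is nothing to compare against; your argument is precisely the one found in standard references.
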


\subsection{Frequency envelopes} Given $ 0<\delta_1 < 1 $, an admissible frequency envelope $ (c_k)_{k \geq 0} $ is defined to be a sequence such that $ c_{p}/c_k \leq C 2^{\delta_1 \vm{p-k}} $ for any $ k,p \geq 0 $. Given spaces $ \bar{X},\ X $ and sequences $ (c_k)_{k \geq 0} $,  $ (\tilde{c}_k)_{k \in \mb{Z}} $  we define the $ \bar{X}_c, \ X_{\tilde{c}} $ norms by
\be \label{fe:X}	 \vn{f}_{\bar{X}_c}=\sup_{k \geq 0} \frac{\vn{\bar{P}_k f}_{\bar{X}}}{c_k}, \qquad \vn{A}_{X_{\tilde{c}}}=\sup_{k \in \mb{Z}} \frac{\vn{P_k A}_{\bar{X}}}{\tilde{c}_k}.   \ee

\

\section{Function spaces}

All the spaces that we will use are translation invariant.

\subsection{The Strichartz and $ X^{b} $-type spaces.}
 
We first define the admissible Strichartz norms for the $ d+1 $ dimensional wave equation. For any $ d \geq 4 $ and any $ k $ we set
$$ S_k^{Str,W} = \bigcap_{\frac{2}{q}+\frac{d-1}{r} \leq \frac{d-1}{2} } 2^{(\frac{d}{2}-\frac{1}{q}-\frac{d}{r})k} L^q L^r  $$
with norm
\be \label{Str:wave}
\vn{f}_{S_k^{Str,W}}=\sup_{\frac{2}{q}+\frac{d-1}{r} \leq \frac{d-1}{2} } 2^{-(\frac{d}{2}-\frac{1}{q}-\frac{d}{r})k} \vn{f}_{L^q L^r} 
\ee
Next we define the $ X^{\frac{1}{2}}_{\infty}, X^{-\frac{1}{2}}_{1} $ and $ \bar{X}^{\frac{1}{2}}_{\infty}, \bar{X}^{-\frac{1}{2}}_{1}$ spaces, which are logarithmic of refinements of the usual $X^{s, b}$ space. Their dyadic norms are
\begin{align*} 
& \nrm{F}_{X^{-\frac{1}{2}}_{1}} = \sum_{j \in \bbZ} 2^{-\frac{1}{2} j} \nrm{Q_{j} F}_{L^{2}_{t,x}}, \quad
& \nrm{A}_{X^{\frac{1}{2}}_{\infty}} = \sup_{j \in \bbZ} 2^{\frac{1}{2} j} \nrm{Q_{j} A}_{L^{2}_{t,x}} \\
& \nrm{F}_{\bar{X}^{\pm \frac{1}{2}}_{1}}  = \sum_{j \in \bbZ} 2^{\pm \frac{1}{2} j} \nrm{\bar{Q}_{j} F}_{L^{2}_{t,x}}, \quad
& \nrm{\phi}_{\bar{X}^{\frac{1}{2}}_{\infty}} = \sup_{j \in \bbZ} 2^{\frac{1}{2} j} \nrm{\bar{Q}_{j} \phi}_{L^{2}_{t,x}}
\end{align*}

\subsection{The spaces for the nonlinearity}
For the nonlinearity, we define for $ k \geq 0 $ and $ k' \in \mb{Z} $
\be \bar{N}_k=L^1L^2 + \bar{X}_1^{-\frac{1}{2}}, \qquad N_{k'}=L^1L^2 + X_1^{-\frac{1}{2}} \ee
with norms
$$ \vn{F}_{\bar{N}_k}=\inf_{F=F_1+F_2} \vn{F_1}_{L^1L^2 }+\vn{F_2}_{\bar{X}_1^{-\frac{1}{2}}}, \qquad \vn{F}_{N_{k'}}=\inf_{F=F_1+F_2} \vn{F_1}_{L^1L^2 }+\vn{F_2}_{X_1^{-\frac{1}{2}}} $$
By duality we can identify $ \bar{N}_k^* $ with $ L^{\infty} L^2 \cap \bar{X}^{\frac{1}{2}}_{\infty} $.
For the scalar equation nonlinearity, respectively the $ A_i $ equation, for $ s\in \mb{R} $  we define
$$ \vn{F}_{\bar{N}^s}^2 =\sum_{k \geq 0} 2^{2sk} \vn{\bar{P}_k F }_{\bar{N}_k}^2 $$
$$ \vn{F}_{\ell^1 N^s}=\sum_{k' \in \mb{Z}}  2^{sk'} \vn{P_{k'} F}_{N_k'}   ,\qquad \vn{F}_{N^s}^2=\sum_{k' \in \mb{Z} } 2^{2sk'}  \vn{P_{k'} F}_{N_k'}^2. $$

\subsection{The iteration space for $ \phi $} 

The solution of the scalar equation will be placed in the space $ \bar{S}^{\sg} $ for $ \sg=\frac{d-2}{2} $ where, for any $ s $ we define
$$ \vn{\phi}_{\bar{S}^s}^2=\vn{\bar{P}_0 (\phi,\pt_t\phi)}_{\bar{S}_0}^2 + \sum_{k \geq 1} 2^{2(s-1)k}  \vn{\nabla_{x,t} \bar{P}_k \phi}_{\bar{S}_k}^2 + \vn{\Box_m \phi}_{L^2 H^{s-\frac{3}{2}}}^2 $$ 
where $ \bar{S}_k $ are defined below. 

When $ d=4 $, in addition to \eqref{Str:wave}, we will also use the Klein-Gordon Strichartz norms below. In general, using these K-G Strichartz norms at high frequencies does not lead to optimal estimates. Therefore, we will only rely on them for low frequencies or when there is enough additional dyadic gain coming from null structures. We set
\be  \label{Str:KG}
\begin{aligned}
& \text{For } d=4: \qquad     \bar{S}_k^{Str}=S_k^{Str,W} \cap 2^{\frac{3}{8}k} L^4 L^{\frac{8}{3}} \cap 2^{\frac{3}{4}k} L^2 L^4 \cap 2^{\frac{3}{4}k} L^2 L^{4,2}  \\
& \text{For } d \geq 5: \qquad  \bar{S}_k^{Str}=S_k^{Str,W} 
\end{aligned} \ee
Notice that we incorporate the Lorentz norms $ L^{4,2} $. See section \ref{sec:Emb} for more information.

For low frequencies $ \{ \vm{\xi} \ls 1 \} $ we define
\be \label{Sbar0}   \vn{\phi}_{\bar{S}_0}=\vn{\phi}_{\bar{S}^{Str}_0}+ \vn{\phi}_{\bar{X}_{\infty}^{\frac{1}{2}}}+ \sup_{\pm,k'<0} \vn{ \bar{Q}_{<k'}^{\pm} \phi}_{S_{box(k')}} \qquad (d \geq 4) \ee
where
$$ \vn{ \phi}_{S_{box(k')}}^2=2^{-2\sg k'}  \sum_{\calC=\calC_{k'}} \vn{P_{\calC} \phi}_{L^2 L^{\infty}}^2  $$
where $ (\calC_{k'})_{k'} $ is a finitely overlapping collection of cubes of sides $ \simeq 2^{k'} $.

For higher frequencies we define as follows. Let $ d \geq 4 $,  $ k \geq 1 $ and
\be \label{highfreqSp}   \vn{\phi}_{\bar{S}_k}^2=\vn{\phi}_{\bar{S}^{Str}_k}^2+ \vn{\phi}_{\bar{X}_{\infty}^{\frac{1}{2}}}^2+ \sup_{\pm} \sup_{l<0} \sum_{\omega}\vn{P_l^{\omega} \bar{Q}^{\pm}_{<k+2l} \phi}_{\bar{S}_k^{\omega \pm}(l)}^2 \ee
where, for $ d\geq 5 $ we define 
$$
\vn{ \phi}_{\bar{S}_k^{\omega \pm}(l)}^2 =\vn{\phi}_{S_k^{Str}}^2  + \sup_{\substack{k' \leq k;-k \leq l' \leq 0 \\ k+2l \leq k'+l' \leq k+l }} \sum_{\calC=\calC_{k'}(l')} 2^{-(d-2)k'-(d-3)l'-k} \vn{P_{\calC} \phi}_{L^2L^{\infty}}^2
$$
while for $ d=4 $ we set 
\begin{equation*}
 \begin{aligned} 
\vn{ \phi}_{\bar{S}_k^{\omega \pm}(l)}^2 =\vn{\phi}_{S_k^{Str}}^2  &+  \sup_{\substack{k' \leq k;-k \leq l' \leq 0 \\ k+2l \leq k'+l' \leq k+l }} \sum_{\calC=\calC_{k'}(l')} \big( 2^{-2k'-k-l'} \vn{P_{\calC} \phi}_{L^2L^{\infty}}^2 +\\
&\ + 2^{-3(k'+l')} \vn{P_{\calC} \phi}_{PW_{\calC}^{\pm}}^2 +  \vn{ P_{\calC} \phi}_{NE_{C}^{\pm}}^2    \big).
\end{aligned} \end{equation*}
where, for any $ \calC=\calC_{k'}(l') $  
\be \vn{\phi}_{NE_{\calC}^{\pm}}= \sup_{\substack{\bar{\omega},\lmd=\lmd(p) \\   \angle(\bar{\omega},\pm \calC)  \gg 2^{-p},2^{-k}, 2^{l'+k'-k}}} \angle(\bar{\omega},\pm \calC)\  \vn{\phi}_{L^{\infty}_{t_{\bar{\omega},\lmd}} L^2_{x_{\bar{\omega},\lmd}}}, \quad \lmd(p) \defeq \frac{1}{\sqrt{1+2^{-2p}}}
\label{NE:norm} \ee
\be \vn{\phi}_{PW_{\calC}^{\pm}}= \inf_{\phi=\sum_i \phi^i} \sum_i \vn{\phi^i}_{L^{2}_{t_{\omega_i,\lmd}} L^{\infty}_{x_{\omega_i,\lmd}}}, \quad  \pm \omega_i \in \calC, \ \lmd=\frac{\vm{\xi_0}}{\jb{\xi_0}},\ \xi_0=\text{center}(\calC)
\label{PW:norm}  \ee
The norms $ L^{\infty}_{t_{\bar{\omega},\lmd}} L^2_{x_{\bar{\omega},\lmd}} $ and $ L^{2}_{t_{\omega_i,\lmd}} L^{\infty}_{x_{\omega_i,\lmd}} $ are taken in the frames \eqref{frame}, \eqref{frame2}.

In other words, $ PW_{\calC}^{\pm} $ is an atomic space whose atoms are functions $ \phi $ with $ \vn{\phi}_{L^{2}_{t_{\omega,\lmd}} L^{\infty}_{x_{\omega,\lmd}}} \leq 1 $ for some $ \omega \in \pm \calC $, where $ \lmd $ depends on the location of $ \calC=\calC_{k'}(l') $.

\	

The purpose of controlling the $ NE_{C}^{\pm} $ and $ PW_C^{\pm} $ norms lies in using the following type of bilinear $ L^2_{t,x} $ estimate, which was introduced in \cite{Tat} for the wave equation (see also  \cite{Tao2}). A Klein-Gordon analogue was first developed in \cite{BH1}, which served as inspiration for our implementation. 
\begin{proposition} \label{L2:nullFrames}
Let $ k, k_2 \geq 1 $, $ k'+C \leq k,k_2 $; $ l \in [-\min(k,k_2),C] $, and let $ \pm_1,\pm_2 $ be two signs.  Let $ \calC, \calC' $ be boxes of size $ 2^{k'} \times (2^{k'+l})^3 $ located in $ \{ \vm{\xi} \simeq 2^{k} \} \subset \mb{R}^4 $ , resp. $ \{ \vm{\xi} \simeq 2^{k_2} \} \subset \mb{R}^4 $ such that
\be \angle(\pm_1 \calC, \pm_2 \calC') \simeq 2^{l'} \gg \max(2^{-\min(k,k_2)}, 2^{l+k'-\min(k,k_2)})   \label{angSep} \ee
Then we have
\be \label{L2:nullFrames:est}
\vn{  \phi_k \cdot  \varphi_{k_2}  }_{L^2_{t,x}(\mb{R}^{4+1}) } \ls 2^{-l'} \vn{ \phi_k}_{NE_\calC^{\pm_1}} \vn{ \varphi_{k_2}}_{PW_{\calC'}^{\pm_2}}
\ee
\end{proposition}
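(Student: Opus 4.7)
My plan is to reduce by atomic decomposition, apply a bilinear H\"older estimate in a null frame, and then convert to the $NE$ norm using the angular separation hypothesis. First, using the definition of $PW_{\calC'}^{\pm_2}$, it suffices to establish, for every atom $\varphi^i$ satisfying $\pm_2 \omega_i \in \calC'$ and $\vn{\varphi^i}_{L^{2}_{t_{\omega_i,\lmd}} L^{\infty}_{x_{\omega_i,\lmd}}} \leq 1$ with $\lmd = \vm{\xi_0'}/\jb{\xi_0'}$, $\xi_0'$ the center of $\calC'$, the single-atom estimate
\begin{equation*}
\vn{\phi_k \cdot \varphi^i}_{L^2_{t,x}} \ls 2^{-l'} \vn{\phi_k}_{NE_\calC^{\pm_1}}.
\end{equation*}
Summing the atoms then delivers the stated bound.

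To obtain the single-atom bound, I would change variables to the null frame $(t_{\omega_i,\lmd}, x_{\omega_i,\lmd})$ defined in \eqref{frame}--\eqref{frame2}. Since the vectors $\omega_i^{\lmd}, \bar{\omega}_i^{\lmd}, \omega_{i,1}^{\perp}, \ldots, \omega_{i,d-1}^{\perp}$ form a Euclidean orthonormal basis of $\mb{R}^{1+d}$, the Jacobian equals one and hence $\dd t\,\dd x = \dd t_{\omega_i,\lmd} \, \dd x_{\omega_i,\lmd}$. Applying H\"older in $x_{\omega_i,\lmd}$ and Cauchy--Schwarz in $t_{\omega_i,\lmd}$ yields
\begin{equation*}
\vn{\phi_k \cdot \varphi^i}_{L^2_{t,x}} \leq \vn{\phi_k}_{L^{\infty}_{t_{\omega_i,\lmd}} L^2_{x_{\omega_i,\lmd}}} \vn{\varphi^i}_{L^{2}_{t_{\omega_i,\lmd}} L^{\infty}_{x_{\omega_i,\lmd}}}.
\end{equation*}

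It remains to control the first factor by $2^{-l'} \vn{\phi_k}_{NE_\calC^{\pm_1}}$. I would do this by choosing in the definition \eqref{NE:norm} the direction $\bar{\omega} = \omega_i$ and parameter $p$ such that $\lmd(p) = \lmd$ (i.e., $2^p \simeq \vm{\xi_0'} \simeq 2^{k_2}$), so that the $\lmd$-parameter matches the one fixed by the $PW_{\calC'}^{\pm_2}$ atom. Since $\pm_2 \omega_i \in \calC'$ and $\angle(\pm_1 \calC, \pm_2 \calC') \simeq 2^{l'}$, we have $\angle(\omega_i, \pm_1 \calC) \simeq 2^{l'}$. The admissibility conditions in \eqref{NE:norm} require $2^{l'} \gg 2^{-p}, 2^{-k}, 2^{l+k'-k}$, which follow from the hypothesis \eqref{angSep}: the $2^{-\min(k,k_2)}$ and $2^{l+k'-\min(k,k_2)}$ lower bounds there dominate both $2^{-k}, 2^{-k_2}$ and $2^{l+k'-k}$ in either case $\min = k$ or $\min = k_2$. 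Therefore
\begin{equation*}
\vn{\phi_k}_{L^{\infty}_{t_{\omega_i,\lmd}} L^2_{x_{\omega_i,\lmd}}} \leq \angle(\omega_i, \pm_1 \calC)^{-1} \vn{\phi_k}_{NE_\calC^{\pm_1}} \ls 2^{-l'} \vn{\phi_k}_{NE_\calC^{\pm_1}},
\end{equation*}
which combined with the previous inequality completes the single-atom estimate.

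The core of the argument is a three-line H\"older computation once the geometry is set up; the only subtle point, and the one I expect to be the main bookkeeping obstacle, is the matching of the $\lmd$ parameters between the $PW$ atom and the $NE$ testing frame, together with checking that the angle admissibility conditions built into the $NE$ norm are exactly what the hypothesis \eqref{angSep} guarantees in both cases $\min(k,k_2) = k$ and $\min(k,k_2) = k_2$. No further frequency localization or $TT^*$ argument is required because all the oscillatory information has already been packaged into the $NE$ and $PW$ norms.
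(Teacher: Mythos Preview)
Your proposal is correct and follows essentially the same approach as the paper's own proof: atomic reduction via the $PW_{\calC'}^{\pm_2}$ definition, H\"older's inequality $L^{\infty}_{t_{\omega,\lmd}} L^2_{x_{\omega,\lmd}} \times L^{2}_{t_{\omega,\lmd}} L^{\infty}_{x_{\omega,\lmd}} \to L^2_{t,x}$ in the adapted frame, and matching the $\lmd$-parameter by setting $2^p \simeq 2^{k_2}$ so that \eqref{angSep} verifies the admissibility conditions in \eqref{NE:norm}. Your write-up is in fact slightly more explicit than the paper's in justifying the Jacobian and checking the angle conditions in both cases of $\min(k,k_2)$.
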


\begin{proof}
The condition \eqref{angSep} insures that $ \pm_1 \calC $ and $ \pm_2 \calC' $ are angularly separated and the angle between them is well-defined. Since $ PW $ is an atomic space, we may assume the second factor is an atom with $ \vn{\varphi_{k_2}}_{L^{2}_{t_{\omega,\lmd}} L^{\infty}_{x_{\omega,\lmd}}} \leq 1 $ for some $ \omega \in \pm_2 \calC' $ and  $ \lmd $ given by \eqref{PW:norm}.
 We choose $ 2^p=\vm{\xi_0} \simeq 2^{k_2} $, so that $ \lmd=\lmd(p) $ from \eqref{NE:norm} so that together with \eqref{angSep} we have 
 $$ \vn{ \phi_k}_{L^{\infty}_{t_{\omega,\lmd}} L^2_{x_{\omega,\lmd}}} \ls 2^{-l'} \vn{P_{\calC} \bar{Q}^{\pm}_{<j} \phi_k}_{NE_\calC^{\pm_1}}. $$
 Now \eqref{L2:nullFrames:est} follows from H\" older's inequality $ L^{\infty}_{t_{\omega,\lmd}} L^2_{x_{\omega,\lmd}} \times L^{2}_{t_{\omega,\lmd}} L^{\infty}_{x_{\omega,\lmd}} \to L^2_{t,x} $.
\end{proof}

\begin{remark} When $ \Box_m \phi_k=\Box_m \varphi_{k_2}=0 $ and $ \phi_k, \varphi_{k_2} $ have Fourier support in $ \calC $, respectively $ \calC'$ then one has
\be \label{free:nf}
\vn{  \phi_k \cdot  \varphi_{k_2}  }_{L^2_{t,x}(\mb{R}^{4+1}) } \ls 2^{-l'} 2^{\frac{3}{2}(k'+l')} \vn{ \phi_k[0]}_{L^2 \times H^{-1}} \vn{ \varphi_{k_2}[0]}_{L^2 \times H^{-1}}  \ee
by convolution estimates (see eg. \cite{Foschi2000}, \cite{TaoMultilinear}). Thus \eqref{L2:nullFrames:est} is meant as a more general substitute for \eqref{free:nf}.
\end{remark}

\subsection{The iteration space for $ A $} For any $ d \geq 4$ and $ k' \in \mb{Z} $ we define
$$ \vn{A}_{S_{k'}}^2=\vn{A}_{S^{Str,W}_{k'}}^2+ \vn{A}_{X_{\infty}^{\frac{1}{2}}}^2+ \sup_{\pm} \sup_{l<0} \sum_{\omega}\vn{P_l^{\omega} Q^{\pm}_{<k'+2l} A}_{S_{k'}^{\omega}(l)}^2 $$
where
$$
\vn{A}_{S_{k'}^{\omega}(l)}^2= 2^{-(d-1)k'-(d-3)l} \vn{A}_{L^2 L^{\infty}}^2+ \sup_{\substack{k'' \in [0,k'], l' \leq 0  \\ k''+l' \leq k'+l}}   \sum_{C_{k''}(l')} 2^{-(d-2)k''-(d-3)l'-k'} \vn{P_{C_{k''}(l')} A}_{L^2L^{\infty}}^2.
$$
Now we define $  \qquad \vn{A}_{\ell^1 S^{\sg}}=\sum_{k' \in \mb{Z}}  2^{(\sg-1)k'} \big( \vn{\nabla_{t,x} P_{k'} A}_{S_{k'}}+ 2^{-\frac{1}{2}k'}  \vn{\Box P_{k'} A}_{ L^2_{t,x} } \big), $ 
$$ \qquad \qquad \vn{A}_{S^{\sg}}^2=\sum_{k' \in \mb{Z}} 2^{2(\sg-1)k'} \vn{\nabla_{t,x} P_{k'} A}_{S_{k'}}^2+\vn{\Box A}_{ L^2 \dot{H}^{\sg-\frac{3}{2}}}^2  $$ 
For the elliptic variable we set
\be
 \vn{A_0}_{Y^{\sg}} =\sum_{k' \in \mb{Z}} \vn{ \nabla_{x,t} P_{k'} A_0}_{L^{\infty} \dot{H}^{\sg-1} \cap L^2 \dot{H}^{\sg-\frac{1}{2}} }
\ee 

\subsection{The $ L^1 L^{\infty} $-type norms}

We set
\be \label{Z:norm:def}
\vn{A}_Z =\sum_{k' \in \mb{Z}} \vn{P_{k'} A}_{Z_{k'}}, \qquad \vn{A}_{Z_{k'}}^2=\sup_{\pm} \sup_{\ell<C} \sum_{\omega} 2^{\ell}  \vn{ P_{\ell}^{ \omega} Q_{k'+2\ell}^{\pm} A}_{L^1 L^{\infty}}^2 
\ee

and define $ Z^{ell}=\Box^{\frac{1}{2}} \Delta^{-\frac{1}{2}} Z $, or equivalently,
\be \label{Z-ell:norm:def}
\vn{A_0}_{Z^{ell}} =\sum_{k' \in \mb{Z}} \vn{P_{k'} A_0}_{Z^{ell}_{k'}}, \quad \vn{A_0}_{Z_{k'}^{ell}}^2 \simeq\sup_{\pm, \ell<C} \sum_{\omega} 2^{-\ell}  \vn{ P_{\ell}^{ \omega} Q_{k'+2\ell}^{\pm} A_0}_{L^1 L^{\infty}}^2 
\ee

We will use the embedding
\be \label{ZZ:emb}
\big( \Box^{-1} \times \Delta^{-1}  \big) P_{k'} : L^1 L^2 \times L^1 L^2 \to 2^{(\sg-1)k'} Z_{k'} \times Z^{ell}_{k'} 
\ee
To see this, we note that both $ (2^{2k'}/\Delta) \tilde{P}_{k'} $ and  $ (2^{2k'+2\ell}/\Box) \tilde{P}_{\ell}^{ \omega} \tilde{Q}_{k'+2\ell}^{\pm} \tilde{P}_{k'} $ are bounded. The latter one has symbol obeying the same bump function estimates as the symbol of $ P_{\ell}^{ \omega} Q_{k'+2\ell}^{\pm} P_{k'} $ on the rectangular region of size $ (2^{k'+\ell})^{d-1}\times 2^{k'+2\ell} \times 2^{k'} $ where it is supported. Then one uses Bernstein's inequality and an orthogonality argument.

Similarly one obtains,
\be \label{Zell:emb}
\vn{\Delta^{-1} P_{k'}A_0}_{Z^{ell}_{k'}} \ls \sup_{\pm, \ell<C} 2^{\ell} 2^{(\sg-1)k'} \vn{Q_{k'+2\ell}^{\pm} P_{k'}A_0}_{L^1 L^2}
\ee

\subsection{Extra derivatives}

For $ X=S, N, Y, \dot{H} $ and $ \bar{X}= \bar{S}, \bar{N}, H $, for any $ s, \rho \in \mb{R} $  we have
$$ \vn{A}_{X^{s+\rho}} \simeq \vn{ \nabla_x^{\rho} A}_{X^s}, \qquad \vn{f}_{\bar{X}^{s+\rho}} \simeq\vn{ \jb{\nabla_x}^{\rho} f}_{\bar{X}^s}
$$ 
Similar definitions are made for their dyadic pieces, for instance 
$$ \vn{\phi_k}_{\bar{S}^s_k} \simeq 2^{(s-1)k}  \vn{(\jb{D_x},\pt_t)  \phi_k}_{\bar{S}_k} .$$

\section{Embeddings} \label{sec:Emb}

\subsection{Lorentz spaces and $ \Box^{-1} $ embeddings} \

\

For functions $ f $ in the Lorentz space $ L^{p,q} $, by decomposing 
$$ f=\sum f_m, \quad \text{where} \quad  f_m(x) \defeq f(x) 1_{\{ \vm{f(x)}\in [2^m,2^{m+1}] \}} $$
 we have the following equivalent norm (see \cite{Grafakos})
\be \label{atomic:Lorentz}
\vn{f}_{L^{p,q}} \simeq \vn{ \vn{f_m}_{L^p(\mb{R}^d)}}_{\ell^q_m(\mb{Z})}.
\ee

The Lorentz spaces also enjoy a H\"older-type inequality which is due to O'Neil \cite{Oneil}. We will need the following case
\be \label{Lorentz:Holder}
\vn{\phi \psi}_{L^{2,1}} \ls \vn{\phi}_{L^{4,2}} \vn{\psi}_{L^{4,2}}
\ee
For $ M \in \mb{Z} $ and $ l \leq 0 $ let 
\be \label{T:op}
T_l^{\omega}=\sum_{k' \leq M} P_l^{\omega} Q_{k'+2l}^{\pm} P_{k'} \frac{1}{\Box} \ee

\begin{remark} We will use the $ T_l^{\omega} $ operators on $ \mb{R}^{4+1} $ to estimate parts of the potential $ A $ in $ L^1 L^{\infty} $, using the embedding \eqref{L1Linf:emb} together with Lorentz space Strichartz estimates $ L^2 L^{4,2} $ for $ \phi $ and \eqref{Lorentz:Holder}. We have been motivated by \cite{ShSt}, where $ A \approx \frac{\pt}{\Delta} (d u)^2 $, and where essentially a Sobolev-type emdedding $ \frac{1}{\vm{D_x}}: L^{d,1}_x \to L^{\infty}_x(\mb{R}^d) $   is used.

When $ l=0 $ the symbol of the operator $ T_l^{\omega} $ makes it resemble $ \Delta^{-1}_x $.

The main point here will be that it is crucial to keep the $ k' $ summation inside the norm in order to overcome logarithmic divergences in \eqref{SmallAngleSmallMod}. 
\end{remark}

\begin{proposition} \label{Box:Embedding}
On $ \mb{R}^{4+1} $ the following embeddings hold uniformly in $ l\leq 0 $ and $ M $:
\begin{align}
2^{\frac{1}{2}l} T_l^{\omega} &: L^2 L^{\frac{4}{3}} \to L^2 L^4, \label{L2L4:emb} \\
2^{\frac{1}{2}l} T_l^{\omega} &: L^1 L^{2,1} \to L^1 L^{\infty}.  \label{L1Linf:emb}
\end{align} 
\end{proposition}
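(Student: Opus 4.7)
The plan is to dyadically decompose $T_l^\omega = \sum_{k' \leq M} T_{l,k'}^\omega$ with $T_{l,k'}^\omega := P_l^\omega Q^\pm_{k'+2l} P_{k'}/\Box$. On the Fourier support of each summand -- a box of dimensions $2^{k'} \times (2^{k'+l})^3 \times 2^{k'+2l}$ in (radial)$\times$(angular)$\times$(modulation) -- the symbol of $1/\Box$ has magnitude $\sim 2^{-2k'-2l}$ since $|\tau\pm|\xi|| \sim 2^{k'}$ and $|\tau\mp|\xi|| \sim 2^{k'+2l}$, while $P_l^\omega Q^\pm_{k'+2l} P_{k'}$ is disposable.

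For \eqref{L2L4:emb}, each $T_{l,k'}^\omega f$ is spatially frequency-localized at $|\xi|\sim 2^{k'}$, so Littlewood--Paley in $L^4_x$ gives
\[ \vn{T_l^\omega f}_{L^2 L^4}^2 \ls \sum_{k' \leq M} \vn{T_{l,k'}^\omega P_{k'} f}_{L^2 L^4}^2 . \]
For each $k'$, Bernstein on the spatial plate $P_l^\omega P_{k'}$ of volume $V = 2^{4k'+3l}$ yields the $L^{4/3}_x \to L^4_x$ factor $V^{1/2} = 2^{2k'+3l/2}$; combined with the symbol bound $2^{-2k'-2l}$ and disposability of $Q^\pm_{k'+2l}$ this produces $\vn{T_{l,k'}^\omega P_{k'} f}_{L^2 L^4} \ls 2^{-l/2} \vn{P_{k'} f}_{L^2 L^{4/3}}$. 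Square-summing and Littlewood--Paley in $L^{4/3}_x$ conclude \eqref{L2L4:emb}.

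For \eqref{L1Linf:emb}, the parallel strategy fails because $L^\infty$ does not support Littlewood--Paley and triangle-inequality summation over $k'$ produces a $O(M\cdot 2^{-l/2})$ logarithmic loss -- precisely the divergence flagged in the discussion above. The plan is therefore to keep the $k'$-summation inside the norm throughout. By stationary phase, each kernel $K_{l,k'}^\omega$ is a wave packet of amplitude $\sim 2^{3k'+3l}$ on a spacetime tube of dimensions $2^{-k'-2l}\times 2^{-k'}\times (2^{-k'-l})^3$ aligned with the null direction $\omega^1$, with temporal support $|t|\ls 2^{-k'-2l}$ and with nested cross-sections across $k'$ at each fixed $t$. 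One then factors $1/\Box \sim -1/(2|D|(\tau\mp|D|))$ on the modulation support to split off a spatial Riesz-type factor handled by the endpoint Lorentz HLS $|D|^{-1}: L^{2,1}_x \to L^{4,1}_x$ on $\mb{R}^4$, and applies Young's inequality for Lorentz in the paired form $L^{4/3,\infty}_x \ast L^{4,1}_x \to L^\infty_x$ to the remaining kernel.

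The main obstacle will be verifying the refined Lorentz kernel estimate in $L^1_t L^{4/3,\infty}_x$ uniformly in $M$: the nested-tube geometry must be used in place of a pointwise-in-$k'$ triangle inequality, so that the cancellation between the geometrically growing amplitudes $2^{3k'+3l}$ and the geometrically shrinking cross-sections $2^{-4k'-3l}$ is realized inside the Lorentz norm rather than lost to a logarithmic summation.
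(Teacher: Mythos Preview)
Your argument for \eqref{L2L4:emb} is correct and somewhat different from the paper's. The paper factors $T_l^\omega$ as a composition of two angular Sobolev embeddings ($2^{-3l/4}P_l^\omega|D_x|^{-1}:L^2L^{4/3}\to L^2_{t,x}$ and its dual) sandwiching a bounded $L^2_{t,x}$ multiplier; you instead treat each dyadic piece by Bernstein on the plate and square-sum via Littlewood--Paley in $L^4$ and $L^{4/3}$. Both exploit the same plate volume; yours avoids the rescaling argument but uses the slightly less standard dual Littlewood--Paley inequality $(\sum_{k'}\|P_{k'}f\|_{L^{4/3}}^2)^{1/2}\ls\|f\|_{L^{4/3}}$.

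For \eqref{L1Linf:emb} there is a genuine gap, and the proposed strategy cannot be completed. The claimed kernel bound in $L^1_tL^{4/3,\infty}_x$ is false. Using the pointwise estimate the paper establishes, $|k(t,x)|\ls 2^{l/2}2^{3l}S^{-3}$ with $S=2^{2l}|t|+|x_\omega|+2^l|x'_\omega|$, one computes directly that $\|k(t,\cdot)\|_{L^{2,\infty}_x}\sim 1/|t|$ for $|t|\gg 2^{-M-2l}$ (the sup in the weak norm is achieved at $S_0\sim 2^{2l}|t|$), and the same $1/|t|$ tail appears for your modified kernel $\tilde k=|D_x|k$ in $L^{4/3,\infty}_x$. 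The nesting of cross-sections does eliminate the dependence on $M$ at each fixed $t$, but the resulting $1/|t|$ is not integrable: the low-frequency pieces $k'\to -\infty$ have temporal supports $2^{-k'-2l}\to\infty$, and no lower cutoff on $k'$ is available.

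What you are missing is that the paper does \emph{not} bound any kernel norm. After reducing to input $\delta_0(t)\otimes f$, it uses the atomic decomposition \eqref{atomic:Lorentz} of $L^{2,1}$ to take $f=f_m$ a single level-set function, with $|f_m|\sim 2^m$ on a set of measure $\sim 2^{-2m}$. For such an atom one has $\|k(t,\cdot)\ast f_m\|_{L^\infty_x}\ls 2^m\sup_{|F|\sim 2^{-2m}}\int_F|k(t,y)|\,dy\ls 2^m 2^{l/2}R^4/((2^{2l}|t|)^3+R^3)$ with $R^4\sim 2^{3l-2m}$, and this \emph{is} in $L^1_t$ with bound $\ls 1$. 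The $\ell^1_m$ summation then comes for free from the $L^{2,1}$ norm. Your kernel approach is equivalent to taking $\sup_m$ \emph{before} integrating in $t$; since the optimal scale $m$ varies with $t$, this costs exactly the logarithmic divergence you were trying to avoid.
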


\begin{proof}
\pfstep{Step~1}{\it Proof of \eqref{L2L4:emb}.} Apply an angular projection such that $ \tilde{P}_l^{\omega} P_l^{\omega}=P_l^{\omega}  $. Now \eqref{L2L4:emb} follows by composing the following embeddings
\begin{align}
& 2^{-\frac{3}{4}l} P_l^{\omega} \vm{D_x}^{-1}  :L^2 L^{\frac{4}{3}} \to L^2_{t,x} \label{sob:emb1}, \quad 2^{2l} \frac{\vm{D_x}^2}{\Box} \sum_{k' \leq M}  Q_{k'+2l}^{\pm} P_{k'}  :L^2_{t,x} \to L^2_{t,x} \\
& 2^{-\frac{3}{4}l} \tilde{P}_l^{\omega} \vm{D_x}^{-1}  :L^2_{t,x} \to L^2 L^4. \label{sob:emb2}
\end{align} 
When $ l=0 $, the first and third mappings follow from Sobolev embedding. For smaller $ l $ we make a change of variable that maps an angular cap of angle $ \simeq 2^l $ into one of angle $ \simeq 2^0 $, which reduces the bound to the case $ l=0 $.

The second mapping holds because the operator has a bounded multiplier.

\pfstep{Step~2}{\it Proof of \eqref{L1Linf:emb}.} Let $ k(t,x) $ be the kernel of $ 2^{\frac{1}{2}l} T_l^{\omega} $. It suffices to show 
\be   \label{red:delta}
2^{\frac{1}{2}l} T_l^{\omega}[ \delta_0(t) \otimes \cdot \ ] : L^{2,1}_x \to L^1 L^{\infty}, \ \text{i.e.} \
   \vn{ \int f(y) k(t, x-y) \dd y}_{L^1_t L^{\infty}_x} \ls \vn{f}_{L^{2,1}_x}         
\ee 
Indeed, assuming \eqref{red:delta}, denoting $ \phi_s(\cdot)=\phi(s,\cdot) $, we have
$$  \vn{2^{\frac{1}{2}l} T_l^{\omega} \phi}_{ L^1 L^{\infty}}  \leq \int \vn{ \int \phi(s,y) k(t-s,x-y) \dd y }_{L^1_t L^{\infty}_x} \dd s \ls \int \vn{\phi_s}_{L^{2,1}} \dd s  $$
using the time translation-invariance in \eqref{red:delta}.

To prove \eqref{red:delta}, since $ q=1 $, by \eqref{atomic:Lorentz} we may assume that $f=f_m $, i.e.  $ \vm{f(x)} \simeq 2^{m} $ for $ x \in E $ and $ f(x)=0 $ for $ x \notin E $. We normalize $ \vn{f}_{L^{2,1}} \simeq \vn{f}_{L^2_x}=1 $, which implies $ \vm{E} \simeq 2^{-2m} $. We have
\be \label{set:kernel} \vn{ \int f(x-y) k(t,y) \dd y}_{L^{\infty}_x} \ls 2^m \sup_{\vm{F} \simeq 2^{-2m}} \int_{F} \vm{k(t,y)} \dd y 
\ee

For $ x_{\omega}=x \cdot \omega, \ x'_{\omega,i}=x \cdot \omega^{\perp}_i $, we will show
\be \label{Emb:kernel} \vm{k(t,x)} \ls 2^{\frac{1}{2}l} \frac{2^{3l}}{(2^{2l} \vm{t}+\vm{x_{\omega}}+2^l \vm{x'_{\omega}})^3}.
\ee 
Assuming this, we integrate it on $ F $ and since the fraction is decreasing in $ \vm{x_{\omega}}, \vm{x'_{\omega}} $,
\begin{align*}
 \text{RHS}  \  \eqref{set:kernel} &  \ls 2^m 2^{\frac{1}{2}l} \int_{[-R,R] \times (2^{-l}[-R,R])^3} \frac{2^{3l}}{(2^{2l} \vm{t}+\vm{x_{\omega}}+2^l \vm{x'_{\omega}})^3} \dd x_{\omega} \dd x'_{\omega} \\
& \ls 2^m 2^{\frac{1}{2}l} \int_{[-R,R]^4} \frac{1}{(2^{2l} \vm{t}+\vm{(x_{\omega},x'_{\omega})})^3} \dd x_{\omega} \dd x'_{\omega} \ls 2^m 2^{\frac{1}{2}l} \frac{R^4}{(2^{2l} \vm{t})^3+R^3}
\end{align*} 
for $ R^4 \simeq 2^{3l} 2^{-2m} $. Integrating this bound in $ t $ we obtain  \eqref{red:delta}.

\pfstep{Step~3}{\it Proof of \eqref{Emb:kernel}.} Let $ k_0(t,x) $ be the kernel of $  P_{0} Q_{2l}^{\pm} P_l^{\omega} \frac{1}{\Box} $. Then 
\be k(t,x)=2^{\frac{1}{2}l} \sum_{k' \leq M} 2^{3k'} k_0 \big(2^{k'}(t,x)\big).  \label{emb:kernel:sum}
\ee
Let $ (t_{\omega}, x^1_{\omega},x'_{\omega}) $ be the coordinates in the frame \eqref{frame}, \eqref{frame2} for $ \lmd=1 $. Then $ 2^{-3l} k_0(t_{\omega},2^{-2l} x^1_{\omega}, 2^{-l} x'_{\omega}) $ is a Schwartz function, being the Fourier transform of a bump function. Thus, 
$$  \vm{k_0(t,x) } \ls \frac{2^{3l}}{\jb{ \vm{t_{\omega}}+2^{2l} \vm{x^1_{\omega}}+2^l \vm{x'_{\omega}}}^N} \ls  \frac{2^{3l}}{\jb{2^{2l} \vm{t}+ \vm{x_{\omega}}+2^l \vm{x'_{\omega}}}^N}.$$
Using this and \eqref{emb:kernel:sum}, denoting $ S=2^{2l} \vm{t}+ \vm{x_{\omega}}+2^l \vm{x'_{\omega}} $, we have
$$  \vm{k(t,x)} \ls 2^{\frac{1}{2}l} 2^{3l} \big( \sum_{2^{k'} \leq S^{-1}} 2^{3k'} + \sum_{S^{-1} < 2^{k'} } 2^{-(N-3) k'} S^{-N} \big) \ls 2^{\frac{1}{2}l} 2^{3l} S^{-3} $$
obtaining \eqref{Emb:kernel}. \end{proof}

\subsection{Further properties}

\begin{lemma}[Sobolev-type embedding]  \label{Sobolev_lemma}
Let $ p \geq q $. For any sign $ \pm $ we have
$$ \vn{\bar{Q}^{\pm}_j u}_{L^p L^2} \ls 2^{(\frac{1}{q}-\frac{1}{p})j} \vn{\bar{Q}^{\pm}_j u}_{L^q L^2} \ls 2^{(\frac{1}{q}-\frac{1}{p})j} \vn{u}_{L^q L^2}. $$ 
The same statements holds for $ Q_j^{\pm} $. 
\end{lemma}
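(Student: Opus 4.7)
The plan is to reduce the claim to a one-dimensional Young's inequality in $t$ after conjugating the modulation cutoff into a pure time-frequency cutoff.

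First, I would write $\bar Q^{\pm}_j$ as the conjugate of a pure temporal multiplier. Set $U_{\pm}(t) = e^{\pm it\jb{D_x}}$ and let $v = U_{\mp}(t) u$, so that the spacetime Fourier transform satisfies $\hat v(\tau,\xi) = \hat u(\tau\pm\jb{\xi},\xi)$. Defining the purely temporal multiplier $Q^{t}_j$ with symbol $\chi(|\tau|/2^j)$, one immediately checks $\bar Q^{\pm}_j u = U_{\pm}(t)\, Q^{t}_j\, U_{\mp}(t) u$. Because $U_{\pm}(t)$ acts as a fiberwise isometry on $L^2_x$ for each fixed $t$, conjugation by it preserves every mixed norm $L^p_t L^2_x$, so the claimed estimate for $\bar Q^{\pm}_j$ is equivalent to the same estimate for $Q^{t}_j$ applied to $v$, namely
\begin{equation*}
  \|Q^{t}_j v\|_{L^p_t L^2_x} \lesssim 2^{(\frac1q - \frac1p)j} \|v\|_{L^q_t L^2_x}.
\end{equation*}

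Second, $Q^{t}_j$ is convolution in $t$ alone with the kernel $K_j(t) = 2^j K_0(2^j t)$, where $K_0 \in \mathcal S(\bbR)$ is the inverse Fourier transform (in $\tau$) of $\chi$. A straightforward rescaling gives $\|K_j\|_{L^r_t} \simeq 2^{j(1 - 1/r)}$ for every $r \in [1,\infty]$. Applying the $L^2_x$-valued (vector-valued) Young inequality in the $t$ variable with exponents satisfying $1 + \tfrac1p = \tfrac1r + \tfrac1q$, which is solvable in $r \in [1,\infty]$ precisely when $p \geq q$, yields
\begin{equation*}
  \|Q^{t}_j v\|_{L^p_t L^2_x} \leq \|K_j\|_{L^r_t} \|v\|_{L^q_t L^2_x} \lesssim 2^{j(\frac1q - \frac1p)} \|v\|_{L^q_t L^2_x}.
\end{equation*}
Undoing the conjugation gives the second inequality $\|\bar Q^{\pm}_j u\|_{L^p L^2} \lesssim 2^{(\frac1q-\frac1p)j}\|u\|_{L^q L^2}$.

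For the first inequality, I would fix a fattened multiplier $\tld{\bar Q}^{\pm}_j$ with symbol equal to $1$ on the support of the symbol of $\bar Q^{\pm}_j$, so that $\tld{\bar Q}^{\pm}_j \bar Q^{\pm}_j = \bar Q^{\pm}_j$, and apply the previous step to $\tld{\bar Q}^{\pm}_j$ acting on $\bar Q^{\pm}_j u$. For $Q^{\pm}_j$ the same argument goes through verbatim with $\jb{D_x}$ replaced by $|D_x|$, conjugating by $e^{\pm it|D_x|}$ instead. There is no real obstacle: the only care needed is verifying that $p \geq q$ is exactly the condition under which the exponent $r$ in Young's inequality is admissible (i.e.\ $r \geq 1$), which matches the hypothesis.
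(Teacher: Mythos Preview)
Your proof is correct and follows essentially the same approach as the paper: both conjugate $\bar Q^{\pm}_j$ by $e^{\mp it\jb{D}}$ to reduce to a pure temporal multiplier $\chi(D_t/2^j)$, then invoke the one-dimensional Bernstein/Young inequality in $t$ (the paper phrases this as a Sobolev-type step rather than writing out Young's inequality explicitly). The only cosmetic difference is that the paper obtains the first inequality directly by keeping $\chi(D_t/2^j)$ through the chain of norms, while you obtain the second inequality first and then deduce the first via a fattened multiplier; both are equivalent routine manipulations.
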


\begin{proof} We conjugate by the operator $ U $ defined by
$$ \mathcal{F}(U u) (\tau, \xi)=\mathcal{F} u (\tau \pm \jb{\xi},\xi), $$
which acts at each $ t $ as the unitary multiplier $ e^{\mp i t \jb{D}} $. Thus we have
$$  Q^{\pm}_j u=U^{-1}  \chi(\frac{D_t}{2^j}) U u.  $$
This clearly implies the second inequality. For the first one we write
$$
 \vn{Q^{\pm}_j u}_{L^p L^2}  \lesssim \vn{ \chi(\frac{D_t}{2^j}) U u}_{L^p L^2} \lesssim 2^{(\frac{1}{q}-\frac{1}{p})j} \vn{\chi(\frac{D_t}{2^j}) U f}_{L^q L^2} \ls 2^{(\frac{1}{q}-\frac{1}{p})j} \vn{\bar{Q}^{\pm}_j u}_{L^q L^2}.
$$
The same argument works for $ Q_j^{\pm} $, conjugating by $ e^{\mp i t \vm{D}} $ instead.
\end{proof}

Next we prove the embedding $ \bar{X}_1^{\frac{1}{2}} \subset \bar{S}_k $.

\begin{proposition} \label{Xembedding} For $ k \geq 0 $ and $ \phi $ with Fourier support in $ \{ \jb{\xi} \simeq 2^k \} $ we have
$$ \vn{\phi}_{\bar{S}_k} \ls \vn{\phi}_{\bar{X}_1^{\frac{1}{2}}} $$
\end{proposition}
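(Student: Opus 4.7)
My plan is to execute the standard transference argument from $X^{s,b}$ theory, reducing estimates on each component of the $\bar{S}_k$ norm to free Klein-Gordon wave bounds through a continuous superposition formula. First I would dyadically decompose $\phi = \sum_{\pm, j} \bar{Q}_j^\pm \phi$; by the triangle inequality (in $\ell^1_j$ directly for the Strichartz and $\bar{X}_\infty^{1/2}$ pieces, and in $\ell^2_\omega$ followed by $\ell^1_j$ for the null-frame sum), the claim will reduce to proving
\[
\|\bar{Q}_j^\pm \phi\|_X \lesssim 2^{j/2} \|\bar{Q}_j^\pm \phi\|_{L^2_{t,x}}
\]
for $X$ equal to each building block of the $\bar{S}_k$ norm.

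The key tool is the Fourier inversion formula in the variable $\tau = \pm\langle\xi\rangle + s$, which represents each modulation piece as a continuous superposition of free Klein-Gordon waves:
\[
\bar{Q}_j^\pm \phi(t,x) = \int_{|s| \simeq 2^j} e^{ist}\, \bigl[e^{\mp i t \langle D\rangle} g_s\bigr](x)\, ds, \qquad \int \|g_s\|_{L^2_x}^2\, ds \simeq \|\bar{Q}_j^\pm \phi\|_{L^2_{t,x}}^2,
\]
the latter by Plancherel. For any time-translation-invariant norm $X$ satisfying a free-wave bound $\|e^{\mp i t\langle D\rangle} g\|_X \lesssim \|g\|_{L^2_x}$ under the same frequency/angular localization, Minkowski in $s$ and Cauchy--Schwarz immediately give the target estimate, with the $2^{j/2}$ coming from the length of the $s$-interval.

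It then remains to verify the free-wave input for each piece of $\bar{S}_k$. The $\bar{X}_\infty^{1/2}$ bound is immediate from $\ell^1_j \hookrightarrow \ell^\infty_j$. The Strichartz component $\bar{S}_k^{Str}$ is handled by the standard Klein-Gordon Strichartz estimates, including the Lorentz endpoint $L^2 L^{4,2}$ in $d=4$ discussed in Section \ref{sec:Emb}. For the angular-localized piece one fixes $l<0$ and a sign; here only modulations $j < k+2l$ contribute because of the cutoff $\bar{Q}_{<k+2l}^\pm$, and the required free-wave bound with $X = \bar{S}_k^{\omega\pm}(l)$ is exactly what Corollaries \ref{Cornullframe}, \ref{corPW} and \ref{Cor:L2Linf} supply for the $NE_\calC^\pm$, $PW_\calC^\pm$ and $L^2 L^\infty$ components (with only the $L^2 L^\infty$ part needed when $d \geq 5$). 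Almost orthogonality of the angular projections $P_l^\omega$ in $L^2_{t,x}$ lets one sum in $\omega$ at no cost before summing in $j$.

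I do not expect any genuinely hard step: the proof is the transference principle applied component by component, and the only substantive input, the free-wave bounds in the adapted-frame spaces, has already been established earlier in the paper via $TT^\ast$ and stationary phase arguments. The main care is purely bookkeeping, namely matching the frequency/angular localizations of $g_s$ and of $\bar{Q}_j^\pm\phi$ so that the free-wave bounds can be invoked uniformly in $s$.
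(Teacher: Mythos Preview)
Your proposal is correct and follows essentially the same transference argument as the paper's proof: decompose into modulation shells, write each as a superposition of free Klein--Gordon waves via $\tau = \pm\langle\xi\rangle + s$, and apply Minkowski plus Cauchy--Schwarz together with the free-wave bounds (Strichartz for $\bar{S}_k^{Str}$, and the cited Corollaries for the $L^2 L^\infty$, $PW_\calC^\pm$, $NE_\calC^\pm$ pieces, with $\psi=0$). Note only the minor sign slip: $e^{\mp it\langle D\rangle}$ should read $e^{\pm it\langle D\rangle}$.
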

\begin{proof} We may assume that $ \phi $ has Fourier support in $ \{ \vm{\tau-\jb{\xi}} \simeq 2^j,\ \tau \geq 0 \}$.
The bound clearly holds for the $ \bar{X}_{\infty}^{\frac{1}{2}} $ component of $ \bar{S}_k $. For the other norms we claim $ \vn{e^{it \jb{D}} u}_{\bar{S}_k} \ls \vn{u}_{L^2_x} $. Assuming this, we write $ \tau=\rho+\jb{\xi}$ in the inversion formula
$$ \phi(t)=\int e^{i t \tau+ ix \xi} \mathcal{F} \phi (\tau, \xi) \dd \xi \dd \tau=\int_{\vm{\rho} \simeq 2^j} e^{i t \rho} e^{i t \jb{D}} \phi_{\rho} \dd \rho $$
for $ \hat{\phi_{\rho}}(\xi)=\mathcal{F} \phi (\rho+\jb{\xi}, \xi) $. Then by Minkowski and Cauchy-Schwarz inequalities 
$$ \vn{\phi}_{\bar{S}_k} \ls \int_{\vm{\rho} \simeq 2^j} \vn{e^{i t \jb{D}} \phi_{\rho} }_{\bar{S}_k}  \dd \rho \ls \int_{\vm{\rho} \simeq 2^j}  \vn{\phi_{\rho} }_{L^2_x} \dd \rho \ls  2^{\frac{j}{2}}  \vn{\phi}_{L^2_{t,x}} \simeq \vn{\phi}_{\bar{X}_1^{\frac{1}{2}}}. $$
By an orthogonality argument, for any $ l<0 $ it remains to establish
$$ e^{it \jb{D}} \bar{P}_k: L^2_x \to \bar{S}^{Str}_k, \qquad e^{it \jb{D}} \bar{P}_k P_l^{\omega}: L^2_x \to \bar{S}_k^{\omega \pm}(l) $$
The first mapping follows by taking $ \psi_{k,\pm}=0 $ in \eqref{reducedStr}. The second one follows similarly, by orthogonality and \eqref{red:L2Linf} for $ L^2 L^{\infty} $, \eqref{PWwaves} for $ PW_C^{\pm} $ and Corollary \ref{CorNE} for $ NE_C^{\pm} $. For $ k=0 $, the $ S_{box(k')} $ component follows similarly. \end{proof}

For iterating Maxwell's equation we will use the following proposition.

\begin{proposition} \label{A:solv}
For any $ A $ such that $ A[0]=0 $ one has
\be
\vn{A}_{\ell^1 S^{\sg}} \ls \vn{\Box A}_{\ell^1(N^{\sg-1} \cap L^2 \dot{H}^{\sg-\frac{3}{2}})}
\ee
For any free solution $ A^{free} $, i.e. $ \Box A^{free} $=0, one has $ \vn{A^{free}}_{S^{\sg}} \simeq \vn{A[0]}_{\dot{H}^{\sg} \times \dot{H}^{\sg-1}}$. Thus, for any $ A $,
\be
\vn{A}_{S^{\sg}} \ls \vn{A[0]}_{\dot{H}^{\sg} \times \dot{H}^{\sg-1}} + \vn{\Box A}_{N^{\sg-1} \cap L^2 \dot{H}^{\sg-\frac{3}{2}}}
\ee
In addition, for any $ A_0 $ one has
\be
\vn{A_0}_{Y^{\sg}} \ls \vn{\Delta A_0}_{\ell^1(L^{\infty} \dot{H}^{\sg-2}\cap L^2 \dot{H}^{\sg-\frac{3}{2}})}+\vn{\Delta \pt_t A_0}_{{\ell^1(L^{\infty} \dot{H}^{\sg-3}\cap L^2 \dot{H}^{\sg-\frac{5}{2}})}}
\ee
\end{proposition}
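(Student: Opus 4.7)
My plan is to prove the three assertions in turn, reducing each to a per-frequency estimate via Littlewood--Paley decomposition. For the first inequality, by the definitions of $\ell^1 S^{\sg}$ and $\ell^1 N^{\sg-1}$ and the fact that $P_{k'}$ commutes with $\Box$, it suffices to show that for every $k'\in \mb{Z}$ and every $A$ with Fourier support in $\{|\xi| \simeq 2^{k'}\}$ and $A[0]=0$,
$$\vn{\nabla_{t,x} A}_{S_{k'}} \ls \vn{\Box A}_{N_{k'}},$$
since the $L^2\dot H^{\sg-\frac{3}{2}}$-piece is identical on both sides. The combined estimate $\vn{A}_{S^{\sg}} \ls \vn{A[0]}_{\dot H^{\sg}\times \dot H^{\sg-1}} + \vn{\Box A}_{N^{\sg-1}\cap L^2\dot H^{\sg-\frac{3}{2}}}$ will follow from this by subtracting the free wave with data $A[0]$ and invoking the free-wave equivalence.

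To establish the free-wave equivalence $\vn{A^{free}}_{S^{\sg}} \simeq \vn{A[0]}_{\dot{H}^{\sg}\times\dot{H}^{\sg-1}}$, the lower bound is clear from energy conservation, since the $S_{k'}$-norm dominates $L^{\infty}_t L^2_x$. For the upper bound I reduce per-frequency to $\vn{\nabla_{t,x} e^{\pm it|D|} u}_{S_{k'}} \ls 2^{k'}\vn{u}_{L^2_x}$ for $u$ localized at frequency $\simeq 2^{k'}$, and verify this norm by norm in the definition of $S_{k'}$: the Strichartz component $S^{Str,W}_{k'}$ is standard; the $X^{\frac{1}{2}}_{\infty}$ component is zero on free waves; and the null-frame components $S_{k'}^{\omega}(l)$ (involving $L^2L^{\infty}$, $PW_{\calC}^{\pm}$, $NE_{\calC}^{\pm}$) are controlled exactly as in the analogous Klein--Gordon argument at the end of the proof of Proposition~\ref{Xembedding}.

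For the inhomogeneous estimate I split $F := \Box A = F_1 + F_2$ with $F_1 \in L^1 L^2$ and $F_2 \in X^{-\frac{1}{2}}_1$. The $L^1L^2$ piece is handled by writing the zero-data solution through Duhamel's formula as a superposition of free evolutions $\frac{\sin((t-s)|D|)}{|D|} F_1(s)$; Minkowski's inequality combined with the free-wave bound from the previous paragraph yields $\vn{\nabla_{t,x} A_1}_{S_{k'}} \ls \vn{F_1}_{L^1L^2}$. For the $X^{-\frac{1}{2}}_1$ piece, note that on $\{|\xi|\simeq 2^{k'},\ |\tau\mp|\xi||\simeq 2^j\}$ the multiplier $\Box^{-1}$ has size $\simeq 2^{-j-k'}$, hence
$$\vn{\nabla_{t,x} \Box^{-1} F_2}_{X^{\frac{1}{2}}_1} \ls \vn{F_2}_{X^{-\frac{1}{2}}_1}.$$
The wave analogue of Proposition~\ref{Xembedding}, i.e. $X^{\frac{1}{2}}_1 \subset S_{k'}$ at frequency $2^{k'}$, proved verbatim by the decomposition $\phi=\int e^{it\rho}e^{\pm it|D|}\phi_\rho\, d\rho$ used there with $\jb{D}$ replaced by $|D|$, upgrades this to an $S_{k'}$-bound. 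Finally I subtract a free wave matching the initial data of $\Box^{-1}F_2$ to obtain the zero-data solution $A_2$; this correction is bounded using the free-wave equivalence applied to the $L^2\times\dot H^{-1}$ trace of $\Box^{-1}F_2$ at $t=0$.

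The third assertion is essentially mechanical elliptic regularity: at frequency $\simeq 2^{k'}$, $\Delta^{-1}$ has multiplier size $\simeq 2^{-2k'}$, whence for $s \in \{\sg-1,\ \sg-\frac{1}{2}\}$,
$$\vn{\nabla_x P_{k'} A_0}_{L^{\infty}_t \dot H^s \cap L^2_t \dot H^s} \ls \vn{P_{k'} \Delta A_0}_{L^{\infty}_t \dot H^{s-1} \cap L^2_t \dot H^{s-1}},$$
and an identical bound with $\partial_t$ in place of $\nabla_x$, using $\Delta \partial_t A_0 = \partial^i J_i$. Summing in $k'\in\mathbb{Z}$ recovers the $\ell^1$ bound. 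The principal obstacle in the whole argument is the $X^{-\frac{1}{2}}_1$ piece in Paragraph~3: establishing the wave-equation embedding $X^{\frac{1}{2}}_1 \subset S_{k'}$ and arranging the free-wave correction to absorb the Cauchy data produced by $\Box^{-1}$. Everything else reduces to Littlewood--Paley bookkeeping and standard properties of $\Box^{-1}$ and $\Delta^{-1}$.
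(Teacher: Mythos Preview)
Your approach is essentially the same as the paper's: reduce to the per-frequency estimate $\vn{\nabla_{t,x} A_{k'}}_{S_{k'}} \ls \vn{A_{k'}[0]}_{\dot H^1\times L^2}+\vn{\Box A_{k'}}_{N_{k'}}$, then handle the homogeneous part via Strichartz and $TT^*$/dispersive bounds, and the inhomogeneous part by reducing to free-wave estimates. The paper packages that last reduction by invoking Lemma~\ref{waves} with $\psi=0$, whereas you unpack the atomic decomposition $F=F_1+F_2$ by hand; the two are equivalent.

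Two small corrections. First, the space $S_{k'}$ for the wave variable $A$ does \emph{not} contain $PW_{\calC}^{\pm}$ or $NE_{\calC}^{\pm}$ components --- those appear only in $\bar S_k$ for the Klein--Gordon variable $\phi$. So the verification of $S_{k'}^{\omega}(l)$ is simpler than you state: only the angular-localized $L^2L^\infty$ pieces need checking, and the paper obtains these from the dispersive estimate \eqref{dispestt2} via $TT^*$ (Corollary~\ref{Cor:L2Linf} with $\psi=0$, $|D|$ in place of $\jb{D}$). Second, the $X^{1/2}_\infty$ bound for the Duhamel piece $A_1$ is not covered by ``Minkowski plus the free-wave bound'': the truncated waves $1_{t>s}e^{\pm i(t-s)|D|}F_1(s)$ are not free, and $X^{1/2}_\infty$ is not stable under time truncation. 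The paper treats this component separately via Lemma~\ref{Sobolev_lemma}: since $Q_j$ annihilates the homogeneous correction, $Q_j A = Q_j \Box^{-1}F$, and then $2^{j/2}\|Q_j\nabla\Box^{-1}F_1\|_{L^2}\ls 2^{-j/2}\|Q_jF_1\|_{L^2}\ls \|F_1\|_{L^1L^2}$.
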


\begin{proof} The $ A_0 $ bound follows easily from the definition of $ Y^{\sg} $. The $ A $ bounds are reduced to 
$$  \vn{\nabla_{t,x}  A_{k'}}_{S_{k'}} \ls \vn{A_{k'}[0]}_{\dot{H}^1 \times L^2} + \vn{\Box A_{k'}}_{N_{k'}} $$
The $ X_{\infty}^{\frac{1}{2}} $ part follows easily from Lemma \ref{Sobolev_lemma}. Using the argument of Lemma \ref{waves} (with $ \psi=0 $), we reduce to showing
\be e^{\pm it \vm{D}} P_{k'} : L^2_x \to S^{Str,W}, \qquad e^{\pm it \vm{D}} P_{k'} P_l^{\omega}: L^2_x \to S_{k'}^{\omega }(l)
\ee 
The first mapping represents well-known Strichartz estimates. By orthogonality, the second one follows from
$$ 2^{-\frac{d-1}{2}k'-\frac{d-3}{2}l} e^{\pm it \vm{D}} P_{k'} P_l^{\omega}: L^2_x \to L^2 L^{\infty}, $$
$$ 2^{-\frac{d-2}{2} k''-\frac{1}{2}k'-\frac{d-3}{2}l'} e^{\pm it \vm{D}} P_{k'} P_{C_{k''}(l')}: L^2_x \to L^2 L^{\infty}
$$
By a $ TT^* $ argument, these are reduced to the dispersive estimate \eqref{dispestt2}, like in Cor. \ref{Cor:L2Linf} (with $ \psi=0 $ and $ \vm{D} $ instead of $ \jb{D} $, which does not affect the proof).
\end{proof}

Finally, we have
\begin{proposition} \label{Nk:orthog}
Let $ k \geq 0 $ and $ \calC_{k'}(l') $ be a finitely overlapping collection of boxes. We have
$$ \sum_{\calC_{k'}(l')} \vn{P_{\calC_{k'}(l')} F}_{\bar{N}_k}^2 \ls \vn{F}_{\bar{N}_k}^2   
$$
\end{proposition}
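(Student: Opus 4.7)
The plan is to reduce the statement to two separate almost-orthogonality estimates, one for each of the two summands in the definition $\bar{N}_k = L^1L^2 + \bar{X}_1^{-\frac{1}{2}}$. Given $\eta>0$, choose a near-optimal decomposition $F = F_1 + F_2$ with
$$\vn{F_1}_{L^1L^2} + \vn{F_2}_{\bar{X}_1^{-\frac{1}{2}}} \leq (1+\eta)\vn{F}_{\bar{N}_k}.$$
Then $P_{\calC_{k'}(l')}F = P_{\calC_{k'}(l')}F_1 + P_{\calC_{k'}(l')}F_2$ is an admissible decomposition for the $\bar{N}_k$ norm of each piece, and by the triangle inequality in $\ell^2_{\calC}$ it suffices to prove separately
$$\Bigl(\sum_{\calC}\vn{P_{\calC}F_1}_{L^1L^2}^2\Bigr)^{\frac{1}{2}} \ls \vn{F_1}_{L^1L^2},\qquad \Bigl(\sum_{\calC}\vn{P_{\calC}F_2}_{\bar{X}_1^{-\frac{1}{2}}}^2\Bigr)^{\frac{1}{2}} \ls \vn{F_2}_{\bar{X}_1^{-\frac{1}{2}}}.$$

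For the $L^1L^2$ piece, I would apply Minkowski's inequality to pull the $\ell^2_{\calC}$ norm inside the $L^1_t$ integral, then use Plancherel in $x$ at each fixed $t$ together with the finite-overlap property of the collection $\{\calC_{k'}(l')\}$:
\begin{align*}
\Bigl(\sum_{\calC}\vn{P_{\calC}F_1}_{L^1_tL^2_x}^2\Bigr)^{\frac{1}{2}}
&\leq \int \Bigl(\sum_{\calC}\vn{P_{\calC}F_1(t,\cdot)}_{L^2_x}^2\Bigr)^{\frac{1}{2}}\dd t \\
&\ls \int \vn{F_1(t,\cdot)}_{L^2_x}\dd t = \vn{F_1}_{L^1L^2}.
\end{align*}

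The $\bar{X}_1^{-\frac{1}{2}}$ piece is the step requiring slightly more care, since the norm involves an $\ell^1$ sum over modulations rather than an $\ell^2$ sum. The idea is to commute the $\ell^2_{\calC}$ summation past the $\ell^1_j$ summation by the triangle inequality in $\ell^2_{\calC}$, then exploit Plancherel for each fixed $j$: the operators $\bar{Q}_j P_{\calC}$, as $\calC$ varies, have essentially disjoint space-time Fourier supports, so
$$\sum_{\calC}\vn{\bar{Q}_j P_{\calC}F_2}_{L^2_{t,x}}^2 \ls \vn{\bar{Q}_j F_2}_{L^2_{t,x}}^2.$$
Consequently
$$\Bigl(\sum_{\calC}\vn{P_{\calC}F_2}_{\bar{X}_1^{-\frac{1}{2}}}^2\Bigr)^{\frac{1}{2}} \leq \sum_{j} 2^{-\frac{j}{2}}\Bigl(\sum_{\calC}\vn{\bar{Q}_j P_{\calC}F_2}_{L^2_{t,x}}^2\Bigr)^{\frac{1}{2}} \ls \sum_{j} 2^{-\frac{j}{2}}\vn{\bar{Q}_j F_2}_{L^2_{t,x}} = \vn{F_2}_{\bar{X}_1^{-\frac{1}{2}}}.$$
Combining the two bounds and letting $\eta \to 0$ yields the claim. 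The only mildly delicate point is the order of summations in the $\bar{X}_1^{-\frac{1}{2}}$ estimate; this is handled cleanly by Minkowski, which is compatible with $\ell^1 \hookrightarrow \ell^2$ after Plancherel.
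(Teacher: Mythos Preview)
Your argument is correct and is precisely the approach the paper sketches: the paper's proof says only that ``since $\bar{N}_k$ is an atomic space the property reduces to the corresponding inequalities for $L^1 L^2$ and $L^2_{t,x}$, which are standard inequalities,'' and you have supplied exactly those standard inequalities (Minkowski to interchange $\ell^2_{\calC}$ with $L^1_t$ or with $\ell^1_j$, followed by Plancherel and finite overlap). There is no substantive difference between your route and the paper's.
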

\begin{proof} Since $ \bar{N}_k $ is an atomic space the property reduces to the corresponding inequalities for $ L^1 L^2 $ and $ L^2_{t,x} $, which are standard inequalities.
\end{proof}

\section{The parametrix theorem} \label{Sec_parametrix} 

We define the paradifferential covariant Klein-Gordon operator 
\be
\Box_m^{p,A}=  \Box+I-2i \sum_{k \geq 0} A^j_{<k-C} \pt_j \bar{P}_k
\ee
where $ A=A^{free}=(A_1,\dots,A_d,0) $ is a real-valued 1-form defined on $ \mb{R}^{1+d} $, assumed to solve the free wave equation and to obey the Coulomb gauge condition
\be  \label{A:cond}
 \Box A=0, \qquad \pt^j A_j=0. 
\ee

By the argument in Prop. \ref{A:solv} one may show
$$ \vn{\phi}_{\bar{S}^{\sg}} \ls \vn{\phi[0]}_{H^{\sg} \times H^{\sg-1}} + \vn{\Box_m \phi}_{\bar{N}^{\sg-1}}
$$
Following \cite{KST}, the goal is to generalize this inequality, showing that $ \Box_m $ can be replaced by $  \Box_m^{p,A} $.

Consider the problem
\begin{equation} \label{problem}
\left\{ 
\begin{array}{l}
 \Box_m^{p,A} \phi=F  \\
 \phi[0]=(f,g)
\end{array} 
\right.
\end{equation}
 
\begin{theorem} \label{main:parametrix} Let $ A $ be a real 1-form obeying \eqref{A:cond} on $ \mb{R}^{d+1} $ for $ d \geq 4 $. If $ \vn{A[0]}_{\dot{H}^{\sg} \times \dot{H}^{\sg-1}} $ is sufficiently small, then for any $ F \in \bar{N}^{\sg-1} \cap L^2 H^{\sg-\frac{3}{2}} $ and $ (f,g) \in H^{\sg} \times H^{\sg-1} $, the solution of \eqref{problem} exists globally in time and it satisfies 
\be \label{en:est}
\vn{\phi}_{\bar{S}^{\sg}} \ls \vn{(f,g)}_{H^{\sg} \times H^{\sg-1}} + \vn{F}_{\bar{N}^{\sg-1} \cap L^2 H^{\sg-\frac{3}{2}}}
\ee
\end{theorem}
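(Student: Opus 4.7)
The plan is to adapt the parametrix construction of \cite{KST} from the wave to the Klein-Gordon setting. First I would perform a frequency localization. Because $A = A_{<k-C}$ in the paradifferential operator has output of $\bar P_k \phi$ still localized at frequency $2^k$ (up to tails which can be absorbed into $F$), it suffices to prove the dyadic bound
\[
\vn{\bar P_k \phi}_{\bar S_k} \lesssim \vn{\bar P_k(f,g)}_{H^1 \times L^2} + \vn{\bar P_k F}_{\bar N_k \cap L^2 H^{-\frac12}}
\]
with constants independent of $k \geq 0$, and to sum after multiplying by $2^{(\sigma-1)k}\langle D\rangle$; the $L^2 H^{\sigma - 3/2}$ control of $\Box_m \phi$ in the definition of $\bar S^{\sigma}$ is recovered from that of $F$ together with the $\bar X^{1/2}_\infty$ piece of $\bar S_k$ because the magnetic potential term is paradifferential and low frequency.

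Second, I would decompose $\bar P_k\phi$ into $\pm$ Klein-Gordon half-waves via $\bar Q^{\pm}$-type projections and construct, for each sign, a pseudodifferential renormalization
\[
e^{-i\psi_{\pm}}(t,x,D), \qquad e^{+i\psi_{\pm}}(D,s,y),
\]
where the real phase $\psi_\pm(t,x,\xi)$ is obtained from $A_{<k-C}$ by integration along the Klein-Gordon group-velocity flow $(s, x + s\, \xi/\langle\xi\rangle)$. This is the place where the Klein-Gordon character enters: the characteristic direction is parametrized by $\lambda = |\xi|/\langle\xi\rangle$ rather than being null, so the phase and all adapted tube decompositions must be built in the frames \eqref{frame}--\eqref{frame2} used to define $NE_C^{\pm}$, $PW_C^{\pm}$, and $\bar S_k^{\omega\pm}(l)$. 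The phase is designed so that
\[
\Box_m^{p,A}\, e^{-i\psi_\pm}(t,x,D) = e^{-i\psi_\pm}(t,x,D)\, \Box_m + E_\pm,
\]
where $E_\pm$ is a genuine error mapping $L^2$-type data into $\bar N_k$.

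Third, the bulk of the work is to establish the core renormalization estimates:
(i) left and right quantizations $e^{\pm i\psi_\pm}(t,x,D)$, $e^{\pm i\psi_\pm}(D,s,y)$ are bounded on $L^2_x$, on $\bar S_k$, and on $\bar N_k$; (ii) they approximately commute with the dyadic modulation projections $\bar Q^{\pm}_{<k+2l}$ and with the angular cutoffs $P^{\omega}_l$ and tube cutoffs $P_{\calC_{k'}(l')}$ that appear in the definition of $\bar S_k^{\omega\pm}(l)$; (iii) the conjugation error $E_\pm$ and the frequency-envelope tail errors lie in $\bar N_k$ with norm $\lesssim \vn{A[0]}_{\dot H^\sigma \times \dot H^{\sigma-1}}$ times the $\bar S_k$ norm of $\phi$. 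Once these are in place, the theorem follows by setting
\[
\phi \approx e^{-i\psi_+}(t,x,D)\phi_+^{free} + e^{-i\psi_-}(t,x,D)\phi_-^{free} + e^{-i\psi_\pm}(t,x,D)\,\Box_m^{-1}\bigl(e^{+i\psi_\pm}(D,s,y)F\bigr),
\]
with the right-hand side solved via a Neumann series in the small parameter $\vn{A[0]}_{\dot H^\sigma \times \dot H^{\sigma-1}}$, and combining with the free-wave bound from Proposition \ref{Xembedding} and the Duhamel bound $\bar X^{-1/2}_1 \hookrightarrow \bar N_k$.

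The main obstacle will be (i)--(ii) on the null-frame components $NE_C^{\pm}$, $PW_C^{\pm}$, and the angular-sector Strichartz pieces of $\bar S_k^{\omega\pm}(l)$ (only in $d=4$), together with control of $E_\pm$. These require $TT^\ast$ and stationary-phase arguments in the Klein-Gordon frames \eqref{frame}, which introduce $\lambda$-dependence that was absent in the wave case, and they break down logarithmically at the two borderline regimes: low output frequency $k' \ll 0$, and small-angle interactions where $l$ approaches $-k$ or $k + 2l \approx k'$. To close these borderline cases I would follow the strategy already announced in the introduction: replace pure $L^2 L^\infty$ control of $A$ by the Lorentz-Strichartz norm $L^2 L^{4,2}$ for $\phi$ and the $\Box^{-1}$ embedding of Proposition \ref{Box:Embedding}, specifically the mapping $2^{l/2} T_l^{\omega}: L^1 L^{2,1} \to L^1 L^\infty$, used in conjunction with the O'Neil inequality \eqref{Lorentz:Holder}. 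This converts the logarithmically divergent sum over $k'$ into a single $L^1_t L^\infty_x$ estimate on the tail of the phase $\psi_\pm$, making the renormalization error perturbative in exactly the regime where ordinary $L^\infty$ bounds fail.
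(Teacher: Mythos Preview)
Your overall architecture is right and matches the paper: frequency-localize, build a real phase $\psi_\pm$ adapted to the Klein-Gordon characteristics, establish mapping properties of $e^{\pm i\psi_\pm}$ on $L^2_x$, $\bar S_k$, $\bar N_k$, control the conjugation error, and close by a Neumann-series iteration. In the paper this is organized as a reduction of Theorem \ref{main:parametrix} to the frequency-localized approximate-solution statement (Theorem \ref{corethm}), which in turn rests on the renormalization estimates of Theorem \ref{Renormalization:thm}; your sketch collapses these layers but contains the same ingredients.

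There is, however, a genuine misconception in your final paragraph. You claim that the null-frame components of the renormalization bound ``break down logarithmically'' at low output frequency and at small angles, and you propose to repair this with the $\Box^{-1}$ embedding of Proposition \ref{Box:Embedding} and the Lorentz norm $L^2 L^{4,2}$. That is not where those tools enter. In the paper the $\bar S_k$ bound \eqref{renbd3} --- including the $NE_C^\pm$, $PW_C^\pm$, and angular $L^2 L^\infty$ pieces --- is proved entirely by $TT^\ast$ and stationary-phase arguments (Section \ref{sec:Osc-int}) together with the decomposable calculus for $\psi$ (Section \ref{Constr:phase}); no Lorentz spaces and no $\Box^{-1}$ embedding are used. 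The small-angle singularity of the phase is removed \emph{at the level of its definition} by the angular cutoff $\Pi^\omega_{>\delta(k_1-k)}$ in \eqref{defn2}, not by any endpoint Strichartz trick. The $L^2 L^{4,2}$ norm does sit inside $\bar S_k^{Str}$, but it is handled as an ordinary Keel--Tao Strichartz estimate.

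Proposition \ref{Box:Embedding} and the $L^2 L^{4,2}$/O'Neil machinery belong instead to the \emph{trilinear} estimate (Proposition \ref{trilinear}, specifically \eqref{SmallAngleSmallMod}), where the null-frame $L^2$ bound genuinely fails for Klein-Gordon waves at angles $\lesssim 2^{-k_{\min}}$ and the $k'$ summation would otherwise diverge logarithmically. Mislocating this tool would have you searching for a fix to a nonexistent problem in the parametrix while underestimating the real obstacle in the nonlinear analysis.

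A smaller point: the phase is not obtained by ``integration along the group-velocity flow.'' It is defined by applying $(-\Delta_{\omega^\perp} - \langle\xi\rangle^{-2}(\omega\cdot\nabla_x)^2)^{-1} L_\pm$ to $A\cdot\xi/\langle\xi\rangle$, with the angular cutoff just mentioned; see \eqref{defn1}--\eqref{defn2}. This distinction matters, since it is precisely this structure (and the Coulomb null form, Remark \ref{rk:Col:nf}) that makes the decomposable estimates \eqref{decomp1}--\eqref{decomp4} close.
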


The proof of this theorem will reduce to its frequency localized approximate version. 

\begin{theorem} \label{corethm} 
Let $ A $ be a real 1-form obeying \eqref{A:cond} on $ \mb{R}^{d+1} $ for $ d \geq 4 $ and let $ k \geq 0 $. If $ \vn{A[0]}_{\dot{H}^{\sg} \times \dot{H}^{\sg-1}} $ is sufficiently small, then for any $ (f_k,g_k) $ with Fourier support in $ \{ \jb{\xi} \simeq 2^k \} $ and any $ F_k $ with Fourier support in $ \{ \jb{\xi} \simeq 2^k, \ \vm{\vm{\tau}- \jb{\xi} } \ll 2^k  \} $ there exists a function $ \phi_k $ with Fourier support in $ \{ \jb{\xi} \simeq 2^k, \ \vm{\vm{\tau}- \jb{\xi} } \ll 2^k  \} $ such that
\begin{align} 
& \vn{ (\jb{D_x},\pt_t) \phi_k}_{\bar{S}_k} \lesssim  \vn{(f_k,g_k)}_{H^1 \times L^2}+\vn{F_k}_{\bar{N}_k} \label{core1} =: M_k  \\
& \vn{(\Box_m - 2i A^j_{<k-c} \partial_j) \phi_k -F_k}_{\bar{N}_k} \lesssim \ep^{\frac{1}{2}} M_k  \label{core2} \\
& \vn{(\phi_k(0)-f_k, \partial_t \phi_k(0)-g_k)}_{H^1 \times L^2}   \lesssim \ep^{\frac{1}{2}} M_k. \label{core3} 
\end{align}
\end{theorem}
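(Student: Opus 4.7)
My plan is to prove Theorem \ref{corethm} by adapting the microlocal renormalization parametrix of Krieger-Sterbenz-Tataru \cite{KST} from the wave setting to the Klein-Gordon setting outlined in Section~\ref{Sec_parametrix}. The overall strategy is to construct a pseudodifferential phase $\Psi_\pm(t,x,\xi)$, built linearly from $A$, which approximately conjugates the paradifferential covariant operator $\Box_m - 2iA^j_{<k-C}\pt_j$ to $\Box_m$ on each sheet $\tau \simeq \pm\jb{\xi}$ of the Klein-Gordon characteristic set. The parametrix is then defined by composing the renormalization with the free Klein-Gordon half-wave evolution and a Duhamel term.

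For the phase, I would first split along the signs $\pm$ corresponding to $\tau \simeq \pm\jb{\xi}$ and decompose $A$ angularly at scales that match the KST phase. The symbol $\Psi_\pm(t,x,\xi)$ is obtained by solving a transport equation along the Klein-Gordon group velocity $\pm\xi/\jb{\xi}$ on the mass shell, rather than along the light-cone velocity $\pm\xi/|\xi|$ as in \cite{KST}. This is precisely what makes the adapted frames \eqref{frame}, \eqref{frame2} with parameter $\lambda = |\xi|/\jb{\xi}$, as introduced by Bejenaru-Herr \cite{BH1}, the natural ones. The smallness of $A[0]$ in $\dot H^\sg \times \dot H^{\sg-1}$ translates into $\Psi_\pm = O(\ep)$. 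The parametrix itself is then
\begin{equation*}
\phi_k = \sum_\pm e^{-i\Psi_\pm}(t,x,D)\, \phi_{k,\pm}^{\mathrm{lin}} + \sum_\pm e^{-i\Psi_\pm}(t,x,D)\, \Box_m^{-1,\pm}\, e^{i\bar\Psi_\pm}(D,y,s)\, F_{k,\pm},
\end{equation*}
where $F_{k,\pm} = \bar Q^\pm_{\ll k}\bar P_k F_k$, the function $\phi_{k,\pm}^{\mathrm{lin}}$ is the free KG half-wave evolution of the renormalized data $e^{i\bar\Psi_\pm}(0,y,D)(f_k,g_k)_\pm$, and $\Box_m^{-1,\pm}$ denotes the forward Duhamel inverse restricted to the $\pm$ sheet. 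A mild outer frequency truncation guarantees the Fourier support of $\phi_k$.

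Granted this setup, the three conclusions reduce to three linear estimates. First, the mapping bound \eqref{core1} follows from the renormalization bounds $e^{\pm i\Psi_\pm}(t,x,D): \bar S_k \to \bar S_k$ and $e^{\pm i\bar\Psi_\pm}(D,y,s): \bar N_k \to \bar N_k$, composed with the dyadic solvability estimate of Proposition~\ref{A:solv} (the free part handled by the embedding in Proposition~\ref{Xembedding}, and the Duhamel part by the corresponding $\Box_m^{-1,\pm}:\bar N_k \to \bar S_k$ bound at fixed frequency). Second, the approximate equation \eqref{core2} follows from the symbol-calculus identity
\begin{equation*}
\bigl(\Box_m - 2iA^j_{<k-C}\pt_j\bigr)\, e^{-i\Psi_\pm}(t,x,D) = e^{-i\Psi_\pm}(t,x,D)\, \Box_m + E_\pm,
\end{equation*}
where $E_\pm$ is bounded from $\bar S_k$ into $\bar N_k$ by $\ep^{1/2}$: the principal part cancels by construction of $\Psi_\pm$ on the mass shell, and the subleading terms (involving $\pt_{t,x}\Psi_\pm$, $\pt_\xi \Psi_\pm$ and $|\Psi_\pm|^2$) are handled using the null structure present in $A^j\pt_j$ together with the smallness of $A[0]$. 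Third, \eqref{core3} follows from the identity $e^{-i\Psi_\pm}(0,x,D)\, e^{i\bar\Psi_\pm}(0,y,D) = I + O_{H^1\times L^2}(\ep^{1/2})$, since both quantizations agree up to subleading pseudodifferential terms that are controlled by the $O(\ep)$ bounds on $\Psi_\pm$.

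The main obstacle will be verifying the renormalization bound $e^{\pm i\Psi_\pm}(t,x,D): \bar S_k \to \bar S_k$. This requires controlling each component of the norm \eqref{highfreqSp}: the Klein-Gordon Strichartz pieces $\bar S_k^{Str}$ (including the Lorentz-space norm $L^2 L^{4,2}$ when $d=4$), the $\bar X^{1/2}_\infty$ norm, and, most delicately, the angular null-frame atoms $NE_C^\pm$ and $PW_C^\pm$ built from the frames \eqref{frame}. The null-frame pieces will require showing that conjugation by $e^{\pm i\Psi_\pm}$ nearly preserves the sector projections $\Pi^\omega_{<\theta}$ and the Klein-Gordon-adapted frame at velocity $\lambda = |\xi|/\jb{\xi}$, so that renormalized atoms remain atoms modulo an $\ep^{1/2}$ error. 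Such bounds reduce, via $TT^*$ and stationary-phase arguments (as alluded to by Corollaries~\ref{Cornullframe}, \ref{corPW}, \ref{Cor:L2Linf}), to analogous estimates for the renormalized free half-wave $e^{-i\Psi_\pm}(t,x,D)e^{\pm it\jb{D}}$. The analysis follows the lines of \cite{KST}, tracking carefully the Klein-Gordon modifications: $\jb{D}$ in place of $|D|$, the group velocity $\xi/\jb{\xi}$ in place of $\xi/|\xi|$, and mass-shell transport in place of light-cone transport. Once this linear renormalization estimate is established, \eqref{core2} and \eqref{core3} follow from direct pseudodifferential computations and \eqref{core1} by composition.
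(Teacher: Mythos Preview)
Your approach is essentially that of the paper: the parametrix \eqref{eq:renorm} built from the renormalization operators $e^{\pm i\psi^k_\pm}_{<k}$, with the three conclusions reduced to the mapping properties collected in Theorem~\ref{Renormalization:thm}.

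One formulation needs correction. You state the key bound as an operator estimate $e^{\pm i\Psi_\pm}(t,x,D):\bar S_k \to \bar S_k$, and then compose it with a separate $\Box_m^{-1,\pm}:\bar N_k\to\bar S_k$ step. The paper does not prove (and one should not expect) a bare $\bar S_k\to\bar S_k$ bound for the renormalization operator: the null-frame components $NE_C^\pm$, $PW_C^\pm$ of $\bar S_k$ are only accessed through $TT^*$ and stationary-phase arguments that rely on the explicit free-wave/Duhamel structure of the input. What the paper proves instead is \eqref{renbd3},
\[
2^k\|e^{-i\psi^k_\pm}_{<k}(t,x,D)u_k\|_{\bar S_k}\ \lesssim\ \|u_k[0]\|_{H^1\times L^2}+\|\Box_m u_k\|_{\bar N_k},
\]
which combines your steps 2 and 3 into one and is established precisely by the reduction you describe (Lemma~\ref{waves}) to the renormalized free half-wave $e^{-i\psi}_{<k}(t,x,D)e^{\pm it\jb{D}}$. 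Since in your construction $e^{-i\Psi_\pm}$ is only applied to $\phi^{\mathrm{lin}}_{k,\pm}$ and to $\Box_m^{-1,\pm}e^{i\bar\Psi_\pm}F_{k,\pm}$, both of which have $\Box_m$ controlled in $\bar N_k$, replacing your claimed $\bar S_k\to\bar S_k$ mapping by \eqref{renbd3} repairs the argument without further change. Similarly, for \eqref{core2} the conjugation error is not estimated as a map $\bar S_k\to\bar N_k$ but rather, as in \eqref{conj}, in terms of $\|u_k\|_{L^\infty H^1}$ and $\|(i\pt_t\pm\jb{D})u_k\|_{\bar N_k}$; the $\ep^{1/2}$ in \eqref{core2} comes not from the conjugation error itself (which is $O(\ep)$) but from the additional terms \eqref{cj:ln3}--\eqref{cj:ln5} handled via \eqref{renbd2}, \eqref{renbd4}.
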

\

The approximate solution will be defined by $ 2 \phi_k=T^{+}+T^{-}+ S^{+}+S^{-} $ where 
\be \label{eq:renorm}
\begin{aligned}
& T^{\pm} \defeq e^{-i \psi^k_{\pm}}_{<k}(t,x,D)  \frac{e^{\pm i t \jb{D}}}{i \jb{D}} e^{i \psi^k_{\pm}}_{<k}(D,y,0) (i \jb{D} f_k \pm  g_k) \\
& S^{\pm} \defeq \pm e^{-i \psi^k_{\pm}}_{<k}(t,x,D) \frac{K^{\pm}}{i \jb{D}}  e^{i \psi^k_{\pm}}_{<k}(D,y,s) F_k,
\end{aligned}
\ee
The phase $ \psi^k_{\pm}(t,x,\xi) $ is defined in Section \ref{Constr:phase} and $ K^{\pm} F $ are the Duhamel terms 
$$ K^{\pm} F(t)=u(t)=\int_0^t e^{\pm i (t-s) \jb{D}} F(s) \dd s, \qquad (\partial_t \mp i \jb{D})u=F, \quad u(0)=0. $$

To implement this one needs estimates for the operators $ e^{-i \psi^k_{\pm}}_{<k}(t,x,D) $ and their adjoints, adapted to the function spaces used in the iteration.

\begin{theorem} \label{Renormalization:thm} For any $ k \geq 0 $, the frequency localized renormalization operators have the following properties for any $ X \in \{ \bar{N}_k,L^2_x,\bar{N}^{*}_k \} $:
\begin{align}
\label{renbd} e_{<k}^{\pm' i \psi^k_{\pm}} (t,x,D) & : X \to X \\
\label{renbdt}  2^{-k} \pt_{t,x} e_{<k}^{\pm' i \psi^k_{\pm}} (t,x,D) & : X \to  \ep X \\
\label{renbd2} e_{<k}^{-i \psi^k_{\pm}} (t,x,D) e_{<k}^{i \psi^k_{\pm}} (D,y,s)-I & : X \to \ep^{\frac{1}{2}} X 
\end{align}
as well as 
\be \label{renbd3}
 2^k \vn{e_{<k}^{-i \psi^k_{\pm}} (t,x,D) u_k}_{\bar{S}_k} \lesssim \vn{u_k}_{L^{\infty}(H^1 \times L^2)}+ \vn{\Box_m u_k}_{\bar{N}_k} 
\ee
\be \label{conj} 
\begin{aligned}
& \vn{e_{<k}^{-i \psi^k_{\pm}} (t,x,D) \Box_m u_k - \Box_{m}^{A_{<k}} e_{<k}^{-i \psi^k_{\pm}} (t,x,D) u_k}_{\bar{N}_{k}} \ls \\ 
& \qquad \qquad \qquad \qquad \qquad \qquad \qquad \ep \vn{ u_k}_{L^{\infty} H^1}+ \ep 2^k \vn{(i\pt_t\pm \jb{D})u_k}_{\bar{N}_k} 
\end{aligned} \ee
\end{theorem}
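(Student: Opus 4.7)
The proof breaks into the five estimates \eqref{renbd}--\eqref{conj}, and all of them rest on pseudodifferential symbol bounds for $\psi^k_{\pm}(t,x,\xi)$. My starting point is to record these bounds from the construction in Section~\ref{Constr:phase}: the smallness $\vn{A[0]}_{\dot{H}^{\sg} \times \dot{H}^{\sg-1}} < \ep$ gives that $\psi$ lies in a suitable symbol class of size $O(\ep)$ after appropriate frequency/angular summation, that $\pt_{t,x}\psi$ gains a factor of $2^{k}\ep$, and---crucially---that since $A_{<k-C}$ solves $\Box A = 0$ with the Coulomb condition, the symbol $\psi$ has modulation support concentrated near the light cone, so that modulation cutoffs can be commuted through $e_{<k}^{-i \psi^k_\pm}(t,x,D)$ with errors that decay in the modulation separation.

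Given these symbol bounds, \eqref{renbd} on $L^2_x$ is a $TT^*$/Schur argument: the kernel of $e_{<k}^{-i \psi^k_\pm}(t,x,D)\bigl[e_{<k}^{-i \psi^k_\pm}(t,x,D)\bigr]^*$ is an oscillatory integral for which Proposition~\ref{stationary} gives rapid off-diagonal decay with uniformly bounded total kernel mass. The $L^1L^2$ bound is then immediate; the $\bar{X}_1^{-1/2}$ piece of $\bar{N}_k$ (and its dual $\bar{X}_\infty^{1/2}$) is handled by inserting $\bar{Q}_j$ cutoffs and using the cone concentration of $\psi$. The derivative bound \eqref{renbdt} follows from the improved estimate on $\pt_{t,x}\psi$. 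For \eqref{renbd2}, I would compose the two quantizations and analyze the resulting oscillatory kernel: Taylor-expanding $\psi(t,x,\xi)-\psi(s,y,\xi)$ around $t=s$, $x=y$ shows the composition is the identity at leading order, with remainder bounded by $\ep^{1/2}$ (the square root absorbing the loss inherent in composing two symbols of size $\ep$ via stationary phase, as in \cite{KST}).

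For the renormalized energy-Strichartz bound \eqref{renbd3}, I would split $u_k = u_k^{+} + u_k^{-}$ by sign of the characteristic and write each $u_k^{\pm}$ as the free Klein--Gordon evolution of its Cauchy data plus the Duhamel integral of $(i\pt_t \mp \jb{D}) u_k^{\pm}$, with the latter controlled by $\vn{u_k}_{L^{\infty}(H^1\times L^2)} + \vn{\Box_m u_k}_{\bar{N}_k}$. It then suffices to bound $e_{<k}^{-i \psi^k_\pm}(t,x,D)$ applied to a free KG wave in each of the norms defining $\bar{S}_k$, which by Corollaries~\ref{Cornullframe}, \ref{corPW}, and~\ref{Cor:L2Linf} reduces to $TT^*$/stationary-phase analysis of the renormalized half-wave propagator $e_{<k}^{-i \psi^k_\pm}(t,x,D) e^{\pm it\jb{D}}$. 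Finally, the conjugation identity \eqref{conj} follows by expanding
\begin{equation*}
\bigl[\Box_m,\, e_{<k}^{-i \psi^k_\pm}(t,x,D)\bigr] u_k = e_{<k}^{-i \psi^k_\pm}(t,x,D)\bigl(-2i(\pt^\al \psi^k_\pm)\pt_\al - i\Box \psi^k_\pm - (\pt^\al \psi^k_\pm)(\pt_\al \psi^k_\pm)\bigr) u_k + \mathcal{E}_k,
\end{equation*}
where $\mathcal{E}_k$ collects symbol-calculus errors, and matching the $-2i(\pt^{j} \psi^k_\pm)\pt_j$ term against $-2i A^{j}_{<k}\pt_j$ via the defining relation for $\psi^k_\pm$; the quadratic term $(\pt^\al\psi)(\pt_\al\psi)$ is $O(\ep^2)$-perturbative, the $\Box \psi$ term is small because $\Box A = 0$, and the Coulomb condition $\pt^{j} A_j = 0$ ensures the relevant pairings close without logarithmic losses.

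The main obstacle I anticipate is \eqref{renbd3}, and within it the null-frame $NE_C^{\pm}$ and $PW_C^{\pm}$ bounds for $d=4$. These require sharp stationary-phase estimates for the renormalized KG half-wave in the adapted frames \eqref{frame}, \eqref{frame2} of parameter $\lmd$ tuned to the KG characteristic, whereas the phase $\psi^k_\pm$ is built from the wave characteristic through its dependence on the free-wave $A$; the symmetry that simplified the argument in the massless case \cite{KST} is therefore lost and must be restored by a delicate angular analysis. At low frequencies and at very small angles one additionally runs into the logarithmic divergences noted in the introduction, where the $L^1L^\infty$ embedding of Proposition~\ref{Box:Embedding} together with the Lorentz Strichartz spaces in \eqref{Str:KG} appears to be the only way to close the estimate.
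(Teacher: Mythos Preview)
Your overall architecture for \eqref{renbd}--\eqref{renbd3} matches the paper's: $TT^*$/Schur for $L^2_x$, reduction to free Klein--Gordon waves for the $\bar{S}_k$ bound via the corollaries you cite, and a modulation decomposition for the $\bar{N}_k$, $\bar{N}_k^*$ spaces. One ingredient you do not mention is the Moser-type lemma (Lemma~\ref{Nspacelemma}) that drives the $\bar{N}_k$ case: to handle an $\bar{X}_1^{-1/2}$ atom one needs $2^{j/2}\vn{\bar{Q}_j e^{\pm i\psi}_{k'}\bar{P}_k G}_{L^2_{t,x}}\ls \ep\,2^{\delta(j-k')}\vn{G}_{\bar{N}_k^*}$, which is proved by Taylor-expanding $e^{i\psi}$ and using the decomposable calculus together with the geometry of Lemma~\ref{geom:cone}. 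Your phrase ``cone concentration of $\psi$'' gestures at this but is not a substitute for the expansion.

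Your treatment of \eqref{conj} has a genuine gap. You write the commutator as $e^{-i\psi}(-2i(\pt^{\al}\psi)\pt_{\al}-\dots)$ and propose to match the spatial piece $-2i(\pt^{j}\psi)\pt_j$ against $-2iA^{j}_{<k}\pt_j$ ``via the defining relation for $\psi$''. But the defining relation is \emph{not} $\pt_j\psi\approx A_j$; it is the directional identity $L_{\mp}\psi_\pm = A\cdot\xi/\jb{\xi}$ (up to the small-angle cutoff). At the symbol level the commutator produces $-2(\pt_t e^{-i\psi})\pt_t + 2i(\nabla e^{-i\psi})\cdot\xi$, and the time-derivative term cannot be treated perturbatively: one must split $\pt_t = -i(i\pt_t\pm\jb{\xi}) \pm i\jb{\xi}$, pulling out the factor $(i\pt_t\pm\jb{\xi})$ acting on $u_k$ (this is the paper's $F_3$ term and is why the right-hand side of \eqref{conj} involves $\vn{(i\pt_t\pm\jb{D})u_k}_{\bar{N}_k}$). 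What remains is $[(\pm\jb{\xi}\pt_t\psi - \xi\cdot\nabla\psi + A_{<k}\cdot\xi)e^{-i\psi}]_{<k}$, which by the construction of $\psi$ reduces to the small-angle remainder $\sum_{k_1<k-c}\Pi^{\omega}_{\leq\delta(k_1-k)}A_{k_1}\cdot\xi$ (the paper's $F_0$). Without extracting the half-wave factor first, the time-derivative contribution is of size $2^k\ep$ in $\bar{N}_k$ and does not cancel.

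Finally, your last paragraph misplaces the difficulty. Proposition~\ref{Box:Embedding} and the Lorentz spaces in \eqref{Str:KG} are used in the trilinear estimate (Proposition~\ref{trilinear}, specifically \eqref{SmallAngleSmallMod}), not in the proof of \eqref{renbd3}. The $\bar{S}_k$ bound is handled entirely by the oscillatory-integral estimates of Section~\ref{sec:Osc-int}; there is no logarithmic divergence in the parametrix bounds themselves.
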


Moreover, by \eqref{renbd2} and \eqref{spec_decomp2} one obtains
\be \label{renbd4}
e^{-i \psi^k_{\pm}}_{<k}(t,x,D) \frac{1}{\jb{D}} e^{i \psi^k_{\pm}}_{<k}(D,y,s) - \frac{1}{\jb{D}} : X \to \ep^{\frac{1}{2}} 2^{-k} X
\ee

The proof of Theorem \ref{Renormalization:thm} is given in section \ref{sec:pf:thm:ren}, relying on the contents of sections \ref{Constr:phase}, \ref{sec:Osc-int}. Now we show how these mappings imply Theorems \ref{main:parametrix}, \ref{corethm}.

\begin{proof}[Proof of Theorem \ref{main:parametrix}]
\pfstep{Step~1} We first look to define an approximate solution $ \phi^a=\phi^a[f,g,F] $ satisfying, for some $ \delta \in (0,1) $:
\be \label{aprr1} \vn{\Box_m^{p,A} \phi^a-F}_{\bar{N}^{\sg-1} \cap L^2 H^{\sg-\frac{3}{2}}}+\vn{\phi^a[0]-(f,g)}_{H^{\sg} \times H^{\sg-1}} \leq \delta \big[ \vn{F}_{\bar{N}^{\sg-1} \cap L^2 H^{\sg-\frac{3}{2}}}+\vn{(f,g)}_{H^{\sg} \times H^{\sg-1}} \big]
\ee
and
\be \label{aprr2}
\vn{\phi^a}_{\bar{S}^{\sg}} \ls \vn{F}_{\bar{N}^{\sg-1} \cap L^2 H^{\sg-\frac{3}{2}}}+\vn{(f,g)}_{H^{\sg} \times H^{\sg-1}}.
\ee
We define $ \phi^a $ from its frequency-localized versions 
$$ \phi^a \defeq \sum_{k \geq 0} \phi^a_k, \qquad  \phi^a_k=\phi^1_k+\phi^2_k $$
which remain to be defined. We decompose $ \bar{P}_k F=\bar{Q}_{<k-6} \bar{P}_k F+ \bar{Q}_{>k-6}\bar{P}_k F $ and first define $ \phi^2_k $ by
$$
\calF \phi^2_k(\tau,\xi) \defeq \frac{1}{-\tau^2+\vm{\xi}^2+1} \calF(\bar{Q}_{>k-6}\bar{P}_k F)  (\tau,\xi)
$$
so that $ \Box_m \phi^2_k=\bar{Q}_{>k-6}\bar{P}_k F $. We have
$$ \vn{ (\jb{D_x},\pt_t) \phi^2_k}_{\bar{S}_k } \ls \vn{\phi^2_k}_{L^{\infty}(H^1 \times L^2)}+\vn{\bar{Q}_{>k-6}\bar{P}_k F}_{\bar{N}_k} \ls \vn{\bar{P}_k F}_{\bar{N}_k}. $$

Then we apply Theorem \ref{corethm} to $ \bar{Q}_{<k-6} \bar{P}_k F $ and $ \bar{P}_k(f,g)-\phi^2_k[0] $ which defines the function $ \phi^1_k $. We are left with estimating 
$$ \vn{A^j_{<k-C} \pt_j  \phi^2_k}_{L^1 L^2 \cap L^2 H^{-\frac{1}{2}}} \ls \vn{A^j_{<k-C}}_{L^2 L^{\infty}} \vn{\nabla  \phi^2_k}_{L^2_{t,x}\cap L^{\infty} H^{-\frac{1}{2}}  } \ls  \ep \vn{\bar{P}_k F}_{\bar{N}_k}   $$
and similarly, using also Lemma \ref{Sobolev_lemma},
\begin{align*}
 2^{-\frac{1}{2}k} \vn{\Box_m \phi^1_k}_{L^2_{t,x}} & \ls \vn{\Box_{m}^{A_{<k}} \phi^1_k-\bar{Q}_{<k-6} \bar{P}_k F}_{\bar{N}_k}+\vn{\bar{Q}_{<k-6} \bar{P}_k F}_{\bar{N}_k}+ \vn{A^j_{<k-C} \pt_j  \phi^1_k}_{L^2 H^{-\frac{1}{2}}}  \\
 & \ls \vn{\bar{P}_k F}_{\bar{N}_k} + \vn{\bar{P}_k(f,g)}_{H^1 \times L^2}
\end{align*}
The following error term, for $ k',k''=k \pm O(1) $, follows from \eqref{est:phi1:freqAx}, \eqref{est:phi6}
$$
\vn{A^j_{k'} \pt_j \bar{P}_{k''} \phi^a_k}_{\bar{N}_k \cap L^2 H^{-\frac{1}{2}}} \ls \ep \vn{\phi^a_k}_{\bar{S}^1_k}
$$

\pfstep{Step~2} Now we iterate the approximate solutions from Step 1 to construct an exact solution. We define $ \phi \defeq \lim \phi^{\leq n} $ where 
$$ \phi^{\leq n} \defeq \phi^1 + \dots + \phi^n $$
and the $ \phi^n $ are defined inductively by $ \phi^1 \defeq \phi^a[f,g,F] $ and 
$$ \phi^n \defeq \phi^a[(f,g)-\phi^{\leq n-1}[0],F-\Box_m^{p,A} \phi^{\leq n-1} ] $$
Normalizing $\vn{F}_{\bar{N}^{\sg-1} \cap L^2 H^{\sg-\frac{3}{2}}}+\vn{(f,g)}_{H^{\sg} \times H^{\sg-1}} =1 $ it follows by induction using \eqref{aprr1}, \eqref{aprr2} that  
\be \label{aprr3} \vn{\Box_m^{p,A} \phi^{\leq n}-F}_{\bar{N}^{\sg-1} \cap L^2 H^{\sg-\frac{3}{2}}}+\vn{\phi^{\leq n}[0]-(f,g)}_{H^{\sg} \times H^{\sg-1}} \leq \delta^n \ee
and
\be \label{aprr4}
\vn{\phi^{n}}_{\bar{S}^{\sg}} \ls \delta^{n-1}.
\ee
Thus $ \phi^{\leq n} $ is a Cauchy sequence in $ \bar{S}^{\sg} $ and $ \phi $ is well-defined, satisfying \eqref{en:est}. Passing to the limit in \eqref{aprr3} we see that $ \phi $ solves \eqref{problem}. 
\end{proof}

\begin{remark} \label{fe:par} The argument above also implies a frequency envelope version of \eqref{en:est}, which will be useful in proving continuous dependence on the initial data   :
\be
\vn{\phi}_{\bar{S}^{\sg}_c} \ls \vn{(f,g)}_{H^{\sg}_c \times H^{\sg-1}_c} + \vn{F}_{(\bar{N}^{\sg-1} \cap L^2 H^{\sg-\frac{3}{2}})_c}
\ee
\end{remark}

\begin{proof}[Proof of Theorem \ref{corethm}]
We define $ \phi_k $ by
$$  \phi_k=\frac{1}{2} \big( T^{+}+T^{-}+ S^{+}+S^{-} \big) $$
where $ T^{\pm}, S^{\pm} $ are defined by \eqref{eq:renorm}.

The bound \eqref{core1} follows from \eqref{renbd3} and \eqref{renbd}, where for $ \pt_t \phi_k $ we use the low modulation support of $ \phi_k $. 
We turn to \eqref{core3} and write
$$ \phi_k(0)-f_k =\frac{1}{2i} \sum_{\pm} [ e^{-i \psi^k_{\pm}}_{<k}(0,x,D) \frac{1}{\jb{D}} e^{i \psi^k_{\pm}}_{<k}(D,y,0) - \frac{1}{\jb{D}} ] (i \jb{D} f_k \pm  g_k)   $$
\begin{align*}  \partial_t \phi_k(0)-g_k = \frac{1}{2} \sum_{\pm} \biggr[ & 
[ e^{-i \psi^k_{\pm}}_{<k}(0,x,D)  e^{i \psi^k_{\pm}}_{<k}(D,y,0)-I ] (\pm i \jb{D} f_k +  g_k) \\
&+ [\pt_t e^{-i \psi^k_{\pm}}_{<k}] (0,x,D) \frac{1}{i \jb{D}} e^{i \psi^k_{\pm}}_{<k}(D,y,0)  (i \jb{D} f_k \pm  g_k)\\
& \pm [ e^{-i \psi^k_{\pm}}_{<k}(0,x,D) \frac{1}{i \jb{D}} e^{i \psi^k_{\pm}}_{<k}(D,y,0)- \frac{1}{i \jb{D}}  ] F_k(0)   \biggr]
\end{align*}

These are estimated using \eqref{renbd4}, \eqref{renbd2},  \eqref{renbd}, respectively \eqref{renbd4}, together with 
$$ \vn{F_k(0)}_{L^2_x} \ls \vn{F_k}_{L^{\infty}L^2} \ls 2^k \vn{F_k}_{\bar{N}_k} $$
which follows from Lemma \ref{Sobolev_lemma} considering the modulation assumption on $ F_k $.

Now we prove \eqref{core2}. We write 
\begin{align} 
 \Box_{m}^{A_{<k}} \phi_k -F_k= & \sum_{\pm} \big[ [ \Box_{m}^{A_{<k}} e_{<k}^{-i \psi^k_{\pm}} (t,x,D) - e_{<k}^{-i \psi^k_{\pm}} (t,x,D) \Box_m ] \phi_{\pm} \label{cj:ln1}  \\
&  \pm \frac{1}{2} e^{-i \psi^k_{\pm}}_{<k}(t,x,D) \frac{\pt_t \pm i \jb{D}}{i \jb{D}}  e^{i \psi^k_{\pm}}_{<k}(D,y,s) F_k \big] -F_k. \label{cj:ln2}
\end{align}
where
$$
\phi_{\pm} \defeq \frac{1}{2i \jb{D}} \big[ e^{\pm i t \jb{D}} e^{i \psi^k_{\pm}}_{<k}(D,y,0)  (i \jb{D} f_k \pm  g_k)   \pm K^{\pm} e^{i \psi^k_{\pm}}_{<k}(D,y,s) F_k  \big]
$$
Using \eqref{conj} we estimate
$$ \vn{\eqref{cj:ln1}}_{\bar{N}_k} \ls \sum_{\pm} \ep [ \vn{e^{i \psi^k_{\pm}}_{<k}(D,y,0)  (i \jb{D} f_k \pm  g_k) }_{L^2}+ \vn{e^{i \psi^k_{\pm}}_{<k}(D,y,s) F_k }_{\bar{N}_k} ]
$$ 
and then we use \eqref{renbd}. Now we turn to \eqref{cj:ln2} and write
\begin{align}
\eqref{cj:ln2}= \sum_{\pm} & \frac{1}{2} \biggr[  [e^{-i \psi^k_{\pm}}_{<k}(t,x,D) e^{i \psi^k_{\pm}}_{<k}(D,y,s)-I] F_k \label{cj:ln3} \\ 
& \pm i^{-1} [ e^{-i \psi^k_{\pm}}_{<k}(t,x,D) \frac{1}{ \jb{D}}  e^{i \psi^k_{\pm}}_{<k}(D,y,s)-\frac{1}{ \jb{D}}] \pt_t F_k \label{cj:ln4} \\
& \pm e^{-i \psi^k_{\pm}}_{<k}(t,x,D) \frac{1}{i \jb{D}} [\pt_t e^{i \psi^k_{\pm}}_{<k}](D,y,s) F_k \biggr]. \label{cj:ln5}
\end{align}
For \eqref{cj:ln3} we use \eqref{renbd2}, for \eqref{cj:ln4} we use  \eqref{renbd4}, and for \eqref{cj:ln5} we use \eqref{renbd}, \eqref{renbdt}, all with $ X=\bar{N}_k $.
\end{proof}

\section{Statements of the main estimates} \label{Sec_statements}

To analyze the equation for $ A $ we introduce the main terms
\be  \label{A:bil:op}
\begin{aligned} 
	\bfA_{x} ( \phi^{1},  \phi^{2}) :=& - \Box^{-1} \calP_{j} \mathfrak{I}  (\phi^1 \nabla_x \bar{\phi^2}) , \\
	\bfA_{0} ( \phi^{1}, \phi^{2})  :=& - \Delta^{-1}  \mathfrak{I}  (\phi^1 \partial_t \bar{\phi^2}).
\end{aligned}
\ee
where here $ \Box^{-1} f$  denotes the solution $\phi$ to the inhomogeneous wave equation $\Box \phi = f$ with $\phi[0] = 0$. Using the formula for $ \calP_j $ one identifies the null structure (see \eqref{cl:nf})
\be \label{ax:nf:identity}
\calP_{j}  (\phi^1 \nabla_x \phi^2)=\Delta^{-1} \nabla^i \calN_{ij} (\phi^1,\phi^2).
\ee

\begin{remark} \label{ax:skew-adj}
Note that \eqref{ax:nf:identity} shows that $ \calP_{j}  (\phi^1 \nabla_x \phi^2) $ is a skew adjoint bilinear form.
\end{remark}

\

\begin{proposition} \label{prop:ax:est} One has the following estimates:
\begin{align} 
&  \vn{\calP_{j}  (\phi^1 \nabla_x \phi^2)}_{\ell^1 N^{\sg-1}}   \ls \vn{\phi^1}_{\bar{S}^{\sg}} \vn{\phi^2}_{\bar{S}^{\sg}} \label{est:ax1}   \\
& \vn{\phi^1 \nabla_{t,x} \phi^2}_{\ell^1  ( L^2 \dot{H}^{\sg-\frac{3}{2}} \cap L^{\infty} \dot{H}^{\sg-2} ) }   \ls \vn{\phi^1}_{\bar{S}^{\sg}} \vn{\phi^2}_{\bar{S}^{\sg}} \label{est:a01}   \\
& \vn{\phi^1 \phi^2 A}_{\ell^1 ( L^1 \dot{H}^{\sg-1} \cap L^2 \dot{H}^{\sg-\frac{3}{2}} \cap L^{\infty} \dot{H}^{\sg-2} )}   \ls   \vn{\phi^1}_{\bar{S}^{\sg}} \vn{\phi^2}_{\bar{S}^{\sg}} \vn{A}_{S^{\sg} \times Y^{\sg}} \label{est:ax2} 
\end{align}
\end{proposition}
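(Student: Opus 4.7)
The plan is to prove each of the three estimates by Littlewood--Paley decomposition of every factor, followed by a case analysis according to the relative sizes of the input and output frequencies. Writing $\phi^i_{k_i} = \bar P_{k_i} \phi^i$ and letting $k'$ denote the output frequency, the dyadic sum is split into high-high-to-low ($k' \prec k_1 \simeq k_2$), high-low ($k' \simeq \max(k_1,k_2) \succ \min(k_1,k_2)$), and comparable ($k' \simeq k_1 \simeq k_2$) interactions. Within each regime we further decompose modulations via $\bar Q_j, Q_{j}$ and, when needed, angular sector projections $P_\ell^\omega$, and apply the appropriate building-block bilinear $L^2_{t,x}$ or $L^1 L^2$ estimates coming from the definition of $\bar S_k$.

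For \eqref{est:ax1} the starting point is the null-form identity \eqref{ax:nf:identity}, which replaces $\calP_j(\phi^1 \nabla_x \phi^2)$ with $\Delta^{-1}\nabla^i \calN_{ij}(\phi^1,\phi^2)$. The classical null form $\calN_{ij}$ yields a factor $\simeq 2^\ell$ where $2^\ell \simeq \angle(\xi_1,\xi_2)$, and the modulation trichotomy forces $j_{\max} \geq k'+2\ell-O(1)$ in low-modulation regions. Pairing these two gains and placing the factor of highest modulation in $\bar X^{\mp 1/2}_1$, or the output in $X_1^{-1/2}$ (when the high modulation is on the output side) or $L^1L^2$ (in large-angle regimes), then summing in $\ell$, $k_1, k_2$ and angular caps $\omega$ via the $\ell^2$ structure of $\bar S^\sigma$ gives the bound. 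For $d \geq 5$ this is essentially all that is needed; for $d=4$, in the small-angle and diagonal regimes plain Strichartz is insufficient and we must invoke Proposition~\ref{L2:nullFrames} together with the null-frame components $NE_C^\pm, PW_C^\pm$ of $\bar S_k^{\omega \pm}(l)$, as well as the Lorentz-space Strichartz norm $L^2 L^{4,2}$ appearing in \eqref{Str:KG} together with the embedding \eqref{L1Linf:emb} whenever a logarithmic divergence in $k_1 \simeq k_2$ threatens the $\ell^1$ summation over $k'$.

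The estimate \eqref{est:a01} is strictly easier since no null cancellation is required: the low-regularity target norm $L^\infty \dot H^{\sg-2}$ is achieved by Hölder in $L^\infty L^2 \times L^\infty L^2$ and Bernstein at the output frequency, while $L^2\dot H^{\sg-3/2}$ follows by pairing one factor in an $L^2 L^r$ Strichartz norm from $S^{Str,W}_k$ or $\bar S^{Str}_k$ against the other in energy. For \eqref{est:ax2} we use a two-step reduction: first estimate $\phi^1 \phi^2$ (or $\phi^1 \nabla_{t,x}\phi^2$) via the method of \eqref{est:a01}, then multiply by $A \in S^\sigma$ or $A_0 \in Y^\sigma$. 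A further Littlewood--Paley trichotomy on the product $(\phi^1\phi^2)\cdot A$ reduces matters to Hölder-type estimates $L^2L^\infty \cdot L^\infty L^2 \cdot L^2 L^\infty$ (or variants) in which two of the three factors are placed in Strichartz norms controlled by $\bar S^\sigma$, $S^\sigma$, and the third in an energy-type norm, with the elliptic piece $A_0$ handled by the $L^\infty \dot H^{\sg-1}$ component of $Y^\sigma$.

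The main obstacle is \eqref{est:ax1} in the $d=4$ case, specifically the high-high-to-low interaction with small angular separation, where ordinary Strichartz and $X^{s,b}$ bounds both fail at the critical regularity and only the sharp null-frame $L^2_{t,x}$ bound of Proposition~\ref{L2:nullFrames} closes the estimate. Managing the $\ell^1$ summation over the output frequency $k'$ without a logarithmic loss, once all the angular and modulation parameters are summed, is the tightest point and is precisely the reason the full null-frame function-space apparatus $NE_C^\pm, PW_C^\pm, L^2 L^{4,2}$ has been built; the remaining estimates \eqref{est:a01} and \eqref{est:ax2} will fall out as comparatively soft consequences of Strichartz, Bernstein, and the bilinear estimate \eqref{est:ax1}.
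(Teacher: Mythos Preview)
Your outline for \eqref{est:a01} and \eqref{est:ax2} is essentially correct and matches the paper: H\"older plus Bernstein (for $L^\infty \dot H^{\sg-2}$), and H\"older with one factor in a Strichartz norm (for $L^2\dot H^{\sg-3/2}$), with \eqref{est:ax2} reduced to \eqref{est:cubic:auxx}-type estimates.

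However, you have misidentified the difficulty in \eqref{est:ax1}. The paper's proof does \emph{not} use the null-frame norms $NE_C^\pm$, $PW_C^\pm$, nor the Lorentz norm $L^2L^{4,2}$, nor the embedding \eqref{L1Linf:emb}. Those tools are reserved for the genuinely harder \emph{trilinear} estimate \eqref{est:trilin}; the bilinear estimate \eqref{est:ax1} closes with more elementary ingredients. Specifically, the paper proves the dyadic bound
\[
\vn{P_{k'} \calP_{j}  (\phi^1_{k_1} \nabla_x \phi^2_{k_2})}_{N_{k'}^{\sg-1}  } \ls 2^{\frac{1}{2}(k_{\min} - k_{\max})} \vn{\phi^1_{k_1}}_{\bar{S}^{\sg}_{k_1}} \vn{\phi^2_{k_2}}_{\bar{S}^{\sg}_{k_2}},
\]
and the exponential gain $2^{\frac12(k_{\min}-k_{\max})}$ handles the $\ell^1$ summation with no logarithmic issue. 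The high-modulation part uses only Lemma~\ref{lem:ellip} (i.e.\ H\"older against the square-summed $L^2L^\infty$ norm \eqref{fe-L2Linfty} over cubes $\calC_{k_{\min}}$). The low-modulation part uses Proposition~\ref{prop:nf}, which packages the $\calN_{ij}$ null-form gain $2^\ell$ together with the refined square-summed $L^2L^\infty$ over boxes $\calC_{k_{\min}}(\ell)$; the extra $2^{\frac12\ell}$ from \eqref{fe-L2Linfty} combined with the $2^\ell$ null-form gain and the $2^{-j/2}$ weight of $X_1^{-1/2}$ is exactly what produces $2^{\frac12(k_{\min}-k_{\max})}$.

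So your statement that ``ordinary Strichartz and $X^{s,b}$ bounds both fail at the critical regularity'' for \eqref{est:ax1} in $d=4$ is not accurate: the angular-localized $L^2L^\infty$ Strichartz norms built into $\bar S_k^{\omega\pm}(l)$ are already sufficient here. The failure you describe occurs only in the trilinear setting, where the extra $\Box^{-1}$ changes the balance and forces the null-frame and Lorentz-space machinery. You should remove the null-frame and $L^2L^{4,2}$ steps from your plan for \eqref{est:ax1} and instead invoke Propositions~\ref{prop:no-nf} and \ref{prop:nf} directly.
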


\

Moving on to the $ \phi $ nonlinearity, when $ A_x $ is divergence free, we can write $ A_j=\calP_j A $, which implies
\be  \label{phi:nf:identity}
A^i \pt_i \phi= \sum \calN_{ij} \big( \nabla_i \Delta^{-1} A_j,\phi  \big).
\ee

As discussed in the introduction, the most difficult interaction occurs when $A_{0}$ and $A_{x}$ have frequencies lower than $\phi$.
To isolate this part, we introduce the low-high paradifferential operators 
\begin{align} \label{pi:op}
	\pi[A] \phi &:= \sum_{k \geq 0} P_{< k-C} A_{\al} \, \pt^{\al} \bar{P}_{k} \phi,
\end{align}
Moreover, we define
\begin{align*} \calH^{\ast}_{k'} L(A, \phi)
&= \sum_{j < k' + C_{2}} \bar{Q}_{< j} L(P_{k'} Q_{j} A, \bar{Q}_{< j} \phi), \\
\calH^{\ast} L(A, \phi)
&=  \sum_{\substack{ k' < k - C_{2} - 10 \\  k' \in \mb{Z}, \ k,\tilde{k} \geq 0 }} \bar{P}_{\tilde{k}}  \calH^{\ast}_{k'} L(A, \phi_{k}).
\end{align*}
With these notations, we have

\begin{proposition} \label{prop:phi:est}  \

\begin{enumerate} [leftmargin=*]

\item For all $ \phi $ and $ A=(A_x,A_0) $ such that $ \pt_j A_j=0 $ one has the null form estimates:
\begin{align} 
 \vn{A_{\al} \pt^{\al} \phi- \pi[A] \phi }_{\bar{N}^{\sg-1} }  & \ls \vn{A}_{S^{\sg} \times Y^{\sg}} \vn{\phi}_{\bar{S}^{\sg}} \label{est:phi1}   \\
 \vn{(I-\calH^*) \pi[A] \phi}_{\bar{N}^{\sg-1}}  & \ls \vn{A}_{\ell^1 S^{\sg} \times Y^{\sg} } \vn{\phi}_{\bar{S}^{\sg}} \label{est:phi2} \\
 \vn{ \calH^* \pi[A] \phi}_{\bar{N}^{\sg-1}}  & \ls \vn{A}_{Z \times Z_{ell}} \vn{\phi}_{\bar{S}^{\sg}} \label{est:phi3}  
\end{align}
\item For all $ \phi $ and $ A=(A_x,A_0) $ one has
\begin{align}
\vn{A^{\al} \pt_{\al} \phi}_{L^2 H^{\sg-\frac{3}{2}}} & \ls \vn{A}_{S^{\sg} \times Y^{\sg}} \vn{\phi}_{\bar{S}^{\sg}} \label{est:phi6} 	 \\
 \vn{\pt_t A_0  \phi }_{\bar{N}^{\sg-1} \cap L^2 H^{\sg-\frac{3}{2}}}  & \ls \vn{A_0}_{Y^{\sg}}  \vn{\phi}_{\bar{S}^{\sg}} \label{est:phi4}  \\
 \vn{A^1_{\al}A^2_{\al} \phi }_{\bar{N}^{\sg-1} \cap L^2 H^{\sg-\frac{3}{2}} }  & \ls \vn{A^1}_{S^{\sg} \times Y^{\sg}} \vn{A^2}_{S^{\sg}  \times Y^{\sg}}  \vn{\phi}_{\bar{S}^{\sg}} \label{est:phi5}.
\end{align}
\end{enumerate}
\end{proposition}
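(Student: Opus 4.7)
The plan is to handle the three null-form estimates \eqref{est:phi1}--\eqref{est:phi3} via a Littlewood--Paley and modulation decomposition that exploits the identity \eqref{phi:nf:identity}, and then to close the remaining bounds \eqref{est:phi6}, \eqref{est:phi4}, \eqref{est:phi5} via H\"older combined with the Strichartz and $X^{s,b}$ components built into $\bar{S}^{\sigma}$, $S^{\sigma}$, $Y^{\sigma}$.

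For \eqref{est:phi1} I would dyadically decompose $A_{\alpha} \pt^{\alpha} \phi = \sum_{k_1, k_2} P_{k_1} A_{\alpha} \pt^{\alpha} \bar{P}_{k_2} \phi$. The operator $\pi[A]$ subtracts precisely the low-high regime $k_1 < k - C$, leaving the high-high ($k_1 \sim k_2$) and high-low ($k_1 > k_2 + C$) pieces. For the spatial components, the Coulomb condition together with \eqref{phi:nf:identity} rewrites the remainder through the classical null form $\calN_{ij}$; after matched angular localization $P_{\ell}^{\omega_1}, P_{\ell}^{\omega_2}$ the symbol of $\calN_{ij}$ contributes a factor $2^{\ell}$, which I would combine with the bilinear $L^2$ bound of Proposition \ref{L2:nullFrames} to land in $L^2_{t,x} \subset \bar N_k$. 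The $A_0$ contribution is controlled directly via $Y^{\sigma}$, exploiting the elliptic gain from $\Delta^{-1}$.

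For \eqref{est:phi2} and \eqref{est:phi3} I would then analyze $\pi[A] \phi$ itself through a trichotomy on the modulations of $A$, $\phi$, and the output, governed by the resonance identity forcing at least one of the three to be large unless all sit near the cone. The operator $\calH^{\ast}$ is tailored to isolate precisely the remaining block where $A$'s modulation is $\lesssim 2^{k'}$ and $\phi$'s and the output's are comparable and smaller. Outside this block one gains extra modulation and closes via $\bar X^{\pm 1/2}$, combined with the null structure \eqref{phi:nf:identity} and Proposition \ref{L2:nullFrames} after a Whitney angular decomposition, yielding \eqref{est:phi2} with the $\ell^1$ summation over $k'$. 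Inside the block one places $A_{k'}$ in $L^1 L^{\infty}$ via $Z$ and $\phi$ in $L^{\infty} L^2$; the difficulty is the low-angle, low-modulation corner where a naive dyadic sum over $k' \leq k - C$ diverges logarithmically. This is resolved by keeping the $k'$-summation \emph{inside} the norm and invoking Proposition \ref{Box:Embedding}, the Lorentz-space Strichartz norm $L^2 L^{4,2}$ from \eqref{Str:KG}, and O'Neil's inequality \eqref{Lorentz:Holder}; the $A_0$ piece is handled analogously via \eqref{Zell:emb}.

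The remaining estimates are comparatively soft. For \eqref{est:phi6} I would pair $A$ in a Strichartz norm from $S_{k'}^{Str,W}$ against $\pt \phi$ in the energy and Strichartz components of $\bar{S}^{\sigma}$, summing dyadically after Bernstein. Estimate \eqref{est:phi4} follows from the definition of $Y^{\sigma}$ controlling $\pt_t A_0$ in $L^{\infty} \dot H^{\sigma - 2} \cap L^2 \dot H^{\sigma - 3/2}$, matched against Strichartz of $\phi$. Estimate \eqref{est:phi5} is trilinear; I would place two of the three factors in Strichartz norms and the third in an energy norm, with $A_0$ always handled through $Y^{\sigma}$. The main obstacle of the proposition is \eqref{est:phi3}: the non-perturbative paradifferential low-high interaction forces reliance on $Z / Z^{ell}$, and matching the angular/frequency/modulation geometry of $\pi[A]$ against the anisotropic $Z$-decomposition while simultaneously defeating the logarithmic divergence via Proposition \ref{Box:Embedding} is where the bulk of the work lies.
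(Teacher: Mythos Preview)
Your outline for \eqref{est:phi1}, \eqref{est:phi2}, \eqref{est:phi4}--\eqref{est:phi6} is broadly on track, with one substitution: the paper does not use the null-frame bilinear estimate (Proposition~\ref{L2:nullFrames}) for any of these. Instead it uses Proposition~\ref{prop:nf}, the core $\calN_{ij}$ estimates of the form $L^{\infty}L^{2}\times (\text{square-summed }L^{2}L^{\infty}\text{ over }\calC_{k_{\min}}(\ell))\to L^{2}_{t,x}$ (and its $L^{1}L^{2}$ variant), after a modulation trichotomy. Proposition~\ref{L2:nullFrames} carries the separation hypothesis $\angle(\pm_1\calC,\pm_2\calC')\gg \max(2^{-\min(k,k_2)},2^{l+k'-\min(k,k_2)})$, so by itself it would leave the small-angle regime uncovered; you would still need the $L^2L^\infty$ route there.

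The real gap is your treatment of \eqref{est:phi3}. There is \emph{no} logarithmic divergence in this estimate, and neither Proposition~\ref{Box:Embedding} nor the Lorentz norms enter its proof. By definition $\nrm{A}_{Z}=\sum_{k'}\nrm{P_{k'}A}_{Z_{k'}}$, so the $k'$-summation is already absorbed on the right-hand side; the task is only the single-frequency bound
\[
\nrm{\calH^{\ast}_{k'}(A^{\alpha}\partial_{\alpha}\phi_{k})}_{L^{1}L^{2}}\lesssim \nrm{A_{k'}}_{Z_{k'}\times Z^{ell}_{k'}}\,\nrm{\phi_{k}}_{\bar S^{1}_{k}} .
\]
The paper proves this directly: after angular decomposition at scale $2^{\ell}$ with $\ell=\tfrac{1}{2}(j-k')_{-}$ and the geometric constraint from Lemma~\ref{geom:cone}, Corollary~\ref{Nij:form:prop} gives a null-form gain $2^{\ell}$, one places $P_{\ell}^{\omega}Q_{j}A_{k'}$ in $L^{1}L^{\infty}$ and $P_{\ell}^{\omega}\bar Q_{<j}\nabla\phi_{k}$ in $L^{\infty}L^{2}$, and Cauchy--Schwarz over $\omega$ together with the built-in weight $2^{\ell/2}$ in the $Z_{k'}$ norm yields a factor $2^{\ell/2}$ summable in $\ell\le 0$. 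The $A_{0}$ part is handled identically, the missing $2^{\ell}$ being exactly the ratio $Z^{ell}=\Box^{1/2}\Delta^{-1/2}Z$. What you have described---keeping the $k'$-sum inside the norm, Proposition~\ref{Box:Embedding}, $L^{2}L^{4,2}$ and O'Neil---is the machinery for the \emph{trilinear} Proposition~\ref{trilinear}, where no external $Z$-control on $\bfA(\phi^{1},\phi^{2})$ is available and the divergence is genuine. You have transplanted that difficulty into the wrong estimate.
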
  

\

The following trilinear bound contains the more delicate estimates occurring in our system. It relies crucially on the cancelation discovered in \cite{MachedonSterbenz} and to handle it we will need the norms $ L^{\infty}_{t_{\omega,\lmd}} L^2_{x_{\omega,\lmd}},\ L^{2}_{t_{\omega,\lmd}} L^{\infty}_{x_{\omega,\lmd}} $, the Lorentz norms $ L^1 L^{2,1}, \ L^2 L^{4,2} $ as well as the bilinear forms from section \ref{bil:forms:sec}. The proof is in Section \ref{Trilinear:section}.

\

\begin{proposition} \label{trilinear} For $ \bfA $ and $ \pi $ defined by \eqref{A:bil:op} and \eqref{pi:op} one has:
\be
\vn{\pi[\bfA( \phi^{1},  \phi^{2}) ] \phi}_{\bar{N}^{\sg-1} }  \ls \vn{\phi^1}_{\bar{S}^{\sg}} \vn{\phi^2}_{\bar{S}^{\sg}} \vn{\phi}_{\bar{S}^{\sg}}  \label{est:trilin}
\ee
\end{proposition}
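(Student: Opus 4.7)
The plan is to exploit the secondary null structure of Machedon--Sterbenz. The bilinear form $\bfA(\phi^{1},\phi^{2})$ inherits the classical null form $\calN_{ij}$ through the identity \eqref{ax:nf:identity}, but this gain alone is not enough at the critical regularity. What makes \eqref{est:trilin} possible is that, after contracting $\bfA_{\alpha}$ against $\partial^{\alpha}\bar P_{k}\phi$ in the paradifferential operator $\pi$, the $\bfA_{x}$ and $\bfA_{0}$ contributions combine so that one can algebraically trade the expression for one featuring the $\calN_{0}$ null form acting on the pair $(\phi^{1},\bar\phi^{2})$, up to remainders with large modulation on at least one factor. Making this rewriting precise is the first task; schematically, on the free part of $\phi^{1},\phi^{2}$ one uses $\Box^{-1}$ on $\bfA_{x}$ and $\Delta^{-1}$ on $\bfA_{0}$ at the symbol level, substitutes the Coulomb/elliptic relations for $\partial^{\alpha}\bfA_{\alpha}$, and collects the leftover into a $\calN_{0}(\phi^{1},\bar\phi^{2})$-type trilinear expression paired with $\bar P_{k}\phi$. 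The high modulation remainders can be moved into $\bar X^{-1/2}_{1}$ by gaining a power of $2^{-j/2}$ from the defining measure, and are harmless.

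With the algebraic decomposition in hand, I would carry out the standard Littlewood--Paley analysis. Write $\pi[\bfA(\phi^{1},\phi^{2})]\phi$ as a sum over frequencies $k_{1},k_{2}$ for $\phi^{1},\phi^{2}$, over $k'<k-C$ for $\bfA$ (dictated by the paradifferential cutoff), and over $k$ for $\phi$. The case analysis is driven by the relative size of $k',k_{1},k_{2}$: (i) when the inner product is high--low (say $k_{1}\simeq k'\gg k_{2}$) there is no cancellation issue and one closes by placing $\phi^{1}$ in Strichartz, $\phi^{2}$ in $L^{\infty}H^{\sigma-1}$ via $\bar X^{1/2}_{\infty}$, and pairing against $\phi$ by Hölder; (ii) the delicate case is the high--high regime $k_{1}\simeq k_{2}\gg k'$, where one must use the rewritten $\calN_{0}$ form. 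In this regime, further dyadic decomposition into angular sectors of size $2^{\ell}$ for $\phi^{1}$ versus $\bar\phi^{2}$ exposes the null cancellation, and one applies Proposition~\ref{L2:nullFrames}: place $\phi^{1}$ in $NE^{\pm}_{\calC}$ and $\bar\phi^{2}$ in $PW^{\pm}_{\calC'}$ to obtain an $L^{2}_{t,x}$ bound with the angular gain $2^{-\ell'}$, then pair against $\phi$ via $L^{2}L^{\infty}$ from $\bar S_{k}$.

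The main obstacle will be the logarithmic divergences that arise at either very small angular separation $\ell'\to-\infty$ or at very low frequency $k'\to-\infty$, where the null-frame $L^{2}$ bound just barely fails to sum. This is precisely the scenario motivating the $Z\times Z^{ell}$ framework and Proposition~\ref{prop:phi:est} part \eqref{est:phi3}: for such interactions I would route $\bfA_{x}$ through the embedding
\[
2^{\ell/2}\,T_{\ell}^{\omega}: L^{1}L^{2,1}\to L^{1}L^{\infty}
\]
of Proposition~\ref{Box:Embedding}, feeding $\phi^{1}\bar\phi^{2}$ into $L^{1}L^{2,1}$ via Lorentz--Hölder \eqref{Lorentz:Holder} and the $L^{2}L^{4,2}$ component of $\bar S^{Str}_{k}$. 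Crucially, the sum over the inner frequency $k'$ must remain inside the $L^{1}L^{\infty}$ norm, which is exactly why $T_{\ell}^{\omega}$ is defined with a $k'$-sum baked in. The corresponding elliptic piece $\bfA_{0}$ is handled analogously through $Z^{ell}$ using \eqref{Zell:emb}. After this, $\phi$ is absorbed in $L^{\infty}L^{2}$ and the summation over $\ell$ and $k'$ closes.

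A final bookkeeping step is to handle the frequency cases $k_{1}\not\simeq k_{2}$ inside the high--high regime and the contributions where one of $\phi^{1},\phi^{2}$ or $\phi$ sits at the low frequency $k=0$, using the $\bar S_{0}$ norm and its $S_{box(k')}$ component. The high-modulation remainders from the algebraic rewriting, where either $\phi^{1}$ or $\phi^{2}$ has $\bar Q_{j}$ with $j\gtrsim k'$, are estimated directly by Lemma~\ref{Sobolev_lemma}, the embedding $\bar X^{1/2}_{1}\subset \bar S_{k}$ of Proposition~\ref{Xembedding}, and Strichartz, yielding $\delta$-gain in one of the three factors so they sum trivially in $\bar N^{\sigma-1}$.
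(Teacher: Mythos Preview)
Your proposal has the right ingredients in view---the secondary null structure, the null-frame bilinear bound of Proposition~\ref{L2:nullFrames}, and the $T_\ell^\omega$ embedding of Proposition~\ref{Box:Embedding} for the logarithmic endpoint---but the central structural identification is wrong, and with it the null-frame assignment.

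The algebraic rewriting \eqref{Q:dec}--\eqref{Q:dec2} does \emph{not} produce a $\calN_0$ form acting on the pair $(\phi^1,\bar\phi^2)$. The principal term is
\[
\calQ_1(\phi^1,\phi^2,\phi)=-\Box^{-1}\mathfrak{I}(\phi^1\,\partial_\alpha\bar\phi^2)\cdot\partial^\alpha\phi,
\]
so the $\partial_\alpha\cdots\partial^\alpha$ contraction links $\phi^2$ with $\phi$, with $\phi^1$ and $\Box^{-1}$ sitting in between. Accordingly, the bilinear angular decomposition in the paper is on the pair $(\phi^2,\phi)$, and in the null-frame step one places $\phi^2$ in $PW_\calC^{\pm}$, $\phi$ in $NE_{\calC'}^{\pm}$, while $\phi^1$ is absorbed in the square-summed $L^2L^\infty$ norm and the dual function in $L^\infty L^2$. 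Your assignment $(\phi^1\in NE,\ \phi^2\in PW,\ \phi\in L^2L^\infty)$ does not see the actual cancellation in $\calQ_1$ and would not close. The same remark applies to the small-angle Lorentz argument: the $2^{-2k_{\min}}$ gain from Proposition~\ref{N0:form} comes from the $(\phi^2,\phi)$ pairing, not from $(\phi^1,\phi^2)$.

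Two further points. First, the rewriting yields \emph{three} pieces $\calQ_1,\calQ_2,\calQ_3$; the latter two carry the $\calM$ form $\partial_\alpha(\phi^1\partial^\alpha\bar\phi^2)$ inside $\Delta^{-1}\Box^{-1}$, which must itself be decomposed via \eqref{M:form:decom} into $\calR_0^{\pm}+\calM_0-\calN_0$ and estimated using Propositions~\ref{M0:form} and~\ref{n0:form:prop}. Your sketch treats the output as a single $\calN_0$ term and omits this. Second, the paper does not attack $\pi[\bfA]\phi$ directly: it first peels off $\bfA_0^{LH}$ via \eqref{est:phi7}--\eqref{A0:lh}, then uses the $\calH$, $\calH^*$ operators together with \eqref{est:phi2}, \eqref{est:phi3}, \eqref{eq:axr-Z} to reduce to the core piece $\calH^*\pi[\calH\bfA]\phi$. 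Only after these reductions does the $\calQ$-decomposition and the delicate analysis begin. Your conflation of the $Z\times Z^{ell}$ reduction with the $T_\ell^\omega$ embedding suggests these two distinct stages have been merged.
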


\

The null form in \eqref{est:trilin} can be seen as follows (\cite{KST}, \cite{MachedonSterbenz}). Plugging in the Hodge projection $ \calP=I- \nabla \Delta^{-1} \nabla $ and doing some computations (see the appendix of \cite{KST} for details) we may write

\be \label{Q:dec}
\bfA^{\al}(\phi^1,\phi^2) \pt_{\al} \phi=(\mathcal{Q}_1+\mathcal{Q}_2+\mathcal{Q}_3)(\phi^1,\phi^2,\phi) \ee
where
\be \label{Q:dec2}
\begin{aligned}
	\mathcal{Q}_1(\phi^1,\phi^2,\phi) :=& - \Box^{-1}  \mathfrak{I}  (\phi^1 \pt_{\al} \bar{\phi^2})\cdot \partial^{\al}\phi , \\
	\mathcal{Q}_2(\phi^1,\phi^2,\phi) :=& \Delta^{-1} \Box^{-1} \pt_t \pt_{\al} \mathfrak{I}  (\phi^1 \pt_{\al} \bar{\phi^2})\cdot \partial_{t}\phi , \\
	\mathcal{Q}_3(\phi^1,\phi^2,\phi)  :=&  \Delta^{-1} \Box^{-1} \pt_{\al} \pt^i  \mathfrak{I}  (\phi^1 \pt_{i} \bar{\phi^2})\cdot \partial^{\al}\phi .
\end{aligned}
\ee

We also define
\begin{align*}
\calH_{k'} L (\phi, \psi)
= & \sum_{j < k' + C_{2}}  P_{k'} Q_{j} L(\bar{Q}_{< j} \phi, \bar{Q}_{< j} \psi), \\
\calH L(\phi, \psi)
= & \sum_{\substack{ k' < k_2 - C_{2} - 10 \\  k' \in \mb{Z}, \ k_1,k_2 \geq 0 }} \calH_{k'} L( \bar{P}_{k_{1}} \phi, \bar{P}_{k_{2}} \psi), \\ 
\end{align*}

\

Before solving the system MKG we give an example of using the estimates above together with Theorem \ref{main:parametrix} to solve the Cauchy problem for $ \Box_m^A \phi=F $, in the particular case $ A=A^{free} $, which will be useful below.

\begin{proposition} \label{cov:Afree}
Let $ A=A^{free} $ be a real 1-form obeying $  \Box A=0, \ \pt^j A_j=0,\ A_0=0 $. If $ \vn{A[0]}_{\dot{H}^{\sg} \times \dot{H}^{\sg-1}} $ is sufficiently small, then for any $ \phi[t_0] \in H^{\sg} \times H^{\sg-1} $ and any $ F \in \bar{N}^{\sg-1} \cap L^2 H^{\sg-\frac{3}{2}} $, the solution of $ \Box_m^A \phi=F $ with data $ \phi[t_0] $ satisfies:
\be \label{en:est:free}
\vn{\phi}_{\bar{S}^{\sg}} \ls \vn{\phi[t_0]}_{H^{\sg} \times H^{\sg-1}}+ \vn{F}_{\bar{N}^{\sg-1} \cap L^2 H^{\sg-\frac{3}{2}}}
\ee
\end{proposition}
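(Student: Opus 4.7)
The plan is to reduce the covariant Klein--Gordon equation $\Box_m^A \phi = F$ to the paradifferential equation $\Box_m^{p,A} \phi = F + G$ to which Theorem \ref{main:parametrix} applies, and to treat the error $G$ perturbatively using Proposition \ref{prop:phi:est}.

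First, I expand $\Box_m^A = \Box + 1 + 2i A^{\al} \pt_{\al} - A^{\al} A_{\al} + i (\pt^{\al} A_{\al})$. Under the hypotheses $\pt^j A_j = 0$ and $A_0 = 0$ we have $\pt^{\al} A_{\al} = 0$, so after shuffling the paradifferential piece of the linear term to the left, the equation $\Box_m^A \phi = F$ becomes
\begin{equation*}
\Box_m^{p,A} \phi = F + G(A,\phi), \qquad G(A,\phi) \defeq -2i\bigl(A^{\al} \pt_{\al} \phi - \pi[A] \phi\bigr) + A^{\al} A_{\al} \phi,
\end{equation*}
where I have used that with $A_0 = 0$ the definition \eqref{pi:op} of $\pi[A]$ matches the paradifferential operator $\Box_m^{p,A}$. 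Translating in time, I may reduce to $t_0 = 0$.

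Second, I solve this equation by Picard iteration. Define $\phi^{(0)} = 0$ and let $\phi^{(n+1)}$ be the solution of the linear Cauchy problem
$$ \Box_m^{p,A} \phi^{(n+1)} = F + G(A, \phi^{(n)}), \qquad \phi^{(n+1)}[0] = \phi[0], $$
whose existence and $\bar{S}^{\sg}$-bound are supplied by Theorem \ref{main:parametrix} (here I use that $\vn{A[0]}_{\dot{H}^{\sg} \times \dot{H}^{\sg-1}}$ is small and $A$ is a free wave, so by Proposition \ref{A:solv} the norm $\vn{A}_{S^{\sg} \times Y^{\sg}}$ is small, which is what the parametrix requires). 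To close the iteration I need to bound $G(A,\phi) - G(A,\psi)$ in $\bar{N}^{\sg-1} \cap L^2 H^{\sg-\frac{3}{2}}$ by $\ep \vn{\phi - \psi}_{\bar{S}^{\sg}}$. For the null structure piece, \eqref{est:phi1} controls the $\bar{N}^{\sg-1}$ norm of $A^{\al}\pt_{\al}\cdot - \pi[A]\cdot$, while \eqref{est:phi6} handles the $L^2 H^{\sg-\frac{3}{2}}$ norm of $A^{\al} \pt_{\al} \cdot$ (and by an analogous, or simpler, Hölder-type argument the paradifferential piece $\pi[A]\cdot$ in that norm too). For the quadratic term $A^{\al} A_{\al} \cdot$, estimate \eqref{est:phi5} supplies both norms at once. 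By linearity in $\phi$ these bounds transfer to differences, yielding the required contraction in $\bar{S}^{\sg}$.

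Third, once the sequence converges to a solution $\phi$ of $\Box_m^A \phi = F$ with $\phi[0] = (f,g)$, applying Theorem \ref{main:parametrix} one final time and invoking the above error bounds gives
\begin{equation*}
\vn{\phi}_{\bar{S}^{\sg}} \ls \vn{\phi[0]}_{H^{\sg} \times H^{\sg-1}} + \vn{F}_{\bar{N}^{\sg-1} \cap L^2 H^{\sg-\frac{3}{2}}} + \ep \vn{\phi}_{\bar{S}^{\sg}} + \ep^2 \vn{\phi}_{\bar{S}^{\sg}},
\end{equation*}
and absorbing the last two terms into the left-hand side for $\ep$ small yields \eqref{en:est:free}.

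The main obstacle I expect is verifying that the full error $G(A,\phi)$ genuinely lies in $\bar{N}^{\sg-1} \cap L^2 H^{\sg-\frac{3}{2}}$ (both norms are required by Theorem \ref{main:parametrix}); this is precisely why Proposition \ref{prop:phi:est} was designed to include the $L^2 H^{\sg-\frac{3}{2}}$ component in \eqref{est:phi5} and \eqref{est:phi6}, so no new estimate is needed, only a careful matching of the algebraic structure with $A_0 = 0$.
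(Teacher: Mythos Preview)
Your proposal is correct and follows essentially the same route as the paper: rewrite $\Box_m^A\phi=F$ as $\Box_m^{p,A}\phi=F+\bar{\calM}(A,\phi)$, solve by a contraction mapping using Theorem~\ref{main:parametrix}, and control the error with \eqref{est:phi1}, \eqref{est:phi6}, \eqref{est:phi5} together with $\vn{A}_{S^{\sg}\times Y^{\sg}}\ls\vn{A[0]}_{\dot H^{\sg}\times\dot H^{\sg-1}}$. Your remark that the $L^2H^{\sg-3/2}$ bound for $\pi[A]\phi$ follows from the same (dyadic, non-cancellative) argument behind \eqref{est:phi6} is exactly the point the paper leaves implicit.
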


\begin{proof} We show that the mapping $ \psi \mapsto \phi $ given by $ \Box_m^{p,A} \phi=F+ \bar{\calM}(A,\psi) $ with data $ \phi[t_0] $ at $ t=t_0 $ is a contraction on $ \Bar{S}^{\sg} $, where
$$ \bar{\calM}(A,\psi)=2i ( A_{\al} \pt^{\al} \psi - \pi[A] \psi )- A^{\al} A_{\al} \psi 
$$ 
is chosen so that $ \bar{\calM}(A,\psi)=\Box_m^{p,A} \psi-\Box_m^A \psi $. Using \eqref{est:phi1}, \eqref{est:phi6}, \eqref{est:phi5}, noting that $ \vn{A}_{S^{\sg} \times Y^{\sg}} \ls \vn{A[0]}_{\dot{H}^{\sg} \times \dot{H}^{\sg-1}} \leq \ep \ll 1 $ (since $ A_0=0 $) we obtain
$$ \vn{ \bar{\calM}(A,\psi)}_{ {\bar{N}}^{\sg-1} \cap L^2 H^{\sg-\frac{3}{2}} } \ls \ep \vn{\psi}_{\bar{S}^{\sg} } $$
which together with Theorem \ref{main:parametrix} proves the existence of $ \phi $ for $ \ep $ small enough. The same estimates imply \eqref{en:est:free}.
\end{proof}

\section{Proof of the main theorem}

Assuming the estimates in sections \ref{Sec_parametrix} and \ref{Sec_statements} we prove Theorem \ref{thm:main}.

For $ J_{\al}=-\mathfrak{I}(\phi \overline{D_{\al} \phi}) $, the MKG system is written as

\be \label{MKG:CG} \tag{MKG}
\left\{
\begin{aligned}
\Box_{m}^{A} \phi & =0 \\
\Box A_i & =\mathcal{P}_i J_x \\
\Delta A_0 &=J_0
\end{aligned}
\right.
\ee

We begin with a more detailed formulation of the main part of Theorem \ref{thm:main}. After proving it we proceed to the proofs of statements (2) and (3) of Theorem \ref{thm:main}.

\

\begin{theorem} \label{thm:main-iter}
There exists a universal constant $\ep > 0$ such that 
\begin{enumerate}[leftmargin=*]
\item For any initial data $\phi[0] \in H^{\sigma} \times H^{\sigma-1} $, $A_{x}[0] \in \dot{H}^{\sg} \times \dot{H}^{\sg-1} $ for \emph{MKG}   satisfying the smallness condition \eqref{eq:main:smalldata} and \eqref{Coulomb}, there exists a unique global solution $(\phi, A_x,A_0) \in \bar{S}^{\sg} \times S^{\sg} \times Y^{\sg} $ to \emph{MKG} with this data. 

\item
For any admissible frequency envelope $ (c_k)_{k \geq 0} $ such that  $ \vn{\bar{P}_k \phi[0]}_{H^{\sigma} \times H^{\sigma-1}} \leq c_k $, we have
\begin{equation} \label{eq:main-fe}
	\vn{\bar{P}_k \phi}_{\bar{S}^{\sg}} \ls c_k ,\quad  \nrm{P_{k'} [ A_{x} - A_{x}^{free}]}_{S^{\sg}} + \nrm{P_{k'} A_{0}}_{Y^{\sg}} \ls  \begin{cases} c_{k'}^2, &  k' \geq 0 \\ 2^{\frac{k'}{2}} c_0^2, & k' \leq 0 \end{cases}.	
\end{equation}
\item(Weak Lipschitz dependence) Let $(\phi',A') \in \bar{S}^{\sg} \times S^{\sg} \times Y^{\sg} $ be another solution to \emph{MKG} with small initial data. Then, for \footnote{ $\dlt_{1} $ is the admissible frequency envelope constant.} 
$\delta \in (0, \dlt_{1})$ we have
\be \label{eq:weak-lip} \vn{\phi-\phi'}_{\bar{S}^{\sg-\delta}}+ \vn{A-A'}_{S^{\sg-\delta} \times Y^{\sg-\delta}} \ls \vn{(\phi-\phi')[0]}_{H^{\sg-\delta} \times H^{\sg-\delta-1}} + \vn{(A_x-A_x')[0]}_{\dot{H}^{\sg-\delta} \times \dot{H}^{\sg-\delta-1}}
\ee
\item (Persistence of regularity) If $\phi[0] \in H^{N} \times H^{N-1} $, $A_{x}[0] \in \dot{H}^{N} \times \dot{H}^{N-1}$ $(N \geq \sg)$, then $ (\phi,\pt_t \phi) \in C_{t}(\bbR; H^{N}\times H^{N-1})$, $\nabla_{t, x} A_{x} \in C_{t}(\bbR; \dot{H}^{N-1})$. In particular, if the data $(\phi[0], A_{x}[0])$ are smooth, then so is the solution $(\phi,A)$.
\end{enumerate}
\end{theorem}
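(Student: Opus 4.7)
My plan is to set up a Picard iteration in the product space $X := \bar{S}^{\sigma} \times S^{\sigma} \times Y^{\sigma}$, with unknown $\Phi := (\phi, A_x^{nl}, A_0)$, where $A^{free}_x$ is the free wave evolution of $A_x[0]$ (with $A^{free}_0 := 0$) and $A_x^{nl} := A_x - A^{free}_x$. Using the Coulomb condition $\partial^j A_j = 0$, the equation $\Box_m^A \phi = 0$ rewrites as
\begin{equation*}
\Box_m^{p, A^{free}} \phi \;=\; 2i\bigl(A^{\alpha}\partial_{\alpha}\phi - \pi[A]\phi\bigr) + 2i\pi[A_x^{nl}]\phi + 2i\pi[A_0]\phi - i(\partial_t A_0)\phi - A^{\alpha}A_{\alpha}\phi ,
\end{equation*}
coupled with $\Box A_x^{nl} = \mathcal{P}_j J_x$ (zero data) and $\Delta A_0 = J_0$, where $J_\alpha = -\mathfrak{I}(\phi \overline{\partial_\alpha \phi}) + A_\alpha|\phi|^2$.

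\textbf{Existence and uniqueness (part 1).} Given a current iterate $\Phi$, I feed it into the right-hand side, use Theorem~\ref{main:parametrix} to produce the next $\phi$, and Proposition~\ref{A:solv} to recover the next $(A_x^{nl}, A_0)$. The first bracket is handled by \eqref{est:phi1}+\eqref{est:phi6}; for $\pi[A_x^{nl}]\phi$ one substitutes $A_x^{nl} = \bfA_x(\phi,\phi) + O(A\cdot|\phi|^2)$ and applies the trilinear estimate \eqref{est:trilin} to the leading piece together with \eqref{est:phi5} on the cubic remainder, while $\pi[A_0]\phi$ is treated analogously via $A_0 = \bfA_0(\phi,\phi) + O(A_0|\phi|^2)$; the last two terms are \eqref{est:phi4} and \eqref{est:phi5}, and the Maxwell nonlinearity is controlled by \eqref{est:ax1}--\eqref{est:ax2}. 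Each nonlinear output is quadratic or higher in $\|\Phi\|_X + \|A^{free}\|_{S^{\sigma}} \lesssim \|\Phi\|_X + \varepsilon$, so the map is a contraction on a ball of radius $C\varepsilon$ and produces a unique fixed point; the Coulomb constraint is preserved because the evolution of $A_j$ is routed through $\mathcal{P}_j$.

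\textbf{Frequency envelope bounds (2) and weak Lipschitz dependence (3).} I would rerun the iteration in the envelope-weighted spaces of Remark~\ref{fe:par}, using envelope versions of Propositions~\ref{prop:ax:est}, \ref{prop:phi:est}, \ref{trilinear} which follow from the dyadic form of the original proofs together with admissibility $c_p/c_k \ls 2^{\delta_1|p-k|}$. The low-frequency branch $2^{k'/2}c_0^2$ in \eqref{eq:main-fe} comes from Bernstein for $A^{free}$, while Proposition~\ref{A:solv} applied to a bilinear-in-$\phi$ source produces the squared envelope. For (3), I would write the difference equation for $(\delta\phi, \delta A_x^{nl}, \delta A_0)$ using the parametrix for $\Box_m^{p, A^{free}}$ and reapply the same multilinear analysis at the shifted regularity $\sigma - \delta$; since every estimate is proved dyadically, all bounds are stable under regularity shifts of size up to $\delta_1$, yielding \eqref{eq:weak-lip}.

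\textbf{Persistence of regularity (4) and main obstacle.} Given $H^N \times H^{N-1}$ data, I pick $c_k := 2^{(\sigma-N)k}\|\bar{P}_k\phi[0]\|_{H^N \times H^{N-1}}$, which is admissible as long as $\delta_1 < N - \sigma$, and apply (2) to obtain $\|\bar{P}_k \phi\|_{\bar{S}^{\sigma}} \ls c_k$; square-summing in $k$ gives $(\phi,\partial_t\phi) \in C_t(H^N \times H^{N-1})$, with analogous control on $A$, and iteration in $N$ covers smoothness. The main obstacle is not the Picard argument itself but ensuring that the quadratic Maxwell feedback $\pi[\bfA(\phi,\phi)]\phi$, which appears once the new $A_x^{nl}$ is substituted into the paradifferential source, can actually be closed in $\bar{N}^{\sigma-1}$ without derivative loss; this is precisely Proposition~\ref{trilinear}, whose proof rests on the Machedon--Sterbenz cancellation together with the null-frame spaces $NE_C^\pm, PW_C^\pm$ and the Lorentz-space embedding of $\Box^{-1}$ developed earlier in the paper, and it is there that the real analytic work lies.
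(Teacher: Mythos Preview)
Your overall architecture---Picard iteration using the parametrix Theorem~\ref{main:parametrix} for $\phi$ and Proposition~\ref{A:solv} for $A$, with the trilinear estimate \eqref{est:trilin} handling $\pi[\bfA(\phi,\phi)]\phi$---matches the paper. However, there is a concrete gap in your treatment of the ``cubic remainder'' $\pi[A^{R}]\phi$, where $A^{R}_x=-\Box^{-1}\calP_j(|\phi|^{2}A_x)$ and $A^{R}_0=-\Delta^{-1}(|\phi|^{2}A_0)$. You cite \eqref{est:phi5}, but that estimate bounds $A^{1}_{\alpha}A^{2}_{\alpha}\phi$, not $\pi[A^{R}]\phi$; the latter still carries the non-perturbative low-high paradifferential structure and cannot be closed using only the $S^{\sigma}\times Y^{\sigma}$ norm of $A^{R}$. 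The paper instead splits $\pi=(I-\calH^{\ast})\pi+\calH^{\ast}\pi$ and applies \eqref{est:phi2} (which needs $A^{R}\in\ell^{1}S^{\sigma}\times Y^{\sigma}$) and \eqref{est:phi3} (which needs $A^{R}\in Z\times Z_{ell}$), after first establishing these auxiliary bounds on $A^{R}$ from \eqref{est:ax2} and \eqref{ZZ:emb}. You have not invoked the $Z$-norm machinery at all, and without it this term does not close.

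For part~(3) you also miss a structural point: since $(\phi,A)$ and $(\phi',A')$ have different initial data $A_{x}[0]$, the two covariant operators $\Box_{m}^{p,A^{free}}$ and $\Box_{m}^{p,A'^{free}}$ differ, producing an extra term $2i\sum_{k}\delta A^{free}_{<k-C}\cdot\nabla_{x}\phi'_{k}$ in the difference equation. The paper bounds this via the dyadic estimate underlying \eqref{est:phi2} and notes that the shift to regularity $\sigma-\delta$ with $\delta>0$ is precisely what allows the $\ell^{2}$ summation over $k'<k$; your sketch does not address this. Finally, for part~(4) your frequency-envelope argument only directly gains $\delta_{1}$ derivatives; the paper instead differentiates the equation and reapplies the multilinear estimates to $\nabla^{N}\phi$, $\nabla^{N}A$, combined with subcritical local well-posedness (Proposition~\ref{subcritical:prop}). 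Your ``iteration in $N$'' would need to be made precise through one of these mechanisms.
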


Theorem \ref{thm:main-iter} is proved by an iteration argument as in \cite{KST}. The presence of the non-perturbative interaction with $ A^{free} $ precludes both the usual iteration procedure based on inverting $ \Box $ and the possibility of proving Lipschitz dependence in the full space $ \bar{S}^{\sg} \times S^{\sg} \times Y^{\sg} $. Instead, we will rely on Theorem \ref{main:parametrix} which provides linear estimates for $ \Box_m^{p,A^{free}} $.

\begin{remark} \label{cov:eq-currents} When $ \phi $ solves a covariant equation $ \Box_m^{A} \phi=0 $ for some real 1-form $ A$, denoting the currents $ J_{\al}=-\mathfrak{I}  (\phi \overline{D_{\al}^A \phi}) $, a simple computation shows 
$ \partial^{\al} J_{\al}=0. $
\end{remark}

\

\subsection{Existence and uniqueness}
We first prove Statement~(1) of Theorem~\ref{thm:main-iter}. 

\pfstep{Step~1} We set up a Picard iteration. For the zeroth iterate, we take  $(\phi^{0},A_j^{0},A_0^0) = (0,A_j^{free},0) $ and for any $n \geq 0$ define $ J_{\al}^n=-\mathfrak{I}(\phi^n \overline{D_{\al}^{A_n} \phi^n}) $ and, recursively, 
\begin{align}
& \Box_{m}^{A^n} \phi^{n+1}  =0  \label{cov:iterate} \\
& \Box A_j^{n+1}  =\mathcal{P}_j J_x^n  \label{ax:iterate} \\
& \Delta A_0^{n+1} =J_0^n   \label{a0:iterate}
\end{align}
with initial data $ (\phi[0], A_x[0]) $. Differentiating \eqref{a0:iterate} and using Remark \ref{cov:eq-currents}, we get
\be \label{a0t:iterate}
\Delta \pt_t A_0^{n+1}=\pt^i J_i^n.
\ee
 Note that $ A_0^1=0 $. We claim that
\be  \label{first:iterate}
 \vn{A_x^1}_{S^{\sg}}=\vn{A_x^{free}}_{S^{\sg}} \leq C_0 \vn{A_x[0]}_{\dot{H}^{\sg} \times \dot{H}^{\sg-1}} \leq C_0 \ep, \qquad \vn{\phi^1}_{\bar{S}^{\sg}} \leq C_0 \ep
\ee
where $A_{j}^{free}$ denotes the free wave development of $A_{j}[0] = (A_{j}, \rd_{t} A_{j})(0)$. 

For $n \geq 1$, denoting $ A^m=(A_x^m,A_0^m) $ we make the induction hypothesis 
\begin{equation} \label{eq:main-iter-ind}
	\vn{\phi^m-\phi^{m-1}}_{\bar{S}^{\sg}}+\vn{A^m-A^{m-1}}_{\ell^1 S^{\sg} \times  Y^{\sg}}	\leq (C_{\ast} \ep)^{m} \qquad  m=2,n.
\end{equation}
for a universal constant $C_{\ast} > 0$. By summing this up and adding \eqref{first:iterate}  we get 
\be  \label{eq:sec-iter-ind}
\vn{\phi^m}_{\bar{S}^{\sg}}+\vn{A^m_x-A^{free}_x}_{\ell^1 S^{\sg}}+ \vn{A^m_x}_{S^{\sg}}  +\vn{A_0^m}_{Y^{\sg}}	\leq 2 C_0 \ep \quad  m=1,n.\ee

These estimates imply convergence of $ (\phi^n, A^n_x, A_0^n) $ in the topology of $ \bar{S}^{\sg} \times S^{\sg} \times Y^{\sg} $ to a solution of MKG.

\pfstep{Step~2} Notice that we can decompose
\begin{equation*}
\begin{aligned}
	 & A_{0}^{n+1} = \bfA_{0}(\phi^{n}, \phi^{n})+A_0^{R,n+1},  \qquad & A_0^{R,n+1}  & \defeq - \Delta^{-1} (\vm{\phi^n}^2 A_0^n ) \\
	& A_{j}^{n+1} = A_{j}^{free} + \bfA_{j}(\phi^{n}, \phi^{n}) + A_j^{R,n+1},  \qquad & A_j^{R,n+1} & \defeq -\Box^{-1} \mathcal{P}_j (\vm{\phi^n}^2 A_x^n )
\end{aligned}
\end{equation*}
for $ \bfA=( \bfA_{0}, \bfA_{j} )  $ defined in \eqref{A:bil:op}, and set $ A^{R,n}=(A_x^{R,n},A_0^{R,n}) $. To estimate $ A^{n+1}-A^n $ we write
\be \label{eqA:dif}
\begin{aligned}
A^{n+1}-A^n &=\bfA(\phi^{n}-\phi^{n-1}, \phi^{n})+\bfA(\phi^{n-1}, \phi^{n}-\phi^{n-1})+\big( A^{R,n+1}-A^{R,n}  \big) \\
\pt_t A_{0}^{n+1}-\pt_t A_{0}^{n} &=\Delta^{-1} \nabla_x \mathfrak{I} \big( \phi^{n-1} \nabla_x \overline{\phi^{n-1}} -\phi^n \nabla_x \bar{\phi^{n}}+ i \vm{\phi^{n-1}}^2 A_x^{n-1}- i \vm{\phi^n}^2 A_x^n    \big) 
\end{aligned}
\ee
The difference $ A^{n+1}-A^n $ is estimated in $ \ell^1 S^{\sg} \times  Y^{\sg} $ using Proposition \ref{A:solv} and \eqref{est:ax1}-\eqref{est:ax2}, together with \eqref{eq:main-iter-ind}, \eqref{eq:sec-iter-ind}. With an appropriate choice of $C_{\ast}$ and $\ep$, this insures the induction hypothesis \eqref{eq:main-iter-ind} for $ A $ remains valid with $ m=n+1 $.

Moreover, using  \eqref{ZZ:emb} and \eqref{est:ax2} with  \eqref{eq:main-iter-ind}, \eqref{eq:sec-iter-ind} we obtain
\be   \label{eq:aux:z}
\vn{A^{R,n}}_{(Z \cap \ell^1 S^{\sg}) \times (Z_{ell} \cap Y^{\sg})} \ls \ep ,\quad \vn{A^{R,n}-A^{R,n-1} }_{(Z \cap \ell^1 S^{\sg}) \times (Z_{ell} \cap Y^{\sg})} \ls ( C_{\ast} \ep )^{n+1}
\ee

\pfstep{Step~3} In order to solve \eqref{cov:iterate}, we rewrite it as 
$$ \Box_m^{p,A^{free}} \phi^{n+1}=\calM(A^n,\phi^{n+1})$$
where 
\begin{align*} 
(2i)^{-1} \calM(A^n,\phi)=\  &  \big( A^n_{\al} \cdot \pt^{\al} \phi - \pi[A^n] \phi \big) +    \pi[A^{R,n}]  \phi  \\
+ &  \pi[\bfA(\phi^{n-1}, \phi^{n-1})] \phi -(2i)^{-1} \big( \pt_t A_0^n \phi+ A^{n,\al} A^n_{\al} \phi \big)
\end{align*}
We prove that the map $ \phi \mapsto \psi $ defined by $ \Box_m^{p,A^{free}} \psi=\calM(A^n,\phi) $ is a contraction on $ \bar{S}^{\sg} $. This follows from Theorem \ref{main:parametrix} together with 
\be  \label{phi:contr}
\vn{\calM(A^n,\phi) }_{\bar{N}^{\sg-1} \cap L^2 H^{\sg-\frac{3}{2}}} \ls  \ep \vn{ \phi}_{\bar{S}^{\sg}}.
\ee
which holds due to \eqref{est:phi1}-\eqref{est:phi5}, \eqref{est:trilin} since we have \eqref{eq:sec-iter-ind} and \eqref{eq:aux:z}.

Moreover, this argument also establishes \eqref{first:iterate} for $ \phi^1 $ since we are assuming $ A^{R,0}=\bfA(\phi^{-1}, \phi^{-1})=0 $.

\pfstep{Step~4} To estimate $ \phi^{n+1}-\phi^n $ using Theorem \ref{main:parametrix} in addition to applying $ \eqref{phi:contr} $ with $ \phi=\phi^{n+1}-\phi^n $  we also need
$$
\vn{\calM(A^n,\phi^n)-\calM(A^{n-1},\phi^n) }_{\bar{N}^{\sg-1} \cap L^2 H^{\sg-\frac{3}{2}}} \ls ( C_{\ast} \ep )^{n}  \vn{\phi^n}_{\bar{S}^{\sg}} 
$$  
This follows by applying \eqref{est:phi1}, \eqref{est:phi4}, \eqref{est:phi5} with $ A=A^n-A^{n-1} $, then \eqref{est:phi2}, \eqref{est:phi3} with $ A=A^{R,n}-A^{R,n-1} $, and finally \eqref{est:trilin} with $ \bfA(\phi^{n-1}, \phi^{n-1}- \phi^{n-2}) $  and $ \bfA(\phi^{n-1}- \phi^{n-2},\phi^{n-2}) $. We use these together with \eqref{eq:main-iter-ind} and \eqref{eq:aux:z}. We conclude that, with appropriate $C_{\ast}$ and $\ep$, the induction hypothesis \eqref{eq:main-iter-ind} remains valid with $ m=n+1 $ for $ \phi $ as well.

\pfstep{Step~5} To prove uniqueness, assume that $ (\phi,A) $ and $ (\phi',A') $ are two solutions with the same initial data. Then the same $ A^{free} $ is used in $ \Box_m^{p,A^{free}} $ for both $ \phi, \phi' $ and using the same estimates as above one obtains
$$ \vn{A-A'}_{\ell^1 S^{\sg} \times Y^{\sg}}+\vn{\phi-\phi'}_{\bar{S}^{\sg}} \ls \ep \big(  \vn{A-A'}_{\ell^1 S^{\sg} \times Y^{\sg}}+\vn{\phi-\phi'}_{\bar{S}^{\sg}}  \big).
$$
Choosing $ \ep $ small enough the uniqueness statement follows.

\subsection{The frequency envelope bounds \eqref{eq:main-fe}} The main observation here is that all estimates used in the proof of existence have a frequency envelope version. Using Remark \ref{fe:par} and $ \Box_m^{p,A^{free}} \phi=\calM(A,\phi)$ we have
\be \label{fee}
\vn{\phi}_{\bar{S}^{\sg}_c} \ls \vn{\phi[0]}_{(H^{\sg} \times H^{\sg-1})_c} + \vn{\calM(A,\phi)}_{(\bar{N}^{\sg-1} \cap L^2 H^{\sg-\frac{3}{2}})_c}    
\ee
By \eqref{est:phi1:freqA0}, \eqref{est:phi1:freqAx}, \eqref{est:phi2:freqA0}, \eqref{est:phi2:freqAx}, \eqref{est:phi3:freq}, \eqref{Q1:trilinear} , \eqref{Q2:bilest}, \eqref{Q3:est}, Lemma \ref{lemma:additional} and the proof of \eqref{est:phi6}-\eqref{est:phi5} we have
\be \label{nonl:fe}  \vn{\calM(A,\phi)}_{(\bar{N}^{\sg-1} \cap L^2 H^{\sg-\frac{3}{2}})_c} \ls \big( \vn{A}_{S^{\sg} \times Y^{\sg}}+\vn{A^R}_{(Z \cap \ell^1 S^{\sg}) \times (Z_{ell} \cap Y^{\sg})} + \vn{\phi}_{\bar{S}^{\sg}}^2  \big)  \vn{\phi}_{\bar{S}^{\sg}_c}
\ee
The term in the bracket is $ \ls \ep $, thus from \eqref{fee} we obtain $ \vn{\phi}_{\bar{S}^{\sg}_c} \ls \vn{\phi[0]}_{(H^{\sg} \times H^{\sg-1})_c} $ which implies $ \vn{\bar{P}_k \phi}_{\bar{S}^{\sg}} \ls c_k $.

Now we turn to $ A $. We define $ \tilde{c}_{k'}=c_{k'}^2 $ for $ k' \geq 0 $ and $  \tilde{c}_{k'}=2^{\frac{k'}{2}} c_0^2 $ for $ k' \leq 0 $. One has
$$ \vn{A_x-A^{free}_x }_{S^{\sg}_{\tilde{c}}}+ \vn{A_0}_{Y^{\sg}_{\tilde{c}}} \ls \vn{\Box A_x}_{N^{\sg-1}_{\tilde{c}} \cap L^2 \dot{H}^{\sg-\frac{3}{2}}_{\tilde{c}}}+ \vn{\Delta A_0}_{\Delta Y^{\sg}_{\tilde{c}}} \ls  \vn{\phi}_{\bar{S}^{\sg}_c}^2 \ls 1  $$
using \eqref{est:ax1:freq} and the proofs of \eqref{est:a01}, \eqref{est:ax2}. This concludes the proof of \eqref{eq:main-fe}.

\begin{remark} A consequence of \eqref{eq:main-fe} is that if we additionally assume $ (\phi[0],A_x[0]) \in H^s \times H^{s-1} \times \dot{H}^s \times \dot{H}^{s-1} $ for $ s\in (\sg,\sg+\delta_1) $ then we can deduce
\be \label{hs:fe}
\vn{\phi}_{L^{\infty} (H^s \times H^{s-1})} + \vn{A}_{L^{\infty} (\dot{H}^s \times \dot{H}^{s-1})} \ls \vn{\phi[0]}_{H^s \times H^{s-1}} + \vn{A_x[0]}_{\dot{H}^s \times \dot{H}^{s-1}}
\ee
Indeed, choosing the frequency envelope
\be \label{ck:fe}
c_k=\sum_{k_1 \geq 0} 2^{-\delta_1 \vm{k-k_1}} \vn{\bar{P}_{k_1} \phi[0]}_{H^{\sg} \times H^{\sg-1}}, \qquad \vn{c_k}_{\ell^2(\mb{Z}_{+})} \simeq  \vn{\phi[0]}_{H^{\sg} \times H^{\sg-1}}
\ee
from \eqref{eq:main-fe} we obtain
$$ \vn{\phi}_{L^{\infty} (H^s \times H^{s-1})} \ls \vn{\jb{D}^{s-\sg} \phi}_{\bar{S}^{\sg}} \ls \vn{2^{k(s-\sg)} c_k}_{\ell^2(\mb{Z}_{+})} \ls  \vn{\phi[0]}_{H^s \times H^{s-1}}  $$
and similarly with $ (A_x-A_x^{free},A_0) $; meanwhile $ \vn{A_x^{free}}_{L^{\infty} (\dot{H}^s \times \dot{H}^{s-1})} \ls \vn{A_x[0]}_{\dot{H}^s \times \dot{H}^{s-1}} $.  
\end{remark}

\subsection{Weak Lipschitz dependence \eqref{eq:weak-lip}} Let $ \delta \phi=\phi-\phi' $ and $ \delta A=A-A' $. Similarly to the equations in \eqref{eqA:dif} we write
$$ \delta A =\bfA(\delta \phi, \phi)+\bfA(\phi', \delta \phi)+\big( A^{R}-A'^{R} \big) 
$$
and similarly for $ \delta \pt_t A_0 $. Applying \eqref{est:ax1:freq} and the estimates in the proofs of \eqref{est:a01}, \eqref{est:ax2} we get
$$ \vn{\delta A}_{S^{\sg-\delta} \times Y^{\sg-\delta} } \ls  \vn{\delta A_x [0]}_{\dot{H}^{\sg-\delta} \times \dot{H}^{\sg-\delta-1}}+ \ep \vn{\delta \phi}_{\bar{S}^{\sg-\delta}} + \ep \vn{\delta A}_{S^{\sg-\delta}\times Y^{\sg-\delta} }.
$$
By Remark \ref{fe:par} we have
$$ \vn{ \delta \phi}_{\bar{S}^{\sg-\delta}} \ls \vn{\delta \phi[0]}_{H^{\sg-\delta} \times H^{\sg-\delta-1}}+  \vn{\Box_m^{p,A^{free}} \delta \phi}_{\bar{N}^{\sg-\delta-1}}.
$$
The equation for $ \delta \phi $ is 
$$ \Box_m^{p,A^{free}} \delta \phi=\calM(A, \delta \phi)+ \big( \calM(A, \phi')-\calM(A', \phi') \big) + 2i \sum_{k \geq 0} \delta A^{free}_{<k-C} \cdot \nabla_x \phi_k'
$$ 
By applying \eqref{nonl:fe} with an appropriate frequency envelope $ c $ we get
$$  \vn{\calM(A, \delta \phi)}_{\bar{N}^{\sg-\delta-1} \cap L^2 H^{\sg-\frac{3}{2}-\delta}} \ls \ep \vn{\delta \phi}_{\bar{S}^{\sg-\delta}} $$
Similarly we obtain
$$ \vn{  \calM(A, \phi')-\calM(A', \phi') }_{\bar{N}^{\sg-\delta-1} \cap L^2 H^{\sg-\frac{3}{2}-\delta}} \ls \ep \big( \vn{\delta A}_{S^{\sg-\delta}\times Y^{\sg-\delta} } + \vn{\delta \phi}_{\bar{S}^{\sg-\delta}} 	\big) $$
Using \eqref{est:phi2:freqAx} (note that the $ \calH^{\ast} $ term is $ 0 $ for $ A^{free} $) we get
$$  \vn{\sum_{k \geq 0} \delta A^{free}_{<k-C} \cdot \nabla_x \phi_k'}_{\bar{N}^{\sg-\delta-1} \cap L^2 H^{\sg-\frac{3}{2}-\delta}} \ls \vn{\delta A^{free}}_{S^{\sg-\delta}} \vn{\phi'}_{\bar{S}^{\sg}} \ls \ep  \vn{\delta A_x [0]}_{\dot{H}^{\sg-\delta} \times \dot{H}^{\sg-\delta-1}}
$$
At this point is where $ \delta>0 $ was used, to do the $ k'<k\ $ $ \ell^2 $-summation of $ \delta A^{free} $. Putting the above together we obtain
\begin{align*} 
\vn{ \delta \phi}_{\bar{S}^{\sg-\delta}}+\vn{\delta A}_{S^{\sg-\delta} \times Y^{\sg-\delta} } & \ls \vn{\delta \phi[0]}_{H^{\sg-\delta} \times H^{\sg-\delta-1}}+ \vn{\delta A_x [0]}_{\dot{H}^{\sg-\delta} \times \dot{H}^{\sg-\delta-1}} \\
& + \ep \big(  \vn{\delta \phi}_{\bar{S}^{\sg-\delta}} + \vn{\delta A}_{S^{\sg-\delta}\times Y^{\sg-\delta} } \big). 
\end{align*}
For $ \ep $ small enough we obtain \eqref{eq:weak-lip}.

\subsection{Subcritical local well-posedness} \label{subcritical:} Here we review some local wellposedness facts that will be used in the proofs below. 

Given $ s>\sg $ we introduce the shorthand $ \calH^{\sg,s}=(\dot{H}^s \times \dot{H}^{s-1}) \cap (\dot{H}^{\sg} \times \dot{H}^{\sg-1}) $.
Note that for $ s>\frac{d}{2}+1 $, $ H^{s-1} $ becomes a Banach Algebra of functions on $ \mb{R}^d $.

\begin{proposition} \label{subcritical:prop}
Let $ s>\frac{d}{2}+1 $. For any initial data $ \phi[0] \in H^s \times H^{s-1} $ and $ A_x[0] \in \calH^{\sg,s} $ there exists a unique local solution $ (\phi,A) $ to \emph{MKG} with these data in the space $ (\phi, \pt_t \phi; A,\pt_t A) \in C_t([0,T], H^s \times H^{s-1}; \calH^{\sg,s}) $ where $ T>0 $ depends continuously on $ \vn{\phi[0]}_{H^s \times H^{s-1}} $ and $ \vn{A_x[0]}_{\calH^{\sg,s}} $. The data-to-solution map in these spaces is Lipschitz continuous. Moreover, additional Sobolev regularity of the initial data is preserved by the solution.
\end{proposition}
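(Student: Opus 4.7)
The plan is a standard Picard iteration taking advantage of the subcritical Sobolev embedding: since $s - 1 > d/2$, the space $H^{s-1}(\bbR^{d})$ is a Banach algebra, so all nonlinearities in \eqref{MKGCG} can be handled by crude product estimates instead of null-form analysis. First I would set up iterates $(\phi^{n}, A_{j}^{n}, A_{0}^{n})$ by solving the decoupled linear problems
\begin{equation*}
  \Delta A_{0}^{n+1} = J_{0}^{n}, \qquad \Box A_{j}^{n+1} = \calP_{j} J_{x}^{n}, \qquad \Box_{m} \phi^{n+1} = 2i A^{n,\al} \pt_{\al} \phi^{n} + i (\pt_{t} A_{0}^{n}) \phi^{n} - A^{n,\al} A^{n}_{\al} \phi^{n},
\end{equation*}
with the prescribed initial data for $\phi$ and $A_{j}$, starting from $(\phi^{0}, A_{j}^{0}, A_{0}^{0}) = (0, A_{j}^{free}, 0)$.

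The iteration would be closed in the space
\begin{equation*}
 X_{T} = C_{t}\bigl([0,T]; H^{s}\times H^{s-1}\bigr) \times C_{t}\bigl([0,T]; \calH^{\sg,s}\bigr),
\end{equation*}
using the classical energy inequalities
\begin{equation*}
  \nrm{\phi}_{L^{\infty}_{t} H^{s}} + \nrm{\pt_{t} \phi}_{L^{\infty}_{t} H^{s-1}} \ls \nrm{\phi[0]}_{H^{s}\times H^{s-1}} + \nrm{\Box_{m} \phi}_{L^{1}_{t} H^{s-1}},
\end{equation*}
and the analogous bound for $\Box$ in $\dot{H}^{s}\cap \dot{H}^{\sg}$, together with the elliptic estimate $\nrm{A_{0}}_{\dot{H}^{\sg_{1}}} \ls \nrm{J_{0}}_{\dot{H}^{\sg_{1}-2}}$ and the $L^{p}$-boundedness of the Leray projector $\calP$. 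The algebra property and standard Moser estimates reduce the source terms to
\begin{equation*}
  \nrm{J^{n}_{\al}}_{L^{1}_{t} H^{s-1}} \ls T \bigl( \nrm{\phi^{n}}_{L^{\infty}_{t}H^{s}} + \nrm{A^{n}}_{L^{\infty}_{t}H^{s-1}} \bigr)^{3},
\end{equation*}
and analogously for the critical low-frequency piece of $A_{x}$, using Sobolev embedding to pass from $H^{s}$ to $\dot{H}^{\sg}$. Choosing $T$ small in terms of the data, contraction in $X_{T}$ follows, yielding existence and Lipschitz dependence.

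The one genuine subtlety, rather than a serious obstacle, is carrying the critical component $\dot{H}^{\sg}\times \dot{H}^{\sg-1}$ of $A_{x}$ simultaneously with the subcritical $\dot{H}^{s}\times \dot{H}^{s-1}$ norm. I would handle this by splitting $A_{x} = A_{x}^{free} + A_{x}^{inh}$: the free part is preserved in $\calH^{\sg,s}$ by the standard energy identity for $\Box$, while the Duhamel part $A_{x}^{inh}$ is controlled by estimating $\calP_{j} J_{x}^{n}$ in $L^{1}_{t}(\dot{H}^{s-1}\cap \dot{H}^{\sg-1})$ using products between $H^{s}$ factors and Bernstein at low frequencies. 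The remaining claims follow in standard fashion: uniqueness in $X_{T}$ is a Gronwall argument comparing two solutions in the energy norm, and persistence of higher regularity is obtained by running the same iteration in $H^{N}\times H^{N-1}$ on the already established existence interval $[0,T]$, noting that the $H^{N}$ norm satisfies a linear-in-itself differential inequality with coefficients bounded by the already-controlled $H^{s}$ norm, so that it cannot blow up before $T$.
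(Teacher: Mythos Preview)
Your proposal is correct and follows exactly the approach the paper indicates: the paper in fact omits the proof entirely, stating only that it ``proceeds by usual Picard iteration (based on the d'Alembertian $\Box$) and the algebra and multiplication properties of the spaces above,'' with the massive term treated perturbatively. Your write-up fills in precisely these standard details, including the handling of the $\calH^{\sg,s}$ norm via the free/inhomogeneous splitting, so there is nothing to correct.
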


We omit the proof, which proceeds by usual Picard iteration (based on the d'Alembertian $ \Box $) and the algebra and multiplication properties of the spaces above. Here, the massive term $ \phi $ can be treated perturbatively. 

We remark that a stronger subcritical result - almost optimal local well-posedness (i.e. initial data in $ H^{1+\ep}( \mb{R}^4) $) was proved in \cite{SSel}. 

\subsection{Persistence of regularity} Now we sketch the proof of Statement (4) of Theorem \ref{thm:main-iter}. In view of Prop. \ref{subcritical:prop} it remains to check 
\be \label{higher:reg}
\vn{\nabla^N \phi}_{\bar{S}^{\sg}}+ \vn{\nabla^N (A-A_x^{free})}_{\ell^1 S^{\sg} \times Y^{\sg}} \ls \vn{\phi[0]}_{H^{\sg+N}\times H^{\sg+N-1}} +\vn{A_x[0]}_{\dot{H}^{\sg+N} \times \dot{H}^{\sg+N-1}}.
\ee
for $ N=1,2 $ whenever the RHS is finite. For brevity we will only consider $ N=1 $; the case $ N=2 $ can be treated similarly. We will already assume that $ \nabla \phi \in \bar{S}^{\sg},\  \nabla A \in S^{\sg} \times Y^{\sg} $. This assumption can be bypassed by repeating the proof of \eqref{higher:reg} for each iterate in the proof of existence. We write
$$  \nabla (A_{x}-A_x^{free}) = \bfA_{x}( \nabla \phi, \phi) + \bfA_{x}(\phi, \nabla \phi)+\nabla A_x^{R},  \quad  A_{x}^{R} = -\Box^{-1} \mathcal{P}_x (\vm{\phi}^2 A_x ) $$
Using the product rule we distribute the derivative on the terms inside $ A_x^{R} $. We also write the similar formula for $ \nabla A_0 $. From Prop. \ref{prop:ax:est} we get
\be \label{hr1}  \vn{\nabla (A-A_x^{free})}_{\ell^1 S^{\sg} \times Y^{\sg}} \ls \ep ( \vn{\nabla \phi}_{\bar{S}^{\sg}}+ \vn{\nabla A}_{S^{\sg} \times Y^{\sg}} 	)
\ee
The equation for $ \nabla \phi $ is 
$$ \Box_m^{p,A^{free}} \nabla \phi=\nabla \calM(A, \phi) + 2i \sum_{k \geq 0} \nabla A^{free}_{<k-C} \cdot \nabla_x \phi_k
$$ 
Using the product rule on $ \nabla \calM(A, \phi) $ and Prop. \ref{prop:ax:est}, \ref{prop:phi:est}, \ref{trilinear} we obtain 
$$ \vn{ \nabla \calM(A, \phi)}_{\bar{N}^{\sg-1} \cap L^2 H^{\sg-\frac{3}{2}}} \ls \ep ( \vn{\nabla \phi}_{\bar{S}^{\sg}}+ \vn{\nabla A}_{S^{\sg} \times Y^{\sg}} 	)
$$ 
Using \eqref{est:phi2:freqAx} (note that the $ \calH^{\ast} $ term is $ 0 $ for $ A^{free} $) we get
$$  \vn{\sum_{k \geq 0} \nabla A^{free}_{<k-C} \cdot \nabla_x \phi_k}_{\bar{N}^{\sg-1} \cap L^2 H^{\sg-\frac{3}{2}}}  \ls \ep  \vn{\nabla A_x [0]}_{\dot{H}^{\sg} \times \dot{H}^{\sg-1}}
$$
We bound $ \nabla \phi $ using Theorem \ref{main:parametrix} so that together with \eqref{hr1} we have
\begin{align*} \vn{\nabla \phi}_{\bar{S}^{\sg}}+ \vn{\nabla (A-A_x^{free})}_{\ell^1 S^{\sg} \times Y^{\sg}} \ls &\vn{\nabla \phi[0]}_{H^{\sg}\times H^{\sg-1}} +\ep  \vn{\nabla A_x [0]}_{\dot{H}^{\sg} \times \dot{H}^{\sg-1}} \\
 +&  \ep ( \vn{\nabla \phi}_{\bar{S}^{\sg}}+ \vn{\nabla A}_{S^{\sg} \times Y^{\sg}} 	)
\end{align*}
Choosing $ \ep $ small enough gives \eqref{higher:reg}.

\begin{remark} An alternative approach would be to use \eqref{hs:fe} for $ s\in (\sg,\sg+\delta_1) $ together with the almost optimal local well-posedness result in \cite{SSel} and its higher dimensional analogue.
\end{remark}

\subsection{Proof of continuous dependence on data} Now we prove statement (2) of Theorem \ref{thm:main} and that every solution obtained by Theorem \ref{thm:main-iter} can be approximated by smooth solutions. 

Let $ \phi[0] \in H^{\sg} \times H^{\sg-1} , \ A_x[0] \in \dot{H}^{\sg} \times \dot{H}^{\sg-1} $ be an initial data set for MKG. For large $ m $ we denote $ \phi^{(m)}[0]=P_{\leq m} \phi[0],\ A_x^{(m)}[0]=P_{\leq m} A_x[0] $. Let $ \phi, A $ (resp. $\phi^{(m)}, A^{(m)} $) be the solutions with initial data $ \phi[0], A_x[0] $ (resp. $ \phi^{(m)}[0], A_x^{(m)}[0] $) given by Theorem \ref{thm:main-iter}.

\begin{lemma}[Approximation by smooth solutions] \label{apr:smoth:lm} Let $ (c_k) $ be a $ H^{\sg} \times H^{\sg-1} $ admissible frequency envelope for $ \phi[0] $, i.e. $ \vn{\bar{P}_k \phi[0] }_{H^{\sg} \times H^{\sg-1}} \leq c_k $. Then
$$ \vn{\phi-\phi^{(m)}}_{\bar{S}^{\sg}}+\vn{A-A^{(m)}}_{S^{\sg} \times Y^{\sg}} \ls \big( \sum_{k>m} c_k^2 \big)^{1/2}+ \vn{P_{> m} A_x[0]}_{\dot{H}^{\sg} \times \dot{H}^{\sg-1}}.
$$
\end{lemma}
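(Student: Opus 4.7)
The strategy is to combine the weak Lipschitz estimate \eqref{eq:weak-lip} (available only in the subcritical norm $\bar{S}^{\sigma-\delta}$) with the critical frequency envelope bound \eqref{eq:main-fe} via a high/low frequency split at the truncation scale $2^m$. Denote $\delta\phi = \phi - \phi^{(m)}$ and $\delta A = A - A^{(m)}$, and set
\[
M := \bigl(\sum_{k > m} c_k^2\bigr)^{1/2} + \|P_{>m} A_x[0]\|_{\dot{H}^\sigma \times \dot{H}^{\sigma-1}}.
\]

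\textbf{Step 1 (low frequencies $k \leq m$ via weak Lipschitz).} Fix $\delta \in (0, \delta_1)$. Since the initial data of $\delta\phi$ and $\delta A$ are $P_{>m} \phi[0]$ and $P_{>m} A_x[0]$ respectively, Bernstein yields
\[
\|P_{>m} \phi[0]\|_{H^{\sigma-\delta} \times H^{\sigma-\delta-1}} + \|P_{>m} A_x[0]\|_{\dot{H}^{\sigma-\delta} \times \dot{H}^{\sigma-\delta-1}} \lesssim 2^{-m\delta} M,
\]
so \eqref{eq:weak-lip} gives $\|\delta\phi\|_{\bar{S}^{\sigma-\delta}} + \|\delta A\|_{S^{\sigma-\delta} \times Y^{\sigma-\delta}} \lesssim 2^{-m\delta} M$. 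For each $k \leq m$, the frequency localization implies $\|\bar{P}_k \delta\phi\|_{\bar{S}^\sigma} \simeq 2^{k\delta} \|\bar{P}_k \delta\phi\|_{\bar{S}^{\sigma-\delta}} \lesssim 2^{-(m-k)\delta} M$, whose $\ell^2$-sum over $k \leq m$ is $\lesssim M$. The identical argument handles the low-frequency part of $\delta A$.

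\textbf{Step 2 (high frequencies $k > m$ via envelope).} Apply \eqref{eq:main-fe} to $\phi$ with the envelope $c_k$, and to $\phi^{(m)}$ with the admissible envelope
\[
c_k^{(m)} := \begin{cases} c_k, & k \leq m+1, \\ 2^{-\delta_1(k-m-1)} c_{m+1}, & k > m+1, \end{cases}
\]
which dominates $\|\bar{P}_k \phi^{(m)}[0]\|_{H^\sigma \times H^{\sigma-1}}$ since $\bar{P}_k P_{\leq m} \phi[0] = 0$ for $k \geq m+2$. The admissibility ratio is easily checked. Then for $k > m$, the triangle inequality gives $\|\bar{P}_k \delta\phi\|_{\bar{S}^\sigma} \lesssim c_k + c_k^{(m)}$, and $\sum_{k > m}(c_k + c_k^{(m)})^2 \lesssim \sum_{k \geq m+1} c_k^2 + c_{m+1}^2 \lesssim M^2$.

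\textbf{Step 3 (assembly for $\delta A$).} Decompose $\delta A_x = (A_x^{free} - A_x^{(m),free}) + \bigl(\bfA_x(\phi, \phi) - \bfA_x(\phi^{(m)}, \phi^{(m)})\bigr) + (A_x^R - A_x^{(m), R})$, and analogously for $\delta A_0$ using the $\bfA_0$ and $A_0^R$ pieces from the existence proof. The free-wave difference contributes exactly $\|P_{>m} A_x[0]\|_{\dot{H}^\sigma \times \dot{H}^{\sigma-1}} \leq M$ in $S^\sigma$. The bilinear differences are expanded as $\bfA_x(\delta\phi, \phi) + \bfA_x(\phi^{(m)}, \delta\phi)$; for the low-frequency part $k' \leq m$ one invokes Proposition \ref{prop:ax:est} and Step 1 to bound them by $\ep \|\delta\phi\|_{\bar{S}^\sigma}$, while for the high-frequency part $k' > m$ one uses \eqref{eq:main-fe} for the full $A$ and $A^{(m)}$ with the envelopes $c_k$, $c_k^{(m)}$ as in Step 2. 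The cubic corrections $A^R - A^{(m), R}$ are treated identically.

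\textbf{Main obstacle.} The principal difficulty lies not in Step 1, where the weak Lipschitz estimate does most of the work, but in ensuring that Step 2 closes with the correct tail: naively estimating $\|\bar{P}_k \phi^{(m)}\|_{\bar{S}^\sigma}$ by the minimal admissible envelope of $P_{\leq m}\phi[0]$ at $k > m$ would produce a term $c_m$ that is not a priori controlled by $(\sum_{k > m} c_k^2)^{1/2}$; the crucial point is that because $P_{\leq m}$ is a smooth frequency cutoff, the envelope can be anchored at $k = m+1$ (where $c_{m+1}$ is already summed into $M$), which resolves this issue. A secondary care is required to combine Steps 1--2 with the paradifferential estimates for $\delta A$ in a manner that does not re-introduce the $\delta > 0$ loss—this works because the problematic $\delta A^{free} \cdot \nabla \phi^{(m)}$ term has $\delta A^{free}$ supported in $\{|\xi| > 2^m\}$, so its high-frequency part is in Step 2 and its (nonexistent) low-frequency part creates no divergence.
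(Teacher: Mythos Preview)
Your approach is essentially the same as the paper's: split at frequency $2^m$, use the weak Lipschitz bound \eqref{eq:weak-lip} for $P_{\leq m}$, and use the frequency envelope bound \eqref{eq:main-fe} for $P_{>m}$.

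However, you have overcomplicated Steps~2 and~3, and your ``main obstacle'' is not an obstacle at all. The paper simply observes that the \emph{original} envelope $c_k$ is also an admissible envelope for the truncated data $\phi^{(m)}[0] = P_{\leq m}\phi[0]$, since $\|\bar P_k P_{\leq m}\phi[0]\|_{H^\sigma\times H^{\sigma-1}} \leq \|\bar P_k \phi[0]\|_{H^\sigma\times H^{\sigma-1}} \leq c_k$. There is no need to build a modified envelope $c_k^{(m)}$ anchored at $k=m+1$; applying \eqref{eq:main-fe} to both $(\phi,A)$ and $(\phi^{(m)},A^{(m)})$ with the same $c_k$ immediately gives $\|\bar P_k \delta\phi\|_{\bar S^\sigma} \lesssim c_k$ for $k>m$, whose $\ell^2$ sum is already part of $M$. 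Likewise, your Step~3 decomposition into $\bfA_x$, $A^R$, etc.\ is unnecessary: \eqref{eq:main-fe} bounds $P_{k'}(A_x - A_x^{free})$ and $P_{k'}A_0$ directly by $c_{k'}^2$ (or $2^{k'/2}c_0^2$), and the free-wave difference $A_x^{free}-A_x^{(m),free}$ is exactly the free development of $P_{>m}A_x[0]$, so the high-frequency part of $\delta A$ is controlled by $M$ without further work.
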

\begin{proof}
Clearly, $ c $ is also a frequency envelope for $ \phi^{(m)}[0] $. Applying the bound \eqref{eq:main-fe} to $ (\phi,A) $ and $ (\phi^{(m)}, A^{(m)}) $ separately, we obtain the estimate above for $ P_{>m}(\phi-\phi^{(m)}) $ and $ P_{>m}(A-A^{(m)}) $. For the terms $ P_{\leq m}(\phi-\phi^{(m)}) $ and $ P_{\leq m}(A-A^{(m)}) $ we use the weak Lipschitz dependence bound \eqref{eq:weak-lip}:
\begin{align*}
\vn{P_{\leq m}(\phi&-\phi^{(m)})}_{\bar{S}^{\sg}}   + \vn{P_{\leq m}(A-A^{(m)})}_{S^{\sg} \times Y^{\sg}}  \ls \\ 
& \ls 2^{\delta m} \vn{P_{\leq m}(\phi-\phi^{(m)})}_{\bar{S}^{\sg-\delta}}+ 2^{\delta m} \vn{P_{\leq m}(A-A^{(m)})}_{S^{\sg-\delta} \times Y^{\sg-\delta}}   \\
& \ls 2^{\delta m} \vn{P_{>m} \phi[0]}_{H^{\sg-\delta} \times H^{\sg-\delta-1}}+2^{\delta m}  \vn{P_{>m} A_x [0]}_{\dot{H}^{\sg-\delta} \times \dot{H}^{\sg-\delta-1}}
\end{align*}
which concludes the proof of the lemma.
\end{proof}

We continue with the proof of statement (2) of Theorem \ref{thm:main}. Let $ (\phi^n[0],A_x^n[0]) $ be a sequence of initial data sets converging to $ (\phi[0],A_x[0]) $ in $ H^{\sg} \times H^{\sg-1} \times \dot{H}^{\sg} \times \dot{H}^{\sg-1} $. For large $ n $ we denote $ (\phi^n,A^n) $ the corresponding solutions given by Theorem \ref{thm:main-iter}. We also define the approximations $ (\phi^{n(m)},A^{n(m)}) $ as above.

For $ T>0 $ and $ \eps>0 $ we prove
\be \vn{\phi-\phi^n}_{C_t([0,T]; H^{\sg} \times H^{\sg-1})}+\vn{A-A^n}_{C_t([0,T]; \dot{H}^{\sg} \times \dot{H}^{\sg-1})} <\eps
\ee
for large $ n $. We apply Lemma \ref{apr:smoth:lm} with the frequency envelopes \eqref{ck:fe} and 
$$ c_k^n=\sum_{k_1 \geq 0} 2^{-\delta_1 \vm{k-k_1}} \vn{\bar{P}_{k_1} \phi^n[0]}_{H^{\sg} \times H^{\sg-1}}. $$
Since $ \vn{\phi^n[0]-\phi[0]}_{H^{\sg} \times H^{\sg-1}}+\vn{A_x^n[0]-A_x[0]}_{\dot{H}^{\sg} \times \dot{H}^{\sg-1}} \to 0 $, there exists $ m $ such that 
$$ \sum_{k >m} c_k^2 < \eps^6, \quad  \sum_{k >m} (c_k^n)^2 < \eps^6, \quad \vn{P_{> m} A_x[0]}_{\dot{H}^{\sg} \times \dot{H}^{\sg-1}}<\eps^3, \quad \vn{P_{> m} A_x^n[0]}_{\dot{H}^{\sg} \times \dot{H}^{\sg-1}} < \eps^3
$$ 
for all $ n \geq n_{\eps} $. By Lemma \ref{apr:smoth:lm} we obtain
$$ \vn{\phi-\phi^{(m)}}_{\bar{S}^{\sg}}+\vn{A-A^{(m)}}_{S^{\sg} \times Y^{\sg}} <\eps^2, \quad \vn{\phi^n-\phi^{n(m)}}_{\bar{S}^{\sg}}+\vn{A^n-A^{n(m)}}_{S^{\sg} \times Y^{\sg}} < \eps^2
$$
and it remains to prove
$$
\vn{\phi^{(m)}-\phi^{n(m)}}_{C_t([0,T]; H^{\sg} \times H^{\sg-1})}+\vn{A^{(m)}-A^{n(m)}}_{C_t([0,T]; \dot{H}^{\sg} \times \dot{H}^{\sg-1})} <\frac{1}{2} \eps
$$
Now the solutions are smooth enough and we may apply Prop. \ref{subcritical:prop}. It is a simple matter to note that for large $ n $ the $ H^s $ norms of the differences stay finite for all $ t \in [0,T] $. This concludes the proof.

\subsection{Proof of scattering} Here we discuss the proof of statement (3) of Theorem \ref{thm:main}. Without loss of generality we set $ \pm=+ $. 

Let $ (\phi, A) $ be the solutions with initial data $ (\phi[0], A_x[0]) $ given by Theorem \ref{thm:main-iter} and let $ A^{free} $ be the free wave development of $ A_x[0] $. We denote by $ S^{A^{free}}(t',t) $ the propagator from time $ t $ to $ t' $ for the covariant equation $ \Box_m^{A^{free}} \phi=0 $, given by Prop. \ref{cov:Afree}, which implies, for any $ t<t' $
$$ \vn{\phi[t']- S^{A^{free}}(t',t) \phi[t]}_{H^{\sg} \times H^{\sg-1}} \ls \vn{\Box_m^{A^{free}} \phi}_{(\bar{N}^{\sg-1} \cap L^2 H^{\sg-\frac{3}{2}})[t,\infty)}
$$ 
the last one being the time interval localized norm (see \cite[Proposition~3.3]{OT2}). Using the estimates from Prop. \ref{prop:phi:est} like in the proof of existence shows that the RHS is finite for, say $ t=0 $, and the RHS vanishes as $ t \to \infty $. By the uniform boundedness of $ S^{A^{free}}(0,t) $ on $ H^{\sg} \times H^{\sg-1} $ (Prop. \ref{cov:Afree}) and the formula $ S^{A^{free}}(t'',t)=S^{A^{free}}(t'',t') S^{A^{free}}(t',t) $ it follows that, as $ t \to \infty $
$$ \vn{S^{A^{free}}(0,t') \phi[t']- S^{A^{free}}(0,t) \phi[t]}_{H^{\sg} \times H^{\sg-1}}  \ls  \vn{\phi[t']- S^{A^{free}}(t',t) \phi[t]}_{H^{\sg} \times H^{\sg-1}} \to 0 
$$
Therefore the limit $ \lim_{t \to \infty} S^{A^{free}}(0,t) \phi[t]=:\phi^{\infty}[0] $ exists in $ H^{\sg} \times H^{\sg-1} $ and $ \phi^{\infty}[0] $ is taken as the initial data for $ \phi^{\infty} $ in Theorem \ref{thm:main}.

The proof of scattering for $ A_x $ is similar, we omit the details.

\section{Core bilinear forms} \label{bil:forms:sec}

This section is devoted to the analysis of translation-invariant bilinear forms.

\subsection{The $ \calM $ form} \label{Mform}
During the proof of the trilinear estimate, we will need to consider terms like 
$$ P_{k'} Q_j \calM( \bar{Q}_{<j} \phi^1_{k_1}, \bar{Q}_{<j} \phi^2_{k_2} ) $$
where 
\be \label{M:form} \calM(\phi^1,\phi^2) \defeq \pt_{\al}( \phi^1\cdot \pt^{\al} \phi^2) \ee
is a null-form adapted to the wave equation, while $ \phi^1_{k_1}, \phi^2_{k_2} $ are assumed to be high-frequency Klein-Gordon waves of low $ \bar{Q} $-modulation, with low frequency output.

To obtain effective bounds, we need to split 
\be \label{M:form:decom}  \calM=\calR_0^{\pm}+\calM_0-\calN_0  \ee

where, denoting $ \Xi^i=(\tau_i,\xi_i) $, the symbols of $ \calM, \calR_0^{\pm}, \calM_0, \calN_0 $ are 
\be m(\Xi^1,\Xi^2) =(\tau_1+\tau_2) \tau_2-(\xi_1+\xi_2) \cdot \xi_2, \label{m:form:symb} \ee
and, respectively,
 \begin{align}
 r_0^{\pm}(\Xi^1,\Xi^2) & \defeq \tau_1(\tau_2 \pm \jb{\xi_2})+ (\jb{\xi_1} \mp \tau_1) \jb{\xi_2}+(\tau_2^2-\jb{\xi_2}^2),\label{r0:form:symb} \\
 m_0(\Xi^1,\Xi^2) & \defeq1+\vm{\xi_1} \vm{\xi_2}-\jb{\xi_1} \jb{\xi_2},\label{m0:form:symb} \\
 n_0(\Xi^1,\Xi^2) & \defeq \vm{\xi_1} \vm{\xi_2}+\xi_1 \cdot \xi_2.  \label{n0:form:symb}
 \end{align}
 
\subsection{The $ \calM_0$ form} \label{M0form}

Let $ \calM_0 (\phi^1,\phi^2) $ be the bilinear form with symbol
$$ m_0(\xi_1,\xi_2)=1+\vm{\xi_1} \vm{\xi_2}-\jb{\xi_1} \jb{\xi_2}. $$
Notice that this multiplier is a radial function in $ \xi_1 $ and $ \xi_2 $. 

The following two statements are aimed at obtaining an exponential gain for $  \calM_0 $ in the high $ \times $ high $ \to $ low frequency interactions.

\begin{lemma} \label{lemma:m:bound}
The following bounds hold:
\begin{align*}
\vm{m_0(\xi_1,\xi_2)} \leq & \frac{\vm{\xi_1+\xi_2}^2}{\jb{\xi_1}\jb{\xi_2}} \\
\vm{\pt_{\xi_i} m_0(\xi_1,\xi_2)} \leq & \frac{\vm{\xi_1+\xi_2}}{\jb{\xi_i}} \Big( \frac{1}{\jb{\xi_1}}+\frac{1}{\jb{\xi_2}} \Big), \qquad i=1,2 \\
\vm{\pt_{\xi_i}^{\al} m_0(\xi_1,\xi_2)}  \ls & \frac{\jb{\xi_1}\jb{\xi_2}}{\jb{\xi_i}^{\vm{\al}+2}}, \qquad \qquad \quad \vm{\al} \geq 2, \ i=1,2.
\end{align*}
\end{lemma}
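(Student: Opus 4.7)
My plan is to work from an algebraic identity for $m_0$ obtained by rationalization. Multiplying numerator and denominator of $m_0=1+|\xi_1||\xi_2|-\langle\xi_1\rangle\langle\xi_2\rangle$ by the conjugate $(1+|\xi_1||\xi_2|)+\langle\xi_1\rangle\langle\xi_2\rangle$, direct expansion of $(1+|\xi_1||\xi_2|)^2-(1+|\xi_1|^2)(1+|\xi_2|^2)=-(|\xi_1|-|\xi_2|)^2$ yields
\[
m_0 = -\frac{(|\xi_1|-|\xi_2|)^2}{(1+|\xi_1||\xi_2|)+\langle\xi_1\rangle\langle\xi_2\rangle}. \qquad (\ast)
\]
The first bound is then immediate from $(\ast)$: the elementary inequality $\bigl||\xi_1|-|\xi_2|\bigr|^2\leq|\xi_1+\xi_2|^2$ (equivalent to Cauchy--Schwarz $\xi_1\cdot\xi_2\geq-|\xi_1||\xi_2|$) combined with the trivial lower bound $(1+|\xi_1||\xi_2|)+\langle\xi_1\rangle\langle\xi_2\rangle\geq\langle\xi_1\rangle\langle\xi_2\rangle$ gives the claim.

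For the gradient estimate, I would differentiate $m_0$ directly,
\[
\partial_{\xi_1^j} m_0 = \xi_1^j\Bigl(\tfrac{|\xi_2|}{|\xi_1|}-\tfrac{\langle\xi_2\rangle}{\langle\xi_1\rangle}\Bigr) = \frac{\xi_1^j\bigl(|\xi_2|\langle\xi_1\rangle-|\xi_1|\langle\xi_2\rangle\bigr)}{|\xi_1|\langle\xi_1\rangle},
\]
and rationalize in the same spirit as $(\ast)$ to obtain
\[
|\xi_2|\langle\xi_1\rangle-|\xi_1|\langle\xi_2\rangle = \frac{|\xi_2|^2-|\xi_1|^2}{|\xi_2|\langle\xi_1\rangle+|\xi_1|\langle\xi_2\rangle}.
\]
Using $\bigl||\xi_1|^2-|\xi_2|^2\bigr|\leq|\xi_1+\xi_2|\,(|\xi_1|+|\xi_2|)$ together with the pointwise splitting $\frac{|\xi_i|}{|\xi_2|\langle\xi_1\rangle+|\xi_1|\langle\xi_2\rangle}\leq\frac{1}{\langle\xi_{3-i}\rangle}$ (obtained by dropping the other summand in the denominator) and with $|\xi_1^j|/|\xi_1|\leq 1$ yields the stated bound on $\partial_{\xi_1^j}m_0$; the case $i=2$ follows by symmetry.

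For the higher-order bounds $|\alpha|\geq 2$, I would apply the Leibniz rule to the factored form $m_0=-N/D$ from $(\ast)$, with $N=(|\xi_1|-|\xi_2|)^2$ and $D=(1+|\xi_1||\xi_2|)+\langle\xi_1\rangle\langle\xi_2\rangle\simeq\langle\xi_1\rangle\langle\xi_2\rangle$. Expanding $N=|\xi_1|^2-2|\xi_1||\xi_2|+|\xi_2|^2$, derivatives $\partial_{\xi_1}^\beta N$ reduce to derivatives of the polynomial $|\xi_1|^2$ (vanishing for $|\beta|>2$) and of $|\xi_1|$ (homogeneous of degree $1-|\beta|$ in $\xi_1$). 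Iterated application of the quotient rule to $1/D$, combined with the scaling $|\partial_{\xi_1}^\gamma\langle\xi_1\rangle|\lesssim\langle\xi_1\rangle^{1-|\gamma|}$, then gives $|\partial_{\xi_1}^\gamma(1/D)|\lesssim\langle\xi_1\rangle^{-|\gamma|}/(\langle\xi_1\rangle\langle\xi_2\rangle)$, and summing via Leibniz produces the target bound. The step I expect to be the main obstacle is that naive bounds on individual Leibniz terms can exceed the claimed right-hand side, so one must group pairs of contributions coming separately from the $|\xi_1||\xi_2|$ and $\langle\xi_1\rangle\langle\xi_2\rangle$ parts of $D$ (and similarly within $N$) and exploit the cancellation via the identity $\langle\xi\rangle-|\xi|=1/(\langle\xi\rangle+|\xi|)$, in the same spirit as the rationalization used in the gradient estimate.
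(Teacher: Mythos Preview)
Your treatment of the first two bounds is essentially the paper's own: the rationalized identity $(\ast)$ and the computation/rationalization of the first derivative coincide with the paper's argument (the paper phrases the latter via the radial derivative $\partial_{|\xi_1|}m_0$, which is the same computation up to the harmless factor $\xi_1^j/|\xi_1|$).

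For $|\alpha|\ge 2$ you take an unnecessarily roundabout route. The paper does \emph{not} apply Leibniz to the factored form $-N/D$; instead it differentiates the original expression $m_0=1+|\xi_1|\,|\xi_2|-\langle\xi_1\rangle\langle\xi_2\rangle$ directly. Since $m_0$ is radial in $\xi_1$, one computes radial derivatives: the term $1+|\xi_1|\,|\xi_2|$ is linear in $|\xi_1|$, so its radial derivatives of order $\ge 2$ vanish, leaving only $-\langle\xi_2\rangle\,\partial_{|\xi_1|}^{\,n}\langle\xi_1\rangle$. The entire higher-order bound then reduces to the single elementary estimate $|\partial_r^{\,n}\langle r\rangle|\lesssim\langle r\rangle^{-n-1}$ for $n\ge 2$, proved by induction starting from $\partial_r^2\langle r\rangle=\langle r\rangle^{-3}$. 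This completely sidesteps the Leibniz bookkeeping and the pairwise cancellations you were (rightly) anticipating in the $N/D$ approach: the factored form is the right tool only for $|\alpha|\le 1$, where the smallness of $(|\xi_1|-|\xi_2|)^2$ must be exploited, while for $|\alpha|\ge 2$ the unfactored form is strictly easier.
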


We return to the proof of this lemma after the following proposition which provides an exponential gain needed for estimate \eqref{Q3:est}.

\begin{proposition} \label{M0:form}
Let $ k \geq 0 $, $ k' \leq k-C $ and $ 1 \leq p,q_1,q_2 \leq \infty $ with $ p^{-1}=q_1^{-1}+q_2^{-1} $. Let $ \calC_1, \calC_2 $ be boxes of size $ \simeq (2^{k'})^d $ located to $ \calC_i \subset \{ \jb{\xi_i} \simeq 2^k \} $ so that 
$$  \calC_1 + \calC_2 \subset \{  \vm{\xi} \leq 2^{k'+2} \} $$

Then, for all functions $ \phi_1, \phi_2 $ with Fourier support in $ \calC_1, \calC_2 $ we have
\be \label{eq:m-basic}
\vn{\calM_0 (\phi^1,\phi^2)}_{L^p} \ls 2^{2(k'-k)}  \vn{\phi_1}_{L^{q_1}} \vn{\phi_2}_{L^{q_2}}.
\ee
\end{proposition}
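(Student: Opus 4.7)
The plan is to prove a disposability statement: after dividing by $2^{2(k'-k)}$, the bilinear operator $\calM_0$ restricted to the boxes becomes a convolution with an $L^1$ kernel, at which point Young's/Minkowski's inequality immediately yields the $L^p$ bound. Concretely, I choose smooth bump functions $\chi_i$ adapted to the boxes $\calC_i$ at scale $2^{k'}$ in each direction and equal to one on $\calC_i$, and write
$$\calM_0(\phi^1,\phi^2)(x) = \int_{\bbR^{2d}} K(y_1,y_2)\, \phi^1(x-y_1)\, \phi^2(x-y_2)\, dy_1\, dy_2,$$
where $K$ is the inverse Fourier transform of $m_0\,\chi_1(\xi_1)\,\chi_2(\xi_2)$. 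Minkowski's inequality gives
$$\|\calM_0(\phi^1,\phi^2)\|_{L^p} \le \|K\|_{L^1(\bbR^{2d})} \|\phi^1\|_{L^{q_1}} \|\phi^2\|_{L^{q_2}},$$
and the proposition reduces to the claim $\|K\|_{L^1} \lesssim 2^{2(k'-k)}$.

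The core step is the bump function estimate
$$\bigl|\partial^\alpha_{\xi_1}\partial^\beta_{\xi_2}(m_0\,\chi_1\,\chi_2)\bigr| \lesssim 2^{2(k'-k)}\,2^{-(|\alpha|+|\beta|)k'}$$
on the support. The zero-order bound $|m_0|\lesssim 2^{2(k'-k)}$ is immediate from the first inequality of Lemma \ref{lemma:m:bound} combined with $|\xi_1+\xi_2|\lesssim 2^{k'}$ and $\langle\xi_i\rangle\simeq 2^k$. The pure first-order bound $|\partial_{\xi_i} m_0|\lesssim 2^{k'-2k} = 2^{2(k'-k)}\cdot 2^{-k'}$ follows from the second inequality. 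For $|\alpha|\ge 2$ the third inequality gives $|\partial^\alpha_{\xi_i} m_0|\lesssim 2^{-|\alpha|k}$, which beats the desired $2^{2(k'-k)}\cdot 2^{-|\alpha|k'}$ precisely because $k'\le k-C$. Mixed derivatives can be handled by direct differentiation of $m_0$; for instance
$$\partial_{\xi_2}\partial_{\xi_1} m_0 = \frac{\xi_1\otimes\xi_2}{|\xi_1||\xi_2|} - \frac{\xi_1\otimes\xi_2}{\langle\xi_1\rangle\langle\xi_2\rangle} = \frac{\xi_1\otimes\xi_2\,(1-m_0)}{|\xi_1||\xi_2|\,\langle\xi_1\rangle\langle\xi_2\rangle}$$
has size $\lesssim 2^{-2k} = 2^{2(k'-k)}\cdot 2^{-2k'}$ on the support, and higher mixed derivatives are treated analogously. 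The cutoffs $\chi_i$ satisfy $|\partial^\gamma\chi_i|\lesssim 2^{-|\gamma|k'}$ by construction, so the Leibniz rule preserves the estimate.

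Finally, iterated non-stationary phase (integrating by parts in $\xi_1$ and $\xi_2$ independently, picking up the derivatives above) converts this into the pointwise decay
$$|K(y_1,y_2)|\lesssim 2^{2(k'-k)}\,2^{2dk'}\,(1+2^{k'}|y_1|)^{-N}(1+2^{k'}|y_2|)^{-N}$$
for any $N$, and integration in $(y_1,y_2)$ yields $\|K\|_{L^1(\bbR^{2d})}\lesssim 2^{2(k'-k)}$, as required. The main obstacle is the verification of the mixed-derivative bounds, which are not stated explicitly in Lemma \ref{lemma:m:bound} but follow by the same elementary pointwise arguments; once that bookkeeping is complete the rest is the standard disposability machinery. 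The hypothesis $k'\le k-C$ plays its role exactly in the step that absorbs the natural derivative scale $2^{-|\alpha|k}$ of pure high-order derivatives into the smaller target scale $2^{-|\alpha|k'}$.
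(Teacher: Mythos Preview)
Your proof is correct and close in spirit to the paper's, though the packaging differs. The paper expands $m_0\chi_1\chi_2$ in a Fourier series on the torus $\tilde{\calC}_1\times\tilde{\calC}_2$, writing $\calM_0$ as a rapidly convergent sum of tensor products $a^1_{\bfj}(D)\phi^1\cdot a^2_{\bfk}(D)\phi^2$; you instead pass directly to the convolution kernel and bound its $L^1$ norm. Both routes rest on the same symbol estimate $\bigl|(2^{k'}\partial_{\xi_i})^{\alpha} m_0\bigr|\lesssim 2^{2(k'-k)}$ furnished by Lemma~\ref{lemma:m:bound}.

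One remark: the paper only ever integrates by parts in $\xi_1$ \emph{or} $\xi_2$ separately (obtaining $\abs{c_{\bfj,\bfk}}\lesssim \mu\abs{\bfj}^{-n}$ and $\abs{c_{\bfj,\bfk}}\lesssim \mu\abs{\bfk}^{-n}$, then combining), so it never needs mixed derivatives of $m_0$. Your kernel argument can be arranged the same way---integrate by parts in $\xi_1$ alone, then in $\xi_2$ alone, and take the minimum of the resulting pointwise bounds---which yields the desired $(1+2^{k'}\abs{y_1})^{-N}(1+2^{k'}\abs{y_2})^{-N}$ decay without ever touching $\partial_{\xi_1}^{\alpha}\partial_{\xi_2}^{\beta}m_0$ for $\abs{\alpha},\abs{\beta}\ge 1$. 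Your mixed-derivative computation is correct (and the higher cases do follow from the telescoping $h_1h_2-H_1H_2=(h_1-H_1)h_2+H_1(h_2-H_2)$ together with $\abs{\partial^{\alpha}(\abs{\xi}-\jb{\xi})}\lesssim \abs{\xi}^{-1-\abs{\alpha}}$), but it is extra work that the paper's organization avoids.
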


\begin{proof}
We expand $m_0(\xi_1,\xi_2)$ as a rapidly decreasing sum of tensor products
\begin{equation} \label{eq:m-basic:dcmp}
	m_0(\xi_1,\xi_2) = \sum_{\bfj, \bfk \in \bbZ^{d}} c_{\bfj, \bfk} \, a^1_{\bfj}(\xi_1) a^2_{\bfk}(\xi_2) \quad \hbox{ for } (\xi_1,\xi_2) \in \calC_1 \times \calC_2
\end{equation}
where, denoting $ \mu=2^{2(k'-k)} $, for any $n \geq 0$, $c_{\bfj, \bfk}$ obeys 
\begin{equation}  \label{eq:m-basic:c}
	\abs{c_{\bfj, \bfk}} \aleq_{n} \mu (1+\abs{\bfj} + \abs{\bfk})^{-n},
\end{equation}
and for some universal constant $n_{0} > 0$, the  $a_{\bfj}^i$  satisfy
\begin{equation} \label{eq:m-basic:ab}
	\nrm{a_{\bfj}^i(D)}_{L^{q} \to L^{q}} \aleq (1+\abs{\bfj})^{n_{0}}, \qquad i=1,2. \end{equation}

Assuming \eqref{eq:m-basic:dcmp}--\eqref{eq:m-basic:ab}, the desired estimate \eqref{eq:m-basic} follows immediately. Indeed, \eqref{eq:m-basic:dcmp} implies that 
\begin{equation*}
	\calM_0(\phi_{1}, \phi_{2}) = \sum_{\bfj, \bfk \in \bbZ^{d}} c_{\bfj, \bfk} \cdot a_{\bfj}^1(D) \phi_{1} \cdot a_{\bfk}^2(D) \phi_{2},
\end{equation*}
so \eqref{eq:m-basic} follows by applying H\"older's inequality and \eqref{eq:m-basic:ab}, then using \eqref{eq:m-basic:c} to sum up in $\bfj, \bfk \in \bbZ^{d}$.

Let the boxes $ \tilde{\calC}_1, \tilde{\calC}_2 $ be enlargements of $ \calC_1, \calC_2 $ of size $ \simeq (2^{k'})^d $ and let $ \chi_1, \chi_2 $ be bump functions adapted to these sets which are equal to $ 1 $ on $ \calC_1 $, respectively $\calC_2 $.

Then for $ (\xi_1,\xi_2) \in \calC_1 \times \calC_2 $, we have $ m_0(\xi_1,\xi_2)=m_0(\xi_1,\xi_2) \chi_1(\xi_1) \chi_2(\xi_2) $. Performing a Fourier series expansion of $m_0(\xi_1,\xi_2) \chi_1(\xi_1) \chi_2(\xi_2) $ by viewing $ \tilde{\calC}_1\times \tilde{\calC}_2$ as a torus, we may write

\begin{equation} \label{eq:m-basic:fs}
	m_0(\xi_1,\xi_2) = \sum_{\bfj, \bfk \in \bbZ^{d}} c_{\bfj, \bfk} \, e^{2 \pi i \bfj \cdot  \xi_1'/2^{k'+c}} e^{2 \pi i \bfk \cdot  \xi_2'/2^{k'+c}} \quad \hbox{ for } (\xi_1,\xi_2) \in \calC_1 \times \calC_2.
\end{equation}
for $ \xi_i'=\xi_i-\xi_i^0 $ where $ \xi_i^0 $ is the center of $ \calC_i $. Defining
\begin{equation*}
	a_{\bfj}^i(\xi) = \chi_i(\xi_i) e^{2 \pi i \bfj \cdot  \xi_1'/2^{k'+c}},  \qquad i=1,2,
\end{equation*}
we obtain the desired decomposition \eqref{eq:m-basic:dcmp} from \eqref{eq:m-basic:fs}.

To prove \eqref{eq:m-basic:c}, we use the Fourier inversion formula
\begin{equation*}
	c_{\bfj, \bfk} =  \frac{1}{\hbox{ Vol }( \tilde{\calC}_1\times \tilde{\calC}_2)}  \int_{ \tilde{\calC}_1\times \tilde{\calC}_2} m_0(\xi_1^0+\xi_1',\xi_2^0+\xi_2') \chi_1 \chi_2 e^{-2 \pi i( \bfj \cdot  \xi_1'+ \bfk \cdot  \xi_2')/2^{k'+c}} \, \ud \xi_1' \, \ud \xi_2'.
\end{equation*}

By Lemma \ref{lemma:m:bound}, for $ (\xi_1,\xi_2) \in \calC_1 \times \calC_2 $, since $ \vm{\xi_1+\xi_2} \ls 2^{k'} $, for any $ \vm{\al} \geq 0 $ we have
$$ \vm{ (2^{k'} \pt_{\xi_i})^{\al} m_0(\xi_1,\xi_2)} \ls \mu, \qquad i=1,2 $$
Thus, integrating by parts in $ \xi_{1}' $ [resp. in $\xi_{2}' $], we obtain 
\begin{equation*}
	\abs{c_{\bfj, \bfk}} \aleq_{n} \mu (1+\abs{\bfj})^{-n}, \qquad  \abs{c_{\bfj, \bfk}} \aleq_{n} \mu (1+\abs{\bfk})^{-n}, \quad n \geq 0.
\end{equation*}
These bounds imply \eqref{eq:m-basic:c}. Next, we have
$$ \vm{(2^{k'} \pt_{\xi_i})^{\al} a_{\bfj}^i(\xi_i) }\ls (1+\abs{\bfj})^{\vm{\al}}, \qquad \vm{\al} \geq 0, \ i=1,2 $$
This implies that the convolution kernel of $ a_{\bfj}^i(D_i) $ satisfies  $\nrm{\check{a}^i_{\bfj}}_{L^{1}} \aleq (1+\abs{\bfj})^{n_{0}} $ for $ n_0=d+1 $, which gives \eqref{eq:m-basic:ab}
\end{proof}

\begin{proof}[Proof of Lemma \ref{lemma:m:bound}] The bounds follow from elementary computations. Indeed, 
$$ -m_0(\xi_1,\xi_2)=\frac{(\vm{\xi_1}-\vm{\xi_2} )^2}{1+\vm{\xi_1} \vm{\xi_2}+\jb{\xi_1} \jb{\xi_2}}\leq\frac{\vm{\xi_1+\xi_2}^2}{\jb{\xi_1}\jb{\xi_2}}.  $$
Next, wlog assume $ i=1 $. Since $ m_0 $ is radial in $ \xi_1 $ it suffices to compute
$$ \pt_{\vm{\xi_1}} m_0(\xi_1,\xi_2)=\frac{1}{\jb{\xi_1}} \big( \jb{\xi_1} \vm{\xi_2}- \jb{\xi_2} \vm{\xi_1} \big)=\frac{1}{\jb{\xi_1}} \frac{\vm{\xi_2}^2-\vm{\xi_1}^2}{\jb{\xi_1} \vm{\xi_2}+\jb{\xi_2} \vm{\xi_1}}   $$
which gives the desires bound.

Finally, the estimate for higher derivatives follows from $ \vm{ \pt_r^n \jb{r}} \ls \jb{r}^{-n-1} $ for $ n \geq 2 $, which is straightforward to prove by induction.
\end{proof}

\subsection{The $ \calN_0$ and $ \tilde{\calN}_0 $ forms}
We consider the bilinear forms $ \tilde{\calN}_0(\phi^1,\phi^2) $ on $ \mb{R}^{d+1} $ with symbol 
\be \label{nn0:eq} \tilde{n}(\Xi^1,\Xi^2)= \frac{1}{\vm{(\tau_1,\xi_1)}}\frac{1}{\vm{(\tau_2,\xi_2)}}(\tau_1 \tau_2- \xi_1 \cdot \xi_2) \ee
and $ \calN_0 (\phi^1,\phi^2) $ on $ \mb{R}^d $ with symbol
\be \label{n0:eq} n_0(\xi_1,\xi_2)=\vm{\xi_1} \vm{\xi_2}+\xi_1 \cdot \xi_2. \ee

\begin{proposition} \label{N0:form}
Let $ k_1, k_2 \in \mathbb{Z} $, $ l' \leq 0 $, and signs $ \pm_1, \pm_2 $. Let $ \kappa_1, \kappa_2 $ be spherical caps of angle $ \simeq 2^{l'} $ centered at $ \omega_1, \omega_2 $ such that $ \angle(\pm_1 \omega_1,\pm_2 \omega_2) \ls 2^{l'} $. Let $ X_1, X_2 $ be translation-invariant spaces and $ L $ be a translation-invariant bilinear operator. Suppose that
$$ \vn{L(  \phi^1,  \phi^2)}_X \ls C_{S_1,S_2} \vn{\phi^1}_{X_1} \vn{\phi^2}_{X_2} $$
holds for all $ \phi_1, \phi_2 $ which are Fourier-supported, respectively, in some subsets
$$ S_i \subset E_i \defeq \{ \vm{\xi_i} \simeq 2^{k_i}, \ \vm{\tau_i \mp_i \vm{\xi_i}} \ls 2^{k_i+2l'}, \ \frac{\xi_i}{\vm{\xi_i}} \in \kappa_i   \}   ,\qquad i=1,2. $$
Then one also has
\be \label{eq:nn0:basic} \vn{L(\pt_{\al} \phi^1, \pt^{\al} \phi^2) }_X \ls 2^{2l'} C_{S_1,S_2} \vn{\nabla_{t,x} \phi^1}_{X_1} \vn{\nabla_{t,x} \phi^2}_{X_2} \ee
for all such $ \phi_1, \phi_2 $ .
\end{proposition}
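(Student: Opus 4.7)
The plan is to mimic the argument for Proposition \ref{M0:form} by a tensor-product Fourier-series decomposition of the symbol
$$ n(\Xi_1, \Xi_2) := \tau_1 \tau_2 - \xi_1 \cdot \xi_2 $$
of the bilinear form $(\phi^1, \phi^2) \mapsto \pt_\al \phi^1 \pt^\al \phi^2$, on the product support $E_1 \times E_2$, gaining a factor $\mu := 2^{2l'} 2^{k_1+k_2}$ in the coefficients, and then applying the assumed estimate on $L$ term by term. Since $\phi^i$ is Fourier-localized to $\{|\Xi_i| \simeq 2^{k_i}\}$, one has $\vn{\phi^i}_{X_i} \simeq 2^{-k_i} \vn{\nabla_{t,x}\phi^i}_{X_i}$ by convolution with an $L^1$-kernel, so the prefactor $\mu$ converts into the required $2^{2l'}$ in \eqref{eq:nn0:basic}.

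The central technical step is the symbol bound $|n(\Xi_1, \Xi_2)| \ls \mu$ on $E_1 \times E_2$, together with matching bounds on its derivatives. Writing $\tau_i = \pm_i |\xi_i| + \sigma_i$ with $|\sigma_i| \ls 2^{k_i + 2l'}$, one decomposes
$$ n = |\xi_1||\xi_2|(\pm_1 \pm_2 - \hat{\xi}_1 \cdot \hat{\xi}_2) + \pm_1 |\xi_1|\sigma_2 + \pm_2|\xi_2|\sigma_1 + \sigma_1\sigma_2, $$
and the first term is $O(|\xi_1||\xi_2| 2^{2l'})$ since the angular assumption gives $\omega_1 \cdot \omega_2 = \pm_1\pm_2(1 - O(2^{2l'}))$ and $\hat{\xi}_i = \omega_i + O(2^{l'})$ with $\hat{\xi}_i - \omega_i$ approximately orthogonal to $\omega_i$; the remaining terms are directly $\ls \mu$ by the modulation estimates. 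For the derivatives, I would use adapted coordinates with radial axis along $\omega_i$ (not $\pm_i\omega_i$), writing $\xi_i = (|\xi_i^0|+\eta_i^{rad}) \omega_i + \eta_i^{ang}$, giving scales $2^{k_i}, 2^{k_i+l'}, 2^{k_i+2l'}$ for $(\eta_i^{rad}, \eta_i^{ang}, \sigma_i)$. In the rescaled unit-size variables $u_i$, a direct computation parallel to the $|\alpha|=0$ case, exploiting $\eta_i^{ang} \perp \omega_i$ and $\omega_1 - \pm_1\pm_2 \omega_2 = O(2^{l'})$, yields $|\pt_{u_1}^{\alpha_1}\pt_{u_2}^{\alpha_2} n| \ls_{\alpha_1,\alpha_2} \mu$ uniformly on $E_1 \times E_2$.

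Given these bounds, I would cover $E_i$ by a slightly larger box $R_i$, select bump cutoffs $\chi_i$ adapted to $R_i$ with $\chi_i = 1$ on $E_i$, and Fourier-expand $\chi_1(\Xi_1) \chi_2(\Xi_2) n(\Xi_1, \Xi_2) = \sum_{\bfj, \bfk} c_{\bfj, \bfk} a^1_\bfj(\Xi_1) a^2_\bfk(\Xi_2)$ on the torus $R_1 \times R_2$. Integration by parts gives $|c_{\bfj, \bfk}| \ls_N \mu(1+|\bfj|+|\bfk|)^{-N}$, while $a^i_\bfj(\Xi_i) = \chi_i(\Xi_i) e^{2\pi i \bfj \cdot u_i(\Xi_i)}$ defines a Fourier multiplier with $L^1$-kernel of mass $\ls (1+|\bfj|)^{n_0}$, hence bounded on the translation-invariant $X_i$. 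Applying the assumed estimate for $L$ to each tensor term and summing completes the proof. The main obstacle is verifying the uniform derivative bound $|\pt_u^\alpha n| \ls \mu$ in the rescaled coordinates: this requires simultaneously tracking all sign configurations and carefully distinguishing $\omega_i$ (a spatial unit vector giving the radial direction of $\xi_i$) from its signed counterpart $\pm_i \omega_i$ appearing in the null-cone condition, since the orthogonality $\eta_i^{ang} \perp \omega_i$ is what produces the $2^{l'}$ gain in all the relevant cross terms.
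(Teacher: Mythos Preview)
Your approach is essentially the paper's: a Fourier-series tensor decomposition of the null symbol on adapted rectangular regions, with rapidly decaying coefficients carrying the $2^{2l'}$ gain, and uniformly bounded multiplier factors. The paper works with the degree-zero normalized symbol $\tilde n(\Xi_1,\Xi_2)=(\tau_1\tau_2-\xi_1\cdot\xi_2)/(|\Xi_1||\Xi_2|)$ rather than $n$, and---more importantly---uses \emph{linear} null-frame coordinates $(\tau_i^\omega,\tau_i^{\omega^\perp},\xi_i')$ in $(\tau_i,\xi_i)$-space (the $\tau_i^\omega$-axis along $(\pm_i,\omega_i)$), exploiting the homogeneity of $\tilde n$ to get decay in the radial index. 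Your direct computation of the rescaled derivative bounds $|\partial_u^\alpha n|\lesssim\mu$ is correct and arguably tidier than the paper's homogeneity argument.

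The one point you underplay is the $L^1$-kernel bound for $a_{\mathbf j}^i(\Xi_i)=\chi_i\,e^{2\pi i\mathbf j\cdot u_i(\Xi_i)}$. Because your modulation coordinate $\sigma_i=\tau_i\mp_i|\xi_i|$ is \emph{nonlinear} in $\xi_i$, the exponential $e^{2\pi i \mathbf j_3\,\sigma_i/2^{k_i+2l'}}$ is not a space-time translation; its $L^1$ kernel on the box must be checked by hand (it is $\lesssim(1+|\mathbf j_3|)^{n_0}$, via the half-wave phase $e^{ic|\xi|}$ on a $2^{k_i}\times(2^{k_i+l'})^{d-1}$ cap with $|c|\lesssim|\mathbf j_3|\,2^{-k_i-2l'}$). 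The paper sidesteps this entirely because in its linear coordinates the exponentials are genuine translations. If you replace your $\sigma_i$ by the linear surrogate $\tilde\sigma_i:=\tau_i\mp_i\,\omega_i\cdot\xi_i$ (which is also $O(2^{k_i+2l'})$ on $E_i$), your setup coincides with the paper's and this step becomes trivial.
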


\begin{corollary}  \label{L2:NFnullFrames:cor} Under the conditions from Proposition \ref{L2:nullFrames}, for $ j \leq \min(k,k_2)+2l'-C$ one has
$$
 \vn{ \pt^{\al} P_{\calC} \bar{Q}^{\pm_1}_{<j} \phi_k \cdot \pt_{\al} P_{\calC'} \bar{Q}^{\pm_2}_{<j} \varphi_{k_2}  }_{L^2_{t,x}} \ls 2^{l'} \vn{ P_{\calC} \bar{Q}^{\pm_1}_{<j}\nabla \phi_k}_{NE_\calC^{\pm_1}}  \vn{ P_{\calC'} \bar{Q}^{\pm_2}_{<j}\nabla \varphi_{k_2}}_{PW_{\calC'}^{\pm_2}}   
$$  
\end{corollary}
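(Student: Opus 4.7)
The plan is to compose Proposition \ref{L2:nullFrames} with Proposition \ref{N0:form}: the former supplies the basic $L^2_{t,x}$ null-frame bilinear bound $2^{-l'} \vn{\cdot}_{NE} \vn{\cdot}_{PW}$ for products of functions with the given Fourier localization, while the latter transfers a gain of $2^{2l'}$ from the $\calN_0$-type structure $\pt_\al \phi \cdot \pt^\al \psi$ at the cost of replacing $\phi, \psi$ by $\nabla_{t,x}\phi, \nabla_{t,x}\psi$. Multiplying the two factors gives exactly the desired $2^{l'}$.

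First I would check that $P_\calC \bar{Q}^{\pm_1}_{<j} \phi_k$ and $P_{\calC'} \bar{Q}^{\pm_2}_{<j} \varphi_{k_2}$ satisfy the Fourier-support hypotheses of Proposition \ref{N0:form}, namely that they sit inside sets of the form
\[
E_i = \{\vm{\xi_i} \simeq 2^{k_i},\ \vm{\tau_i \mp_i \vm{\xi_i}} \ls 2^{k_i + 2l'},\ \xi_i/\vm{\xi_i} \in \kappa_i\}
\]
for caps $\kappa_i$ of angle $\simeq 2^{l'}$ centered at some $\omega_i$ with $\angle(\pm_1 \omega_1, \pm_2 \omega_2) \ls 2^{l'}$. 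The frequency magnitudes are immediate from $P_\calC P_k$. For the angular containment, the angular width of $\calC$ viewed from the origin is $\simeq 2^{l+k'-k}$, which by the separation hypothesis \eqref{angSep} is $\ll 2^{l'}$; hence $\calC$ fits inside a cap of angle $\simeq 2^{l'}$ around its center direction, and the required separation between $\omega_1$ and $\omega_2$ is inherited from $\angle(\pm_1 \calC, \pm_2 \calC') \simeq 2^{l'}$.

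The only mildly nontrivial point, and what I anticipate as the main step to verify, is converting the Klein-Gordon modulation cutoff $\bar{Q}^{\pm_i}_{<j}$ (which enforces $\vm{\tau_i \mp_i \jb{\xi_i}} \ls 2^j$) into the wave-type cutoff $\vm{\tau_i \mp_i \vm{\xi_i}} \ls 2^{k_i + 2l'}$ demanded by Proposition \ref{N0:form}. By the triangle inequality this reduces to the two bounds $2^j \ls 2^{k_i + 2l'}$, which follows from the hypothesis $j \leq \min(k,k_2) + 2l' - C$, together with $\vm{\jb{\xi_i} - \vm{\xi_i}} \simeq 2^{-k_i} \ls 2^{k_i + 2l'}$, valid whenever $l' \geq -k_i$, a condition included in the range $l \in [-\min(k,k_2),C]$ of Proposition \ref{L2:nullFrames}. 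With the Fourier supports thus verified, I would apply Proposition \ref{N0:form} to the plain product bilinear form $L(\phi,\varphi) = \phi \varphi$ treated in Proposition \ref{L2:nullFrames}, taking $X = L^2_{t,x}$, $X_1 = NE_\calC^{\pm_1}$, $X_2 = PW_{\calC'}^{\pm_2}$, and $C_{S_1, S_2} = 2^{-l'}$. The resulting combined factor $2^{2l'} \cdot 2^{-l'} = 2^{l'}$ and the promotion of $\phi_k, \varphi_{k_2}$ to $\nabla_{t,x}\phi_k, \nabla_{t,x}\varphi_{k_2}$ inside the null-frame norms produce exactly the claimed inequality.
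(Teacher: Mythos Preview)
Your proof is correct and follows essentially the same approach as the paper: combine Proposition~\ref{L2:nullFrames} (giving $C_{S_1,S_2}=2^{-l'}$) with Proposition~\ref{N0:form} (giving the $2^{2l'}$ null-form gain), after checking that the Klein--Gordon modulation localization $\bar Q^{\pm_i}_{<j}$ implies the wave-type modulation bound $\vm{\tau_i\mp_i\vm{\xi_i}}\ls 2^{k_i+2l'}$ via $\vm{\jb{\xi_i}-\vm{\xi_i}}\simeq 2^{-k_i}$. One small attribution slip: the inequality $2^{-k_i}\ls 2^{k_i+2l'}$ (equivalently $l'\gtrsim -k_i$) comes from the angular separation hypothesis \eqref{angSep}, which gives $2^{l'}\gg 2^{-\min(k,k_2)}$, not from the range $l\in[-\min(k,k_2),C]$ (note $l$ and $l'$ are distinct parameters).
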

\begin{remark} \label{NF:remark}
One may of course formulate analogues of Prop. \ref{N0:form} also for multilinear forms, such as the trilinear expressions $ L(\phi^1, \pt_{\al} \phi^2, \pt^{\al} \phi^3) $ that occur in the proofs of \eqref{SmallAnglesLargeMod}, \eqref{SmallAngleSmallMod}, \eqref{SmallAngleRemainders}. Checking that the same argument applies for them is straightforward and is left to the reader.
\end{remark}

\begin{proof}[Proof of Prop. \ref{N0:form}] \pfstep{Step~1} Let $ \ell(\Xi^1,\Xi^2) $ be the multiplier symbol of $ L $. In \eqref{eq:nn0:basic} we have the operator with symbol $ \ell(\Xi^1,\Xi^2) \tilde{n}(\Xi^1,\Xi^2) $ applied to $ \vm{D_{t,x}} \phi^1, \vm{D_{t,x}} \phi^2$. 

The idea is to perform a separation of variables in the form
\begin{equation}  \label{eq:nf-basic:dcmp}
	\tilde{n}(\Xi^1,\Xi^2) = \sum_{\bfj, \bfk \in \bbZ^{d}} c_{\bfj, \bfk} \, a_{\bfj}(\Xi^1) b_{\bfk}(\Xi^2) \quad \hbox{ for } (\Xi^1,\Xi^2) \in E_{1} \times E_{2}
\end{equation}
where for each $n \geq 0$ the coefficients obey 
\begin{equation}  \label{eq:nf-basic:c}
	\abs{c_{\bfj, \bfk}} \aleq_{n} 2^{2l'} (1+\abs{\bfj} + \abs{\bfk})^{-n},
\end{equation}
and for some universal constant $n_{0} > 0$, the operators $a_{\bfj}$ and $b_{\bfk}$ satisfy
\begin{equation} \label{eq:nf-basic:ab}
	\nrm{a_{\bfj}(D_{t,x})}_{X_1 \to X_1} \aleq (1+\abs{\bfj})^{n_{0}}, \quad \nrm{b_{\bfk}(D_{t,x}) }_{X_2 \to X_2} \aleq (1+\abs{\bfk})^{n_{0}},
\end{equation}
From these, \eqref{eq:nn0:basic} follows immediately.
\newline

We do a change of variables such that $ \tau_i^{\omega} $ is the (essentially null vector) radial coordinate, $  \tau_i^{\omega^{\perp}} $ is orthogonal to it, and $ \xi_i' $ are angular type coordinates in the $ \xi $ hyperplane, so that $ \vm{\xi_i'} \simeq 2^{k_i} \tht_i  $ where $ \tht_i $ are the angles between $ \xi_i $ and the center of $ \kappa_i $. We denote $ \tilde{\Xi_i}=( \tau_i^{\omega},\tau_i^{\omega^{\perp}}, \xi_i') $.

Denote by $ \tilde{E}_i $ an enlargement of $ E_i $, chosen be a rectangular region of size $  \simeq  2^{k_i} \times 2^{k_i+2l'} \times (2^{k_i+l'})^{d-1} $ (consistently with the coordinates $ ( \tau_i^{\omega},\tau_i^{\omega^{\perp}}, \xi_i') $). Let $ \chi_i $ be a bump function adapted to $ \tilde{E}_i $, which is equal to $ 1 $ on $ E_i $.

\pfstep{Step~2} We claim the following bounds for $ (\Xi^1,\Xi^2) \in E_{1} \times E_{2} $:
\begin{align} 
\vm{\tilde{n}(\Xi^1,\Xi^2)} & \ls 2^{2l'}  \label{n:formbd:1} \\
\vm{ \pt_{\xi_i'}  \tilde{n}(\Xi^1,\Xi^2)} & \ls 2^{-k_i} 2^{l'}, \qquad \quad  \  i=1,2; \label{n:formbd:2}  \\
\vm{ \pt_{\Xi_i}^{\al}  \tilde{n}(\Xi^1,\Xi^2) }& \ls \vm{\Xi^i}^{-\vm{\al}} , \qquad \ i=1,2. \label{n:formbd:3} 
\end{align}
Recall \eqref{nn0:eq}. We write 
$$ \tau_1 \tau_2- \xi_1 \cdot \xi_2=(\tau_1 \mp_1 \vm{\xi_1} )\tau_2 \pm_1 \vm{\xi_1} (\tau_2 \mp_2 \vm{\xi_2}) \pm_1 \pm_2 \vm{\xi_1} \vm{\xi_2} \big(1-\cos\angle(\pm_1 \xi_1,\pm_2 \xi_2 )  \big) $$
which clearly implies \eqref{n:formbd:1}. It is easy to see that
$$ \vm{ \pt_{\xi_i'}  \tilde{n}(\Xi^1,\Xi^2)} \ls 2^{-k_i} \sin \angle(\xi_1,\xi_2)  $$
which implies \eqref{n:formbd:2}, while \eqref{n:formbd:3} follows from the fact that $ \tilde{n} $ is homogeneous in both $ \Xi^1,\Xi^2 $.

\pfstep{Step~3} Performing a Fourier series expansion of $ \tilde{n}(\tilde{\Xi_1},\tilde{\Xi_2}) \chi_1(\tilde{\Xi_1})  \chi_2(\tilde{\Xi_2}) $ by viewing $ \tilde{E}_1 \times \tilde{E}_2 $ as a torus, we may write
\begin{equation} \label{eq:nf-basic:fs}
	\tilde{n}(\tilde{\Xi_1},\tilde{\Xi_2})  = \sum_{\bfj, \bfk \in \bbZ^{d}} c_{\bfj, \bfk} \, e^{2 \pi i \bfj \cdot D_{1} \tilde{\Xi_1}} e^{2 \pi i \bfk \cdot D_{2} \tilde{\Xi_2}} \quad \hbox{ for } (\tilde{\Xi_1},\tilde{\Xi_2}) \in E_{1} \times E_{2},
\end{equation}
where $D_{1}, D_{2}$ are diagonal matrices of the form
\be	D_{i} = \mathrm{diag} \, (O(2^{-k_i}), O(2^{-k_i-2l'}), O(2^{-k_i-l'}), \ldots, O(2^{-k_i-l'})).
\ee
	Defining
\begin{equation*}
	a_{\bfj}(\Xi_1) = ( \chi_1(\tilde{\Xi_2}) e^{2 \pi i \bfj \cdot D_{1} \tilde{\Xi_1}})(\Xi_1), \quad 
	b_{\bfk}(\Xi_2) = (\chi_2(\tilde{\Xi_2}) e^{2 \pi i \bfk \cdot D_{2} \tilde{\Xi_2}})(\Xi_2),
\end{equation*}
we obtain the desired decomposition \eqref{eq:nf-basic:dcmp} from \eqref{eq:nf-basic:fs}.

To prove \eqref{eq:nf-basic:c}, by the Fourier inversion formula
\begin{equation*}
	c_{\bfj, \bfk} =  \frac{1}{\hbox{Vol}(\tilde{E}_1 \times \tilde{E}_2)}  \int_{\tilde{E}_1 \times \tilde{E}_2} n(\tilde{\Xi_1},\tilde{\Xi_2}) \chi_1(\tilde{\Xi_1})  \chi_2(\tilde{\Xi_2})e^{- 2 \pi i \bfj \cdot D_{1} \tilde{\Xi_1}} e^{- 2 \pi i \bfk \cdot D_{2} \tilde{\Xi_2}} \, \ud \tilde{\Xi_1}  \, \ud \tilde{\Xi_2}.
\end{equation*}
Integrating by parts w.r.t. to $ \tau_i^{\omega} $ by the homogeneity of $ \tilde{n} $ and \eqref{n:formbd:1} we obtain 
\begin{equation*}
	\abs{c_{\bfj, \bfk}} \aleq_{n} 2^{2l'} (1+\abs{\bfj_{1}})^{-n} \quad [\hbox{resp. } \abs{c_{\bfj, \bfk}} \aleq_{n} 2^{2l'} (1+\abs{\bfk_{1}})^{-n}],
\end{equation*}
for any $n \geq 0$.  On the other hand, for any $j = 2, \ldots, d+1$, integration by parts in $ \tau_i^{\omega^{\perp}} $ or in $ \xi_i' $ and using \eqref{n:formbd:1}-\eqref{n:formbd:3} yields
\begin{equation*}
	\abs{c_{\bfj, \bfk}} \aleq_{n} 2^{2l'} \abs{\bfj_{j}}^{-n} \quad [\hbox{resp. } \abs{c_{\bfj, \bfk}} \aleq_{n} 2^{2l'} \abs{\bfk_{j}}^{-n}].
\end{equation*}
The preceding bounds imply \eqref{eq:nf-basic:c} as desired.

Finally, we need to establish \eqref{eq:nf-basic:ab}. We will describe the case of $a_{\bfj}(D)$. Consider the differential operators 
$$ D_{\omega_1}=( 2^{k_1} \pt_{\tau_i^{\omega}}, 2^{k_1+2l'} \pt_{\tau_i^{\omega^{\perp}}}, 2^{k_1+l'} \pt_{\xi_1'} ) $$
For any multi-index $\alp$, observe that
$$
\abs{D_{\omega_1}^{\alp} (\chi_1(\tilde{\Xi_1}) e^{2 \pi i \bfj \cdot D_{1} \tilde{\Xi_1}})} \aleq_{\alp} (1+\abs{\bfj})^{\abs{\alp}}.
$$
From this bound, it is straightforward to check that the convolution kernel of $a_{\bfj}(D)$ obeys $\nrm{\check{a}_{\bfj}}_{L^{1}} \aleq (1+\abs{\bfj})^{n_{0}}$ for some universal constant $n_{0}$, which implies the bound \eqref{eq:nf-basic:ab} for $a_{\bfj}(D)$.
\end{proof}

\begin{proof}[Proof of Corollary \ref{L2:NFnullFrames:cor}] The corollary follows from Prop. \ref{N0:form}  and Prop. \ref{L2:nullFrames}. Indeed, with $ k=k_1, k_2=k_2 $ we take $ C_{S_1,S_2}=2^{-l'} $ with
$$ S_1=\{ (\tau_1,\xi_1) \ | \ \xi_1 \in \calC, \ \vm{\xi_1} \simeq 2^k, \ \vm{\tau_1 \mp_1 \jb{\xi_1}} \ls 2^j \} $$
and $ S_2 $ defined analogously. We check that $ S_i \subset E_i $. The condition \eqref{angSep} insures that we can define $ \kappa_1,\kappa_2 $ appropriately. It remains to verify
$$ \vm{\tau_i \mp_i \vm{\xi_i}} \leq \vm{\tau_i \mp_i \jb{\xi_i}}+ \jb{\xi_i}-\vm{\xi_i} \ls 2^j+2^{-k_i} \ls 2^{k_i+2l'} $$
by the condition on $ j $ and \eqref{angSep}.
\end{proof}

If we replace $ \tau_i $ by $ \pm \vm{\xi_i} $ in \eqref{nn0:eq} we remove the time dependence in Prop. \ref{N0:form} and may formulate a spatial analogue for the bilinear form defined by $ \vm{\xi_1} \vm{\xi_2}\pm \xi_1 \cdot \xi_2 $. We consider the $ + $ case for $ \calN_0(\phi_1,\phi_2) $ in \eqref{n0:eq}, which will be useful for high $ \times $ high $ \to $ low frequency interactions.

\begin{proposition} \label{n0:form:prop}
Let $ k \in \mb{Z},\ l \leq 0 $ and $ 1 \leq p,q_1,q_2 \leq \infty $ with $ p^{-1}=q_1^{-1}+q_2^{-1} $. Let $ \kappa_1, \kappa_2 $ be spherical caps of angle $ \simeq 2^l $ such that $ \angle( \kappa_1, -\kappa_2) \ls 2^l $. 

Then, for all functions $ \phi_1, \phi_2 $ with Fourier support, respectively, in $ \{ \vm{\xi_i} \simeq 2^k, \ \xi_i/\vm{\xi_i} \in \kappa_i \}, \ i=1,2, $ we have 
$$  \vn{\calN_0 (\phi^1,\phi^2)}_{L^p} \ls 2^{2l+2k}  \vn{\phi_1}_{L^{q_1}} \vn{\phi_2}_{L^{q_2}}.
$$
\end{proposition}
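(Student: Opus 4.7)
The plan is to follow the same strategy as in the proof of Proposition \ref{N0:form}, namely a separation of variables via a Fourier series decomposition of the symbol $n_0(\xi_1,\xi_2) = |\xi_1||\xi_2| + \xi_1 \cdot \xi_2$ on a rectangular box adapted to the Fourier supports of $\phi_1,\phi_2$. The statement is purely spatial, which slightly simplifies the argument relative to Proposition~\ref{N0:form} (no modulation variable enters).

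First, I would record the pointwise bound on the symbol. Writing $n_0 = |\xi_1||\xi_2|(1+\cos\angle(\xi_1,\xi_2))$, the angular separation hypothesis $\angle(\kappa_1,-\kappa_2) \ls 2^{l}$ gives $\angle(\xi_1,\xi_2) = \pi - O(2^{l})$, hence $1+\cos\angle(\xi_1,\xi_2) \ls 2^{2l}$ and therefore $|n_0(\xi_1,\xi_2)| \ls 2^{2k+2l}$ on the supports. Next, for each $i=1,2$ I introduce coordinates $(\rho_i,\omega_i')$ with $\rho_i = |\xi_i| \simeq 2^{k}$ radial and $\omega_i' \in \bbR^{d-1}$ transverse, so that the support of $\phi_i$ is contained in a rectangular box $\tilde E_i$ of dimensions $2^k \times (2^{k+l})^{d-1}$. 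Let $\chi_i$ be a smooth bump function adapted to $\tilde E_i$ and equal to $1$ on the support of $\hat\phi_i$.

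The central derivative estimates I need on $E_1 \times E_2$ are
\begin{equation*}
\bigl|(2^{k}\partial_{\rho_1})^{\alpha_1}(2^{k+l}\partial_{\omega_1'})^{\beta_1}
(2^{k}\partial_{\rho_2})^{\alpha_2}(2^{k+l}\partial_{\omega_2'})^{\beta_2} n_0\bigr| \ls 2^{2k+2l}
\end{equation*}
for every multi-index. These follow from three ingredients: (a) $n_0$ is homogeneous of degree $1$ in each $\xi_i$, so each $\partial_{\rho_i}$ gains a factor $\rho_i^{-1} \simeq 2^{-k}$; (b) the first angular derivative of $1+\cos\theta$ near $\theta=\pi$ is $-\sin\theta = O(2^{l})$, giving the extra $2^{l}$ needed to match the weight $2^{k+l}$ for $\partial_{\omega_i'}$; (c) higher angular derivatives of $1+\cos\theta$ are $O(1)$, and when rescaled by $(2^{k+l})^{m}/|\xi|^{m} = 2^{ml}$ with $l \leq 0$ and $m \geq 2$ are still bounded by $2^{2l}$. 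Combined with $|n_0| \ls 2^{2k+2l}$, this gives the claimed uniform bound.

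Then I view $\tilde E_1 \times \tilde E_2$ as a product of tori and expand $n_0(\xi_1,\xi_2)\chi_1(\xi_1)\chi_2(\xi_2)$ in a Fourier series, producing
\begin{equation*}
n_0(\xi_1,\xi_2) = \sum_{\bfj,\bfk \in \bbZ^{d}} c_{\bfj,\bfk}\, a_{\bfj}(\xi_1) b_{\bfk}(\xi_2) \quad \text{on } E_1 \times E_2,
\end{equation*}
where, by Fourier inversion and repeated integration by parts using the derivative bound above, $|c_{\bfj,\bfk}| \ls_n 2^{2k+2l}(1+|\bfj|+|\bfk|)^{-n}$, and the $a_{\bfj}, b_{\bfk}$ are $\chi_i$ times complex exponentials whose multiplier convolution kernels have $L^1$ norm bounded by a polynomial in $|\bfj|,|\bfk|$; in particular $a_{\bfj}(D), b_{\bfk}(D)$ are bounded on $L^{q_1},L^{q_2}$ respectively with polynomial loss. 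H\"older's inequality with $p^{-1}=q_1^{-1}+q_2^{-1}$ and summation in $\bfj,\bfk$ then conclude.

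The main obstacle, as in Proposition~\ref{N0:form}, is establishing the uniform derivative bounds on $n_0$ after rescaling to unit-size boxes, which requires exploiting both the antipodal cancellation $\cos\theta \approx -1$ (for the pointwise and first-angular-derivative bounds) and the homogeneity (for radial and higher-order angular derivatives). Once these are in place, the Fourier series/separation-of-variables machinery transfers verbatim from the proof of Proposition~\ref{N0:form}.
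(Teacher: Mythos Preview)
Your proposal is correct and follows essentially the same approach as the paper. The paper's proof is omitted with the remark that it is very similar to that of Proposition~\ref{N0:form}, the only difference being that the Fourier series expansion is carried out on a $\bigl(2^{k}\times(2^{k+l})^{d-1}\bigr)^{2}$-sized region in $\bbR^{d}_{\xi}\times\bbR^{d}_{\xi}$ rather than in $\bbR^{d+1}_{\tau,\xi}\times\bbR^{d+1}_{\tau,\xi}$; this is precisely what you do, and your derivative bounds on $n_{0}$ correctly supply the analogues of \eqref{n:formbd:1}--\eqref{n:formbd:3} needed to make the separation-of-variables argument go through.
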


\begin{proof}
The proof is very similar to the proof of Prop. \ref{N0:form} and is omitted. The basic difference is that here one performs the Fourier series expansion on a $ \big( 2^{k} \times (2^{k+l})^{d-1} \big)^2 $-sized region in $ \mb{R}_{\xi}^{d} \times \mb{R}_{\xi}^{d} $ instead of $ \mb{R}_{\tau,\xi}^{d+1} \times \mb{R}_{\tau,\xi}^{d+1} $.
\end{proof}

\subsection{The $ \calN_{ij} $ forms}

In this subsection we consider the null forms
\be \label{Nijforms}
 \calN_{ij}(\phi,\varphi)=\pt_i \phi \pt_j \varphi- \pt_j \phi \pt_i \varphi. \ee
We begin with a general result
\begin{proposition} \label{ntht:form:prop}
Let $ N $ be a bilinear form with symbol $ n(\xi,\eta) $ assumed to be homogeneous of degree $ 0 $ in $ \xi,\eta $ and to obey
$$
\vm{n(\xi,\eta)} \leq A \vm{ \angle(\xi,\eta) }.
$$
Let $ \omega_1, \omega_2 \subset \mb{S}^{d-1} $ be angular caps of radius $ \vm{r_i}\leq 2^{-10}, \ i=1,2 $ and define  $ \tht \defeq \max \{ \angle(  \vm{ \omega_1, \omega_2 )}, r_1,r_2 \} $. Let $ 1 \leq p, q_1, q_2 \leq \infty $ be such that $ p^{-1}=q_1^{-1}+q_2^{-1} $.  Let the functions $ f_1, f_2 $ be defined on $ \mb{R}^{d} $ with Fourier support in 
$$ \{  \vm{\xi} \simeq 2^{k_i}, \ \frac{\xi}{\vm{\xi}} \in \omega_i \}, \qquad i=1,2. $$
Then we have
\be \label{ntht:est}
\vn{N(f_1,f_2)}_{L^p} \ls \tht \vn{f_1}_{L^{q_1}} \vn{f_2}_{L^{q_2}}.
\ee
\end{proposition}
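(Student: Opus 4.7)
The plan is to follow the separation-of-variables strategy employed in Propositions~\ref{M0:form} and~\ref{N0:form}. First, I would parameterize each cap using angular coordinates on a box of size $\simeq r_{i}$ and a radial coordinate on scale $2^{k_{i}}$, and view the enlargement of the product of supports as a torus in $\mathbb{R}^{d}\times\mathbb{R}^{d}$. Expanding $n \chi_{1} \chi_{2}$ (with smooth cutoffs $\chi_i$ adapted to slight enlargements of the cap supports) into a Fourier series on this torus gives
$$
n(\xi,\eta) = \sum_{{\bf j},{\bf k} \in \mathbb{Z}^{d}} c_{{\bf j},{\bf k}}\, a_{\bf j}(\xi)\, b_{\bf k}(\eta) \qquad \text{for } (\xi,\eta) \in \mathrm{supp}(\hat\phi_{1}) \times \mathrm{supp}(\hat\phi_{2}),
$$
where $a_{\bf j}, b_{\bf k}$ are products of the cap cutoffs with characters of the torus. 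Granted the coefficient bound
$$
|c_{{\bf j},{\bf k}}| \lesssim_{M} A\theta\, (1+|{\bf j}|+|{\bf k}|)^{-M}
$$
for every $M \geq 0$, and the standard bound $\|a_{\bf j}(D)\|_{L^{q} \to L^{q}} + \|b_{\bf k}(D)\|_{L^{q} \to L^{q}} \lesssim (1+|{\bf j}|)^{n_{0}} + (1+|{\bf k}|)^{n_{0}}$ via an $L^{1}$-kernel estimate, the conclusion \eqref{ntht:est} follows immediately from H\"older's inequality after summation.

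The core step is therefore the coefficient estimate. By Fourier inversion, $c_{{\bf j},{\bf k}}$ is a normalized integral of $n\chi_{1}\chi_{2} \cdot e^{-i(\cdots)}$ over the torus. When $({\bf j},{\bf k})=(0,0)$, the hypothesis $|n(\xi,\eta)| \leq A\,|\angle(\xi,\eta)| \lesssim A\theta$ on the support directly gives $|c_{0,0}| \lesssim A\theta$. For nonzero indices, I would integrate by parts in the direction of the largest component. In a tangential direction the length scale is $L_{t} \simeq 2^{k_{i}} r_{i}$, and homogeneity of degree zero yields the bound $|\partial_{\xi_{i}}^{M} n| \lesssim A\, 2^{-M k_{i}}$, so $M$-fold IBP gives
$$
|c_{{\bf j},{\bf k}}| \lesssim (L_{t}/j_{\alpha})^{M} \cdot A\, 2^{-M k_{i}} = A\,(r_{i}/j_{\alpha})^{M} \leq A\,(\theta/j_{\alpha})^{M} \lesssim A\theta\, j_{\alpha}^{-M},
$$
where the final inequality uses $r_{i}\leq \theta$ and $\theta \leq 1$. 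In the radial direction, homogeneity forces $\partial_{|\xi_{i}|} n \equiv 0$, so every radial derivative lands on $\chi_{i}$, which obeys $|\partial_{|\xi_{i}|}^{M} \chi_{i}| \lesssim 2^{-M k_{i}}$; combined with the sup norm bound $|n| \lesssim A\theta$ on the support, this again yields $|c_{{\bf j},{\bf k}}| \lesssim A\theta\, j_{\mathrm{rad}}^{-M}$.

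The main delicacy is the extraction of the full factor $A\theta$ through the tangential integration by parts; this is precisely where the hypothesis $r_{i} \leq \theta$ enters. The argument is parallel to the proof of Proposition~\ref{N0:form}, with the difference that in that setting the symbol $\tilde n$ satisfies the sharper first-derivative bound \eqref{n:formbd:2} (since it is essentially $\sim (\text{angle})^{2}$), whereas here only the zeroth-order bound improves by $\theta$ and one must combine it with the homogeneity bounds on derivatives. Since Proposition~\ref{ntht:form:prop} is stated for a generic smooth homogeneous symbol, no further cancellation in $n$ is needed and the single passage $r_{i} \leq \theta$ suffices to close the estimate.
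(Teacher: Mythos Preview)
Your proposal is correct and follows precisely the Fourier-series separation-of-variables method that the paper points to (it defers the proof to \cite{MD} and notes it is ``similar to the one of Prop.~\ref{N0:form}''). One small imprecision: homogeneity alone gives $|\partial_{\xi}^{M} n|\lesssim C_{n}\,2^{-Mk_{i}}$ with $C_{n}$ depending on the derivatives of $n$ on the unit sphere rather than on $A$; after Leibniz the $a=0$ term uses the hypothesis $|n|\le A\theta$ directly while the $a\ge 1$ terms contribute $C_{n}r_{i}^{a}/|j_{\alpha}|^{M}\le C_{n}\theta/|j_{\alpha}|^{M}$, so the coefficient bound holds with implied constant depending on both $A$ and the smoothness of $n$, which is all that \eqref{ntht:est} requires.
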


\begin{proof}
This is a known proposition, whose proof is similar to the one of Prop. \ref{N0:form}. For a complete proof we refer to \cite[Prop. 7.8 and appendix of Section 7]{MD}.
\end{proof}

\begin{corollary} \label{Nij:form:prop}
Under the conditions of Prop. \ref{ntht:form:prop} we have
\be \label{Nij:est}
 \vn{\calN_{ij}(f_1,f_2)}_{L^p} \ls \tht \vn{\nabla_x f_1}_{L^{q_1}} \vn{\nabla_x f_2}_{L^{q_2}}.
\ee
\end{corollary}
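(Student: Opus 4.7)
The approach is to reduce Corollary \ref{Nij:form:prop} directly to Proposition \ref{ntht:form:prop} by exploiting the algebraic structure of $\calN_{ij}$. On the Fourier side, the symbol of $\calN_{ij}(f_1,f_2)$ is $-(\xi_i \eta_j - \xi_j \eta_i)$, which factors as $|\xi||\eta|\, \tilde{n}(\xi,\eta)$ where
\[
  \tilde{n}(\xi,\eta) = -\frac{\xi_i \eta_j - \xi_j \eta_i}{|\xi||\eta|}.
\]
The function $\tilde{n}$ is homogeneous of degree $0$ in each of $\xi,\eta$, and $|\tilde{n}(\xi,\eta)|$ equals the $(i,j)$ component of $\hat{\xi} \wedge \hat{\eta}$ (for $\hat\xi = \xi/|\xi|$), so
\[
  |\tilde{n}(\xi,\eta)| \leq |\sin \angle(\xi,\eta)| \leq |\angle(\xi,\eta)|.
\]
In particular $\tilde{n}$ meets the hypotheses of Proposition \ref{ntht:form:prop} with constant $A \leq 1$.

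The factorization then reads $\calN_{ij}(f_1,f_2) = \tilde{N}(|D_x| f_1, |D_x| f_2)$, where $\tilde{N}$ is the bilinear form with symbol $\tilde{n}$. Applying Proposition \ref{ntht:form:prop} to $|D_x| f_1, |D_x|f_2$ (which have the same Fourier support as $f_1, f_2$) yields
\[
  \vn{\calN_{ij}(f_1,f_2)}_{L^p} \ls \tht \vn{|D_x| f_1}_{L^{q_1}} \vn{|D_x| f_2}_{L^{q_2}}.
\]

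It remains to replace $|D_x|$ by $\nabla_x$ on the right, which is possible because each $f_i$ is localized in the angular cap $\omega_i$. Fix a unit direction $\nu_i$ at the center of $\omega_i$; since $r_i \leq 2^{-10}$, on the Fourier support of $f_i$ we have $\nu_i \cdot \xi \simeq |\xi|$, so the multiplier $|\xi|/(\nu_i \cdot \xi)$ restricted to this support (enlarged and cut off smoothly in the usual way) is a smooth bounded symbol with convolution kernel of bounded $L^1$ mass, uniformly in $k_i$. Hence
\[
  \vn{|D_x| f_i}_{L^{q_i}} \ls \vn{\nu_i \cdot \nabla_x f_i}_{L^{q_i}} \ls \vn{\nabla_x f_i}_{L^{q_i}},
\]
completing the proof of \eqref{Nij:est}. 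There is no real obstacle here; the only subtle point is handling the endpoints $q_i \in \{1,\infty\}$, which is why one must present the multiplier $|\xi|/(\nu_i \cdot \xi)$ via a bump-function cutoff on the cap rather than invoking Riesz transform $L^q$ boundedness, but this is automatic from the frequency localization.
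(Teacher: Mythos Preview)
Your proof is correct and follows essentially the same approach as the paper: write $\calN_{ij}(f_1,f_2)=N(|D|f_1,|D|f_2)$ for the homogeneous degree-zero symbol $n(\xi,\eta)=\frac{\xi_i}{|\xi|}\frac{\eta_j}{|\eta|}-\frac{\xi_j}{|\xi|}\frac{\eta_i}{|\eta|}$ and invoke Proposition~\ref{ntht:form:prop}. The paper's argument is terser and leaves the passage from $\||D_x|f_i\|_{L^{q_i}}$ to $\|\nabla_x f_i\|_{L^{q_i}}$ implicit; your explicit treatment of this step via the disposable multiplier $|\xi|/(\nu_i\cdot\xi)$ on the angular cap (avoiding Riesz transform boundedness at the endpoints $q_i\in\{1,\infty\}$) is a welcome clarification.
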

\begin{proof}
This follows by writing $ \calN_{ij}(f_1,f_2)=N(\vm{D}f_1,\vm{D} f_2) $ for $ N $ with symbol $ n(\xi,\eta)=\frac{\xi_i}{\vm{\xi}} \frac{\eta_j}{\vm{\eta}}- \frac{\xi_j}{\vm{\xi}} \frac{\eta_i}{\vm{\eta}} $ and applying \eqref{ntht:est}.
\end{proof}

\subsection{The geometry of frequency interactions} \

Before we state the core $ \calN_{ij} $ estimates that will be used to estimate the nonlinearity, we pause here to analyze the geometry of the frequencies of two hyperboloids interacting with a cone at low modulations. 
The methods of doing this is well-known, see \cite[sec. 13]{Tao2}, \cite[Lemma 6.5]{BH1}.

In what follows we denote by $ k_{\min}, k_{\max} $ the minimum and maximum of $ k_0,k_1,k_2 $, and similarly we consider $ j_{\min}, j_{\med}, j_{\max} $ for $ j_0, j_1, j_2 $.

\begin{lemma} \label{geom:cone}  Let $ (k_0,k_1,k_2) \in \mb{Z} \times \mb{Z}_+ \times \mb{Z}_+ $, $ j_i \in \mb{Z} \ $ for $ i=0,1,2 $ and let $ \omega_i \subset \mb{S}^{d-1} $ be angular caps of radius $ r_i \ll 1 $. Let $ \phi^1, \phi^2 $ have Fourier support, respectively, in 
$$ S_i=\{  \jb{\xi} \simeq 2^{k_i}, \ \frac{\xi}{\vm{\xi}} \in \omega_i, \ \vm{\tau-s_i  \jb{\xi}} \simeq 2^{j_i} \}, \qquad i=1,2 $$
and let $ A $ have Fourier support in 
$$ S_0=\{ \vm{\xi} \simeq 2^{k_0}, \ \frac{\xi}{\vm{\xi}} \in \omega_0, \ \vm{\tau-s_0 \vm{\xi} } \simeq 2^{j_0} \}, $$
for some signs $ s_0,s_1,s_2 $. Let $ L $ be translation-invariant and consider
\be \label{expr}
\int A \cdot L(\phi^1,\phi^2) \dd x \dd t.
\ee
\begin{enumerate} [leftmargin=*]
\item Suppose $ j_{\max} \leq k_{\min}+C_0 $. Then \eqref{expr} vanishes unless $$ j_{\max} \geq k_{\min}-2 \min(k_1,k_2)-C. $$
\item Suppose $ j_{\max} \leq k_{\min}+C_0 $ and define $ \ell \defeq \frac{1}{2}(j_{\max}-k_{\min})_{-} $. 

Then \eqref{expr} vanishes unless $ 2^{\ell} \gtrsim 2^{-\min(k_1,k_2)} $ and 
\be \angle(s_i \omega_i, s_{i'} \omega_{i'} ) \ls 2^{\ell} 2^{k_{\min}-\min(k_i,k_{i'})} + \max (r_i,r_{i'}) \label{geom:ang}  \ee
for every pair $ i,i' \in \{ 0,1,2\} $.
\item If in addition we assume $ j_{\med} \leq j_{\max}-5 $, then in \eqref{geom:ang} we have $ \simeq $ instead of $ \ls $.    \\
\item If $ j_{\med} \leq j_{\max}-5 $ then \eqref{expr} vanishes unless either $ j_{\max}=k_{\max}+O(1) $ or $ j_{\max} \leq k_{\min}+\frac{1}{2} C_0 $.
\end{enumerate}
\end{lemma}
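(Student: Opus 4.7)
The argument starts from the basic Fourier observation that, by Plancherel, \eqref{expr} is supported on the resonance set $\{\xi_0+\xi_1+\xi_2=0,\ \tau_0+\tau_1+\tau_2=0\}$. Combined with the modulation constraints $\tau_0 = s_0|\xi_0|+O(2^{j_0})$ and $\tau_i = s_i\jb{\xi_i}+O(2^{j_i})$ for $i=1,2$, this forces the master identity
\[ s_0|\xi_0| + s_1\jb{\xi_1} + s_2\jb{\xi_2} \;=\; O(2^{j_{\max}}), \qquad (\star) \]
together with the pairwise companions coming from $(\tau_i+\tau_{i'})^2 - |\xi_i+\xi_{i'}|^2 = \tau_{i''}^2-|\xi_{i''}|^2$ and the mass-shell estimates $\tau_0^2-|\xi_0|^2 = O(2^{j_0+k_0})$, $\tau_i^2-|\xi_i|^2 = 1+O(2^{j_i+k_i})$ for $i=1,2$. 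The two Klein--Gordon ``$1$'' contributions cancel in the pairings $(0,i)$ but leave a net $-1$ in $(1,2)$, producing
\[ \tau_0\tau_i - \xi_0\cdot\xi_i = O(2^{j_{\max}+k_{\max}}), \qquad \tau_1\tau_2 - \xi_1\cdot\xi_2 = -1 + O(2^{j_{\max}+k_{\max}}). \qquad (\diamond) \]

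For Parts (1)--(2), I would first split $\jb{\xi_i} = |\xi_i| + (\jb{\xi_i}-|\xi_i|)$ in $(\star)$, using $\jb{\xi_i}-|\xi_i| \simeq 2^{-k_i}$ (valid since $k_i \geq 0$), to extract the wave resonance $|s_0|\xi_0|+s_1|\xi_1|+s_2|\xi_2|| \lesssim 2^{-\min(k_1,k_2)}+2^{j_{\max}}$. A quick sign analysis under $j_{\max} \leq k_{\min}+C_0$ shows that the all-same-sign case forces $k_{\max} \simeq k_{\min}$, where the conclusion is immediate; in the mixed-sign case I would substitute $\tau_i \approx s_i\jb{\xi_i}$ into the $(1,2)$-identity of $(\diamond)$, write $\xi_1\cdot\xi_2 = |\xi_1||\xi_2|\cos\theta_{12}$, and convert to $\phi_{12} = \angle(s_1\xi_1, s_2\xi_2)$ via the sign $s_1 s_2$. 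A small-angle expansion combined with $\jb{\xi_1}\jb{\xi_2}-|\xi_1||\xi_2| \simeq 2^{|k_1-k_2|}$ then yields, in the only sign pattern consistent with $(\star)$,
\[ |\xi_1||\xi_2|\,\phi_{12}^2 \;\lesssim\; 2^{j_{\max}+k_{\max}}, \]
which simultaneously forces $k_1 \simeq k_2$ and provides both the lower bound $j_{\max} \geq k_{\min} - 2\min(k_1,k_2) - C$ of Part (1) and, setting $\ell = \tfrac{1}{2}(j_{\max}-k_{\min})_{-}$, the $(1,2)$-angle estimate of Part (2). The angle bounds for the pairs $(0,1)$ and $(0,2)$ follow analogously from the first identity of $(\diamond)$; the $\max(r_i, r_{i'})$ correction absorbs the sharp-cap uncertainty.

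For Part (3), when $j_{\med} \leq j_{\max}-5$ the single dominant modulation forces the $O(2^{j_{\max}+k_{\max}})$ error in $(\diamond)$ to be realized with a matching lower bound of the same order, so the $\lesssim$ in the angular bounds upgrades to $\simeq$. Part (4) is then extracted by a case analysis on which of the three mass shells supplies the dominant $O(2^{j+k})$ error: in each case, the pairwise identity in $(\diamond)$ involving the two ``other'' indices forces either a saturation $j_{\max} = k_{\max}+O(1)$ (when the geometry is generic) or the smallness $j_{\max} \leq k_{\min} + \tfrac{1}{2} C_0$ (when the signs and angles fall into the null configuration), while intermediate values cannot balance the two sides. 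The main delicate point throughout is the mixed-sign, high-high-to-low regime, where the Klein--Gordon correction $\jb{\xi_i}-|\xi_i|\simeq 2^{-k_i}$ interacts nontrivially with the wave-cone contribution $|\xi_0|$ and must be tracked carefully; the remaining regimes reduce to straightforward triangle-inequality arguments.
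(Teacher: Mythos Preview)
Your strategy of using the pairwise identities $(\diamond)$ is a legitimate alternative to the paper's direct manipulation of the resonance function $H = s_0|\xi_0|+s_1\jb{\xi_1}+s_2\jb{\xi_2}$, and for the pair $(i,i')$ that contains the low-frequency index it gives exactly the right angle bound. However, there is a genuine gap when both indices in the pair carry high frequency. Concretely, in the high--high--to--low regime $k_0=k_{\min}\ll k_1\simeq k_2=k_{\max}$, your $(1,2)$-identity yields only
\[
|\xi_1||\xi_2|\,\phi_{12}^2 \lesssim 2^{j_{\max}+k_{\max}}, \qquad\text{i.e.}\qquad \phi_{12}\lesssim 2^{\tfrac12(j_{\max}-k_{\max})},
\]
whereas the claimed bound \eqref{geom:ang} requires $\phi_{12}\lesssim 2^{\ell}2^{k_{\min}-k_{\max}}=2^{\tfrac12(j_{\max}+k_{\min})-k_{\max}}$, which is smaller by a factor $2^{\tfrac12(k_{\min}-k_{\max})}$. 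The same loss occurs for the $(0,1)$-identity when $k_2=k_{\min}$. The paper avoids this by first extracting from $H$ the angle at the \emph{low-frequency} vertex (where the estimate is sharp) and then transferring it to the high--high pair via the law of sines, which supplies precisely the missing factor $|\xi_{\min}|/|\xi_{\max}|$.

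A second, related issue: your displayed inequality for $\phi_{12}$ does \emph{not} give the Part~(1) lower bound on $j_{\max}$. When $|\xi_1|=|\xi_2|$ exactly, one has $\jb{\xi_1}\jb{\xi_2}-|\xi_1||\xi_2|=1$, so the $(1,2)$-identity is trivially satisfied at $\phi_{12}=0$ with no constraint on $j_{\max}$. The lower bound in fact comes from the $(0,i)$-identities, where the Klein--Gordon correction $|\xi_0|(\jb{\xi_i}-|\xi_i|)\simeq 2^{k_0-k_i}$ is a positive term with the same sign as the angular contribution and therefore cannot cancel; equivalently, in the paper's rationalized expression for $H$ both summands share the sign $s_1$, so $|H|\gtrsim 2^{k_0-k_1-k_2}$. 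Your sketch of Parts~(3)--(4) also needs care: under $j_{\med}\le j_{\max}-5$ the error in $(\diamond)$ is $\simeq 2^{j_{\max}+k_{i^*}}$ with $i^*$ the index realizing $j_{\max}$, not automatically $2^{j_{\max}+k_{\max}}$; the paper instead argues directly that $|H|\simeq 2^{j_{\max}}$ and combines this with the dichotomy $|H|\simeq 2^{k_{\max}}$ versus $|H|\lesssim 2^{k_{\min}}$.
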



\begin{proof} If  \eqref{expr} does not vanish, there exist $ (\tau^i,\xi^i) \in S_i $, ($ i=0,1,2 $) such that $ \sum_i (\tau^i,\xi^i) =0 $. Consider
$$ H \defeq s_0 \vm{\xi_0} + s_1 \jb{\xi_1}+s_2 \jb{\xi_2}. $$
Using $ \sum_i \tau^i=0 $, note that
\be  \label{H:jmax}
\vm{H} = \vm{(s_0 \vm{\xi_0}-\tau^0) + (s_1 \jb{\xi_1}-\tau^1)+(s_2 \jb{\xi_2}-\tau^2) } \ls 2^{j_{\max}}. \ee

When the signs $ s_i $ of the two highest frequencies are the same, we have $ \vm{H} \simeq 2^{k_{\max}} $. This implies $ j_{\max} \geq k_{\max}-C $ and with the assumption $ j_{\max} \leq k_{\min}+C_0 $ we deduce $ \vm{k_{\max}-k_{\min}} \leq C $ and $ \ell=O(1) $, in which case the statements are obvious. 

Now suppose the high frequencies have opposite signs. By conjugation symmetry we may assume $ s_0=+ $. By swapping $ \phi^1 $ with $ \phi^2 $ if needed, we may assume $ s_2=- $ and that $ k_2 \neq k_{\min} $. We write
\begin{align*} H & =\vm{\xi_0} + s_1 \jb{\xi_1}- \jb{\xi_2}=\frac{(\vm{\xi_0} +s_1 \jb{\xi_1})^2-(1+\vm{\xi_0+\xi_1}^2)}{\vm{\xi_0} + s_1 \jb{\xi_1}+ \jb{\xi_2}} = \\
& =\frac{2 s_1 \vm{\xi_0} \jb{\xi_1}-2 \xi_0 \cdot \xi_1}{\vm{\xi_0} + s_1 \jb{\xi_1}+ \jb{\xi_2}}=\frac{2 s_1 \vm{\xi_0} \vm{\xi_1}-2 \xi_0 \cdot \xi_1}{\vm{\xi_0} + s_1 \jb{\xi_1}+ \jb{\xi_2}}+ \frac{2 s_1 \vm{\xi_0}}{\jb{\xi_1}+\vm{\xi_1}} \frac{1}{{\vm{\xi_0} + s_1 \jb{\xi_1}+ \jb{\xi_2}}}    .
\end{align*}
where we have used $ \jb{\xi_1}-\vm{\xi_1}=(\jb{\xi_1}+\vm{\xi_1})^{-1} $.

If $ k_0=k_{\min} $ we are in the case $ (s_0,s_1,s_2)=(+,+,-) $. If $ k_0=k_{\max}+O(1) $, we are in the case $ k_1=k_{\min} $. Either way, we deduce  
$$ \vm{H} \simeq 2^{k_{\min}} \angle(\xi^0,s_1 \xi^1)^2+ 2^{k_0-k_1-k_2} .$$
This and \eqref{H:jmax} proves Statement (1) and (2) for $ (i,i')=(0,1) $. The other pairs $ (i,i') $ are reduced to this case. Indeed, denote by $ \xi^l $ and $ \xi^h $ the low and high frequencies among $ \xi_0, \xi_1 $. By the law of sines we have
$$ \sin \angle(\xi^h, -\xi_2) =\frac{\vm{\xi^l}}{\vm{\xi_2}} \sin \angle(\xi^l, \xi^h) \ls 2^{\ell} 2^{k_{\min}-k_2} $$
which implies \eqref{geom:ang} in the high-high case. The remaining low-high case now follows from the previous two cases and the triangle inequality.

Statement (3) follows by noting that in the case $ j_{\med} \leq j_{\max}-5 $ we have $ \vm{H} \simeq 2^{j_{\max}} $. Similarly, for statement (4), since either $ \vm{H} \simeq 2^{k_{\max}} $ or $ \vm{H} \ls 2^{k_{\min}} $, the statement follows by choosing $ C_0 $ large enough. 
\end{proof}

\begin{remark} \label{rk:geom:cone}
In the case $ k_{\min} \in \{k_i, k_{i'} \} $, Statement (3) can be rephrased as follows. Denoting $ 2^{\ell_0}= \angle(s_i \omega_i, s_{i'} \omega_{i'} ) $ and choosing $ r_i,r_{i'} \ll 2^{\ell_0} $, then \eqref{expr} vanishes unless
$$ j_{\max}=k_{\min}+2 \ell_0+O(1).
$$
\end{remark}

\subsection{Core $ \calN_{ij} $ and $ \calL $ bilinear estimates} We now state the main bilinear estimates for $ \calL $ and $ \calN_{ij} $ when the inputs and the output have low modulation (i.e. less than the minimum frequency).

\begin{lemma} \label{lem:ellip}
Let $ (k_{0}, k_{1}, k_{2}) \in \mb{Z}\times \mb{Z}_+\times \mb{Z}_+$ be such that $\abs{k_{\med} - k_{\max}} \leq 5$. Let $\calL$ be a translation invariant bilinear operator on $\bbR^{d}$ with bounded mass kernel. Then we have
\begin{align}
	\nrm{P_{k_{0}} \calL(\bar{P}_{k_{1}} f, \bar{P}_{k_{2}}g)}_{L^{2} L^{2}} 
	\aleq & \nrm{f_{k_{1}}}_{L^{\infty} L^{2}} \bb( \sum_{\calC_{k_{\min}}} \nrm{P_{\calC_{k_{\min}}} g_{k_{2}}}_{L^{2} L^{\infty}}^{2} \bb)^{1/2}, \label{eq:ellip-0} \\
	\nrm{P_{k_{0}} \calL(\bar{P}_{k_{1}} f, \bar{P}_{k_{2}}g)}_{L^{1} L^{2}} 
	\aleq & \nrm{f_{k_{1}}}_{L^{2} L^{2}} \bb( \sum_{\calC_{k_{\min}}} \nrm{P_{\calC_{k_{\min}}} g_{k_{2}}}_{L^{2} L^{\infty}}^{2} \bb)^{1/2}.		\label{eq:ellip-1}
\end{align}
The same statement holds when $ (k_{0}, k_{1}, k_{2}) \in \mb{Z}_+ \times \mb{Z} \times \mb{Z}_+ $ or $ (k_{0}, k_{1}, k_{2}) \in \mb{Z}_+ \times \mb{Z}_+ \times \mb{Z} $ when we replace the LHS by $ \bar{P}_{k_{0}} \calL(P_{k_{1}} f, \bar{P}_{k_{2}}g) $, respectively  $ \bar{P}_{k_{0}} \calL(\bar{P}_{k_{1}} f, P_{k_{2}}g) $.
\end{lemma}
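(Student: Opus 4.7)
The plan is to reduce both bounds to a single pointwise-in-time estimate
$$ \vn{P_{k_0} \calL(\bar{P}_{k_1} f(t), \bar{P}_{k_2} g(t))}_{L^2_x} \lesssim \vn{\bar{P}_{k_1} f(t)}_{L^2_x} \Big( \sum_\calC \vn{P_\calC \bar{P}_{k_2} g(t)}_{L^{\infty}_x}^2 \Big)^{1/2}, $$
after which \eqref{eq:ellip-0} follows by integrating in $t$ and using $\vn{f(t)}_{L^2_x} \leq \vn{f}_{L^{\infty}_t L^2_x}$, while \eqref{eq:ellip-1} follows by Cauchy--Schwarz in $t$ and placing $f$ in $L^2_t L^2_x$. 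The underlying unlocalized bilinear bound $\vn{\calL(u,v)}_{L^2_x} \lesssim \vn{u}_{L^2_x} \vn{v}_{L^{\infty}_x}$ is a direct consequence of Minkowski and H\"older, using that $\calL$ has a bounded-mass convolution kernel.

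To prove the pointwise estimate I would decompose $\bar{P}_{k_2} g = \sum_\calC P_\calC \bar{P}_{k_2} g$ over a finitely overlapping cover by cubes $\calC$ of side $2^{k_{\min}}$ and analyze Fourier supports of the resulting terms. The hypothesis $|k_{\med} - k_{\max}| \leq 5$ provides a favorable geometry, but the argument splits depending on which index realizes $k_{\min}$.

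If $k_{\min} \in \{k_1, k_2\}$, say $k_{\min} = k_1$, then $\bar{P}_{k_1} f$ is Fourier-supported in $O(1)$ cubes of side $2^{k_{\min}}$. For each $\calC$, the Fourier support of $P_{k_0} \calL(\bar{P}_{k_1} f, P_\calC \bar{P}_{k_2} g)$ lies in a $2^{k_{\min}}$-neighborhood of the center of $\calC$, so distinct $\calC$'s produce almost-orthogonal outputs in $L^2_x$; combining this with the bilinear bound applied to each piece closes the estimate. The symmetric case $k_{\min} = k_2$ is easier still, as the right-hand $\calC$-sum collapses to $O(1)$ terms.

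If $k_{\min} = k_0$, then $k_1 \approx k_2 \gg k_0$ and all output pieces share Fourier support in the same shell $\{|\xi| \sim 2^{k_0}\}$, so output orthogonality fails. Instead I would decompose $\bar{P}_{k_1} f = \sum_{\calC'} P_{\calC'} \bar{P}_{k_1} f$ by cubes of the same side, and observe that for each $\calC$ the interaction condition $(\calC' + \calC) \cap \{|\xi| \sim 2^{k_0}\} \neq \emptyset$ forces $\calC'$ into the $O(1)$ cubes clustered near $-\calC$; let $\tilde P_\calC$ denote the projection onto their union. Triangle inequality in $\calC$ followed by Cauchy--Schwarz then yields the pointwise bound, provided one verifies that each $\calC'$ belongs to $\tilde\calC(\calC)$ for only $O(1)$ values of $\calC$, after which $L^2_x$-orthogonality of cube projections gives $\sum_\calC \vn{\tilde P_\calC \bar{P}_{k_1} f(t)}_{L^2_x}^2 \lesssim \vn{\bar{P}_{k_1} f(t)}_{L^2_x}^2$. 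This last case is the main obstacle: the natural output-Fourier orthogonality breaks down and must be replaced by an input-side Schur-style bookkeeping on the correspondence $\calC \mapsto \tilde\calC(\calC)$.
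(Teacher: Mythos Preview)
Your proof is correct and follows the standard route for this type of estimate. The paper itself does not prove Lemma~\ref{lem:ellip} directly but refers to \cite[Section 7.5]{MD} (with the underlying idea from \cite{KST}); your argument is precisely the one found in those references: reduce to a fixed-time $L^2_x$ bound, decompose the high-frequency input into $2^{k_{\min}}$-cubes, and exploit almost-orthogonality either on the output side (when $k_{\min}\in\{k_1,k_2\}$) or via the diagonal constraint $\calC'\approx -\calC$ on the input side (when $k_{\min}=k_0$). Your identification of the high-high case as the nontrivial one, and the Schur-type bookkeeping there, is exactly right.
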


\begin{proposition} \label{prop:no-nf}
Let $ (k_{0}, k_{1}, k_{2}) \in \bbZ_{+} \times \bbZ_{+} \times \bbZ $ be such that $\abs{k_{\max} - k_{\med}} \leq 5$ and $j \leq k_{\min} + C_{0}$. Define $\ell := \frac{1}{2} (j-k_{\min})_{-}$. Then, the following estimates hold:
\begin{equation} \label{eq:no-nf:est0}
\nrm{\bar{P}_{k_{0}} \bar{Q}_{j} ( \bar{Q}_{<j} f_{k_{1}} \cdot Q_{<j} g_{k_{2}})}_{L^{2}_{t,x}} 
\aleq \nrm{f_{k_{1}}}_{L^{\infty} L^{2}} \big( \sup_{\pm} \sum_{\calC_{k_{\min}}(\ell)}\nrm{P_{\calC_{k_{\min}}(\ell)} Q_{<j}^{\pm} g_{k_{2}}}_{L^{2} L^{\infty}}^{2} \big)^{1/2}
\end{equation}
\begin{equation} \label{eq:no-nf:est1}
\nrm{\bar{P}_{k_{0}} \bar{Q}_{<j} (\bar{Q}_{j} f_{k_{1}}, Q_{<j} g_{k_{2}})}_{L^{1} L^{2}} 
\aleq \nrm{\bar{Q}_{j} f_{k_{1}}}_{L^{2}_{t,x}} \bb( \sup_{\pm} \sum_{\calC_{k_{\min}}(\ell)}\nrm{P_{\calC_{k_{\min}}(\ell)} Q_{<j}^{\pm} g_{k_{2}}}_{L^{2} L^{\infty}}^{2} \bb)^{1/2}
\end{equation}
The same statement holds when we replace $ (\bar{Q}_j,\bar{Q}_{<j},Q_{<j}) $ by $ (Q_j,\bar{Q}_{<j},\bar{Q}_{<j}) $ and all the similar variations. 
\end{proposition}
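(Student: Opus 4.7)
The plan is to combine the geometric frequency constraint from Lemma~\ref{geom:cone} with an angular/box decomposition and Hölder's inequality, much in the spirit of the proof of Proposition~\ref{L2:nullFrames}.

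First, note that in \eqref{eq:no-nf:est0} the output modulation is $j_0 = j$, while the two inputs have modulation $j_1, j_2 < j$, so $j_{\max} = j \leq k_{\min} + C_0$. By Lemma~\ref{geom:cone}(2), with $\ell = \tfrac{1}{2}(j-k_{\min})_-$, a nonzero contribution forces, for appropriate signs $s_1, s_2$,
$$ \angle(s_1 \xi_1, s_2 \xi_2) \lesssim 2^{\ell} \cdot 2^{k_{\min} - \min(k_1, k_2)}, $$
and the analogous bounds on the other pairs $(0,1), (0,2)$. Decomposing $Q_{<j}^{\pm} g_{k_{2}} = \sum_{\calC} P_{\calC} Q_{<j}^{\pm} g_{k_{2}}$ over boxes $\calC = \calC_{k_{\min}}(\ell)$, the above angular and radial constraints mean that for each such $\calC$ only a $O(1)$ set of matching boxes $\calC'$ of the same dimensions inside $\{\jb{\xi} \simeq 2^{k_1}\}$ can support $\bar{Q}_{<j} f_{k_1}$; insert the corresponding compatible projector $\tilde{P}_{\calC'}$ on $f_{k_1}$ without loss.

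For \eqref{eq:no-nf:est0}, apply Hölder $L^{\infty} L^{2} \cdot L^{2} L^{\infty} \hookrightarrow L^{2}_{t,x}$ for each box pair, then sum using Cauchy--Schwarz together with the almost orthogonality of the $\tilde{P}_{\calC'} f_{k_{1}}$ in $L^{\infty} L^{2}$ (Plancherel in $x$ uniformly in $t$):
$$ \sum_{\calC} \|\tilde{P}_{\calC'} \bar{Q}_{<j} f_{k_{1}}\|_{L^{\infty} L^{2}} \|P_{\calC} Q_{<j}^{\pm} g_{k_{2}}\|_{L^{2} L^{\infty}} \lesssim \|f_{k_{1}}\|_{L^{\infty} L^{2}} \Bigl( \sum_{\calC} \|P_{\calC} Q_{<j}^{\pm} g_{k_{2}}\|_{L^{2} L^{\infty}}^{2} \Bigr)^{1/2}. $$
Taking the supremum over the two signs $\pm$ yields \eqref{eq:no-nf:est0}. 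The estimate \eqref{eq:no-nf:est1} follows by the same scheme, replacing the Hölder step with $L^{2}_{t,x} \cdot L^{2} L^{\infty} \hookrightarrow L^{1} L^{2}$ and using $\|\bar{Q}_{j} f_{k_1}\|_{L^{2}_{t,x}}$ in place of $\|f_{k_1}\|_{L^{\infty} L^{2}}$; the variants obtained by swapping the roles of $\bar{Q}, Q$ are identical, since the geometric lemma applies uniformly to Klein-Gordon and wave modulations at the regime $j \leq k_{\min} + C_0$ (the $\jb{\xi} - |\xi|$ discrepancy is controlled by $2^{-k_{\min}} \ll 2^j$ in this regime).

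The main technical point is verifying in Step~2 that the constraint from Lemma~\ref{geom:cone}(2) really does match each box $\calC$ with $O(1)$ companion boxes $\calC'$ of dimensions $2^{k_{\min}} \times (2^{k_{\min}+\ell})^{d-1}$ in the $f_{k_1}$ annulus; once this is in hand, the almost-orthogonality summation in the last step is standard. In particular, the radial constraint $|\,\xi_1 + \xi_2\,|_{\text{radial}} \lesssim 2^{k_{\min}}$ (from localization of the output to $\{\jb{\xi}\simeq 2^{k_0}\}$ when $k_{\min} \in \{k_0\}$, or automatic otherwise) combines with the angular bound to ensure the box matching is at the correct scale.
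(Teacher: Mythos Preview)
Your overall strategy is exactly the standard one the paper invokes (it defers the proof to \cite[Section~7.5]{MD} and \cite{KST}): use Lemma~\ref{geom:cone} to localize the two inputs to essentially diagonal pairs of boxes $(\calC,\calC')$ of size $2^{k_{\min}}\times(2^{k_{\min}+\ell})^{d-1}$, then combine H\"older with orthogonality. Your box-matching verification is correct.

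There is, however, a genuine gap in the summation step as written. After applying H\"older in the space-time norms box by box you obtain
\[
\sum_{\calC}\nrm{\tilde P_{\calC'} \bar Q_{<j} f_{k_1}}_{L^\infty L^2}\,\nrm{P_{\calC} Q_{<j}^{\pm} g_{k_2}}_{L^2 L^\infty},
\]
and Cauchy--Schwarz then requires $\sum_{\calC'}\nrm{\tilde P_{\calC'} f}_{L^\infty L^2}^2 \aleq \nrm{f}_{L^\infty L^2}^2$. This is \emph{false} in general: the supremum in $t$ may be attained at different times for different boxes, so $\ell^2_{\calC'}L^\infty_t L^2_x$ does not embed into $L^\infty_t L^2_x$. ``Plancherel in $x$ uniformly in $t$'' gives the bound at each fixed $t$, not after the sup has been taken.

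The fix is to reverse the order of operations. Insert the diagonal decomposition as an identity (it is one, by Lemma~\ref{geom:cone}), keep the sum \emph{inside} the norm, and apply Cauchy--Schwarz pointwise in $(t,x)$:
\[
\Big|\sum_{\calC}\tilde P_{\calC'} f\cdot P_{\calC} g\Big|
\le \Big(\sum_{\calC'}|\tilde P_{\calC'} f|^2\Big)^{1/2}\Big(\sum_{\calC}|P_{\calC} g|^2\Big)^{1/2}.
\]
Only then split by H\"older $L^\infty L^2\cdot L^2 L^\infty\to L^2_{t,x}$. Now Plancherel at each fixed $t$ gives $\nrm{(\sum|\tilde P_{\calC'} f|^2)^{1/2}}_{L^\infty L^2}\aleq\nrm{f}_{L^\infty L^2}$, while $\nrm{(\sum|P_{\calC} g|^2)^{1/2}}_{L^2 L^\infty}\le(\sum\nrm{P_{\calC} g}_{L^2 L^\infty}^2)^{1/2}$ is immediate. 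The $L^1L^2$ estimate \eqref{eq:no-nf:est1} is handled the same way with $L^2_{t,x}\cdot L^2 L^\infty\to L^1 L^2$.
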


\begin{proposition}  \label{prop:nf}
Let $k_{0} \in \bbZ, \ k_{1}, k_{2} \geq 0, \  j \in \bbZ $ be such that $\abs{k_{\max} - k_{\med}} \leq 5$ and $j \leq k_{\min} + C_{0}$.  Define $\ell := \frac{1}{2} (j-k_{\min})_{-}$ and let $ \calN $ be any of the null forms $ \calN_{m,r} $ from \eqref{Nijforms}. Then, the following estimates hold:
\begin{equation} \label{eq:nf:est0}
\begin{aligned} 
& \hskip-2em
\nrm{P_{k_0} Q_{j} \calN (\bar{Q}_{<j} f_{k_{1}}, \bar{Q}_{<j} g_{k_{2}})}_{L^{2} L^{2}} \\
\aleq &  2^{\ell} 2^{k_{\min} +k_{\max}} \nrm{f_{k_{1}}}_{L^{\infty} L^{2}} \bb( \sup_{\pm}\sum_{\calC_{k_{\min}}(\ell)}\nrm{P_{\calC_{k_{\min}}(\ell)} \bar{Q}_{<j}^{\pm} g_{k_{2}}}_{L^{2} L^{\infty}}^{2} \bb)^{1/2}
\end{aligned}
\end{equation}
\begin{equation} \label{eq:nf:est1}
\begin{aligned} 
& \hskip-2em
\nrm{P_{k_0} Q_{<j} \calN (\bar{Q}_{j} f_{k_{1}}, \bar{Q}_{<j} g_{k_{2}})}_{L^{1} L^{2}} \\
\aleq &  2^{\ell} 2^{k_{\min}+k_{\max} } \nrm{\bar{Q}_{j} f_{k_{1}}}_{L^{2} L^{2}} \bb(\sup_{\pm} \sum_{\calC_{k_{\min}}(\ell)}\nrm{P_{\calC_{k_{\min}}(\ell)} \bar{Q}_{<j}^{\pm} g_{k_{2}}}_{L^{2} L^{\infty}}^{2} \bb)^{1/2}
\end{aligned}
\end{equation}
The same statement holds in the case $ (k_0,k_1,k_2) \in \mb{Z}_{+} \times \mb{Z} \times  \mb{Z}_{+} $ when we replace the LHS of \eqref{eq:nf:est0}, \eqref{eq:nf:est1}  by $ \bar{P}_{k_0} \bar{Q}_{j} \calN (Q_{<j} f_{k_{1}}, \bar{Q}_{<j} g_{k_{2}}) $ and $ \bar{P}_{k_0} \bar{Q}_{<j} \calN ( Q_{j} f_{k_{1}}, \bar{Q}_{<j} g_{k_{2}}) $ respectively; or in the case $ (k_0,k_1,k_2) \in \mb{Z}_{+} \times \mb{Z}_{+} \times  \mb{Z}  $ when we replace the LHS of \eqref{eq:nf:est0}, \eqref{eq:nf:est1}  by $ \bar{P}_{k_0} \bar{Q}_{j} \calN (\bar{Q}
_{<j} f_{k_{1}}, Q_{<j} g_{k_{2}}) $ and $ \bar{P}_{k_0} \bar{Q}_{<j} \calN ( \bar{Q}_{j} f_{k_{1}}, Q_{<j} g_{k_{2}}) $ respectively.
\end{proposition}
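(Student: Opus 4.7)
The plan is to reduce Proposition \ref{prop:nf} to the already available no-null-form Proposition \ref{prop:no-nf}, by extracting the expected angular gain from $\calN_{m,r}$. The key observation is that the geometric Lemma \ref{geom:cone}(2) forces the Fourier supports of the two inputs to be angularly separated at scale $2^{m}$, where $2^{m} \defeq 2^{\ell} \cdot 2^{k_{\min}-\min(k_1,k_2)}$, while Corollary \ref{Nij:form:prop} extracts precisely this angular factor as a smallness gain. Combined with the two derivatives embedded in $\calN_{m,r}$, this produces $2^{m} \cdot 2^{k_1+k_2} = 2^{\ell} \cdot 2^{k_{\min}+k_{\max}}$, which is the expected factor.

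First, decompose each input into angular pieces $P_{m}^{\omega_i}$ of width $\simeq 2^{m}$. Fix signs $\pm_1, \pm_2$ and consider the pieces $\bar{Q}_{<j}^{\pm_1} P_{m}^{\omega_1} f_{k_1}$ and $\bar{Q}_{<j}^{\pm_2} P_{m}^{\omega_2} g_{k_2}$. By Lemma \ref{geom:cone}(2) applied with the output frequency in $\{\jb{\xi}\simeq 2^{k_0}\} \cap \{\vm{\tau \mp_0 \vm{\xi}}\simeq 2^j\}$ for each sign $\mp_0$, the contribution vanishes unless $\angle(\pm_1 \omega_1, \pm_2 \omega_2) \aleq 2^{m}$, leaving only $O(1)$ admissible $\omega_2$ for each $\omega_1$. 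For each admissible pair $(\omega_1,\omega_2)$, apply Corollary \ref{Nij:form:prop} to rewrite $\calN_{m,r}(P_{m}^{\omega_1} f_{k_1}, P_{m}^{\omega_2} g_{k_2})$ as a rapidly convergent sum of tensor products, gaining the factor $2^{m}$ and leaving bilinear expressions of the form $2^{k_1+k_2} \cdot \widetilde{L}\bigl(P_{m}^{\omega_1} f_{k_1}, P_{m}^{\omega_2} g_{k_2}\bigr)$ with bounded-mass kernel $\widetilde{L}$.

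At this point one applies Proposition \ref{prop:no-nf}, specifically \eqref{eq:no-nf:est0}, to each pair. Since $m \leq \ell$, the angular sector $\{\xi/\vm{\xi} \in \omega_2\}$ intersected with $\{\vm{\xi}\simeq 2^{k_2}\}$ is contained in a union of boxes $\calC_{k_{\min}}(\ell)$: if $k_{\min}=k_2$ then $m=\ell$ and the sector is essentially one such box, while if $k_{\min}<k_2$ one refines the sector into $\calC_{k_{\min}}(\ell)$-boxes, which is consistent with the RHS norm. One then square-sums over $\omega_1$ (using Plancherel and the $L^\infty L^2$ norm on the left factor, together with the $O(1)$ overlap in $\omega_2$) and over the two signs $\pm_1,\pm_2$ to obtain \eqref{eq:nf:est0}. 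Estimate \eqref{eq:nf:est1} follows identically by invoking \eqref{eq:no-nf:est1} in place of \eqref{eq:no-nf:est0}. The variant frequency configurations listed after \eqref{eq:nf:est1} are treated by the same argument, since both Lemma \ref{geom:cone} and Corollary \ref{Nij:form:prop} are symmetric in the roles of the three frequencies.

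The main obstacle will be the orthogonality bookkeeping: one must check that, after the $P_{m}^{\omega_1}$ decomposition and the $\calC_{k_{\min}}(\ell)$ subdivision of the second factor, the resulting outputs have almost disjoint Fourier supports in the $P_{k_0} Q_j$ region — this is what legitimates the square-summation in $\omega_1$ on the left and reproduces the square-summed norm on the right. Here the sharp angular conclusion of Lemma \ref{geom:cone}(3), combined with Remark \ref{rk:geom:cone}, is what pins down the location of the output Fourier support as a function of $\omega_1$ and guarantees the required almost-disjointness.
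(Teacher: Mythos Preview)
Your overall strategy matches the paper's: the proof is carried out in the referenced source using exactly the two ingredients you name, Lemma~\ref{geom:cone} and Corollary~\ref{Nij:form:prop}. The angular scale $2^{m}=2^{\ell}2^{k_{\min}-\min(k_1,k_2)}$ and the bookkeeping $2^{m}\cdot 2^{k_1+k_2}=2^{\ell}2^{k_{\min}+k_{\max}}$ are correct.

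There is, however, a genuine gap in your summation step. In the high--high$\to$low case ($k_0=k_{\min}$, $k_1\simeq k_2\simeq k_{\max}$) the output orthogonality you invoke fails. Lemma~\ref{geom:cone}(2) with $(i,i')=(0,1)$ only pins the output direction to within angle $2^{\ell}$ of $\omega_1$, whereas distinct $\omega_1$'s are separated by merely $2^{m}=2^{\ell+k_{\min}-k_{\max}}\ll 2^{\ell}$; hence $\sim 2^{(d-1)(k_{\max}-k_{\min})}$ different $\omega_1$'s land in the same output sector, and the outputs are not almost disjoint. Lemma~\ref{geom:cone}(3) and Remark~\ref{rk:geom:cone} do not help here---they locate $j_{\max}$, not the output angle. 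The alternative of applying Cauchy--Schwarz directly to the right-hand sides also fails, because $\sum_{\omega_1}\|P_{m}^{\omega_1}f\|_{L^\infty L^2}^2$ is not controlled by $\|f\|_{L^\infty L^2}^2$ (the suprema in $t$ may be attained at different times).

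The fix is not to invoke Proposition~\ref{prop:no-nf} as a black box on each $(\omega_1,\omega_2)$ pair, but to rerun its fixed-time argument with the null-form gain inserted. Concretely: at each fixed $t$, decompose both inputs into $\calC_{k_{\min}}(\ell)$ boxes (these have angular width $\simeq 2^m$ at radius $2^{k_i}$, so Corollary~\ref{Nij:form:prop} applies box-by-box), use the diagonal pairing $\calC\leftrightarrow -\calC$ forced by $P_{k_0}$, apply H\"older $L^2_x\times L^\infty_x\to L^2_x$, then Cauchy--Schwarz over boxes using $\sum_{\calC}\|P_{\calC}f(t)\|_{L^2_x}^2\lesssim\|f(t)\|_{L^2_x}^2$ \emph{at fixed $t$}. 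Only afterwards take $L^\infty_t$ on $f$ and $L^2_t$ on the square sum for $g$. This is how the $L^\infty L^2$ norm is legitimately recovered, and it is the argument the paper is pointing to.
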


\begin{proof}[Proof of Lemma \ref{lem:ellip}, Prop. \ref{prop:no-nf}, Prop. \ref{prop:nf}] The idea of these estimates is taken from \cite{KST}. For a proof of these bounds as stated here, see \cite[Section 7.5]{MD}. We can invoke that proof since we have Corollary \ref{Nij:form:prop} and Lemma \ref{geom:cone}.
\end{proof}

Finally, we record the following identity.
\begin{lemma}[Commutator identity] \label{comm_id} We can write
$$ P_{<k}(fg)=f P_{<k}g+L(\nabla_x f, 2^{-k} g) $$
where $ L $ is a translation-invariant bilinear operator with integrable kernel.

\end{lemma}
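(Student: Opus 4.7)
The approach is to exploit that $P_{<k}$ is convolution with a Schwartz kernel at scale $2^{-k}$, and then expand the commutator via the fundamental theorem of calculus.

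First I will write $P_{<k} h = K_k \ast h$ where $K_k(x) = 2^{dk} K_0(2^k x)$ for some fixed $K_0 \in \mathcal{S}(\bbR^d)$. A trivial rearrangement under the integral gives
\begin{equation*}
P_{<k}(fg)(x) - f(x) P_{<k} g(x) = \int K_k(x-y)\, [f(y) - f(x)]\, g(y)\, dy.
\end{equation*}
Substituting the elementary identity $f(y) - f(x) = \int_0^1 \nabla f(x + s(y-x)) \cdot (y - x)\, ds$, changing variable to $h = y - x$, and inserting the factor $2^k \cdot 2^{-k}$, the right-hand side can be rewritten as $L(\nabla_x f,\, 2^{-k} g)(x)$ with
\begin{equation*}
L(F, G)(x) := \int_0^1 \int M_k(h) \cdot F(x + sh)\, G(x + h)\, dh\, ds, \qquad M_k(h) := 2^k\, h\, K_k(-h).
\end{equation*}
This $L$ is manifestly translation-invariant and bilinear, with (vector-valued) kernel $\int_0^1 M_k(h_2)\, \delta(h_1 - s h_2)\, ds$ on $\bbR^d \times \bbR^d$.

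The last step is to verify that this kernel has bounded total mass uniformly in $k$. Integrating out the delta reduces this to $\nrm{M_k}_{L^1(\bbR^d)}$, and the rescaling $u = 2^k h$ immediately gives $\nrm{M_k}_{L^1} = \int |u\, K_0(-u)|\, du < \infty$, a constant independent of $k$ because $K_0$ is Schwartz. The normalizations $\nabla_x f$ and $2^{-k} g$ in the statement are calibrated precisely so that the $2^k$ coming from the gradient scale is absorbed by the $2^{-k}$ on $g$, yielding this scale-invariant mass bound.

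No genuine obstacle arises: this is a textbook Littlewood--Paley commutator identity. The only point worth emphasizing is the $k$-uniformity of the kernel mass, which by Minkowski's inequality ensures $L$ acts boundedly on all translation-invariant normed spaces, as required for the paper's applications.
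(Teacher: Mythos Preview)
Your proof is correct and is precisely the standard argument for this commutator identity. The paper itself does not give a proof but simply cites \cite[Lemma 2]{Tao2}, where the same fundamental-theorem-of-calculus expansion of $f(y)-f(x)$ is used; your write-up reproduces that argument in full detail, including the key observation that the kernel mass $\nrm{M_k}_{L^1}$ is independent of $k$ after rescaling.
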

\begin{proof}
See \cite[lemma 2]{Tao2}.
\end{proof}

\section{Bilinear estimates}

The proofs in this section and the next one are based on the Littlewood-Paley trichotomy which states that $ P_{k_0}(P_{k_1} f_1 P_{k_2} f_2) $ vanishes unless $ \vm{k_{\med}-k_{\max}} \leq 5 $, where $ k_{\med}, k_{\max} $ are the the median and the maximum of $ \{ k_0,k_1, k_2 \} $.

Most of the arguments in this section originate in \cite{KST}. However, we have tried to give a thorough exposition in order to justify that the arguments carry over when two of the inputs/output correspond to Klein-Gordon waves.

\subsection{Additional bilinear estimates}

Before we begin the proofs we state some additional bilinear estimates that will be used in the proof of the trilinear estimate in the next section.

We separate the high-high and low-high parts of $ \bfA_{0} $
\be \label{a0:decomp}
\begin{aligned}
 \bfA_{0} ( \phi^{1},  \phi^{2}) & =\bfA_{0}^{LH} ( \phi^{1},  \phi^{2})+  \bfA_{0}^{HH} ( \phi^{1},  \phi^{2})  \\ 
\text{where} \qquad  \qquad \qquad \bfA_{0}^{HH} ( \phi^{1},  \phi^{2}) & =  \sum_{\substack{k_{0}, k_{1}, k_{2} \\ k_{0} < k_{2} - C_{2} - 5}} P_{k_{0}}  \bfA_{0}( \bar{P}_{k_{1}} \phi^1, \bar{P}_{k_{2}} \phi^2). 
\end{aligned}
\ee

\begin{lemma} \label{lemma:additional} With the decomposition above, one has: 
\begin{align}
\vn{ \pi[(0,A_0)] \phi}_{\bar{N}^{\sg-1} } & \ls \vn{A_0}_{\ell^1 L^1 L^{\infty}} \vn{\phi}_{\bar{S}^{\sg}}. \label{est:phi7} \\
 \vn{\bfA_{0}^{LH} ( \phi^{1},  \phi^{2})}_{\ell^1 L^1 L^{\infty}} & \ls \vn{\phi^1}_{\bar{S}^{\sg}} \vn{\phi^2}_{\bar{S}^{\sg}} \label{A0:lh}   \\
 \vn{(\bfA_{x},\bfA_{0}^{HH}) ( \phi^{1},  \phi^{2})}_{\ell^1 S^{\sg} \times Y^{\sg}} & \ls  \vn{\phi^1}_{\bar{S}^{\sg}} \vn{\phi^2}_{\bar{S}^{\sg}} \label{Ax:A0:hh} \\
 \vn{(I - \calH) (\bfA_{x},\bfA_{0}^{HH})( \phi^{1},  \phi^{2})}_{Z \times Z_{ell}}  
	& \aleq  \nrm{\phi^{1}}_{\bar{S}^{\sg}} \nrm{\phi^{2}}_{\bar{S}^{\sg}}  \label{eq:axr-Z}.
\end{align}
For $ d \geq 5 $ one also has:
\be \vn{ (\bfA_{x},\bfA_{0}^{HH})( \phi^{1},  \phi^{2})}_{Z \times Z_{ell}}  
	 \aleq  \nrm{\phi^{1}}_{\bar{S}^{\sg}} \nrm{\phi^{2}}_{\bar{S}^{\sg}} 
\label{AHH:highdim}  \ee
\end{lemma}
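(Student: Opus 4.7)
By \eqref{eq:axr-Z}, it suffices to estimate the residual pieces $\calH\bfA_{x}$ in $Z$ and $\calH\bfA_{0}^{HH}$ in $Z^{ell}$; these isolate the high-high-to-low output interactions at very low output modulation. I would apply the embeddings \eqref{ZZ:emb} and \eqref{Zell:emb} to reduce the $Z_{k'}$ and $Z^{ell}_{k'}$ norms to weighted $L^{1}L^{2}$ bounds on the corresponding sources, with weight $2^{(\sg-1)k'} = 2^{(d-4)k'/2}$. This weight is a strictly positive power of $2^{k'}$ when $d \geq 5$, which supplies the key dimensional gain at low output frequency.

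For $\bfA_{x}$, with inputs $\bar{P}_{k}\phi^{1}, \bar{P}_{k}\phi^{2}$ (with $k_1 \simeq k_2 = k$) producing output at frequency $k' < k - C_{2} - 10$, I would invoke the null identity \eqref{ax:nf:identity}, $\calP_{j}(\phi^{1}\nabla\bar{\phi}^{2}) = \Delta^{-1}\nabla^{i}\calN_{ij}(\phi^{1}, \bar{\phi}^{2})$. Basic geometry (or Lemma \ref{geom:cone}) forces the angle between the two input frequencies to be $\theta \ls 2^{k'-k}$, so Corollary \ref{Nij:form:prop} combined with the H\"older split $L^{1}L^{2} \hookleftarrow L^{2}L^{4} \cdot L^{2}L^{4}$ yields
\[ \vn{P_{k'}\calN_{ij}(\bar{P}_{k}\phi^{1}, \bar{P}_{k}\bar{\phi}^{2})}_{L^{1}L^{2}} \ls 2^{k'-k}\vn{\nabla \bar{P}_{k}\phi^{1}}_{L^{2}L^{4}}\vn{\nabla \bar{P}_{k}\phi^{2}}_{L^{2}L^{4}}. \]
The pair $(q,r) = (2,4)$ is wave-Strichartz admissible precisely when $d \geq 5$ ($d = 5$ being the endpoint), and a short computation gives $\vn{\nabla \bar{P}_{k}\phi}_{L^{2}L^{4}} \ls 2^{(6-d)k/4} c_{k}$ where $c_k$ is the $\bar{S}^\sg$ frequency envelope of $\phi$. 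For $\bfA_{0}^{HH}$ no null cancellation is available, but none is needed: apply \eqref{Zell:emb} and bound the source by $\vn{P_{k'}(\bar{P}_{k}\phi^{1}\cdot\pt_{t}\bar{P}_{k}\bar{\phi}^{2})}_{L^{1}L^{2}} \ls \vn{\bar{P}_{k}\phi^{1}}_{L^{2}L^{4}}\vn{\pt_{t}\bar{P}_{k}\phi^{2}}_{L^{2}L^{4}}$ via the same H\"older split.

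Assembling all factors with $\sg = (d-2)/2$ shows that both pieces satisfy the same scaling
\[ \vn{P_{k'}(\calH\bfA_{x}, \calH\bfA_{0}^{HH})}_{Z_{k'}\times Z^{ell}_{k'}} \ls 2^{(d-4)(k'-k)/2}\, c_{k}^{(1)}\, c_{k}^{(2)}, \]
with $c_k^{(i)}$ the $\bar{S}^\sg$-envelope of $\phi^i$. For $d \geq 5$ the exponent is negative whenever $k' < k$, so the inner $\ell^{1}_{k'}$-sum is a convergent geometric series; a final Cauchy-Schwarz in $k$ on the envelopes yields the claimed bound $\ls \vn{\phi^{1}}_{\bar{S}^{\sg}}\vn{\phi^{2}}_{\bar{S}^{\sg}}$.

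The main obstacle is the bookkeeping of dimensional weights and ensuring that the square-function estimate in $L^{2}L^{4}$ — used implicitly to sum over the angular null-form decomposition — introduces no losses. This argument fails at $d = 4$ because the weight $2^{(\sg-1)k'}\equiv 1$ offers no low-frequency gain and the pair $(2,4)$ is no longer Strichartz-admissible; this is precisely why the more delicate $\calH$ vs.\ $(I-\calH)$ splitting, and the Lorentz / null-frame refinements, are needed in the $d=4$ treatment of \eqref{eq:axr-Z}.
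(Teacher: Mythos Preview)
Your approach to \eqref{AHH:highdim} is correct and matches the paper's: both hinge on the embedding \eqref{ZZ:emb}, the H\"older split $L^{2}L^{4}\times L^{2}L^{4}\to L^{1}L^{2}$, the wave-admissibility of $(2,4)$ for $d\geq 5$, and the resulting high-high gain $2^{\frac{d-4}{2}(k'-k)}$. The only notable difference is that you route $\bfA_{x}$ through the null identity \eqref{ax:nf:identity} and Corollary~\ref{Nij:form:prop}, whereas the paper simply treats $\calP_{j}$ as a bounded multiplier and applies H\"older directly; your angle gain $2^{k'-k}$, the extra gradient it costs, and the $\Delta^{-1}\nabla$ prefactor cancel exactly, so the two computations land on the same numerology. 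Your detour is harmless but introduces the angular square-function issue you flag, which the paper's route avoids entirely. For the low-high part of $\bfA_{x}$ you invoke \eqref{eq:axr-Z}, while the paper cites \eqref{Z:norm2} directly; these are equivalent since $\calH$ only sees high-high interactions.
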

\begin{proof}
By doing dyadic decompositions, \eqref{est:phi7} follows trivially from H\" older's inequality $ L^1 L^{\infty} \times L^{\infty} L^2 \to L^1 L^2 $. The bound \eqref{A0:lh} follows from 
$$ \vn{P_{k'} ( \phi^{1}_{k_1} \pt_t  \phi^{2}_{k_2})}_{ L^1 L^{\infty}}  
	\aleq   \nrm{\phi^{1}_{k_1}}_{L^2 L^{\infty}} \nrm{\phi^{2}_{k_2}}_{L^2 L^{\infty}}. $$
The bound \eqref{Ax:A0:hh} follows from Prop. \ref{prop:ax:est} and from the proof of \eqref{est:a01}.

The proofs of estimates \eqref{eq:axr-Z}, \eqref{AHH:highdim} are longer and are deferred to the end of this section.
\end{proof}

\subsection{Dyadic norms} For easy referencing in the arguments below, here we collect the norms that we control. Recall that we denote
$$  \vn{A_x}_{S^s_{k'}} = 2^{(s-1)k'} \vn{\nabla_{t,x} A}_{S_{k'}},  \qquad \vn{\phi_k}_{\bar{S}^s_k} = 2^{(s-1)k}  \vn{ (\jb{D_x},\pt_t)  \phi_k}_{\bar{S}_k}  $$

For $ k' \in \mb{Z} $ and $ k \geq 0 $ we have:
\begin{align}
\label{fe-LinfL2}
& \vn{\nabla_{t,x} P_{k'} A_x}_{L^{\infty}L^2}  \ls \vn{P_{k'} A_x}_{S^1_{k'}} ,	 	
& \vn{(\jb{D_x},\pt_t) \phi_k}_{L^{\infty}L^2}  \ls \vn{\phi_{k}}_{\bar{S}^1_{k}}	
\\
\label{fe-L2}
& \nrm{Q_{j} P_{k'} A_x}_{L^{2}_{t,x}}
	 \aleq  2^{-\frac{1}{2} j}  \vn{P_{k'} A_x}_{S_{k'}} ,	 
& \nrm{\bar{Q}_{j} \phi_{k}}_{L^{2}_{t,x}}
	\aleq  2^{-\frac{1}{2} j}  \vn{\phi_{k}}_{\bar{S}_{k}}
\\
\label{fe-L2Linf}
& \vn{P_{k'} A_x}_{L^2 L^{\infty}}  \ls 2^{\frac{1}{2}k'} \vn{P_{k'} A_x}_{S^{\sg}_{k'}}, 	 
& \vn{\phi_k}_{L^2 L^{\infty}}  \ls 2^{\frac{1}{2}k} \vn{\phi_{k}}_{\bar{S}^{\sg}_{k}}
\end{align}

For any $k' \leq k$ and $ l' \in [-k,C] $, $ j=k'+2l' $ and any $ \pm $:
\begin{equation} \label{fe-L2Linfty}
\begin{aligned}
\bb(\sum_{\calC= \calC_{k'}(l')} \nrm{P_{\calC} \bar{Q}^{\pm}_{<j} \phi_{k}}_{L^{2} L^{\infty}}^{2} \bb)^{1/2}
	\aleq & 2^{\frac{1}{2} l'} 2^{\sg(k'-k)}  2^{\frac{1}{2} k} \vn{\phi_{k}}_{\bar{S}^{\sg}_{k}}, \\
\bb(\sum_{\calC=\calC_{k'}(0)} \nrm{P_{\calC} \phi_{k}}_{L^{2} L^{\infty}}^{2} \bb)^{1/2}
	\aleq &  2^{\sg(k'-k)}  2^{\frac{1}{2} k} \vn{\phi_{k}}_{\bar{S}^{\sg}_{k}}.
\end{aligned} 
\end{equation}

The former follows by choosing $ k+2l=k'+2l' $ in \eqref{highfreqSp}. When $ k=0 $ it suffices to consider $ l'=0 $.  The latter inequality holds for $ \bar{Q}_{<k'} \phi_k $, while for $ \bar{Q}_{\geq k'} \phi_k $ it follows from \eqref{fe-L2}, orthogonality and Bernstein's inequality (with $ l'=0 $)
\be \label{Brns} P_{C_{k'}(l')} L^2_x \subset 2^{\frac{d}{2}k'+\frac{d-1}{2} l'} L^{\infty}_x \ee
Using \eqref{Brns} we also obtain, when $ d=4, \ \sg=1$,
\be
\begin{aligned} \label{fe-sqX}
\big(\sum_{C_{k'}(l')} \vn{P_{C_{k'}(l')} (\pt_t \mp i \jb{D}) \bar{Q}_{<j}^{\pm} \phi_k}_{L^2 L^{\infty}}^2 \big)^{\frac{1}{2}} & \ls 2^{\frac{3}{2} l'} 2^{2k'} 2^{\frac{1}{2}j} \vn{\phi_k}_{\bar{X}^{\frac{1}{2}}_{\infty}} \\
 & \ls 2^{\frac{3}{2} l'} 2^{2k'} 2^{\frac{1}{2}j} 2^{-k} \vn{\phi_{k}}_{\bar{S}^1_{k}}.
\end{aligned}
\ee
For any $k' \leq k''$ and $ l' \leq 0 $, $ j=k'+2l' $ and any $ \pm $ we have

\begin{equation} \label{A:fe-L2Linfty}
\begin{aligned}
\bb(\sum_{\calC=\calC_{k'}(l')} \nrm{P_{\calC} Q^{\pm}_{<j}  A_{k''}}_{L^{2} L^{\infty}}^{2} \bb)^{1/2}
	\aleq & 2^{\frac{1}{2} l'} 2^{\sg(k'-k'')}  2^{\frac{1}{2} k''} \vn{P_{k''} A_x}_{S^{\sg}_{k''}}, \\
\bb(\sum_{\calC=\calC_{k'}(0)} \nrm{P_{\calC}  A_{k''}}_{L^{2} L^{\infty}}^{2} \bb)^{1/2}
	\aleq & 2^{\sg(k'-k'')}  2^{\frac{1}{2} k''} \vn{P_{k''} A_x}_{S^{\sg}_{k''}}.
\end{aligned} 
\end{equation}

For $ A_0 $ we have the following bounds
\be \label{A0:fe-LinfL2}
\vn{\nabla_{t,x} P_{k'} A_0}_{L^{\infty} L^2} \ls  \vn{P_{k'}A_0}_{Y^1}.
\ee
Since we control $ \pt_t A_0 $, for $ j \geq k' $ we have both
\be \label{A0:fe-L2}
\vn{P_{k'} A_0}_{L^2_{t,x}} \ls 2^{-(\sg+\frac{1}{2})k'} \vn{P_{k'}A_0}_{Y^{\sg}}, \qquad \vn{Q_j P_{k'} A_0}_{L^2_{t,x}} \ls 2^{-j} 2^{-(\sg-\frac{1}{2})k'} \vn{P_{k'}A_0}_{Y^{\sg}}
\ee
and for $ j=k'+2l'$, using \eqref{Brns} and orthogonality, we have
\be  \label{A0:fe-L2Linfty}
\bb(\sum_{\calC=\calC_{k'}(l')} \nrm{P_{\calC} ( Q^{\pm}_{<j}) A^0_{k'}}_{L^{2} L^{\infty}}^{2} \bb)^{1/2}
	\aleq  2^{\frac{d}{2}k'+\frac{d-1}{2} l'} \vn{A^0_{k'}}_{L^2_{t,x}} \ls 2^{\frac{1}{2} k'+\frac{3}{2}l'} \vn{P_{k'}A_0}_{Y^{\sg}}
\ee
\be   \label{A0:fe-L2Linf}  \text{In particular,} \qquad \qquad  \qquad \qquad \qquad \qquad
\vn{P_{k'} A_0}_{L^2 L^{\infty}} \ls 2^{\frac{1}{2} k'} \vn{P_{k'}A_0}_{Y^{\sg}}.
\ee

Now we turn to the proofs of Prop. \ref{prop:ax:est}, \ref{prop:phi:est}.

\

\subsection{Proof of \eqref{est:ax1}}

This follows from proving, for $ k' \in \mb{Z} $, $ k_1,k_2 \geq 0 $:
\be  \label{est:ax1:freq}
\vn{P_{k'} \calP_{j}  (\phi^1_{k_1} \nabla_x \phi^2_{k_2})}_{N_{k'}^{\sg-1}  } \ls 2^{\frac{1}{2}(k_{\min} - k_{\max})} \vn{\phi^1_{k_1}}_{\bar{S}^{\sg}_{k_1}} \vn{\phi^2_{k_2}}_{\bar{S}^{\sg}_{k_2}}. 
\ee
Note that the factor $ 2^{\frac{1}{2}(k_{\min} - k_{\max})} $ provides the $ \ell^1 $ summation in  \eqref{est:ax1}. Here $ k_{\min}, k_{\max} $ are taken from the set $ \{ k',k_1,k_2\} $.

We first treat the high modulation contribution. Since $ \calP_{j}  (\phi^1 \nabla_x \phi^2) $ is skew adjoint (see Remark \ref{ax:skew-adj}), in the low-high case ($ 2^{k'}\simeq 2^{k_{max}} $) we may assume $ k_2=k_{\min} $ (i.e. the derivative falls on the lower frequency). By Lemma~\ref{lem:ellip} we have
\begin{align}
&	\nrm{P_{k'}   \calP_{j} (\bar{Q}_{\geq k_{\min} } \phi_{k_{1}}^1 \nabla_x \phi^2_{k_{2}})}_{L^{1} L^{2}}
	\aleq \nonumber  \\
& \qquad \qquad \qquad \qquad \quad  \vn{\bar{Q}_{\geq k_{\min} } \phi_{k_{1}}^1}_{L^2_{t,x}}  \big( \sum_{\calC_{k_{\min}}} \vn{P_{\calC_{k_{\min}}} \nabla_x \phi_{k_{2}}^2}_{L^2 L^{\infty}}^2 \big)^{\frac{1}{2}} , \label{highmod:1} \\
&	\nrm{P_{k'}   \calP_{j} (\bar{Q}_{< k_{\min} } \phi_{k_{1}}^1 \nabla_x \bar{Q}_{\geq k_{\min} } \phi_{k_{2}}^2)}_{L^{1} L^{2}}
	\aleq \nonumber \\
& \qquad \qquad \qquad \qquad \quad \big( \sum_{\calC_{k_{\min}}} \vn{P_{\calC_{k_{\min}}} \bar{Q}_{< k_{\min}}  \phi_{k_{1}}^1}_{L^2 L^{\infty}}^2  \big)^{\frac{1}{2}}  \vn{\bar{Q}_{\geq k_{\min} } \nabla_x \phi_{k_{2}}^2}_{L^2_{t,x}}  \label{highmod:2} \\ 
&	\nrm{P_{k'} Q_{\geq k_{\min}}  \calP_{j} (\bar{Q}_{< k_{\min}}  \phi_{k_{1}}^1 \nabla_x \bar{Q}_{< k_{\min}}  \phi_{k_{2}}^2)}_{L^2_{t,x}}
	\aleq  \nonumber \\
& \qquad  \qquad \qquad \quad  \big( \sum_{\calC_{k_{min}}} \vn{P_{\calC_{k_{min}}} \bar{Q}_{< k_{\min}} \phi_{k_{1}}^1}_{L^2 L^{\infty}}^2  \big)^{\frac{1}{2}} \vn{ \bar{Q}_{< k_{\min}} \nabla_x \phi_{k_{2}}^2}_{L^{\infty}L^2} . \label{highmod:3}
\end{align}

Using \eqref{fe-L2}, \eqref{fe-L2Linfty} for \eqref{highmod:1}, using \eqref{fe-L2Linfty}, \eqref{fe-L2} for \eqref{highmod:2}, and using \eqref{fe-L2Linfty}, \eqref{fe-LinfL2} and the $ X_1^{-1/2} $ norm for \eqref{highmod:3}, we see that these terms are acceptable. 

We continue with the low modulation term 
$$ P_{k'} Q_{< k_{\min} }  \calP_{j} (\bar{Q}_{< k_{\min} } \phi_{k_{1}}^1 \nabla_x \bar{Q}_{< k_{\min} } \phi_{k_{2}}^2), $$
which, summing according to the highest modulation, using \eqref{ax:nf:identity}, we decompose into sums of 
\begin{align}
	I_{0} =& \sum_{j < k_{\min}} P_{k'} Q_{j} \Delta^{-1} \nabla^l  \calN_{lm} (\bar{Q}_{< j} \phi_{k_{1}}^1, \bar{Q}_{< j} \phi_{k_{2}}^2), \label{I:0} \\
	I_{1} =& \sum_{j < k_{\min}}P_{k'} Q_{\leq j}\Delta^{-1} \nabla^l  \calN_{lm}(\bar{Q}_{j} \phi_{k_{1}}^1, \bar{Q}_{< j}\phi_{k_{2}}^2),  \label{I:1} \\
	I_{2} =& \sum_{j < k_{\min}} P_{k'} Q_{\leq j}\Delta^{-1} \nabla^l  \calN_{lm} ( \bar{Q}_{\leq j} \phi_{k_{1}}^1, \bar{Q}_{ j}  \phi_{k_{2}}^2).   \label{I:2}
\end{align}
for which we have
\be  \label{low:mod:Aeq}
\vn{ \vm{D}^{\sg-1} I_0}_{X_1^{-1/2}}+\vn{I_1}_{L^1  \dot{H}^{\sg-1}} +\vn{I_2}_{L^1 \dot{H}^{\sg-1} } \ls 2^{\frac{1}{2}(k_{\min} - k_{\max})} \vn{\phi^1_{k_1}}_{\bar{S}^{\sg}_{k_1}} \vn{\phi^2_{k_2}}_{\bar{S}^{\sg}_{k_2}}.
\ee
These are estimated by Proposition \ref{prop:nf} and \eqref{fe-LinfL2}, \eqref{fe-L2}, \eqref{fe-L2Linfty}, which concludes the proof of \eqref{est:ax1:freq}.

\subsection{Proof of \eqref{est:phi1}.} 
We separate $ A_0 \pt_t \phi $ and $ A^j \pt_j \phi $. Since we subtract $ \pi[A]\phi $, this effectively eliminates low-high interactions in the Littlewood-Paley trichotomy. Thus for $ k, k_0 \geq 0 $, $ k' \geq k-C $ it suffices to prove 
\be \label{est:phi1:freqA0}
\vn{\bar{P}_{k_0} \big( A^0_{k'} \pt_t \phi_k \big)}_{L^1 H^{\sg-1}} \ls 2^{k_{\min}-k_{\max}} \vn{A_{k'}^0}_{L^2 \dot{H}^{\sg+\frac{1}{2}}} \vn{\pt_t \phi_k}_{\bar{S}^{\sg-1}_{k}},
\ee
\be \label{est:phi1:freqAx}
\vn{\bar{P}_{k_0} \big( A^j_{k'} \pt_j \phi_k \big)}_{\bar{N}_{k_0}^{\sg-1}  } \ls 2^{\frac{1}{2}(k_{\min}-k_{\max})} \vn{A_{k'}}_{S^{\sg}_{k'}} \vn{\phi_k}_{\bar{S}^{\sg}_{k}}.
\ee

The bound \eqref{est:phi1:freqA0} follows immediately from \eqref{eq:ellip-1}. Now we turn to \eqref{est:phi1:freqAx}.

We first treat the high modulation contribution. By Lemma~\ref{lem:ellip} we have

\begin{align*}
&	\nrm{\bar{P}_{k_0} \bar{Q}_{\geq k_{\min} }  \big( A^j_{k'} \pt_j \phi_k \big)}_{L^2_{t,x}}
	\aleq \vn{A_{k'}}_{L^{\infty}L^2}  \big( \sum_{\calC_{k_{\min}}} \vn{P_{\calC_{k_{\min}}} \nabla_x \phi_{k}}_{L^2 L^{\infty}}^2  \big)^{\frac{1}{2}}  , \\
&	\nrm{\bar{P}_{k_0} \bar{Q}_{< k_{\min} }  (Q_{\geq k_{\min} } A^j_{k'} \pt_j \phi_k )}_{L^{1} L^{2}}
	\aleq \\
& \qquad \qquad \qquad \qquad \qquad \qquad \vn{Q_{\geq k_{\min} } A_{k'}}_{L^2_{t,x}} \big( \sum_{\calC_{k_{\min}}} \vn{P_{\calC_{k_{\min}}} \nabla_x \phi_{k}}_{L^2 L^{\infty}}^2  \big)^{\frac{1}{2}}, \\
&	\nrm{\bar{P}_{k_0} \bar{Q}_{< k_{\min} }  (Q_{< k_{\min} }  A^j_{k'} \pt_j  \bar{Q}_{\geq k_{\min}}\phi_k)  }_{L^{1} L^{2}}
	\aleq \\
& \qquad \qquad \qquad \qquad \quad \big( \sum_{\calC_{k_{\min}}} \vn{P_{\calC_{k_{\min}}} Q_{< k_{\min}}  A_{k'}}_{L^2 L^{\infty}}^2  \big)^{\frac{1}{2}}  \vn{\bar{Q}_{\geq k_{\min} } \nabla_x \phi_{k}}_{L^2_{t,x}}.  
\end{align*}
Using \eqref{fe-LinfL2}, \eqref{fe-L2Linfty} and the $ \bar{X}_1^{-1/2} $ norm for the first term, \eqref{fe-L2}, \eqref{fe-L2Linf} for the second, and \eqref{A:fe-L2Linfty}, \eqref{fe-L2} for the third, we see that these terms are acceptable. 

We continue with the low modulation term 
$$ \bar{P}_{k_0} \bar{Q}_{< k_{\min} }  \big(Q_{< k_{\min} }  A^j_{k'} \pt_j  \bar{Q}_{<  k_{\min}}\phi_k \big) $$
which, summing according to the highest modulation, using \eqref{phi:nf:identity}, we decompose into sums of 
\begin{align}
	I_{0} =& \sum_{j < k_{\min}} \bar{P}_{k_0} \bar{Q}_{j}   \calN_{lm} (\Delta^{-1} \nabla^l Q_{< j} A^m_{k'}, \bar{Q}_{< j} \phi_{k}), \label{I:zero} \\
	I_{1} =& \sum_{j < k_{\min}}\bar{P}_{k_0} \bar{Q}_{\leq j}  \calN_{lm}(\Delta^{-1} \nabla^l Q_{j} A^m_{k'}, \bar{Q}_{< j}\phi_{k}),\\
	I_{2} =& \sum_{j < k_{\min}} \bar{P}_{k_0} \bar{Q}_{\leq j}  \calN_{lm} (\Delta^{-1} \nabla^l Q_{\leq j} A^m_{k'}, \bar{Q}_{ j}  \phi_{k}). \label{I:two}
\end{align}
These are estimated using Proposition \ref{prop:nf}. We use \eqref{eq:nf:est0} with \eqref{fe-LinfL2} and \eqref{fe-L2Linfty} to estimate $ I_0 $ in $ \bar{X}_1^{-1/2} $. For $ I_1 $ we use \eqref{eq:nf:est1} with \eqref{fe-L2} and \eqref{fe-L2Linfty}, while for $ I_2 $ we use \eqref{eq:nf:est1} with \eqref{A:fe-L2Linfty} and   \eqref{fe-L2}. This concludes the proof of \eqref{est:phi1}.

\subsection{Proof of \eqref{est:phi2}.} We separate $ A_0 \pt_t \phi $ and $ A^j \pt_j \phi $. This case corresponds to low-high interactions in the Littlewood-Paley trichotomy. Thus for $ k, k_0 \geq 0 $, $ k' \leq k-C $ (and $ \vm{k-k_0} \leq 5 $)  it suffices to prove 
\be \label{est:phi2:freqA0}
 \vn{\bar{P}_{k_0} \big( A^0_{k'} \pt_t \phi_k \big)- \bar{P}_{k_0} \calH^{\ast}_{k'} \big( A_0 \pt_t \phi_k \big) }_{\bar{N}_{k_0}} \ls \vn{P_{k'} A_{0}}_{Y^{\sg}} \vn{\phi_k}_{\bar{S}^1_{k}} 
\ee
\be \label{est:phi2:freqAx}
\vn{\bar{P}_{k_0} \big( A^j_{k'} \pt_j \phi_k \big)- \bar{P}_{k_0} \calH^{\ast}_{k'} \big( A^j \pt_j \phi_k \big) }_{\bar{N}_{k_0}} \ls \vn{P_{k'} A_{x}}_{S^{\sg}_{k'}} \vn{\phi_k}_{\bar{S}^1_{k}}.
\ee

Notice that the lack of an exponential gain of type $ 2^{\frac{1}{2}(k_{\min}-k_{\max})} $ (as in \eqref{est:phi1:freqA0}, \eqref{est:phi1:freqAx}) is responsible for the need of $ \ell^1 $ summation in the norm on the RHS of \eqref{est:phi2}.

We first treat the high modulation contribution, where we denote $ A $ for either $ A^0 $ or $ A^j $. For any $ j \geq k'+C_2 $, by H\" older's inequality
\be \label{aaph:eq}
\begin{aligned}
& \vn{\bar{P}_{k_0} \bar{Q}_{\geq j-5} \big( Q_j A_{k'} \pt \phi_k \big)}_{\bar{X}_1^{-1/2}} \ls 2^{-\frac{1}{2}j} \vn{A_{k'}}_{L^2 L^{\infty}} \vn{\nabla \phi_k}_{L^{\infty} L^2} \\
& \vn{\bar{P}_{k_0} \bar{Q}_{< j-5} \big( Q_j A_{k'}  \bar{Q}_{\geq j-5}\pt \phi_k \big)}_{L^1 L^2 } \ls \vn{A_{k'}}_{L^2 L^{\infty}} \vn{\bar{Q}_{\geq j-5} \nabla \phi_k}_{L^2_{t,x}}
\end{aligned}
\ee
Using \eqref {A0:fe-L2Linf}, \eqref{fe-L2Linf}, \eqref{fe-LinfL2}, \eqref{fe-L2} and summing over $  j \geq k'+C_2 $, it follows that $ \bar{P}_{k_0} \big( Q_{\geq k'+C_2} A_{k'} \pt \phi_k \big) $ is acceptable except
$$ T=\sum_{j \geq k'+C_2} \bar{P}_{k_0} \bar{Q}_{< j-5} \big( Q_j A_{k'}  \bar{Q}_{<j-5}\pt \phi_k \big) $$
By applying Lemma \ref{geom:cone} (here we choose $ C_2 > \frac{1}{2} C_0 $) we see that the summand vanishes unless $ j=k_{\max}+O(1) $. Then, by Lemma~\ref{lem:ellip} we have
$$ \vn{T}_{L^1 L^2} \ls \sum_{j=k+O(1)} \vn{ Q_j A_{k'}}_{L^2_{t,x}} \big( \sum_{\calC_{k'}} \vn{P_{\calC_{k'}} \nabla \phi_k}^2_{L^2 L^{\infty}}   \big)^{\frac{1}{2}}      $$
which is acceptable by \eqref{A0:fe-L2}, \eqref{fe-L2}, \eqref{fe-L2Linfty}. 
The terms 
$$ \bar{P}_{k_0} \bar{Q}_{\geq k'+C_2} \big( Q_{< k'+C_2} A_{k'} \pt \phi_k \big), \qquad \bar{P}_{k_0} \bar{Q}_{< k'+C_2} \big( Q_{< k'+C_2} A_{k'} \bar{Q}_{\geq k'+C_2} \pt \phi_k \big) $$ 
are treated in the same way as \eqref{aaph:eq}. We omit the details.

We continue with the low modulation terms. Since we are subtracting $ \calH^{\ast} $ we consider
$$ I= \sum_{j < k'+C_2} \bar{P}_{k_0} \bar{Q}_{j}   ( Q_{< j} A^0_{k'} \cdot \pt_t \bar{Q}_{< j} \phi_{k}) $$
$$ J=\sum_{j < k'+C_2} \bar{P}_{k_0} \bar{Q}_{\leq j} (Q_{\leq j} A^0_{k'} \cdot \pt_t \bar{Q}_{ j}   \phi_{k}) $$
and prove
$$ \vn{I}_{\bar{X}_1^{-1/2}} + \vn{J}_{L^1 L^2} \ls \vn{P_{k'} A_{0}}_{Y^{\sg}} \vn{\phi_k}_{\bar{S}^1_{k}}. $$
by using \eqref{eq:no-nf:est0} with  \eqref{fe-LinfL2} and \eqref{A0:fe-L2Linfty} for $ I $; we use \eqref{eq:no-nf:est1} with \eqref{fe-L2} and  \eqref{A0:fe-L2Linfty} for $ J $.

It remains to show that for $ I_0, I_2 $ from \eqref{I:zero} and \eqref{I:two} (with summation over $ j<k'+C_2 $) we have
$$ \vn{I_0}_{\bar{X}_1^{-1/2}} + \vn{I_{2}}_{L^1 L^2} \ls \vn{A_{k'}}_{S^{\sg}_{k'}} \vn{\phi_k}_{\bar{S}^1_{k}}. $$
These follow from \eqref{eq:nf:est0} with \eqref{A:fe-L2Linfty}, \eqref{fe-LinfL2} and from \eqref{eq:nf:est1} with \eqref{A:fe-L2Linfty}, \eqref{fe-L2}, respectively.

\subsection{Proof of \eqref{est:phi3}.} This estimate follows from the next bound, for $ k' <k-5 $ 
\be \label{est:phi3:freq}
\vn{\calH^{\ast}_{k'} \big( A^{\al} \pt_{\al} \phi_k \big) }_{L^1 L^2} \ls \vn{A_{k'}}_{Z_{k'} \times Z^{ell}_{k'}} \vn{\phi_k}_{\bar{S}^1_{k}}. 
\ee
To prove \eqref{est:phi3:freq}, let $ \ell=\frac{1}{2}(j-k')_{-} \geq -k-C $ and separate $ A_0 \pt_t \phi $ from $ A^j \pt_j \phi $. We use \eqref{phi:nf:identity} and denote by $ \calN(A,\phi)$ one of $ A^0 \pt_t \phi $ or $ \calN_{lm} (\Delta^{-1} \nabla^l A^m, \phi) $. We expand
$$  \calH^{\ast}_{k'} \calN \big( A, \phi_k \big)=\sum_{j<k'+C_2^{\ast}} \sum_{\omega_1,\omega_2} \bar{Q}_{<j} \calN \big(P_{\ell}^{\omega_1} Q_j A_{k'},P_{\ell}^{\omega_2} \bar{Q}_{<j} \phi_k \big) $$
Splitting $ \bar{Q}_{<j}=\bar{Q}_{<j}^{+}+\bar{Q}_{<j}^{-},  \ Q_j=Q_j^++Q_j^-, $ and applying Lemma \ref{geom:cone} we see that the summand vanishes unless $ \vm{\angle(\omega_1,\pm \omega_2 )}  \ls 2^{\ell}  $. 

For  $ \calN=\calN_{lm} (\Delta^{-1} \nabla^l A^m, \phi) $ and $ s_1,s \in \{+,-\} $, by Corollary \ref{Nij:form:prop} we have

\begin{equation*}  \vn{\bar{Q}_{<j} \calN \big(P_{\ell}^{\omega_1} Q_j^{s_1} A_{k'},P_{\ell}^{\omega_2} \bar{Q}_{<j}^s \phi_k \big)}_{L^1 L^2} \ls 2^{\ell} \vn{P_{\ell}^{\omega_1} Q_j^{s_1} A_{k'}}_{L^1 L^{\infty}} \vn{P_{\ell}^{\omega_2} \bar{Q}_{<j}^s \nabla \phi_k}_{L^{\infty} L^2}  \end{equation*}

For $ \calN=A^0 \pt_t \phi $ we have the same inequality but without the $ 2^{\ell} $ factor. This is compensated by the fact that the $ Z^{ell} $ norm is larger. Indeed, we have  
$$ Z^{ell}=\Box^{\frac{1}{2}} \Delta^{-\frac{1}{2}} Z  \quad \text{and} \quad
 2^{\ell}  \vn{ P_{\ell}^{ \omega_1} Q_j^{s_1} A^0_{k'}}_{L^1 L^{\infty}}  \simeq   \vn{\Box^{\frac{1}{2}} \Delta^{-\frac{1}{2}} P_{\ell}^{ \omega_1} Q_j^{s_1} A^0_{k'}}_{L^1 L^{\infty}}.  $$

Note that for fixed $ \omega_1 $ there are only (uniformly) bounded number of $ \omega_2 $ such that the product is non-vanishing. Therefore, by Cauchy-Schwarz,
$$ \vn{\calH^{\ast}_{k'} \big( A^{\al} \pt_{\al} \phi_k \big) }_{L^1 L^2} \ls \sum_{\ell \leq 0} 2^{\frac{1}{2} \ell} \vn{A_{k'}}_{Z_{k'} \times Z^{ell}_{k'}} \big( \sup_{\pm} \sum_{\omega_2} \vn{P_{\ell}^{\omega_2} \bar{Q}_{<j}^{\pm} \nabla \phi_k}_{L^{\infty} L^2}^2  \big)^{\frac{1}{2}} $$
which implies \eqref{est:phi3:freq}.

\subsection{Proof of \eqref{est:a01}, \eqref{est:phi6} and the $ L^2 H^{\sg-\frac{3}{2}} $ part of \eqref{est:phi4} } One proceeds by dyadic decompositions. The $ L^2_{t,x} $-type estimates follow easily by H\" older's inequality $ L^{\infty}L^2 \times L^2 L^{\infty} \to L^2_{t,x} $ in the low-high/high-low cases and by Lemma \ref{lem:ellip} (eq. \eqref{eq:ellip-0}) in the high-high to low case. One uses the norms \eqref{fe-LinfL2}, \eqref{A0:fe-LinfL2}, \eqref{fe-L2Linf}, \eqref{A0:fe-L2Linf}, \eqref{fe-L2Linfty},  \eqref{A:fe-L2Linfty}, \eqref{A0:fe-L2Linfty}.

The $ L^{\infty} L^2 $ estimate follows by H\" older's ( $ L^{\infty} L^{\infty} \times L^{\infty}L^2 \to L^{\infty}L^2 $ or $ L^{\infty}L^2 \times L^{\infty}L^2 \to L^{\infty}L^1 $) and Bernstein's inequalities ( $ P_k L^2_x \to 2^{\frac{d}{2}k} L^{\infty}_x $ or $ P_k L^1_x \to 2^{\frac{d}{2}k} L^2_x $), depending on which frequency (input or output) is the lowest. 

\subsection{Proof of \eqref{est:phi4} for $ \bar{N} $.} Suppose $ k,k_2 \geq 0 $, $ k_1 \in \mb{Z} $. Let $ r_0 $ be the endpoint Strichartz exponent (i.e. $ \frac{d-1}{r_0}=\sg-\frac{1}{2} $).   By H\" older's inequality and using Bernstein's inequality for the lowest frequency (input or output) we obtain
\be \label{est:cubic:auxx}
\vn{\bar{P}_k \big( P_{k_1}A \cdot \phi_{k_2} \big)}_{L^1 H^{\sg-1} } \ls 2^{\frac{d}{r_0}(k-\max{k_i})} 2^{-\frac{1}{r_0} \vm{k_1-k_2}}  \vn{P_{k_1}A}_{L^2 \dot{H}^{\sg-\frac{1}{2}}} \vn{\phi_{k_2}}_{L^2 W^{r_0,\rho }}
\ee
With $ A=\pt_t A_0 $ and  $ \vn{\phi_{k_2}}_{L^2 W^{r_0,\rho }} \ls \vn{\phi_{k_2}}_{\bar{S}^{\sg}_{k_2}} $, upon summation we obtain \eqref{est:phi4}.

\subsection{Proof of \eqref{est:phi5} and \eqref{est:ax2} } We first prove the $ L^1 L^2 $ part. For \eqref{est:phi5} we consider $ k, k_2 \geq 0 $, $ k_1,k_3,k_4 \in \mb{Z} $. We apply \eqref{est:cubic:auxx} with $ A=A^1_{\al} A^2_{\al} $ together with
 $$
\vn{P_{k_1}(P_{k_3} A^1_{\al} P_{k_4} A^2_{\al})}_{L^2 \dot{H}^{\sg- \frac{1}{2}}} \ls 2^{\frac{1}{2}(k_{\min}-k_{\max})}   \vn{A_{\max\{k_3,k_4\}}}_{L^{\infty} \dot{H}^{\sg}}  \vn{A_{\min\{k_3,k_4\}}}_{L^2 \dot{W}^{\infty,-\frac{1}{2}} }. 
$$
By summing we obtain  \eqref{est:phi5}. The same argument is used for $ L^1 L^2 $ of  \eqref{est:ax2}.

To prove the $ L^2 \dot{H}^{\sg-\frac{3}{2}} $ and $ L^2 H^{\sg-\frac{3}{2}} $ estimates we write
$$  \vn{P_{k}\big(P_{k_1} (fg) P_{k_2} h \big)}_{L^2 \dot{H}^{\sg-\frac{3}{2}} }  \ls 2^{\frac{1}{2}(k-\max{k_i})} 2^{-\frac{1}{2} \vm{k_1-k_2}}  \vn{P_{k_1} (fg)}_{L^{\infty} \dot{H}^{\sg-1 }} \vn{P_{k_2} h}_{L^2 W^{r_0,\rho }} $$
and use $ L^{\infty} \dot{H}^{\sg} \times L^{\infty} \dot{H}^{\sg} \to L^{\infty} \dot{H}^{\sg-1} $ by H\" older and Sobolev embedding.

The $ \ell^1 L^{\infty} \dot{H}^{\sg-2} $ part of  \eqref{est:ax2} is similarly a consequence of H\" older and Bernstein inequalities.

\subsection{Proof of \eqref{eq:axr-Z}} 

Recall that $ \calH $ subtracts terms only for high-high interactions. For $ k' \leq k_2-C_2-10 $ we claim
\be \label{Z:norm1}
\vn{(P_{k'}-\calH_{k'}) \bfA ( \phi^{1}_{k_1},  \phi^{2}_{k_2})}_{Z_{k'} \times Z^{ell}_{k'}}  
	\aleq  2^{\frac{1}{2}(k'-k_{2})}  \nrm{\phi^{1}_{k_1}}_{\bar{S}^{\sg}_{k_1}} \nrm{\phi^{2}_{k_2}}_{\bar{S}^{\sg}_{k_2}}. 
\ee
while the low-high interactions: for $ k' \geq k_2-C_2-10 $ 
\be  \label{Z:norm2}
\vn{P_{k'} \bfA_x ( \phi^{1}_{k_1},  \phi^{2}_{k_2})}_{Z_{k'}}  
	\aleq  2^{-\frac{1}{2} \vm{k_{1}-k_{2}}}  \nrm{\phi^{1}_{k_1}}_{\bar{S}^{\sg}_{k_1}} \nrm{\phi^{2}_{k_2}}_{\bar{S}^{\sg}_{k_2}}.
\ee 
Clearly, \eqref{Z:norm1} and \eqref{Z:norm2} imply \eqref{eq:axr-Z}.
First we recall that 
$$ (\Box \bfA_{x}  , \Delta \bfA_{0}) ( \phi^{1},  \phi^{2})=-\mathfrak{I}( \calP_{x }(\phi^1 \nabla_x \bar{\phi^2}), \phi^1 \partial_t \bar{\phi^2}) $$ 
and  the embedding from \eqref{ZZ:emb}
$$
\big( \Box^{-1} \times \Delta^{-1}  \big) P_{k'} : L^1 L^2 \times L^1 L^2 \to 2^{(\sg-1)k'}  Z_{k'} \times Z^{ell}_{k'} 
$$

\pfstep{Step~1}{\it Proof of \eqref{Z:norm1}.}  The terms 
$$ P_{k'} \bfA ( \bar{Q}_{\geq k'+C} \phi^{1}_{k_1},  \phi^{2}_{k_2}), \qquad P_{k'} \bfA ( \bar{Q}_{\leq k'+C} \phi^{1}_{k_1},  \bar{Q}_{\geq k'+C} \phi^{2}_{k_2}) $$
are estimated using \eqref{highmod:1}, \eqref{highmod:2} and \eqref{ZZ:emb}. For $ \bfA_0 $ we note that \eqref{highmod:1}, \eqref{highmod:2} still  hold with $ \calP_j $ replaced by $ \calL $ \footnote{ $ \calL $ denotes any translation invariant bilinear form with bounded mass kernel.} and $ \nabla_x $ replaced by $ \pt_t $.
Recall that the $ Z $ norms restrict modulation to $ Q_{\leq k'+C} $. Thus it remains to consider
$$   (P_{k'} Q_{\leq k'+C}-\calH_{k'}) \bfA ( \bar{Q}_{\leq k'+C} \phi^{1}_{k_1},  \bar{Q}_{\leq k'+C} \phi^{2}_{k_2})
$$
For $ \bfA_x $, using \eqref{ax:nf:identity}, we need to treat $ \Box^{-1} I_1 $,  $ \Box^{-1} I_2 $ as defined in \eqref{I:0}-\eqref{I:2} (the $ \Box^{-1} I_0 $ term is subtracted by $ \calH_{k'} $). These are estimated using \eqref{low:mod:Aeq} and \eqref{ZZ:emb}.

We turn to $ \bfA_0 $. By switching the roles of $ \phi^1, \phi^2 $ if needed, it remains to consider 
$$ J_j= P_{k'} Q_{\leq j} \bfA_0 ( \bar{Q}_{j} \phi^{1}_{k_1},  \bar{Q}_{\leq j} \phi^{2}_{k_2}), \qquad \qquad  j \leq k'+C. $$
Using \eqref{Zell:emb} we obtain
\begin{align*} \vn{J_j}_{Z_{k'}^{ell}}  & \ls \sum_{\pm; j' \leq j} 2^{\frac{1}{2}(j'-k')} \vn{P_{k'} Q^{\pm}_{j'} (\bar{Q}_{j} \phi^{1}_{k_1} \cdot \pt_t  \bar{Q}_{\leq j} \phi^{2}_{k_2})  }_{L^1 \dot{H}^{\sg-1}  }  \\
& \ls 2^{\frac{1}{2}(j-k')} 2^{\frac{1}{2} (k'-k_2)} \nrm{\phi^{1}_{k_1}}_{\bar{S}^{\sg}_{k_1}} \nrm{\phi^{2}_{k_2}}_{\bar{S}^{\sg}_{k_2}}
\end{align*}

For the last inequality we have used Prop. \ref{prop:no-nf}  together with \eqref{fe-L2} and  \eqref{fe-L2Linfty}.

Summing in $ j \leq k'+C $ completes the proof of \eqref{Z:norm1}.

\pfstep{Step~2}{\it Proof of \eqref{Z:norm2}.} Due to skew-adjointness (see Remark \ref{ax:skew-adj}), we may assume that $ k_2=k_{\min}+O(1) $. The terms 
$$ P_{k'} \bfA_x ( \bar{Q}_{\geq k_2-c} \phi^{1}_{k_1},  \phi^{2}_{k_2}), \qquad P_{k'} \bfA_x ( \bar{Q}_{\prec k_2} \phi^{1}_{k_1},  \bar{Q}_{\geq k_2-c} \phi^{2}_{k_2}) $$
are estimated using \eqref{highmod:1}, \eqref{highmod:2} and \eqref{ZZ:emb}.

Note that the $ Z $ norm restricts modulations to $ Q_{\leq k'+C} $. Thus it remains to consider 
\be \label{mod:Z:terms}
P_{k'} Q_j \bfA_x ( \bar{Q}_{\prec k_2} \phi^{1}_{k_1},  \bar{Q}_{\prec k_2} \phi^{2}_{k_2})
\ee
for $ j \leq k'+C $. When $ j \geq k_2+C $, by Lemma \ref{geom:cone} the term vanishes unless $ j=k'+O(1) $. In this case
\begin{align*} & \vn{P_{k'} Q_{j} \bfA_x (  \bar{Q}_{\prec k_2} \phi^{1}_{k_1},  \bar{Q}_{\prec k_2} \phi^{2}_{k_2})}_{Z_{k'}}   \ls 2^{-2k'} \vn{\bar{Q}_{\prec k_2} \phi^1_{k_1} \nabla_x \bar{Q}_{\prec k_2} \phi^2_{k_2}}_{L^1 L^{\infty}} \\
& \ls 2^{k_2-2k'} \vn{\bar{Q}_{\leq k_1} \phi^1_{k_1}}_{L^2 L^{\infty}} \vn{\bar{Q}_{\prec k_2} \phi^2_{k_2}}_{L^2 L^{\infty}}+ 2^{k_2} \vn{\bar{Q}_{[k_2-c,k_1]} \phi^1_{k_1}}_{L^2 H^{\sg-1} } \vn{\bar{Q}_{\prec k_2} \phi^2_{k_2}}_{L^2 L^{\infty}}
\end{align*}
which is estimated using \eqref{fe-L2Linf} and \eqref{fe-L2}.

It remains to consider \eqref{mod:Z:terms} for $ j<k_2+C $. Using \eqref{ax:nf:identity} we decompose into sums of $ \Box^{-1} I_i $, $ (i=0,2) $ as defined in \eqref{I:0}-\eqref{I:2} (for $ k_2-C<j<k_2+C $ with $ \bar{Q}$ indices slightly adjusted). Then $ \Box^{-1} I_1 $ and $ \Box^{-1} I_2 $ are estimated using \eqref{low:mod:Aeq} and \eqref{ZZ:emb}.

Now we consider $ \Box^{-1} I_0 $. Define $ \ell \defeq \frac{1}{2}(j-k_2)_{-}  \geq \ell' \defeq \frac{1}{2}(j-k')_{-} $ and for $ s=\pm $ we decompose
$$ P_{k'} Q_{j}^s   \calN_{lm} (\bar{Q}_{< j} \phi_{k_{1}}^1, \bar{Q}_{< j} \phi_{k_{2}}^2)=\sum_{s_2, \omega_i} P_{\ell'}^{\omega_0} P_{k'} Q_{j}^s   \calN_{lm} (P_{\ell'}^{\omega_1} \bar{Q}_{< j}^s \phi_{k_{1}}^1, P_{\ell}^{\omega_2}\bar{Q}_{< j}^{s_2} \phi_{k_{2}}^2)
$$
By Lemma \ref{geom:cone}, the summand on the RHS vanishes unless
\begin{align*}
&\ \ \vm{\angle(\omega_0,\omega_1)} \ \ \ls  2^{\ell} 2^{k_2-k'}+2^{\ell'} \ls 2^{\ell'} \\
& \vm{\angle(s \omega_0,s_2 \omega_2)}  \ls 2^{\ell}+\max(2^{\ell'},2^{\ell})  \ls 2^{\ell}.
\end{align*}
Note that $ P_{\ell'}^{\omega_0} P_{k'} Q_{j}^s $ and $ 2^{2\ell'+2k'} \Box^{-1} P_{\ell'}^{\omega_0} P_{k'} Q_{j}^s  $ are disposable. Corollary \ref{Nij:form:prop} implies 
\be \label{Z:nf:bd}
\vn{ \calN_{lm} (P_{\ell'}^{\omega_1} \bar{Q}_{< j}^s \phi_{k_{1}}^1, P_{\ell}^{\omega_2}\bar{Q}_{< j}^{s_2} \phi_{k_{2}}^2) }_{L^1 L^{\infty}} \ls 2^{\ell} \vn{P_{\ell'}^{\omega_1} \bar{Q}_{< j}^s \nabla \phi_{k_{1}}^1}_{L^2 L^{\infty}}  \vn{P_{\ell}^{\omega_2}\bar{Q}_{< j}^{s_2} \nabla \phi_{k_{2}}^2}_{L^2 L^{\infty}} 
\ee

For a fixed $\omg_{0}$ [resp. $\omg_{1}$], there are only (uniformly) bounded number of $\omg_{1}, \omg_{2}$ [resp. $\omg_{0}, \omg_{2}$] such that the summand is nonzero. Summing first in $\omg_{2}$ (finitely many terms), then the (essentially diagonal) summation in $\omg_{0}, \omg_{1}$, we obtain
$$ \big( \sum_{\omega_0} \text{LHS} \eqref{Z:nf:bd}^2 \big)^\frac{1}{2} \ls 
2^{\ell} \big( \sum_{\omega_1} \vn{P_{\ell'}^{\omega_1} \bar{Q}_{< j}^s \nabla \phi_{k_{1}}^1}_{L^2 L^{\infty}}^2 \big)^{\frac{1}{2}}  \sup_{\omega_2} \vn{P_{\ell}^{\omega_2}\bar{Q}_{< j}^{s_2} \nabla \phi_{k_{2}}^2}_{L^2 L^{\infty}}
$$
Keeping track of derivatives and dyadic factors, recalling \eqref{Z:norm:def} and using \eqref{fe-L2Linfty} for $ \phi^1, \phi^2 $, we obtain
$$ \vn{\Box^{-1} I_0}_{Z_{k'}} \ls \sum_{j<k_2} 2^{\frac{1}{4}(j-k_2)} 2^{k_2-k'} \nrm{\phi^{1}_{k_1}}_{\bar{S}^{\sg}_{k_1}} \nrm{\phi^{2}_{k_2}}_{\bar{S}^{\sg}_{k_2}}  $$
This completes the proof of \eqref{Z:norm2}.

\subsection{Proof of \eqref{AHH:highdim}} 
The low-high part of the estimate for $ \bfA_x ( \phi^{1},  \phi^{2}) $ follows from \eqref{Z:norm2}. For the high-high parts of both $ \bfA_x ( \phi^{1},  \phi^{2}) $ and $ \bfA_0 ( \phi^{1},  \phi^{2}) $ we fix the frequency and use \eqref{ZZ:emb}, H\" older $ L^2 L^4 \times L^2 L^4 \to L^1 L^2 $ together with $ L^2 L^4 $ Strichartz inequalities. We gain the factor $ 2^{\frac{d-4}{2} (k_{\min}-k_{\max})} $ which suffices to do the summation in the present case $ d \geq 5 $.

\section{Proof of the trilinear estimate \eqref{est:trilin}} \label{Trilinear:section}

This section is devoted to the the proof of Proposition \ref{trilinear}.

\subsection{Proof of Proposition \ref{trilinear}} Our goal is to prove

$$ \vn{\pi[\bfA( \phi^{1},  \phi^{2}) ] \phi}_{\bar{N}^{\sg-1}  }  \ls \vn{\phi^1}_{\bar{S}^{\sg} } \vn{\phi^2}_{\bar{S}^{\sg}} \vn{\phi}_{\bar{S}^{\sg}} $$
First we note that (recalling definition \eqref{a0:decomp}) \eqref{est:phi7} together with \eqref{A0:lh} implies
$$ \vn{\pi[0,\bfA_{0}^{LH}( \phi^{1},  \phi^{2}) ] \phi}_{\bar{N}^{\sg-1}  }  \ls \vn{\phi^1}_{\bar{S}^{\sg} } \vn{\phi^2}_{\bar{S}^{\sg}} \vn{\phi}_{\bar{S}^{\sg}}  	$$
Secondly, \eqref{Ax:A0:hh} and \eqref{est:phi2} imply
$$ \vn{(I-\calH^{\ast}) \pi[(\bfA_{x},\bfA_{0}^{HH})( \phi^{1},  \phi^{2}) ] \phi}_{\bar{N}^{\sg-1} }  \ls \vn{\phi^1}_{\bar{S}^{\sg} } \vn{\phi^2}_{\bar{S}^{\sg}} \vn{\phi}_{\bar{S}^{\sg}}    $$
For $ d \geq 5 $, \eqref{est:phi3} and \eqref{AHH:highdim} imply 
$$ 
\vn{\calH^{\ast} \pi[(\bfA_{x},\bfA_{0}^{HH})( \phi^{1},  \phi^{2}) ] \phi}_{\bar{N}^{\sg-1} }  \ls \vn{\phi^1}_{\bar{S}^{\sg} } \vn{\phi^2}_{\bar{S}^{\sg}} \vn{\phi}_{\bar{S}^{\sg}} 
$$
which concludes the proof in the case $ d \geq 5 $. In the remaining of this section we assume $ d=4 $, $ \sg=1 $.

Next we use \eqref{est:phi3} together with \eqref{eq:axr-Z}  and obtain
$$
\vn{\calH^{\ast} \pi[(I - \calH)(\bfA_{x},\bfA_{0}^{HH})( \phi^{1},  \phi^{2}) ] \phi}_{\bar{N}}  \ls \vn{\phi^1}_{\bar{S}^1} \vn{\phi^2}_{\bar{S}^1} \vn{\phi}_{\bar{S}^1} 
$$
Since $ \calH \bfA_{0}= \calH \bfA_{0}^{HH} $  it remains to consider $ \calH^{\ast} \pi[\calH \bfA ] \phi $ which we write using \eqref{Q:dec}, \eqref{Q:dec2} as
$$  \calH^{\ast} \pi[\calH \bfA( \phi^{1},  \phi^{2}) ] \phi =\mathcal{Q}^1+\mathcal{Q}^2+\mathcal{Q}^3 $$
where
\begin{align*}
	\mathcal{Q}^1 :=&   \calH^{\ast} ( \Box^{-1} \calH  \mathfrak{I}  (\phi^1 \pt_{\al} \bar{\phi^2})\cdot \partial^{\al}\phi) , \\
	\mathcal{Q}^2 :=& -  \calH^{\ast} (  \calH \Delta^{-1} \Box^{-1} \pt_t \pt_{\al} \mathfrak{I}  (\phi^1 \pt_{\al} \bar{\phi^2})\cdot \partial_{t}\phi ), \\
	\mathcal{Q}^3  :=& -  \calH^{\ast} ( \calH \Delta^{-1} \Box^{-1} \pt_{\al} \pt^i  \mathfrak{I}  (\phi^1 \pt_{i} \bar{\phi^2})\cdot \partial^{\al}\phi ).
\end{align*}
and it remains to prove 
\be  \label{eq:tri-Q}
\vn{\mathcal{Q}^i(\phi^1,\phi^2,\phi)}_{\bar{N}}  \ls \vn{\phi^1}_{\bar{S}^1} \vn{\phi^2}_{\bar{S}^1} \vn{\phi}_{\bar{S}^1} 
, \qquad i=1,3; \ (d=4).
\ee

\

\subsection{Proof of \eqref{eq:tri-Q} for $ \mathcal{Q}^1 $}

Fix $ k, k_1, k_2 \geq 0 $ and let $ k_{\min}=\min(k,k_1,k_2) \geq 0 $. The estimate follows from
\be \label{Q1:trilinear}
\vn{ \sum_{k' < k_{\min}-C} \sum_{j<k'+C} \mathcal{Q}^1_{j,k'} (\phi^1_{k_1},\phi^2_{k_2},\phi_k)}_{N_k} \ls \vn{\phi^1_{k_1}}_{\bar{S}^1_{k_1}} \vn{\phi^2_{k_2}}_{\bar{S}^1_{k_2}} \vn{\phi_{k}}_{\bar{S}^1_{k}} 
\ee
by summing in $ k_1=k_2+O(1) $, where
$$ \mathcal{Q}^1_{j,k'} (\phi^1_{k_1},\phi^2_{k_2},\phi_k)= \bar{Q}_{<j}[P_{k'}Q_j \Box^{-1} (\bar{Q}_{<j}\phi^1_{k_1} \pt_{\al} \bar{Q}_{<j} \phi^2_{k_2}) \cdot \pt^{\al} \bar{Q}_{<j} \phi_k].     	 $$
Define $ l \in [-k_{\min}, C] $ by $ j=k'+2l $ which implies $ \angle(\phi_k, P_{k'}A),  \angle(\phi^2_{k_2}, P_{k'}A) \ls 2^l $. When $ k_{\min}=0 $ we may set $ l=0 $ and similarly for $ l_0 $ below.

In proving \eqref{Q1:trilinear}, we make the normalization 
\be \label{normalz}
\vn{\phi^1_{k_1}}_{\bar{S}_{k_1}^1}=1, \qquad  \vn{\phi^2_{k_2}}_{\bar{S}^1_{k_2}}=1,\qquad \vn{\phi_{k}}_{\bar{S}^1_{k}}=1. 
\ee

Since we have a null form between $ \phi^2 $ and $ \phi $ we use a bilinear partition of unity based on their angular separation:

$$ \mathcal{Q}^1_{j,k'}=\sum_{l_0+C<l'<l} \sum_{\substack{\omega_1,\omega_2 \in \Gamma(l') \\ \angle(\omega_1,\omega_2) \simeq 2^{l'} }} \mathcal{Q}^1_{j,k'} (\phi^1_{k_1},P_{l'}^{\omega_2} \phi^2_{k_2},P_{l'}^{\omega_1} \phi_k)+ \sum_{\substack{\omega_1,\omega_2 \in \Gamma(l_0) \\ \angle(\omega_1,\omega_2) \ls 2^{l_0} } } \mathcal{Q}^1_{j,k'} (\phi^1_{k_1},P_{l_0}^{\omega_2} \phi^2_{k_2},P_{l_0}^{\omega_1} \phi_k)
$$
where $ l_0 \defeq \max(-k_{\min},l+k'-k_{\min},\frac{1}{2}(j-k_{\min})) $ and the angle $ \angle(\omega_1,\omega_2) $ is taken $ \mod \pi $.   Notice that the sums in $ \omega_1,\omega_2 $ are essentially diagonal. In each summand, we may insert $ P_l^{[\omega_1]} $ in front of $ P_{k'}Q_j \Box^{-1} $,  where $ P_l^{[\omega_1]} $ is uniquely (up to $ O(1)$) defined by $ \omega_1 $ (or $ \omega_2 $).

For the first sum, for $ k_{\min}>0 $, for any $ l' \in [l_0+C,l] $ we will prove
\be  \label{NullFramesEstimate}
\sum_{\omega_1,\omega_2} \vn{\mathcal{Q}^1_{j,k'} (\phi^1_{k_1},P_{l'}^{\omega_2} \phi^2_{k_2},P_{l'}^{\omega_1} \phi_k)}_{L^1L^2} \ls 2^{\frac{1}{4}(l'+l)} 2^{\frac{1}{2}(k'-k_2)} 
\ee
by employing the null-frame estimate in Corollary \ref{L2:NFnullFrames:cor}, which takes advantage of the angular separation. Summing in $ l',j,k' $ we obtain part of \eqref{Q1:trilinear}.

At small angles however, one does not control the null-frame norms for Klein-Gordon waves and the null-form gives only a limited gain. We consider two cases.

For $ j \geq -k_{\min} $ we sum the following in $ j,k' $
\be  \label{SmallAnglesLargeMod}
\sum_{\omega_1,\omega_2} \vn{ \mathcal{Q}^1_{j,k'} (\phi^1_{k_1},P_{l_0}^{\omega_2} \phi^2_{k_2},P_{l_0}^{\omega_1} \phi_k) }_{L^1L^2} \ls 2^l 2^{k'-k_{\min}}  
\ee

When $ j \leq -k_{\min} $ (thus $ k' \leq -k_{\min}-2l $ and $ l_0=-k_{\min} $) the operator $ P_{k'}Q_j \Box^{-1} $ becomes more singular and we encounter a logarithmic divergence if we try to sum $ k', j $ outside the norm in \eqref{Q1:trilinear}. We proceed as follows. We write
$$ \mathcal{Q}^1_{j,k'} (\phi^1_{k_1},P_{l_0}^{\omega_2} \phi^2_{k_2},P_{l_0}^{\omega_1} \phi_k)= \bar{Q}_{<j}[P_l^{[\omega_1]} P_{k'}Q_j \Box^{-1} (\bar{Q}_{<j}\phi^1_{k_1} \pt_{\al} \bar{Q}_{<j} P_{l_0}^{\omega_2} \phi^2_{k_2}) \cdot \pt^{\al} \bar{Q}_{<j}P_{l_0}^{\omega_1} \phi_k] $$
We define
$$ \tilde{\mathcal{Q}}^{\omega_i}_{j,k'}=P_l^{[\omega_1]} P_{k'}Q_j \Box^{-1} (\phi^1_{k_1} \pt_{\al} \bar{Q}_{<k_{2}+2l_0} P_{l_0}^{\omega_2} \phi^2_{k_2}) \cdot \pt^{\al} \bar{Q}_{<k+2l_0}P_{l_0}^{\omega_1} \phi_k   $$
and we shall prove, using the embeddings in Prop. \ref{Box:Embedding}, that for any $ l \in [-k_{\min}, C] $
\be \label{SmallAngleSmallMod}
\sum_{\omega_1,\omega_2} \vn{\sum_{k' \leq -k_{\min}-2l } \tilde{\mathcal{Q}}^{\omega_i}_{k'+2l,k'}}_{L^1L^2} \ls 2^{-\frac{1}{2}(l+k_{\min})} ,
\ee
which sums up (in $ l $) towards the rest of \eqref{Q1:trilinear} except for the remainders
$$ \tilde{\mathcal{Q}}^{\omega_i}_{j,k'}- \mathcal{Q}^1_{j,k'} (\phi^1_{k_1},P_{l_0}^{\omega_2} \phi^2_{k_2},P_{l_0}^{\omega_1} \phi_k)=\mathcal{R}^{1,\omega_i}_{j,k'}+\mathcal{R}^{2,\omega_i}_{j,k'}+\mathcal{R}^{3,\omega_i}_{j,k'}+\mathcal{R}^{4,\omega_i}_{j,k'} $$
for which we have
\be  \label{SmallAngleRemainders}
\sum_{\omega_1,\omega_2} \vn{\mathcal{R}^{i,\omega}_{j,k'}}_{N_k} \ls 2^{\frac{l}{2}} 2^{\frac{1}{2}(k'-k_2)} , \qquad i=1,4 
\ee
where
\begin{align*}
	 \mathcal{R}^{1,\omega_i}_{j,k'} & :=   \bar{Q}_{>j}[  P_l^{[\omega_1]} P_{k'}Q_j \Box^{-1} (\bar{Q}_{<j}\phi^1_{k_1}  \pt_{\al} \bar{Q}_{<j} P_{l_0}^{\omega_2} \phi^2_{k_2}) \cdot  \pt^{\al} \bar{Q}_{<j}P_{l_0}^{\omega_1} \phi_k] , \\
	\mathcal{R}^{2,\omega_i}_{j,k'} & :=    P_l^{[\omega_1]} P_{k'}Q_j \Box^{-1} ( \bar{Q}_{<j}\phi^1_{k_1}  \pt_{\al} \bar{Q}_{<j} P_{l_0}^{\omega_2} \phi^2_{k_2}) \cdot  \pt^{\al} \bar{Q}_{[j,k-2k_{\min}]}P_{l_0}^{\omega_1} \phi_k , \\
	\mathcal{R}^{3,\omega_i}_{j,k'} & :=   P_l^{[\omega_1]} P_{k'}Q_j \Box^{-1} ( \bar{Q}_{>j}\phi^1_{k_1}  \pt_{\al} \bar{Q}_{<j} P_{l_0}^{\omega_2} \phi^2_{k_2}) \cdot  \pt^{\al} \bar{Q}_{<k-2k_{\min}}P_{l_0}^{\omega_1} \phi_k , \\
	\mathcal{R}^{4,\omega_i}_{j,k'} & :=    P_l^{[\omega_1]} P_{k'}Q_j \Box^{-1} ( \phi^1_{k_1}  \pt_{\al} \bar{Q}_{[j,k_2-2k_{\min}]} P_{l_0}^{\omega_2} \phi^2_{k_2}) \cdot  \pt^{\al} \bar{Q}_{<k-2k_{\min}}P_{l_0}^{\omega_1} \phi_k. \\
\end{align*}
Summing in $ j,k' $ we obtain the rest of \eqref{Q1:trilinear}.

\subsection{Proof of \eqref{NullFramesEstimate} and \eqref{SmallAnglesLargeMod}}
We are in the case $ k_1=k_2+O(1) $, $ k=\tilde{k}+O(1) $, $ k'+C<k_{\min}=\min(k,k_1,k_2) > 0 $, $ j=k'+2l  $. We prove \footnote{Notice that this case does not occur when $ k_{\min}=0 $.}
\begin{multline*} 
 | \lng  P_l^{[\omega_1]} P_{k'}Q_j \Box^{-1} (\bar{Q}_{<j}\phi^1_{k_1}  \pt_{\al} \bar{Q}_{<j} P_{l'/l_0}^{\omega_2} \phi^2_{k_2}) , \tilde{P}_{k'} \tilde{Q}_j (  \pt^{\al} \bar{Q}_{<j} P_{l'/l_0}^{\omega_1} \phi_k \cdot \bar{Q}_{<j} \psi_{\tilde{k}} \rng | \\
\ls M_{\omega_1,\omega_2} \vn{\psi_{\tilde{k}}}_{L^{\infty} L^2}
\end{multline*}
where $ M_{\omega_1,\omega_2} $ will be defined below.

The two products above are summed over diametrically opposed boxes $ \pm \calC $ [resp. $ \pm \calC'  $ ] of size $ \simeq 2^{k'} \times (2^{k'+l})^3 $ included in the angular caps $ \calC_{l'/l_0}^{\omega_2} $ [resp. $\calC_{l'/l_0}^{\omega_1}$] where $ P_{l'/l_0}^{\omega_2} $ [resp. $ P_{l'/l_0}^{\omega_1} $] are supported (Lemma \ref{geom:cone}).

Note that $ 2^{j+k'}  P_l^{[\omega_1]} P_{k'}Q_j \Box^{-1} $ acts by convolution with an integrable kernel. By a simple argument based on translation-invariance we may dispose of this operator (after first making the the $ \calC, \calC'  $ summation).

{\bf Step~1:~Proof of \eqref{NullFramesEstimate}}

In this case the null form gains $ 2^{2l'} $. It suffices to show, having normalized \eqref{normalz}
\be  \label{NullFramesEstimateDual}
 2^{-j-k'} | \lng (\bar{Q}_{<j}\phi^1_{k_1}  \pt_{\al} \bar{Q}_{<j}^{\pm \pm'} P_{l'}^{\omega_2} \phi^2_{k_2}) , (  \pt^{\al} \bar{Q}_{<j}^{\pm} P_{l'}^{\omega_1} \phi_k \cdot \bar{Q}_{<j} \psi_{\tilde{k}} \rng | \ls M_{\omega_1,\omega_2} \vn{\psi_{\tilde{k}}}_{L^{\infty} L^2} \ee
 \be \label{SumedNullFramesEstimateDual}
 \sum_{\omega_1,\omega_2}  M_{\omega_1,\omega_2} \ls  2^{\frac{1}{4}(l'+l)} 2^{\frac{1}{2}(k'-k_2)}  \ee
where $ \angle(\omega_1, \pm' \omega_2 ) \simeq 2^{l'} $. We write $ 2^{j+k'} \text{LHS} \eqref{NullFramesEstimateDual}  \ls $
\begin{align*} 
& \int \sum_{\substack{ \calC \subset \calC_{l'}^{\omega_2} \\ \calC' \subset \calC_{l'}^{\omega_1} }}  \vn{P_{-\calC} \bar{Q}_{<j}\phi^1_{k_1}}_{L^{\infty}_x}  \vn{\pt_{\al} P_{\calC} \bar{Q}_{<j}^{\pm \pm'}\phi^2_{k_2}  \pt^{\al} P_{\calC'} \bar{Q}_{<j}^{\pm} \phi_{k}}_{L^2_x} \vn{P_{-\calC'}  \bar{Q}_{<j} \psi_{\tilde{k}} }_{L^2_x} (t)  \dd t  \ls \\
& \int (\sum_{\calC \subset \calC_{l'}^{\omega_2}} \vn{P_{-\calC} \bar{Q}_{<j}\phi^1_{k_1}}_{L^{\infty}_x}^2)^{\frac{1}{2}} ( \sum_{\substack{ \calC \subset \calC_{l'}^{\omega_2} \\ \calC' \subset \calC_{l'}^{\omega_1} }}\vn{\pt_{\al} P_{\calC} \bar{Q}_{<j}^{\pm \pm'} \phi^2_{k_2}  \pt^{\al} P_{\calC'} \bar{Q}_{<j}^{\pm} \phi_{k}}_{L^2_x}^2   )^{\frac{1}{2}}  \vn{ \bar{Q}_{<j} \psi_{\tilde{k}}   }_{L^2_x} (t) \dd t \\
& \ls (\sum_{\calC \subset \calC_{l'}^{\omega_2}} \vn{P_{-\calC} \bar{Q}_{<j}\phi^1_{k_1}}_{L^2 L^{\infty}}^2)^{\frac{1}{2}} \ \mathcal{I}_{\omega_1,\omega_2}(l') \vn{ \bar{Q}_{<j} \ \psi_{\tilde{k}}   }_{L^{\infty} L^2}
\end{align*}
where, using Corollary \ref{L2:NFnullFrames:cor}, 
\begin{align*} \mathcal{I}_{\omega_1,\omega_2}(l')^2 & \defeq \sum_{\calC \subset \calC_{l'}^{\omega_2}} \sum_{\calC' \subset \calC_{l'}^{\omega_1}} \vn{\pt_{\al} P_{\calC} \bar{Q}_{<j}^{\pm \pm'} \phi^2_{k_2} \cdot  \pt^{\al} P_{\calC'} \bar{Q}_{<j}^{\pm} \phi_{k}}_{L^2_{t,x}}^2 \\
& \ls 2^{l'} (  \sum_{\calC \subset \calC_{l'}^{\omega_2}} \vn{P_{\calC} \bar{Q}_{<j}^{\pm \pm'} \nabla_{t,x}\phi^2_{k_2}}_{PW_\calC^{{\pm \pm'}}}^2 )  ( \sum_{\calC' \subset \calC_{l'}^{\omega_1}} \vn{ P_{\calC'} \bar{Q}_{<j}^{\pm} \nabla_{t,x} \phi_{k}}_{NE_{\calC'}^{\pm}}^2).
\end{align*}
Thus in \eqref{NullFramesEstimateDual} we may take
$$ M_{\omega_1,\omega_2}=2^{-j-k'} (\sum_{\calC \subset \calC_{l'}^{\omega_2}} \vn{P_{-\calC} \bar{Q}_{<j}\phi^1_{k_1}}_{L^2 L^{\infty}}^2)^{\frac{1}{2}} \ \mathcal{I}_{\omega_1,\omega_2}(l') $$
and by Summing in $ \omega_2 $ (the $ \omega_1$ sum is redundant) using C-S we have
$$  \sum_{\omega_1,\omega_2}  M_{\omega_1,\omega_2} \ls 2^{-2l-2k'} \cdot 2^{l'} \cdot 2^{\frac{1}{2}l}2^{k'}2^{-\frac{1}{2}k_1} \cdot 2^{\frac{3}{2}(k'+l)}  $$
which implies \eqref{SumedNullFramesEstimateDual}.

{\bf Step~2:~Proof of \eqref{SmallAnglesLargeMod}} Here $ j \geq-k_{\min} $.
In this case the null form gains $ 2^{j-k_{\min}} $ and
$$ 2^{l_0}=\max( 2^{-k_{\min}},2^l 2^{k'-k_{\min}}, 2^{\frac{1}{2}(j-k_{\min})}) \leq 2^l $$ 
By Prop. \ref{N0:form} and Remark \ref{NF:remark} it suffices to prove, under \eqref{normalz}
\begin{multline}  \label{SmallAnglesLargeModDual}
2^{j-k_{\min}} | \lng  P_l^{[\omega_1]} P_{k'}Q_j \Box^{-1} (\bar{Q}_{<j}\phi^1_{k_1}  \nabla_{t,x} \bar{Q}_{<j} P_{l_0}^{\omega_2} \phi^2_{k_2}) , \tilde{P}_{k'} \tilde{Q}_j (  \nabla_{t,x} \bar{Q}_{<j}P_{l_0}^{\omega_1} \phi_k \cdot \bar{Q}_{<j} \psi_{\tilde{k}} \rng | \\
\ls M_{\omega_1,\omega_2} \vn{\psi_{\tilde{k}}}_{L^{\infty} L^2}, \qquad \sum_{\omega_1,\omega_2}  M_{\omega_1,\omega_2} \ls  2^l 2^{k'-k_{\min}}.
\end{multline}
We have
\begin{align*} \text{LHS} \ \eqref{SmallAnglesLargeModDual} \ls & 2^{-k_{\min}-k'} 2^{k_2} \sum_{\calC \subset \calC_{l_0}^{\omega_2}} \vn{P_{\calC} \bar{Q}_{<j}\phi^1_{k_1}}_{L^2 L^{\infty}}  \vn{P_{-\calC} \bar{Q}_{<j}\phi^2_{k_2}}_{L^2 L^{\infty}} \\
& \times \sup_{t} \sum_{\calC' \subset \calC_{l_0}^{\omega_1}} \vn{P_{\calC'} \bar{Q}_{<j} \nabla \phi_{k}(t)}_{L^2_x} \vn{P_{-\calC'}  \bar{Q}_{<j} \psi_{\tilde{k}} (t)  }_{L^2_x}  \ls  M_{\omega_1,\omega_2} \vn{\psi_{\tilde{k}}}_{L^{\infty} L^2}
\end{align*}
where for each $ t $ we have used Cauchy-Schwarz and orthogonality, where
$$ M_{\omega_1,\omega_2}= 2^{-k_{\min}-k'} 2^{k_2} ( \sum_{\calC \subset \calC_{l_0}^{\omega_2}}  \vn{P_{\calC} \bar{Q}_{<j}\phi^1_{k_1}}_{L^2 L^{\infty}} ^2)^{\frac{1}{2}} ( \sum_{\calC \subset \calC_{l_0}^{\omega_2}}  \vn{P_{-\calC} \bar{Q}_{<j}\phi^2_{k_2}}_{L^2 L^{\infty}}^2)^{\frac{1}{2}} \vn{\nabla \phi_{k}}_{L^{\infty} L^2}
$$
Summing in $ \omega_2 $ (the $ \omega_1$ sum is redundant) using C-S and  \eqref{fe-L2Linfty}, \eqref{fe-LinfL2} we get \eqref{SmallAnglesLargeModDual}.

\subsection{Proof of \eqref{SmallAngleSmallMod} } Recall that $ l \in [-k_{\min},C], \ k_{\min}=\min(k,k_1,k_2) \geq 0 $ and $ k_1=k_2+O(1) $ are fixed. We are in the case $ k'+2l=j \leq -k_{\min} $, thus $ l_0=-k_{\min} $, i.e. $ \angle (\omega_1,\omega_2) \ls 2^{-k_{\min}} $.

By Prop. \ref{N0:form} and Remark \ref{NF:remark} the null form gains $ 2^{-2k_{\min}} $. We can apply that proposition because $ \bar{Q}_{<k_i-2k_{\min}}=Q_{<k_i-2k_{\min}+C}  \bar{Q}_{<k_i-2k_{\min}} $.  

We will apply Prop. \ref{Box:Embedding}. For $ M=-k_{\min}-2l $, we write    
\be \label{interm:sum}
\sum_{k' \leq M} \tilde{\mathcal{Q}}^{\omega_i}_{k'+2l,k'}=\sum_{\pm} T_{l}^{[\omega_1]} ( \phi^1_{k_1} \pt_{\al} \bar{Q}_{<k_{2}+2l_0} P_{l_0}^{\omega_2} \phi^2_{k_2}) \cdot \pt^{\al} \bar{Q}_{<k+2l_0}P_{l_0}^{\omega_1} \phi_k
\ee

We consider two cases.

{\bf Case~1:~ $ k_{\min}=k_2+O(1) $.} The null form gains $ 2^{-2k_2} $. We have
$$ \vn{\eqref{interm:sum}}_{L^1L^2} \ls 2^{-2k_2} \vn{T_{l}^{[\omega_1]} (\phi^1_{k_1} \bar{Q}_{<k_{2}+2l_0} P_{l_0}^{\omega_2} \nabla\phi^2_{k_2})}_{L^1 L^{\infty}}  \vn{P_{l_0}^{\omega_1} \bar{Q}_{<k+2l_0} \nabla \phi_k}_{L^{\infty} L^2}. $$
Using \eqref{L1Linf:emb} and \eqref{Lorentz:Holder} we have
$$  \vn{T_{l}^{[\omega_1]} (\phi^1_{k_1} \bar{Q}_{<k_{2}+2l_0} P_{l_0}^{\omega_2} \nabla\phi^2_{k_2})}_{L^1 L^{\infty}} \ls 2^{-\frac{1}{2}l} 2^{k_2} \vn{\phi^1_{k_1}}_{L^2 L^{4,2}} \vn{P_{l_0}^{\omega_2}\bar{Q}_{<k_{2}+2l_0}  \phi^2_{k_2}}_{L^2 L^{4,2}}  $$
Summing (diagonally) in $ \omega_1, \omega_2 $ we obtain \eqref{SmallAngleSmallMod} since $ \vn{\phi^1_{k_1}}_{L^2 L^{4,2}} \ls 2^{-\frac{1}{4}k_1} \vn{\phi^1_{k_1}}_{\bar{S}^1_{k_1}} $, 
$$
\big( \sum_{\omega} \vn{P_{l_0}^{\omega}\bar{Q}_{<k_{2}+2l_0}  \phi^2_{k_2}}_{L^2 L^{4,2}}^2 \big) ^{\frac{1}{2}} \ls 2^{-\frac{1}{4}k_2} \vn{\phi^2_{k_2}}_{\bar{S}^1_{k_2}}  $$
\be \label{fe:sum:LinfL2}
 \big( \sum_{\omega}    \vn{P_{l_0}^{\omega} \bar{Q}_{<k+2l_0} \nabla_{t,x} \phi_k}_{L^{\infty} L^2} \big) ^{\frac{1}{2}} \ls \vn{\phi_{k}}_{\bar{S}^1_{k}} .
\ee		
{\bf Case~2:~ $ k_{\min}=k $.} Now the null form gains $ 2^{-2k} $, so we can put $ \phi_k $ in $ L^2 L^4 $.
$$  \vn{\eqref{interm:sum}}_{L^1L^2} \ls 2^{-2k} \vn{T_{l}^{[\omega_1]} (\phi^1_{k_1} \bar{Q}_{<k_{2}+2l_0} P_{l_0}^{\omega_2} \nabla\phi^2_{k_2})}_{L^2 L^4}  \vn{P_{l_0}^{\omega_1} \bar{Q}_{<k+2l_0} \nabla \phi_k}_{L^2 L^4}. $$
Using \eqref{L2L4:emb} and H\"older's inequality we have
$$ \vn{T_{l}^{[\omega_1]} (\phi^1_{k_1} \bar{Q}_{<k_{2}+2l_0} P_{l_0}^{\omega_2} \nabla\phi^2_{k_2})}_{L^2 L^4} \ls 2^{-\frac{1}{2}l} 2^{k_2} \vn{\phi^1_{k_1}}_{L^4 L^{\frac{8}{3}}} \vn{P_{l_0}^{\omega_2}\bar{Q}_{<k_{2}+2l_0}  \phi^2_{k_2}}_{L^4 L^{\frac{8}{3}}} $$ 
Summing (diagonally) in $ \omega_1, \omega_2 $ we obtain \eqref{SmallAngleSmallMod} since $ \vn{\phi^1_{k_1}}_{L^4 L^{\frac{8}{3}}} \ls 2^{-\frac{5}{8}k_1} \vn{\phi^1_{k_1}}_{\bar{S}^1_{k_1}}  $, 
$$
\big( \sum_{\omega} \vn{P_{l_0}^{\omega}\bar{Q}_{<k_{2}+2l_0}  \phi^2_{k_2}}_{L^4 L^{\frac{8}{3}}}^2 \big) ^{\frac{1}{2}} \ls 2^{-\frac{5}{8}k_2} \vn{\phi^2_{k_2}}_{\bar{S}^1_{k_2}} 
$$
$$ \big( \sum_{\omega}    \vn{P_{l_0}^{\omega} \bar{Q}_{<k+2l_0} \nabla_{t,x} \phi_k}_{L^{2} L^4} \big) ^{\frac{1}{2}} \ls 2^{\frac{3}{4}k} \vn{\phi_{k}}_{\bar{S}^1_{k}}.
$$ 

\subsection{Proof of \eqref{SmallAngleRemainders}} By Prop. \ref{N0:form} and Remark \ref{NF:remark} the null form gains $ 2^{-2k_{\min}} $. 

{\bf Step~1: $\mathcal{R}^{1} $ and $\mathcal{R}^{2} $.} Denoting 
$$ h^{\omega_i}=P_l^{[\omega_1]} P_{k'}Q_j \Box^{-1} ( \bar{Q}_{<j}\phi^1_{k_1}  \pt_{\al} \bar{Q}_{<j} P_{l_0}^{\omega_2} \phi^2_{k_2}), $$
we estimate using Bernstein and Prop. \ref{prop:no-nf} 
$$ \vn{h^{\omega_i}}_{L^2 L^{\infty}} \ls 2^{2k'+\frac{3}{2}l} \vn{h^{\omega_i}}_{L^2_{t,x}} \ls 2^{-j-k'}  2^{2k'+\frac{3}{2}l} \big(\sum_{\calC} \vn{P_{\calC} \bar{Q}_{<j} \phi^1_{k_1}}_{L^2 L^{\infty}}^2 \big)^{\frac{1}{2}} \vn{P_{l_0}^{\omega_2} \bar{Q}_{<j}  \nabla \phi^2_{k_2}}_{L^{\infty} L^2}  $$
where $ \calC=C_{k'}(l) $. Using the $ \bar{X}^{-\frac{1}{2}}_1 $ norm, we have
$$ \vn{\mathcal{R}^{1,\omega_i}_{j,k'}}_{N_k} \ls 2^{-\frac{j}{2}} 2^{-2k_{\min}}  \vn{h^{\omega_i}}_{L^2 L^{\infty}} \vn{P_{l_0}^{\omega_1} \bar{Q}_{<j}\nabla \phi_k}_{L^{\infty} L^2} $$
$$ \vn{\mathcal{R}^{2,\omega_i}_{j,k'}}_{L^1 L^2} \ls  2^{-2k_{\min}}   \vn{h^{\omega_i}}_{L^2 L^{\infty}} \vn{P_{l_0}^{\omega_1} \bar{Q}_{[j,k-2k_{\min}]} \nabla \phi_k}_{L^2_{t,x}} $$
Summing in $ \omega_1,\omega_2 $, we obtain \eqref{SmallAngleRemainders} for $ \mathcal{R}^{1} $, $\mathcal{R}^{2} $ by using \eqref{fe-L2Linfty} for $ \phi^1 $ and \eqref{fe:sum:LinfL2} for $ \phi^2 $ (first introducing $ \bar{Q}_{<k_2+2l_0} $ and discarding $ \bar{Q}_{<j} $), and \eqref{fe:sum:LinfL2}, respectively \eqref{fe-L2} for $ \phi $. We also use $ 2^{-k_{\min}} \ls 2^l $.

{\bf Step~2:~ $\mathcal{R}^{3} $ and $\mathcal{R}^{4} $.} We denote
\begin{align*} h^{\omega_{1,2}}_3 &= P_l^{[\omega_1]} P_{k'}Q_j \Box^{-1} ( \bar{Q}_{>j}\phi^1_{k_1}  \pt_{\al} \bar{Q}_{<j} P_{l_0}^{\omega_2} \phi^2_{k_2}) , \\ 
h^{\omega_{1,2}}_4 &=P_l^{[\omega_1]} P_{k'}Q_j \Box^{-1} ( \phi^1_{k_1}  \pt_{\al} \bar{Q}_{[j,k_2-2k_{\min}]} P_{l_0}^{\omega_2} \phi^2_{k_2}).
\end{align*}
For $ i=3,4 $ we have
$$ \vn{\mathcal{R}^{i,\omega}_{j,k'}}_{L^1 L^2} \ls   2^{-2k_{\min}}   \vn{h^{\omega_{1,2}}_{i}}_{L^1 L^{\infty}} \vn{P_{l_0}^{\omega_1} \bar{Q}_{<k-2k_{\min}} \nabla \phi_k}_{L^{\infty} L^2}
$$
We estimate using Prop. \ref{prop:no-nf}
$$ \vn{h^{\omega_{1,2}}_3}_{L^1 L^{\infty}} \ls 2^{2k'+\frac{3}{2}l} \vn{h^{\omega_{1,2}}_3}_{L^1 L^2} \ls 2^{-j-k'}  2^{2k'+\frac{3}{2}l} \vn{\bar{Q}_{>j} \phi^1_{k_1}}_{L^2_{t,x}} \big(\sum_{\calC} \vn{P_{\calC} \bar{Q}_{<j} \nabla \phi^2_{k_2}}_{L^2 L^{\infty}}^2 \big)^{\frac{1}{2}}  $$
where $ \calC=C_{k'}(0) $. Reversing the roles of $ \phi^1, \phi^2 $, $\vn{h^{\omega_{1,2}}_4}_{L^1 L^{\infty}} $ is also estimated.

Summing in $ \omega_1,\omega_2 $, we obtain \eqref{SmallAngleRemainders} for $ \mathcal{R}^{3} $, $\mathcal{R}^{4} $ by using \eqref{fe:sum:LinfL2} for $ \phi  $ and \eqref{fe-L2} \eqref{fe-L2Linfty},  for $ \phi^1, \phi^2 $. We also use $ 2^{-k_{\min}} \ls 2^l $.

\subsection{Proof of \eqref{eq:tri-Q} for $ \mathcal{Q}^2 $} Estimating in $ L^1 L^2$ we use H\" older's inequality with $ \vn{\pt_t \phi_k}_{L^{\infty}L^2} \ls  \vn{\phi_{k}}_{\bar{S}^1_{k}} $ and 
\be \label{Q2:bilest} \vn{\Delta^{-1} \Box^{-1} \pt_t Q_{j} P_{k'} \pt_{\al}  ( \bar{Q}_{<j}\phi^1_{k_1} \cdot \pt^{\al} \bar{Q}_{<j} \phi^2_{k_2})}_{L^1 L^{\infty}} \ls 2^l 2^{\frac{1}{2}(k'-k_1)} \vn{\phi^1_{k_1}}_{\bar{S}^1_{k_1}}  \vn{\phi^2_{k_2}}_{\bar{S}^1_{k_2}} \ee 
The $ \mathcal{Q}^2 $ part of  \eqref{eq:tri-Q} follows by summing this in $ k', j$, where
\be \label{Q23:cond}
k_1 =k_2+O(1),\quad k'+C<k_1,k, \quad j <k'+C, \quad l  \defeq \frac{1}{2}(j-k')_{-} \geq -k_1,k
\ee
To prove \eqref{Q2:bilest}, first note that the product is summed over diametrically opposed boxes $ \calC_1,\calC_2 $ of size $ \simeq 2^{k'} \times (2^{k'+l})^3 $ (Lemma \ref{geom:cone}). Each term in the sum forces a localization $ P_l^{\omega} $ in front of $ Q_{j} P_{k'} $ and note that $ 2^{j+k'}  P_l^{\omega} Q_j  P_{k'}\Box^{-1} $ is disposable.

Now recall for \eqref{M:form} the decomposition \eqref{M:form:decom}-\eqref{n0:form:symb}. By Prop. \ref {n0:form:prop}, and the fact that here $ \angle(\calC_1,-\calC_2) \ls 2^{l+k'-k_1} $ we have
$$  \vn{\Delta^{-1} \Box^{-1} \pt_t Q_{j} P_{k'}  \calN_0 ( \bar{Q}_{<j}\phi^1_{k_1},\bar{Q}_{<j} \phi^2_{k_2})}_{L^1 L^{\infty}} \ls 2^{-j-2k'} \times (2^{2l+2k'} ) \times $$
$$  \times \big(\sum_{\calC_1} \vn{P_{\calC_1} \bar{Q}_{<j} \phi^1_{k_1}}_{L^2 L^{\infty}}^2 \big)^{\frac{1}{2}}  \big(\sum_{\calC_2} \vn{P_{\calC_2} \bar{Q}_{<j} \phi^2_{k_2}}_{L^2 L^{\infty}}^2 \big)^{\frac{1}{2}}
$$ 
The same holds true for $ \calM_0 $ , since now, by Prop. \ref{M0:form} we gain $ 2^{2k'-2k_1} \ls 2^{2k'+2l} $. Using \eqref{fe-L2Linfty} we obtain \eqref{Q2:bilest} for $ \calN_0, \calM_0 $. We turn to $ \calR_0^{\pm} $ and write
$$ 2^{k_2} \vn{\Delta^{-1} \Box^{-1} \pt_t Q_{j} P_{k'} \big( (\pt_t \mp i \jb{D}) \bar{Q}_{<j}^{\pm} \phi^1_{k_1} \cdot \bar{Q}_{<j}^{\mp} \phi^2_{k_2} \big)}_{L^1 L^{\infty}} \ls 2^{k_2} \cdot 2^{-j-2k'} \times $$
$$  \times \big(\sum_{\calC_1} \vn{P_{\calC_1} (\pt_t \mp i \jb{D}) \bar{Q}_{<j}^{\pm} \phi^1_{k_1}}_{L^2 L^{\infty}}^2 \big)^{\frac{1}{2}}  \big(\sum_{\calC_2} \vn{P_{\calC_2} \bar{Q}_{<j}^{\mp} \phi^2_{k_2}}_{L^2 L^{\infty}}^2 \big)^{\frac{1}{2}}
$$ 
Then we use \eqref{fe-sqX},  \eqref{fe-L2Linfty} to obtain $ \calR_0^{\pm} $-part of \eqref{Q2:bilest}. The other parts of $ \calR_0^{\pm} $ follow by reversing the roles of $ \phi^1,\phi^2 $.

\subsection{Proof of \eqref{eq:tri-Q} for $ \mathcal{Q}^3 $} Let $ k_1, k_2, k, k', j, l $ as in \eqref{Q23:cond} and $ \tilde{k}=k+O(1), \ k_{\min}  \defeq \min(k_1,k_2,k), \ k'=k''+O(1)<k_{\min} -C, \  j =j'+O(1)<k'+C. $ We prove
\be
\begin{aligned} \label{Q3:est}
| \lng \frac{\pt Q_{j'} P_{k'}}{\Delta \Box}  ( \bar{Q}_{<j'}\phi^1_{k_1} \cdot \pt \bar{Q}_{<j'} \phi^2_{k_2}) ,  Q_j P_{k''} \pt_{\al}(\pt^{\al} \bar{Q}_{<j} \phi_k \cdot \bar{Q}_{<j} \psi_{\tilde{k}} )\rng | \ls \\
\ls 2^{\frac{1}{2}l} 2^{\frac{1}{2}(k'-k_{\min})}  \vn{\phi^1_{k_1}}_{\bar{S}^1_{k_1}}   \vn{\phi^2_{k_2}}_{\bar{S}^1_{k_2}} \vn{\phi_{k}}_{\bar{S}^1_{k}}  \vn{\psi_{\tilde{k}}}_{N_k^{*}}
\end{aligned}
\ee
which, by duality, implies \eqref{eq:tri-Q} for $ \mathcal{Q}^3 $. Like for $ \mathcal{Q}^2 $, we sum over diametrically opposed boxes $ \pm \calC $ of size $ \simeq 2^{k'} \times (2^{k'+l})^3 $ and introduce $ P_l^{\omega} $ to bound $ \Box^{-1} $. 

First, using Prop. \ref{prop:no-nf} and \eqref{fe-LinfL2}, \eqref{fe-L2Linfty}, we estimate
$$ \vn{\Delta^{-1} \Box^{-1} \pt Q_{j'} P_{k'}   ( \bar{Q}_{<j'}\phi^1_{k_1} \cdot \pt \bar{Q}_{<j'} \phi^2_{k_2})}_{L^2_{t,x}} \ls 2^{-j-2k'} (2^{\frac{1}{2}l} 2^{k'}  2^{-\frac{1}{2}k_1}) \vn{\phi^1_{k_1}}_{\bar{S}^1_{k_1}}   \vn{\phi^2_{k_2}}_{\bar{S}^1_{k_2}}  $$

For the second product, we recall the decomposition \eqref{M:form}-\eqref{n0:form:symb}.
By Prop. \ref {n0:form:prop} and orthogonality, using the fact that $ \angle(\phi,\psi) \ls 2^{l+k'-k} $, we have
$$
\vn{Q_j P_{k''} \calN_0 (\bar{Q}_{<j} \phi_k,\bar{Q}_{<j} \psi_{\tilde{k}} ) }_{L^2_{t,x}} \ls 2^{2l+2k'} \big(\sum_{\calC} \vn{P_{\calC} \bar{Q}_{<j} \phi_k}_{L^2 L^{\infty}}^2 \big)^{\frac{1}{2}} \vn{\psi_{\tilde{k}}}_{L^{\infty} L^2}
$$
The same holds true for $ \calM_0 $ , since now, by Prop. \ref{M0:form} we gain $ 2^{2k'-2k} \ls 2^{2k'+2l} $.

For $ \calR_0^\pm $ we prove
\begin{align*}
& \vn{Q_j P_{k''}  \big( (\pt_t \mp i \jb{D}) \bar{Q}_{<j}^{\pm} \phi_k \cdot \bar{Q}_{<j}^{\mp} \psi_{\tilde{k}} \big) }_{L^2_{t,x}} \ls \big(\sum_{\calC} \vn{P_{\calC} (\pt_t \mp i \jb{D}) \bar{Q}_{<j}^{\pm} \phi_k}_{L^2 L^{\infty}}^2 \big)^{\frac{1}{2}} \vn{\psi_{\tilde{k}}}_{L^{\infty} L^2}, \\
& 2^k \vn{Q_j P_{k''} (\bar{Q}_{<j}^{\pm} \phi_k \cdot (\pt_t \pm i \jb{D}) \bar{Q}_{<j}^{\mp} \psi_{\tilde{k}} ) }_{L^2_{t,x}} \ls 2^k \vn{\bar{Q}_{<j}^{\pm} \phi_k}_{L^{\infty} L^2} \times \\
& \qquad \qquad \times \big(\sum_{\calC} \vn{P_{\calC} (\pt_t \pm i \jb{D}) \bar{Q}_{<j}^{\mp} \psi_{\tilde{k}}}_{L^2 L^{\infty}}^2 \big)^{\frac{1}{2}} \ls 2^k 2^{2k'+\frac{3}{2}l} 2^{\frac{1}{2}j} \vn{\phi_k}_{L^{\infty} L^2} \vn{\psi_{\tilde{k}}}_{\bar{X}^{\frac{1}{2}}_{\infty}}.
\end{align*}
where we have used Bernstein's inequality and orthogonality. 

Putting all of the above together, using \eqref{fe-L2Linfty}, \eqref{fe-sqX} and \eqref{fe-LinfL2}, we obtain \eqref{Q3:est}.

\section{The construction and properties of the phase} \label{Constr:phase}

\subsection{Motivation}

We begin by recalling some heuristic considerations motivating the construction in \cite{RT}, which also extends to the massive case. 

Suppose one is interested in solving the equation
\be \label{eq:cva}
\Box_m^A \phi=0, \qquad \Box_m^A\defeq D^{\al} D_{\al}+I
\ee
where $ D_{\al}\phi=(\pt_{\al}+iA_{\al}) \phi $ and $ \Box A=0 $. After solving \eqref{eq:cva}, one can also obtain solutions to the inhomogeneous equation $ \Box_m^A \phi=F $ by Duhamel's formula. The equation \eqref{eq:cva} enjoys the following gauge invariance. For any real function $ \psi $, replacing 
$$  \phi \mapsto e^{i \psi} \phi, \quad A_{\al} \mapsto A_{\al}-\pt_{\al} \psi, \quad D_{\al} \mapsto e^{i \psi} D_{\al} e^{-i \psi}
$$
we obtain another solution. To make use of this, one expects that by choosing $ \psi $ appropriately ($ \nabla \psi \approx A $) one could reduce closer to the free wave equation $ \Box \phi \approx 0 $.

However, this is not in general possible since $ A $ is not a conservative vector field. Instead, one makes the construction microlocally and for each dyadic frequency separately. Taking $ e^{i x \cdot \xi} $ as initial data, considering $ \phi=e^{-i \psi_{\pm}(t,x)} e^{\pm i t \jb{\xi}+i  x \cdot \xi} $ we compute
$$ \Box_m^A \phi=2 \lpr \pm \jb{\xi} \pt_t \psi_{\pm} - \xi \cdot \nabla \psi_{\pm} + A \cdot \xi \rpr \phi+ \big( -i \Box \psi_{\pm} +  (\pt_t \psi_{\pm})^2- \vm{\nabla \psi_{\pm}}^2 - A \cdot \nabla \psi_{\pm} \big) \phi $$

The second bracket is expected to be an error term, while for the first, one wants to define $ \psi_{\pm} $ so as to get as much cancelation as possible, while also avoiding to make $ \psi_{\pm} $ too singular. Defining
$$  L_{\pm}=\pm \pt_t + \frac{\xi}{\jb{\xi}} \cdot \nabla_x, \qquad \text{one has} $$
\be \label{nullvf} -L_{+} L_{-}=\Box + \la_{\omega^{\perp}}+\frac{1}{\jb{\xi}^2} (\omega \cdot \nabla_x) ^2, \quad \omega=\frac{\xi}{\vm{\xi}}.  \ee

We would like to have $ L_{\mp} \psi_{\pm}=A \cdot \xi / \jb{\xi} $ thus applying $ L_{\pm} $ and neglecting $ \Box $ in \eqref{nullvf} (since $ \Box A=0 $) one obtains, for fixed $ \xi $:
\be \psi_{\pm}(t,x)=\frac{-1}{\la_{\omega^{\perp}}+\frac{1}{\jb{\xi}^2} (\omega \cdot \nabla_x) ^2} L_{\pm} \lpr A(t,x) \cdot \frac{\xi}{\jb{\xi}} \rpr.  \label{appr:defn} \ee

Taking general initial data $ \int e^{i x \cdot \xi} h_{\pm}(\xi) \dd \xi $, using linearity, one obtains the approximate solutions
$$ \phi_{\pm}(t,x)=\int e^{-i \psi_{\pm}(t,x,\xi)} e^{\pm i t \jb{\xi}} e^{i x \cdot \xi} h_{\pm}(\xi) \dd \xi.
$$
Thus, the renormalization is done through the pseudodifferential operators $ e^{-i \psi_{\pm}}(t,x,D) $.

In what follows, $ \xi $ will be restricted to dyadic frequencies $ \vm{\xi} \simeq 2^k $ or $ \vm{\xi} \ls 1 $, while $ A(t,x) $ (and thus $ \psi $ too) will be localized to strictly lower frequencies $ \ll 2^k $. When $ \vm{\xi} \ls 1 $, the denominator in \eqref{appr:defn} is essentially $ \Delta_x $. If $ \xi $ is a high frequency then the dominant term is $ \la_{\omega^{\perp}}^{-1} $ and the construction needs to be refined to remove the singularity; see the next subsection for precise definitions.

For more details motivating the construction see \cite[sec. 7,8]{RT}.

\

The construction in \cite{KST} slightly differs from the one in \cite{RT} in that they further localize the exponentials in the $ (t,x) $-frequencies $  \big( e^{-i \psi_{\pm}(t,x,\xi)} \big)_{<k-c} $.
By Taylor expansion one can see that these constructions are essentially equivalent. Indeed, since
$$ e^{i \psi_{<k-c}(t,x,\xi)} =1+i \psi_{<k-c}(t,x,\xi)+O\big(\psi_{<k-c}^2(t,x,\xi) \big) $$
we see that they differ only by higher order terms, which are negligible due to the smallness assumption on $ A $. Here, following \cite{KST}, it will be technically convenient to do this localization. 
We denote by 
$$  e^{\pm i \psi_{\pm}^k}_{<h} (t,x,D), \qquad e^{\pm i \psi_{\pm}^k}_{<h} (D,s,y) $$
the left and right quantizations of the symbol $ e^{\pm i \psi_{\pm}^k}_{<h}(t,x,\xi) $  where the $ <h $  subscript denotes $ (t,x)$-frequency localization to frequencies $ \leq h-C $, pointwise in $ \xi $. Thus
\be \label{averaging}  e^{\pm i \psi_{\pm}^k}_{<h}(t,x,\xi)=\int_{\mb{R}^{d+1}} e^{\pm i T_{(s,y)} \psi_{\pm}^k(t,x,\xi)} m_h(s,y) \dd s \dd y  \ee
where $ T_{(s,y)} \psi(t,x,\xi)=\psi(t+s,x+y,\xi) $ and $ m_h=2^{(d+1)h} m(2^h \cdot) $ for a bump function $ m(s,y) $. By averaging arguments such as Lemmas \ref{loczsymb}, \ref{lemma:locz:symb}, estimates for $ e^{-i \psi_{\pm}}(t,x,D) $ will automatically transfer to $ e^{-i \psi_{\pm}}_{<k}(t,x,D) $.

\

\subsection{The construction of the phase}\

\

We recall that $ A $ is real-valued, it solves the free wave equation $ \Box A=0 $ and satisfies the Coulomb gauge condition $ \nabla_x \cdot A=0 $.

For $ k=0 $ we define
\be \label{defn1} \begin{aligned}
 \psi_{\pm}^0(t,x,\xi) & \defeq \sum_{j<-C} \psi^{0}_{j,\pm}(t,x,\xi), \qquad \text{where} \\
 \psi^{0}_{j,\pm}(t,x,\xi) & \defeq \frac{-L_{\pm}}{\la_{\omega^{\perp}}+\frac{1}{\jb{\xi}^2}(\omega \cdot \nabla_x) ^2} \lpr P_jA (t,x) \cdot \frac{\xi}{\jb{\xi}} \rpr 
\end{aligned} \ee
For $ k\geq 1$ we define 
\be  \label{defn2} \psi^{k} _{\pm}(t,x,\xi) \defeq   \frac{-1}{\la_{\omega^{\perp}}+\frac{1}{\jb{\xi}^2} (\omega \cdot \nabla_x) ^2} L_{\pm} \sum_{k_1 <k-c} \lpr \Pi^{\omega}_{> \delta(k_1-k)} P_{k_1} A \cdot \frac{\xi}{\jb{\xi}} \rpr  \ee
It will be convenient to rescale the angular pieces that define $ \psi^k_{\pm} $ to $ \vm{\xi} \simeq 1 $:
\be \label{phase_piece} \psi^{k}_{j,\theta,\pm} (t,x,2^k \xi) \defeq \frac{-L_{\pm,k}}{\la_{\omega^{\perp}}+2^{-2k} \frac{1}{\jb{\xi}_k^2} (\omega \cdot \nabla_x) ^2} \lpr \Pi^{\omega}_{\tht} P_{j} A \cdot \omega \frac{\vm{\xi}}{\jb{\xi}_k} \rpr \ee 
for $ 2^{\delta(j-k)}< \tht < c $ and $  j<k-c $, where 
$$ L_{\pm,k}= \pm \pt_t + \frac{\vm{\xi}}{\jb{\xi}_k} \omega \cdot \nabla_x, \qquad \omega=\frac{\xi}{\vm{\xi}}, \qquad \jb{\xi}_k=\sqrt{2^{-2k}+\vm{\xi}^2}.  $$

Note that $ \Pi^{\omega}_{\tht}, \Pi^{>\omega}_{\tht} $ defined in \eqref{sect:proj1}, \eqref{sect:proj2}  behave like Littlewood-Paley projections in the space $ \omega^{\perp} $.

\begin{remark} It will be important to keep in mind that $ \psi^k_{\pm} $ is real-valued, since it is defined by applying real and even Fourier multipliers to the real function $ A $. 
\end{remark}

\begin{remark}\label{rk:Col:nf} Due to the Coulomb condition $ \nabla_x \cdot A=0 $ the expression in \eqref{defn2} acts like a null form, leading to an angular gain. Indeed, a simple computation shows 
$$ \vm{\widehat{\Pi^{\omega}_{\tht} A} (\eta) \cdot \omega} \ls \tht  \vm{\widehat{\Pi^{\omega}_{\tht} A} (\eta)},  $$
which implies $ \vn{\Pi^{\omega}_{\tht} A \cdot \omega}_{L^2_x} \ls \tht  \vn{\Pi^{\omega}_{\tht} A}_{L^2_x} $.
\end{remark}

\

We denote by 
$$ \vp_{\xi}(\eta)=\vm{\eta}^2_{\omega^{\perp}}+ 2^{-2k} \jb{\xi}_k^{-2} (\omega \cdot \eta)^2 $$
the Fourier multiplier of the operator $ \la_{\omega^{\perp}}+2^{-2k} \frac{1}{\jb{\xi}_k^2} (\omega \cdot\nabla_x)^2 $. We have the following bounds on $ \vp_{\xi}(\eta) $:

\begin{lemma} \label{lemmadermultiplier}Let $ k \geq 1 $. For any $ \eta $ and $ \xi=\vm{\xi} \omega $  such that $ \angle (\xi, \eta) \simeq \tht $ and $ \vm{\eta} \simeq 2^j $ we have
\begin{align} 
\label{dermult}
\vm{ (\tht \nabla_{\omega})^{\al} \frac{1}{\vp_{\xi}(\eta)}} & \leq \frac{C_{\al}}{(2^j \tht)^2+2^{2j-2k}}
\\
\label{dermultiplier}
\vm{ \pt_{\vm{\xi}}^l (\tht \nabla_{\omega})^{\al} \frac{1}{\vp_{\xi}(\eta)}} & \leq \frac{C_{\al,l}}{(2^j \tht)^2+2^{2j-2k}} \cdot \frac{2^{-2k}}{ \tht^2+2^{-2k}}, \qquad l\geq 1.
\end{align}
\end{lemma}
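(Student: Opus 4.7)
\medskip

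\noindent \textbf{Proof proposal for Lemma \ref{lemmadermultiplier}.}

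The plan is a direct computation, organized so that the two estimates reduce to a size bound on $\varphi_\xi$ together with a precise record of how each derivative $\theta\nabla_\omega$ or $\partial_{|\xi|}$ acts on it. The first step is to rewrite
\[
\varphi_\xi(\eta) = |\eta|^2 - c(|\xi|) (\omega\cdot\eta)^2, \qquad c(|\xi|) = \frac{|\xi|^2}{\jb{\xi}_k^2} = 1 - \frac{2^{-2k}}{\jb{\xi}_k^2},
\]
so that $\varphi_\xi(\eta) = |\eta_{\omega^\perp}|^2 + 2^{-2k}\jb{\xi}_k^{-2}(\omega\cdot\eta)^2$. Under the hypotheses $|\eta|\simeq 2^j$ and $\angle(\xi,\eta)\simeq\theta$ (with $|\xi|\simeq 1$, hence $\jb{\xi}_k\simeq 1$), we have $|\eta_{\omega^\perp}|\simeq 2^j\theta$ and $|\omega\cdot\eta|\simeq 2^j$, so the basic size estimate
\[
\varphi_\xi(\eta) \simeq (2^j\theta)^2 + 2^{2j-2k}
\]
is immediate.

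Next I would prove \eqref{dermult}. A single derivative gives
\[
\theta\nabla_\omega \varphi_\xi(\eta) = -2c(|\xi|) (\omega\cdot\eta)\, \theta\, \eta_{\omega^\perp},
\]
whose size is $\lesssim 2^{2j}\theta^2 \lesssim \varphi_\xi(\eta)$, and the same bound persists for all higher angular derivatives by an induction (each additional $\theta\nabla_\omega$ hits either a factor of $\omega\cdot\eta$, producing a $\theta\eta_{\omega^\perp}$, or a tangential factor, producing at worst another bounded factor, all of which are controlled by $\varphi_\xi$). Thus $|(\theta\nabla_\omega)^{\alpha}\varphi_\xi(\eta)|\lesssim \varphi_\xi(\eta)$ for every $\alpha$. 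Applying Faà di Bruno to $1/\varphi_\xi$, every term in the expansion of $(\theta\nabla_\omega)^\alpha(1/\varphi_\xi)$ is a product of factors $(\theta\nabla_\omega)^{\alpha_i}\varphi_\xi$ divided by $\varphi_\xi^{N+1}$, where $N$ is the number of such factors and $\sum \alpha_i=\alpha$. Since each numerator is $\lesssim \varphi_\xi$, the total is $\lesssim 1/\varphi_\xi$, which is \eqref{dermult}.

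The estimate \eqref{dermultiplier} requires one extra input: the derivatives $\partial_{|\xi|}^l c(|\xi|)$ are $O(2^{-2k})$ for every $l\geq 1$, since $c(|\xi|) = 1 - 2^{-2k}(|\xi|^2+2^{-2k})^{-1}$ and $(|\xi|^2+2^{-2k})^{-1}$ is a smooth function of $|\xi|$ uniformly on $|\xi|\simeq 1$. Therefore any mixed derivative $\partial_{|\xi|}^l (\theta\nabla_\omega)^\beta \varphi_\xi(\eta)$ with $l\geq 1$ satisfies
\[
\bigl|\partial_{|\xi|}^l (\theta\nabla_\omega)^\beta \varphi_\xi(\eta)\bigr| \lesssim 2^{-2k}\, 2^{2j} = 2^{2j-2k}.
\]
Expanding $\partial_{|\xi|}^l(\theta\nabla_\omega)^\alpha(1/\varphi_\xi)$ by Faà di Bruno, one obtains a sum of terms of the shape $\prod_i \partial_{|\xi|}^{l_i}(\theta\nabla_\omega)^{\alpha_i}\varphi_\xi / \varphi_\xi^{N+1}$, with $\sum l_i = l\geq 1$ and $\sum \alpha_i = \alpha$. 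At least one index $l_i\geq 1$, so that factor is bounded by $2^{2j-2k}$ rather than by $\varphi_\xi$; the remaining factors are bounded by $\varphi_\xi$ as before. Collecting the estimates gives
\[
\bigl|\partial_{|\xi|}^l (\theta\nabla_\omega)^\alpha (1/\varphi_\xi)\bigr| \lesssim \frac{2^{2j-2k}}{\varphi_\xi(\eta)^2} \simeq \frac{1}{(2^j\theta)^2+2^{2j-2k}} \cdot \frac{2^{-2k}}{\theta^2+2^{-2k}},
\]
which is \eqref{dermultiplier}.

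The only genuinely delicate point is the verification that the gain for $\partial_{|\xi|}$ is exactly $2^{-2k}$ per derivative (not, say, $2^{-2k}/\theta^2$ or worse); this is where the explicit identity $c(|\xi|) = 1 - 2^{-2k}\jb{\xi}_k^{-2}$ is essential, since it cleanly separates a constant in $|\xi|$ from a smooth $O(2^{-2k})$ correction. Everything else is a bookkeeping exercise that follows by induction and Faà di Bruno.
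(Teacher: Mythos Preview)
Your proposal is correct and follows essentially the same approach as the paper: both proofs rest on the size estimate $\varphi_\xi(\eta)\simeq (2^j\theta)^2+2^{2j-2k}$, the bound $|(\theta\nabla_\omega)^{\alpha'}\varphi_\xi|\lesssim (2^j\theta)^2$ for $|\alpha'|\geq 1$, and the bound $|\partial_{|\xi|}^{l'}(\theta\nabla_\omega)^{\alpha'}\varphi_\xi|\lesssim 2^{2j-2k}$ for $l'\geq 1$, with the latter coming from the identity $\varphi_\xi(\eta)=|\eta|^2-(1-2^{-2k}\jb{\xi}_k^{-2})(\omega\cdot\eta)^2$ that you isolate. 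The only organizational difference is that the paper runs an induction on $N=l+|\alpha|$ via the Leibniz rule applied to $1=\varphi_\xi\cdot\varphi_\xi^{-1}$, whereas you invoke Fa\`a di Bruno directly; these are equivalent bookkeeping devices.
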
  
  
\begin{remark} \label{rkdermultiplier} Suppose we want to estimate $\pt_{\vm{\xi}}^{l} ( \tht \pt_{\omega})^{\al}  \psi^{k}_{j,\theta,\pm}(t_0, \cdot,2^k \xi) $ in $ L^2_x $.
By lemma \ref{lemmadermultiplier} and the Coulomb condition (remark \ref{rk:Col:nf}), the following multiplier applied to $ A(t_0) $
$$ \pt_{\vm{\xi}}^{\al_1} ( \tht \pt_{\omega})^{\al} \frac{-L_{\pm,k}}{\la_{\omega^{\perp}}+2^{-2k} \frac{1}{\jb{\xi}_k^2} (\omega \cdot \nabla_x) ^2} \lpr \Pi^{\omega}_{\tht} P_{j} ( \ ) \cdot \omega \frac{\vm{\xi}}{\jb{\xi}_k} \rpr   $$ 
may be replaced by 
$$ \frac{2^{-j} \tht}{\tht^2+2^{-2k}} \Pi^{\omega,\al}_{\tht} \tilde{P}_{j} \quad (\text{if}\  l=0), \qquad \frac{2^{-2k}2^{-j} \tht}{(\tht^2+2^{-2k})^2} \Pi^{\omega,\al,l}_{\tht} \tilde{P}_{j} \quad (\text{if}\  l\geq 1),  $$ 
for the purpose of obtaining an upper bound for the $ L^2_x $ norm, where  $ \Pi^{\omega,\al,l}_{\tht} $ and $ \tilde{P}_{j} $ obey the same type of localization properties and symbol estimates as $ \Pi^{\omega}_{\tht} $ and $ P_{j} $. 
\end{remark}  
  
\begin{proof}
For $ \al=0, \ l=0 $ the bound is clear since 
\be \label{indbd0} \vp_{\xi}(\eta) \simeq (2^j \tht)^2+2^{2j-2k} .\ee
 For $ N\geq 1$ we prove the lemma by induction on $ N=l+\vm{\al} $. We focus on the case $ l\geq 1 $ since the proof of \eqref{dermult} is entirely similar. Suppose the claim holds for all $ l', \al' $ such that $ 0 \leq l'+\vm{\al'} \leq N-1 $. Applying the product rule to $ 1=\vp_{\xi}(\eta) \frac{1}{\vp_{\xi}(\eta)} $ we obtain
$$ \vp_{\xi}(\eta) \cdot \pt_{\vm{\xi}}^l (\tht \nabla_{\omega})^{\al} \frac{1}{\vp_{\xi}(\eta)}=\sum C^{\al',\beta'}_{\al'',\beta''} \cdot \pt_{\vm{\xi}}^{l'} (\tht \nabla_{\omega})^{\al'} \vp_{\xi}(\eta)\cdot \pt_{\vm{\xi}}^{l''} (\tht \nabla_{\omega})^{\al''} \frac{1}{\vp_{\xi}(\eta)} $$
where we sum over $ l'+l''=l, \ \al'+\al''=\al, \ l''+\vm{\al''} \leq N-1$. Given the induction hypothesis and \eqref{indbd0}, for the terms in the sum it suffices to show 
\begin{align}
 \label{indbd1} \vm{\pt_{\vm{\xi}}^{l'} (\tht \nabla_{\omega})^{\al'} \vp_{\xi}(\eta)} & \ls 2^{2j-2k} \qquad \hbox{for} \ l' \geq 1, \\
  \label{indbd2} \vm{\pt_{\vm{\xi}}^{l'} (\tht \nabla_{\omega})^{\al'} \vp_{\xi}(\eta)} & \ls (2^j \tht)^2  \qquad \hbox{for } \ l'=0,\ \vm{\al'} \geq 1\ (l''=l\geq 1) 
 \end{align}
We write 
$$ \vp_{\xi}(\eta)=C_{\eta}- (\omega \cdot \eta)^2(1-2^{-2k} \jb{\xi}_k^{-2} ). $$
We have $ \vm{ \omega \cdot \eta} \ls 2^j $ and thus for $ l'\geq 1 $ we obtain \eqref{indbd1}.

Now suppose $ l'=0 $ and thus $ \vm{\al'} \geq 1 $. Observe that $ \pt_{\omega} (\omega \cdot \eta)^2 \simeq 2^{2j} \tht $ and thus for all $ \vm{\al'} \geq 1 $ we have $ (\tht \nabla_{\omega})^{\al'} (\omega \cdot \eta)^2 \ls 2^{2j} \tht^2 $, which implies \eqref{indbd2}.
\end{proof}

The following proposition will be used in stationary phase arguments.

\begin{proposition} \label{phasederProp} For $ k \geq 0 $, $ \vm{\xi} \simeq 1 $, denoting $ T= \vm{t-s}+\vm{x-y} $ we have:
\begin{align}
\label{phasediff} \vm{  \psi_{\pm}^k(t,x,2^k \xi) - \psi_{\pm}^k(s,y, 2^k \xi)   } & \ls \ep \log(1+ 2^k T ) \\
\label{phaseder} \vm{ \pt_{\omega}^{\al} \lpr \psi_{\pm}^k(t,x,2^k \xi) - \psi_{\pm}^k(s,y, 2^k \xi) \rpr  } & \ls \ep (1+ 2^k T )^{ (\vm{\al}- \frac{1}{2}) \delta} , \quad 1\leq \vm{\al} \leq \delta^{-1} \\
\label{phaseder2}
\vm{ \pt_{\vm{\xi}}^l \pt_{\omega}^{\al} \lpr \psi_{\pm}^k(t,x,2^k \xi) - \psi_{\pm}^k(s,y, 2^k \xi) \rpr  } & \ls \ep 2^{-2k} (1+ 2^k T )^{ (\vm{\al}+ \frac{3}{2}) \delta},  \quad l \geq 1, (\vm{\al}+ \frac{3}{2}) \delta <1 
\end{align}
\end{proposition}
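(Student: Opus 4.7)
The plan is to work dyadically in the $(j,\theta)$ decomposition of $\psi^k_\pm$ given by \eqref{defn2}--\eqref{phase_piece}, establish a uniform-in-$j$ pointwise bound of size $\ep$ on each angular piece (which is where the Coulomb cancellation and the symbol bounds of Lemma \ref{lemmadermultiplier} are used), and then perform the $j$-summation using a Taylor-type trade-off between direct $L^\infty$ control and $(t,x)$-differences. Throughout, I use that $A$ is a real-valued free wave with $\vn{A[0]}_{\dot H^\sigma\times \dot H^{\sigma-1}}\ls\ep$, which via energy conservation supplies $\vn{P_j A}_{L^\infty_t L^2_x}\ls 2^{j(1-\sigma)}\ep$.

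\emph{Step 1: pointwise bound for one piece.} Fix $j<k-c$ and a dyadic $\theta\in[2^{\delta(j-k)},c]$. By Remark \ref{rkdermultiplier}, the operator defining $\psi^k_{j,\theta,\pm}$ can be replaced, for $L^\infty_x$ purposes, by the scalar factor $\tfrac{2^{-j}\theta}{\theta^2+2^{-2k}}$ times $\Pi^{\omega,\al}_{\theta}\tilde P_j$, and the Coulomb gain of Remark \ref{rk:Col:nf} provides an additional factor $\theta$. Using Bernstein on the cap of volume $2^{jd}\theta^{d-1}$ and the energy bound above, one obtains
\[
\vn{\psi^k_{j,\theta,\pm}(t,\cdot,2^k\xi)}_{L^\infty_x}\ls \ep\,\frac{\theta^{2+(d-1)/2}}{\theta^2+2^{-2k}}.
\]
Summing over dyadic $\theta$ in $[2^{\delta(j-k)},c]$ (the sum is dominated either by $\theta\simeq c$ or $\theta\simeq 2^{-k}$) yields $\vn{\psi^k_{j,\pm}(t,\cdot,2^k\xi)}_{L^\infty_x}\ls\ep$ \emph{uniformly in} $j$. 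Angular derivatives $\partial_\omega^\alpha$ cost $\theta^{-|\alpha|}$, and after the $\theta$-sum one gains $\theta_{\min}^{(d-1)/2-|\alpha|}\simeq 2^{\delta(j-k)((d-1)/2-|\alpha|)}$, which will be absorbed by the polynomial $(1+2^k T)^{|\al|\delta}$ in the conclusion. The derivative $\partial_{|\xi|}^l$ with $l\ge 1$ brings an extra $\tfrac{2^{-2k}}{\theta^2+2^{-2k}}$ by Lemma \ref{lemmadermultiplier}, which summed over $\theta$ produces the $2^{-2k}$ prefactor of \eqref{phaseder2}.

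\emph{Step 2: the $(t,x)$-difference via Taylor.} Since $\Pi^\omega_\theta P_j A$ has spatial/temporal Fourier support at scale $2^j$ (recall $A$ is a free wave, so $|\tau|\simeq|\eta|\simeq 2^j$), applying $\nabla_{t,x}$ to $\psi^k_{j,\theta,\pm}$ costs at most $2^j$. Thus for any two points with $T=|t-s|+|x-y|$,
\[
\vm{\psi^k_{j,\pm}(t,x,2^k\xi)-\psi^k_{j,\pm}(s,y,2^k\xi)}\ls \min(1,2^jT)\cdot\ep,
\]
and similarly for $\partial_\omega^\al$, $\partial_{|\xi|}^l\partial_\omega^\al$ with the extra $\theta$-gains from Step 1.

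\emph{Step 3: summation in $j$, producing the logarithm.} Split the sum $\sum_{j<k-c}$ at $j_0:=-\log_2 T$. For $j<j_0$ use the $2^jT$ gain: $\sum_{j<j_0}2^jT\cdot\ep\ls\ep$. For $j_0\leq j<k-c$ use the uniform-in-$j$ bound from Step 1: the number of such $j$'s is $\simeq\log_2(2^k T)$, giving $\ep\log(1+2^kT)$. If $2^k T\le 1$ only the first sum appears and yields $\ep 2^k T\ls\ep$; in either case we obtain \eqref{phasediff}. For \eqref{phaseder}, the same split applies, but now the Step 1 bound carries the extra factor $2^{\delta(j-k)((d-1)/2-|\al|)}$. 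At the crossover $j\simeq j_0$ this factor equals $(2^kT)^{-\delta((d-1)/2-|\al|)}=(2^kT)^{\delta(|\al|-(d-1)/2)}$; choosing $|\al|\ge 1$ and tracking the geometric sums shows the dominant contribution is $\ep(2^kT)^{(|\al|-\tfrac12)\delta}$, the exponent $-\tfrac12$ coming from combining the $(d-1)/2\ge \tfrac32$ (recall $d\ge 4$) angular gain with the logarithmic borderline. The same accounting with the additional $2^{-2k}$ prefactor in Step 1 gives \eqref{phaseder2}.

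The principal technical obstacle is Step 3: one has to verify that the combined angular and radial symbol bounds from Lemma \ref{lemmadermultiplier}, together with the Bernstein/Coulomb/energy chain in Step 1, produce sums in $\theta$ that are \emph{finite and $O(\ep)$ uniformly in} $j$, while the leftover $\theta$-power at the endpoint $\theta_{\min}=2^{\delta(j-k)}$ is small enough in $\delta$ to be compatible with the stated powers of $(1+2^kT)$ at all orders $|\al|\le\delta^{-1}$. This is a purely bookkeeping issue once the per-piece bounds in Step 1 are in place, but it is the place where the restriction $\theta>2^{\delta(j-k)}$ built into \eqref{defn2} plays its essential role.
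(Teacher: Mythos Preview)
Your plan is the same as the paper's: per-piece bounds via Bernstein + Coulomb null form + Lemma~\ref{lemmadermultiplier}/Remark~\ref{rkdermultiplier}, then the $\min(1,2^jT)$ trade-off, then splitting $\sum_{j<k-c}$ at $2^{j_0}T\simeq 1$. The strategy is correct, but there is a bookkeeping error that propagates.

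In Step~1 you invoke Remark~\ref{rkdermultiplier} to replace the operator by the scalar $\tfrac{2^{-j}\theta}{\theta^2+2^{-2k}}$ \emph{and then} add ``an additional factor $\theta$'' from the Coulomb gain of Remark~\ref{rk:Col:nf}. That double-counts: the $\theta$ in the numerator of Remark~\ref{rkdermultiplier} \emph{is} the Coulomb/null-form gain (the remark explicitly says it uses Remark~\ref{rk:Col:nf}). The correct per-piece bound is therefore
\[
\vm{\psi^k_{j,\theta,\pm}(t,x,2^k\xi)}\ \lesssim\ \ep\,\frac{\theta^{(d+1)/2}}{\theta^2+2^{-2k}}\ \lesssim\ \ep\,\theta^{(d-3)/2},
\]
not $\ep\,\theta^{2+(d-1)/2}/(\theta^2+2^{-2k})$. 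This does not affect \eqref{phasediff} (both versions sum to $O(\ep)$ in $\theta$), but it does affect \eqref{phaseder} and \eqref{phaseder2}: with angular derivatives the correct bound is $\ep\,\theta^{(d-3)/2-|\al|}$, and after the $\theta$-sum the leftover power at $\theta_{\min}=2^{\delta(j-k)}$ is $(d-3)/2-|\al|$, not $(d-1)/2-|\al|$. For $d=4$ this gives, after the $j$-split, exactly the stated exponent $(|\al|-\tfrac12)\delta$ \emph{directly}, with no need for the vague ``combining with the logarithmic borderline'' argument in your Step~3. As written, your Step~3 takes the wrong intermediate exponent $(d-1)/2$ and then hand-waves to the right final answer; once the double-counting is removed the arithmetic closes cleanly. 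The analogous fix applies to \eqref{phaseder2}, where the correct per-piece bound is $2^{-2k}\ep\,\theta^{-3/2-|\al|}$ (for $d=4$), yielding $(|\al|+\tfrac32)\delta$ after summation. You should also note separately that for $k=0$ there is no $\theta$-decomposition, and the bounds follow by the same $j$-split without angular factors.
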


\begin{proof}
 Using  $ \vn{\vm{\nabla}^{\sigma} A }_{L^{\infty}L^2} \ls \ep $, Bernstein's inequality $ P_j \Pi_{\tht}^{\omega} L^2_x \to ( 2^{dj}  \tht^{d-1})^{\frac 1 2}  L^{\infty}_x $ and the null form (Remark \ref{rk:Col:nf}), for $ k \geq 1 $ we obtain 
$$ \vm{ \psi^{k}_{j,\theta,\pm} (t,x,2^k \xi) } \ls \ep (2^{dj}  \tht^{d-1})^{\frac 1 2}  \tht   \frac{2^j 2^{-\sigma j}}{(2^j \tht)^2+2^{2j-2k}}  \ls  \ep  \tht^{\frac{1}{2}}$$
Thus, for both $ k=0 $ and $ k \geq 1 $, one has 
$$  \vm{\psi_{j,\pm}^k(t,x,2^k \xi)} \ls \ep, \qquad \vm{\nabla_{x,t}  \psi_{j,\pm}^{k} (t,x,2^k \xi) } \ls 2^{j} \ep $$
We sum the last bound for $ j \leq j_0 $ and the previous one for $ j_0 <j \leq k-c $:
$$ \vm{ \psi_{\pm}^k(t,x,2^k \xi) - \psi_{\pm}^k(s,y, 2^k \xi)   } \ls \ep  \lpr 2^{j_0} T + (k-j_0)   \rpr $$
Choosing $ k-j_0=\log_2(2^k T)-O(1) $ we obtain \eqref{phasediff}.

For the proof of \eqref{phaseder} and \eqref{phaseder2} we use Remark \ref{rkdermultiplier}. Since their proofs are similar we only write the details for \eqref{phaseder2}. First suppose $ k \geq 1 $.

 From Bernstein's inequality and Remark \ref{rkdermultiplier} we obtain
\begin{align*} & \vm{ \pt_{\vm{\xi}}^l \pt_{\omega}^{\al} \psi^{k}_{j,\theta,\pm} (t,x,\xi) } \ls (2^{dj}  \tht^{d-1})^{\frac 1 2} \tht   \frac{1}{(2^j \tht)^2+2^{2j-2k}} \frac{2^{-2k}}{\tht^2} \tht^{-\vm{\al}} \ls 2^{-2k} \ep  \tht^{-\frac{3}{2}-\vm{\al}} \\
& \vm{\nabla_{x,t} \pt_{\vm{\xi}}^l \pt_{\omega}^{\al}  \psi^{k}_{j,\theta,\pm} (t,x,\xi) } \ls 2^{j} 2^{-2k} \ep  \tht^{-\frac{3}{2}-\vm{\al}}.
\end{align*}
We sum after $ 2^{\delta(j-k)}< \tht < c $. Summing one bound for $ j \leq j_0 $ and the other one for $ j_0 <j \leq k-c $ we obtain
$$ \vm{ \pt_{\vm{\xi}}^l \pt_{\omega}^{\al} \lpr \psi_{\pm}^k(t,x,2^k \xi) - \psi_{\pm}^k(s,y, 2^k \xi) \rpr  } \ls \ep 2^{-2k} \lpr 2^{j_0} T 2^{-(\frac{3}{2}+\vm{\al})\delta(j_0-k)}+2^{-(\frac{3}{2}+\vm{\al})\delta(j_0-k)}    \rpr $$
Choosing $ 2^{-j_0} \sim T $ , we obtain \eqref{phaseder2}. When $k=0$ the same numerology, but without the $ \tht $ factors, implies \eqref{phaseder2}.
\end{proof}

\subsection{Decomposable estimates} \

The decomposable calculus was introduced in \cite{RT}. The formulation that we use here is similar to \cite{KST}, which we have modified to allow for non-homogeneous symbols.

For $ k=0 $ we define 
$$ \vn{F}_{D_0(L^q L^r)} = \sum_{\al \leq 10d} \sup_{\vm{\xi} \leq C} \vn{ \partial_{\xi}^{\al} F(\cdot,\cdot,\xi)}_{L^q L^r}. $$

For $ k \geq 1 $ we define
\be \label{decomposable_def} \vn{F^{\tht}}_{D_k^{\tht} (L^q L^r)}^2 = \sum_{\phi} \sum_{\al_1, \vm{\al} \leq 10d} \sup_{\vm{\xi} \sim 2^k} \vn{\chi^{\tht}_{\phi}(\xi) (2^k \pt_{\vm{\xi}})^{\al_1} (2^k \tht \nabla_{\omega})^{\al} F^{\tht}(\cdot,\cdot,\xi)}_{L^q L^r}^2.   \ee
where $ \chi^{\tht}_{\phi}(\xi) $ denote cutoff functions to sectors centered at $ {\phi} $ of angle $ \ls \tht $ and $ {\phi} $ is summed over a finitely overlapping collection of such sectors.

The symbol $ F(t,x,\xi) $ is in $ D_k(L^q L^r) $ if we can decompose $ F=\sum F^{\tht} $ such that 
$$ \sum_{\tht} \vn{F^{\tht}}_{D_k^{\tht} (L^q L^r)} < \infty $$
and we define $ \vn{F}_{D_k(L^q L^r)} $ to be the infimum of such sums.

\begin{lemma} \label{decomp_lemma}
Suppose that for $ i=1,N $ the symbols $ a(t,x,\xi), F_i(t,x,\xi) $  satisfy $ \vn{F_i}_{D_k(L^{q_i} L^{r_i})} \ls 1$ and 
$$ \sup_{t \in \mb{R}} \vn{A(t,x,D)}_{L^2_x \to L^2_x} \ls 1. $$ 
The symbol of $ T $ is defined to be
$$  a(t,x,\xi) \prod_{i=1}^N F_i(t,x,\xi) . $$ 
Then, whenever $ q, \tilde{q}, r, r_i \in [1,\infty], \ q_i \geq 2 $ are such that 
$$ \frac{1}{q}=\frac{1}{\tilde{q}}+ \sum_{i=1}^N \frac{1}{q_i}, \qquad \frac{1}{r}=\frac{1}{2}+\sum_{i=1}^N \frac{1}{r_i}, $$ 
we have
$$ T(t,x,D) \bar{P}_k  : L^{\tilde{q}} L^2 \to L^{q} L^{r}, $$
By duality, when $ r=2, \ r_i=\infty $, the same mapping holds for $ T(D,s,y) $.

\end{lemma}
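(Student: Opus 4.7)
The plan is to use a Fourier series expansion of each symbol $F_i$ in the $\xi$ variable, adapted to the angular sectors appearing in the definition \eqref{decomposable_def}. This reduces the operator $T(t,x,D)\bar P_k$ to a rapidly convergent sum of compositions of a multiplication operator (in $t,x$) with $a(t,x,D)$ followed by an $L^2$-bounded Fourier multiplier, after which H\"older's inequality and angular almost-orthogonality close the bound. The argument mirrors the Fourier series trick used in the proofs of Propositions \ref{M0:form} and \ref{N0:form}.

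First I would reduce to the case where each $F_i = F_i^{\theta_i}$ is at a single dyadic angular scale, using the decomposition in the definition of $\vn{\cdot}_{D_k}$. Then for each $i$, via a partition of unity $\sum_{\phi_i}\chi^{\theta_i}_{\phi_i}(\xi) = 1$ on $\{\vm{\xi} \sim 2^k\}$ by finitely overlapping caps of angle $\theta_i$, I would expand on a rectangular enlargement of each sector,
$$ \chi^{\theta_i}_{\phi_i}(\xi)\, F_i^{\theta_i}(t,x,\xi) = \chi^{\theta_i}_{\phi_i}(\xi) \sum_{\mathbf{n}_i \in \bbZ^d} c_{i,\mathbf{n}_i}^{\theta_i,\phi_i}(t,x)\, e^{i\mathbf{n}_i \cdot D_{\phi_i}\xi}, $$
where $D_{\phi_i}$ is diagonal with entries $O(2^{-k}), O((2^k\theta_i)^{-1}), \ldots, O((2^k\theta_i)^{-1})$ in coordinates adapted to $\phi_i$. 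Integration by parts in the Fourier inversion formula for $c_{i,\mathbf{n}_i}^{\theta_i,\phi_i}$, together with the symbol bounds $(2^k\pt_{\vm{\xi}})^{\alpha_1}(2^k\theta_i\nabla_\omega)^{\alpha}F_i^{\theta_i}$ controlled in $D_k^{\theta_i}$, gives the coefficient estimate
$$ \bb(\sum_{\phi_i}\vn{c_{i,\mathbf{n}_i}^{\theta_i,\phi_i}}_{L^{q_i}L^{r_i}}^2\bb)^{1/2} \aleq_N (1+\vm{\mathbf{n}_i})^{-N}\, \vn{F_i^{\theta_i}}_{D_k^{\theta_i}(L^{q_i}L^{r_i})}. $$

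Inserting the expansions into the symbol of $T$ and using the composition identity $a(t,x,D)\circ m(D) = (a\cdot m)(t,x,D)$ for any Fourier multiplier $m$, I would rewrite
$$ T(t,x,D)\bar P_k u = \sum_{\vec\phi,\vec\mathbf{n}} \prod_{i=1}^N c_{i,\mathbf{n}_i}^{\theta_i,\phi_i}(t,x)\cdot a(t,x,D)\, w_{\vec\phi,\vec\mathbf{n}}, \qquad w_{\vec\phi,\vec\mathbf{n}} \defeq \prod_i \chi^{\theta_i}_{\phi_i}(D)\tau_{-D_{\phi_i}\mathbf{n}_i}\bar P_k u, $$
where $\tau_y$ denotes translation. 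For fixed $\vec\mathbf{n}$, iterated Cauchy--Schwarz in each $\phi_i$ separates the coefficients from $a(t,x,D)w_{\vec\phi,\vec\mathbf{n}}$; then H\"older in $x$ with $1/r = 1/2 + \sum 1/r_i$ and Minkowski ($r_i \geq 2$) yields
$$ \bb\|\textstyle\sum_{\vec\phi}\prod_i c_{i,\mathbf{n}_i}^{\theta_i,\phi_i}(t,\cdot)\, a(t,\cdot,D)w_{\vec\phi,\vec\mathbf{n}}(t,\cdot)\bb\|_{L^r_x} \aleq \prod_i \bb(\textstyle\sum_{\phi_i}\|c_{i,\mathbf{n}_i}^{\theta_i,\phi_i}(t,\cdot)\|_{L^{r_i}_x}^2\bb)^{1/2}\cdot \bb(\textstyle\sum_{\vec\phi}\|a(t,\cdot,D)w_{\vec\phi,\vec\mathbf{n}}(t,\cdot)\|_{L^2_x}^2\bb)^{1/2}. $$
The last factor is bounded by $\vn{u(t,\cdot)}_{L^2_x}$ using the vector-valued $L^2$-boundedness of $a(t,\cdot,D)$ (which follows directly from the scalar bound) together with the almost-orthogonality of $\chi^{\theta_i}_{\phi_i}(D)$ in $\phi_i$ and the unitarity of translations. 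H\"older in $t$ with $1/q = 1/\tilde q + \sum 1/q_i$ (using Minkowski via $q_i \geq 2$), followed by summation of $\prod_i(1+\vm{\mathbf{n}_i})^{-N}$ over $\vec\mathbf{n}$ for $N$ large, completes the bound. The right-quantization statement in the endpoint $r=2$, $r_i=\infty$ follows by duality from $a(x,D)^* = \bar a(D,y)$ and the invariance of the $D_k$ norm under conjugation.

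The principal obstacle is bookkeeping: ensuring the tensor-product angular sums $\vec\phi = (\phi_1,\ldots,\phi_N)$ combine correctly through the iterated Cauchy--Schwarz, so that the single $\ell^2_{\phi_i}$ summability built into the $D_k^{\theta_i}$ norm suffices for each factor while the remaining $\ell^2_{\vec\phi}$ burden lands on $w_{\vec\phi,\vec\mathbf{n}}$ where angular almost-orthogonality is available.
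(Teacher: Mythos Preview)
Your proposal is correct and follows essentially the same route as the paper's proof: reduce to a single angular scale $\theta_i$ for each $F_i$, expand each $F_i^{\theta_i}$ in a Fourier series on the sectors $\phi_i$ with rapidly decaying coefficients (via integration by parts against the $D_k^{\theta_i}$ symbol bounds), then combine H\"older in $x$ and $t$ with Cauchy--Schwarz over the sectors and the almost-orthogonality of the $\chi^{\theta_i}_{\phi_i}(D)$. The paper presents the case $N=2$ and writes the sector sum as $\sum_{T_1,T_2}$, but the mechanism is identical. One small slip: your parenthetical ``Minkowski ($r_i\geq 2$)'' at the H\"older-in-$x$ step should read $q_i\geq 2$ and belongs at the H\"older-in-$t$ step, where it is used to pass from $\ell^2_{\phi_i}L^{q_i}_t$ (which the $D_k^{\theta_i}$ norm controls directly) to $L^{q_i}_t\ell^2_{\phi_i}$; no Minkowski is needed for the spatial step.
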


\begin{proof}[Proof for $ k=0$] 
For each $ i=1,N $ we decompose into Fourier series
$$ F_i(t,x,\xi)=\sum_{j \in \mb{Z}^d} d_{i,j} (t,x) e_j(\xi), \qquad  e_j(\xi)=e^{i j \cdot \xi} $$
on a box $ [-C/2,C/2]^d $. 
From the Fourier inversion formula and integration by parts we obtain
$$ \jb{j}^{M} \vn{d_{i,j}}_{L^{q_i} L^{r_i}} \ls \vn{F_i}_{D_0(L^{q_i} L^{r_i})} \ls 1 $$
for some large $ M $. Then
$$ Tu(t,x)= \sum_{j_1, \dots j_N \in \mb{Z}^d} \prod_{i=1}^N d_{i,j_i}(t,x) \int e^{i x \cdot \xi} e^{i (j_1+\dots +j_N) \cdot \xi} a(t,x,\xi) \hat{u} (t,\xi) \dd \xi $$
From H\" older's inequality we obtain
\begin{align*}
 \vn{Tu}_{L^q L^r} & \leq  \sum_{j_1, \dots j_N \in \mb{Z}^d} \prod_{i=1}^N \vn{d_{i,j_i}}_{L^{q_i} L^{r_i}} \vn{A(t,x,D) e^{i (j_1+\dots +j_N)D} u}_{L^{\tilde{q}} L^2} \ls \\
& \ls \sum_{j_1, \dots j_N \in \mb{Z}^d} \prod_{i=1}^N \jb{j_i}^{-M} \vn{u}_{L^{\tilde{q}} L^2} \ls \vn{u}_{L^{\tilde{q}} L^2},
\end{align*}
which proves the claim.
\end{proof}

\begin{proof}[Proof for $ k \geq1 $] 
We present the proof for the case $ N=2 $. It is straightforward to observe that the following works for any $ N $. From the decompositions $ F_i=\sum_{\tht_i} F^{\tht_i}_i $ and definition of $ D_k(L^q L^r) $ we see that it suffices to restrict attention to the operator $ T $ with symbol
$$  a(t,x,\xi) F_1(t,x,\xi) F_2(t,x,\xi) $$
in the case $ F_i=F^{\tht_i}_i, \ i=1,2 $ and to prove
\be \label{symbmapping}
\vn{T(t,x,D)  \tilde{P}_k }_{ L^{\tilde{q}} L^2 \to L^{q} L^{r}} \ls  \vn{F_1}_{D^{\tht_1}_k(L^{q_1} L^{r_1})} \vn{F_2}_{D^{\tht_2}_k(L^{q_2} L^{r_2})}.  \ee
For $ i=1,2 $ we decompose 
$$ F_i=\sum_{T_i} F_i^{T_i}, \qquad  F_i^{T_i} (t,x,\xi) \defeq  \vp_{\tht_i}^{T_i}(\xi) F_i(t,x,\xi) $$
where $ \vp_{\tht_i}^{T_i}(\xi) $ are cutoff functions to sectors $ T_i $ of angle $ \tht_i $, where the $ T_i $ is summed over a finitely overlapping collection of such sectors. We also consider the bump functions  $ \chi_{T_i}(\xi) $ which equal $ 1 $ on the supports of $ \vp_{\tht_i}^{T_i}(\xi) $ and are adapted to some enlargements of the sectors $ T_i $. We expand each component as a Fourier series
$$  F_i^{T_i} (t,x,\xi)= \sum_{j \in \mb{Z}^d} d_{i,j}^{T_i} (t,x) e^{T_i}_{\tht_i,j}(\xi), \qquad e^{T_i}_{\tht_i,j}(\xi)=\exp ( i  2^{-k} j \cdot (\vm{\xi}, \tilde{\omega} \tht_i^{-1})/C ) $$
on the tube $ T_i=\{ \vm{\xi}\sim 2^k, \angle(\xi, \phi_i )\ls \tht_i \} $ where $ \xi=\vm{\xi} \omega $ in polar coordinates so that $ \omega $ is parametrized by $ \tilde{\omega} \in \mb{R}^{d-1} $ such that $ \vm{\tilde{\omega}} \ls \tht_i $.
 Integrating by parts in the Fourier inversion formula for $ d_{i,j}^{T_i} (t,x) $ we obtain
$$ \jb{j}^M  \vn{d_{i,j}^{T_i} (t)}_{L^{r_i}}  \ls  \sum_{\al_1, \vm{\al} \leq 10d} \sup_{\vm{\xi} \sim 2^k} \vn{ (2^k \pt_{\vm{\xi}})^{\al_1} (2^k \tht_i \nabla_{\omega})^{\al} F_i^{T_i}(t,\cdot,\xi)}_{L^{r_i}} $$
and since $ q_i \geq 2 $ we have
\be \label{decaydec}
 \vn{d_{i,j}^{T_i}}_{L^{q_i}_t l^2_{T_i} L^{r_i}}  \ls \jb{j}^{-M} \vn{F_i}_{D^{\tht_i}_k(L^{q_i} L^{r_i})} \ee
Since for $ \xi \in T_i $ we have $ F_i^{T_i}=F_i^{T_i} \chi_{T_i}(\xi) $, we can write
$$ T u (t,x)=\sum_{T_1,T_2} \sum_{j_1,j_2} d_{1,j_1}^{T_1} (t,x) d_{2,j_2}^{T_2} (t,x) \int e^{i x \xi} a(t,x,\xi) e^{T_1}_{\tht_1,j_1} e^{T_i}_{\tht_2,j_2} \chi_{T_1} \chi_{T_2} \tilde{\chi}(\xi/2^k) \hat{u} (t,\xi) \dd \xi. $$
Thus
\begin{align*}
 \vn{Tu(t)}_{L^r_x} & \ls \sum_{j_1,j_2 \in \mb{Z}^d} \sum_{T_1,T_2} \vn{d_{1,j_1}^{T_1} (t)}_{L_x^{r_1}} \vn{d_{2,j_2}^{T_2} (t)}_{L_x^{r_2}} \vn{\chi_{T_1} \chi_{T_2}  \hat{u} (t)}_{L^2} \ \\
& \ls \sum_{j_1,j_2 \in \mb{Z}^d}  \vn{d_{1,j_1}^{T_1} (t)}_{l^2_{T_1} L_x^{r_1}} \sum_{T_2}  \vn{d_{2,j_2}^{T_2} (t)}_{L_x^{r_2}} \vn{\chi_{T_2}  \hat{u} (t)}_{L^2}  \ \\
& \ls \sum_{j_1,j_2 \in \mb{Z}^d} \vn{d_{1,j_1}^{T_1} (t)}_{l^2_{T_1} L_x^{r_1}} \vn{d_{2,j_2}^{T_2} (t)}_{l^2_{T_2} L_x^{r_2 }} \vn{\hat{u} (t)}_{L^2}  
\end{align*}
Applying H\" older's inequality and \eqref{decaydec} we obtain
$$ \vn{Tu}_{L^q L^r} \ls \vn{u}_{L^{\tilde{q}} L^2}  \sum_{j_1,j_2 \in \mb{Z}^d} \jb{j_1}^{-M}\jb{j_2}^{-M}  \vn{F_1}_{D^{\tht_1}_k(L^{q_1} L^{r_1})} \vn{F_2}_{D^{\tht_2}_k(L^{q_2} L^{r_2})} $$
which sums up to \eqref{symbmapping}.
\end{proof}

\subsection{Decomposable estimates for the phase} \ 

\

Now we apply the decomposable calculus to the phases $ \psi^k_{\pm}(t,x,\xi) $. 

\begin{lemma} Let $ q \geq 2, \ \frac{2}{q}+\frac{d-1}{r} \leq \frac{d-1}{2} $. For $ k \geq 1$ we have
\begin{align}
 \label{decomp1}  \vn{(\psi^{k}_{j,\theta,\pm},2^{-j}\nabla_{t,x} \psi^{k}_{j,\theta,\pm} ) }_{D_k^{\tht}(L^q L^r)} & \ls \ep 2^{-(\frac{1}{q}+\frac{d}{r})j} \frac{\tht^{\frac{d+1}{2}-(\frac{2}{q}+\frac{d-1}{r})}}{\tht^2+2^{-2k}} . \\
 \label{decomp6} \vn{P_{\tht}^{\omega} A_{k_1} (t,x) \cdot \omega}_{D_k^{\tht}(L^2L^{\infty})} & \ls \tht^{\frac{3}{2}} 2^{\frac{k_1}{2}}  \ep. 
\end{align}
For $ k=0 $ we have
\be \label{decomp0} 
\vn{(\psi^{0}_{j,\pm},2^{-j}\nabla_{t,x} \psi^{0}_{j,\pm} ) }_{D_0(L^q L^r)} \ls \ep 2^{-(\frac{1}{q}+\frac{d}{r})j}.
\ee
\end{lemma}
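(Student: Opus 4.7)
\medskip

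\noindent\textbf{Proof plan.}  The strategy for all three estimates is to reduce the $D_k^{\tht}$ norm of the phase to $L^qL^r$ estimates on sector-localized pieces of $A$, which are then controlled by sector-localized Strichartz inequalities for the free wave $A$ (using $\vn{A[0]}_{\dot H^{\sg}\times\dot H^{\sg-1}}\ls\ep$).

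For \eqref{decomp1}, I would first unpack the $D_k^{\tht}$ norm \eqref{decomposable_def}: at fixed $|\xi|\sim 2^k$ in a $\tht$-sector, derivatives $(2^k\pt_{|\xi|})^{\al_1}(2^k\tht\nabla_\omega)^{\al}$ fall on the multiplier $-L_{\pm,k}/\vp_{\xi}$, on the sector cutoff $\Pi_{\tht}^{\omega}$, and on the factor $\omega|\xi|/\jb{\xi}_k$.  Invoking Remark \ref{rkdermultiplier} (which already incorporates the Coulomb-gauge null-form gain of Remark \ref{rk:Col:nf}), the $\al_1=0$ contribution is dominated by $\frac{2^{-j}\tht}{\tht^2+2^{-2k}}\,\vn{\Pi^{\omega,\al}_{\tht}\tilde{P}_j A}_{L^qL^r}$, while the $\al_1\geq 1$ cases gain an additional factor $\frac{2^{-2k}}{\tht^2+2^{-2k}}\le 1$ and are therefore subsumed.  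For the $2^{-j}\nabla_{t,x}\psi^k_{j,\tht,\pm}$ part, the $\nabla_{t,x}$ acts on the frequency-$2^j$-supported function and contributes at most $2^j$, canceling the $2^{-j}$.

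Step two is the sector-localized wave Strichartz estimate
\be \label{plan:str}
\vn{\Pi^{\omega}_{\tht} P_j A}_{L^qL^r} \ls \tht^{\frac{d-1}{2}-(\frac{2}{q}+\frac{d-1}{r})}\,2^{(1-\frac{1}{q}-\frac{d}{r})j}\,\ep,
\ee
valid on the admissible range $2/q+(d-1)/r\le (d-1)/2$.  This is obtained by interpolation/Bernstein between the trivial bound $\vn{P_jA}_{L^\infty L^2}\ls 2^{-\sg j}\ep$ (no angular gain) and the endpoint wave Strichartz $\vn{P_jA}_{L^2L^{r_0}}\ls \ep\,2^{(1-1/2-d/r_0)j}$ with $r_0=2(d-1)/(d-3)$, using the orthogonality over $\tht$-sectors at the endpoint.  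Combining \eqref{plan:str} with the multiplier factor $\frac{2^{-j}\tht}{\tht^2+2^{-2k}}$ and noting that $\sg=d/2-1$ gives $1-1/q-d/r$ independently of $d$, yields exactly the right-hand side of \eqref{decomp1}.  The bound \eqref{decomp0} is the same argument with $\tht$ set to $1$ (no sector decomposition and no Coulomb gain), starting from $\vn{P_jA}_{L^qL^r}\ls 2^{(1-1/q-d/r)j}\ep$ and multiplying by $2^{-j}$ from $L_\pm/\Delta$.

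For \eqref{decomp6}, the symbol $F(t,x,\xi)=P^{\omega(\xi)}_{\tht}A_{k_1}(t,x)\cdot\omega(\xi)$ depends on $\xi$ only through $\omega=\xi/|\xi|$, so $\pt_{|\xi|}F=0$ and each normalized angular derivative $\tht\nabla_\omega$ contributes at most $O(1)$ when it falls on the sector cutoff (of angular scale $\tht$) or on the $\omega$ factor.  Consequently the norm collapses to $\vn{P^{\omega}_{\tht}A_{k_1}\cdot\omega}_{L^2L^\infty}$.  Here one exploits the Coulomb null form (Remark \ref{rk:Col:nf}) for a factor $\tht$, and then uses the endpoint Strichartz $\vn{P_{k_1}A}_{L^2L^{r_0}}\ls 2^{(1-1/2-d/r_0)k_1}\ep$ followed by Bernstein $L^{r_0}\to L^\infty$ on a set of volume $2^{k_1}(2^{k_1}\tht)^{d-1}$, producing $\tht\cdot\tht^{1/2}2^{k_1/2}\ep=\tht^{3/2}2^{k_1/2}\ep$ in $d=4$.

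The main obstacle is establishing the sharp sector-Strichartz bound \eqref{plan:str} with the precise angular exponent $\frac{d-1}{2}-(\frac{2}{q}+\frac{d-1}{r})$: the derivative-counting and the $\tht^2+2^{-2k}$ bookkeeping in Lemma \ref{lemmadermultiplier} only yield the right answer when the sector-gain in \eqref{plan:str} is exactly this much, which is the "excess of admissibility" coming from restricting a $d$-dimensional wave to an angle-$\tht$ cap.  Once \eqref{plan:str} is in hand, all three bounds of the lemma follow from bookkeeping of $2^j,\tht$, and $\tht^2+2^{-2k}$ factors.
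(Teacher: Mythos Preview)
Your plan is essentially the paper's proof: reduce via Remark~\ref{rkdermultiplier} to a sector-localized Strichartz bound for the free wave $A$, then use $\ell^2$-orthogonality over sectors. Two small points where your write-up is imprecise. First, the paper applies Strichartz directly to $\pt_{|\xi|}^{\al_1}(\tht\pt_\omega)^\al\psi^k_{j,\tht,\pm}$ (which solves $\Box=0$), and only afterwards invokes Remark~\ref{rkdermultiplier} to bound the \emph{initial data} in $L^2_x$; you instead apply the multiplier bound in $L^qL^r$ before Strichartz, which is fine but note that Remark~\ref{rkdermultiplier} is stated for $L^2_x$. Second, your derivation of \eqref{plan:str} by interpolating $(\infty,2)$ and $(2,r_0)$ only covers the sharp admissible line, where the $\tht$-exponent $(d-1)/2-(2/q+(d-1)/r)$ vanishes; the positive gain for $r>r_0(q)$ comes purely from Bernstein $L^{r_0(q)}\to L^r$ on a $\tht$-sector (exactly $\tht^{(d-1)(1/r_0(q)-1/r)}$), which is how the paper proceeds. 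Finally, for the $\ell^2_\phi$ sum in the $D_k^\tht$ norm, keep $\vn{\Pi^\omega_\tht \tilde P_jA[0]}_{\dot H^\sigma\times\dot H^{\sigma-1}}$ on the right-hand side of your sector-Strichartz rather than $\ep$, so that orthogonality yields $\vn{\tilde P_jA[0]}_{\dot H^\sigma\times\dot H^{\sigma-1}}\ls\ep$ after summing.
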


\begin{proof} Suppose $ k \geq 1 $.
Without loss of generality, we will focus on $ \psi^{k}_{j,\theta,\pm} $, since exactly the same estimates hold for $ 2^{-j}\nabla_{t,x} \psi^{k}_{j,\theta,\pm} $. In light of the definition \eqref{phase_piece}, for any $ \xi=\vm{\xi} \omega $ and any $ \al_1, \vm{\al} \leq 10d $, the derivatives $  \pt_{\vm{\xi}}^{\al_1} ( \tht \pt_{\omega})^{\al}\psi^{k}_{j,\theta,\pm} $ are localized to a sector of angle $ O(\tht) $ in the $ (t,x) $-frequencies and they solve the free wave equation 
$$  \Box_{t,x} \pt_{\vm{\xi}}^{\al_1} ( \tht \pt_{\omega})^{\al} \psi^{k}_{j,\theta,\pm}(t,x,2^k \xi) =0 $$
Let $ r_0 $ be defined by $ \frac{2}{q}+\frac{d-1}{r_0}=\frac{d-1}{2} $. The Bernstein and Strichartz inequalities imply 
$$ \vn{\pt_{\vm{\xi}}^{\al_1} ( \tht \pt_{\omega})^{\al} \psi^{k}_{j,\theta,\pm}(\cdot,2^k \xi)}_{L^q L^r} \ls \tht^{(d-1)(\frac{1}{r_0}-\frac{1}{r})} 2^{d(\frac{1}{r_0}-\frac{1}{r})j} \vn{\pt_{\vm{\xi}}^{\al_1} ( \tht \pt_{\omega})^{\al} \psi^{k}_{j,\theta,\pm}(\cdot,2^k \xi)}_{L^q L^{r_0}} $$
\be \label{Ltwo_estim} \ls 2^{(1-\frac{1}{q}-\frac{d}{r})j} \tht^{(d-1)(\frac{1}{r_0}-\frac{1}{r})} \vn{\pt_{\vm{\xi}}^{\al_1} ( \tht \pt_{\omega})^{\al} \psi^{k}_{j,\theta,\pm}(2^k \xi)[0]}_{\dot{H}^{\sigma} \times \dot{H}^{\sigma-1} } \ee
By  Remark \ref{rkdermultiplier} (which uses the null form) we deduce
$$  \eqref{Ltwo_estim} \ls 2^{-(\frac{1}{q}+\frac{d}{r})j} \frac{\tht^{\frac{d+1}{2}-(\frac{2}{q}+\frac{d-1}{r})}}{\tht^2+2^{-2k}} \vn{\Pi^{\omega,\al}_{\tht} \tilde{P}_{j} A[0]}_{\dot{H}^{\sigma} \times \dot{H}^{\sigma-1}}.  $$
By putting together this estimate, definition \ref{decomposable_def}, the finite overlap of the sectors and the orthogonality property, we obtain
$$ \vn{\psi^{k}_{j,\theta,\pm}}_{D_k^{\tht}(L^q L^r)} \ls 2^{-(\frac{1}{q}+\frac{d}{r})j} \frac{\tht^{\frac{d+1}{2}-(\frac{2}{q}+\frac{d-1}{r})}}{\tht^2+2^{-2k}} \vn{\tilde{P}_{j} A[0]}_{\dot{H}^{\sigma} \times \dot{H}^{\sigma-1}}, $$
which proves the claim, since $  \vn{\tilde{P}_{j} A[0]}_{\dot{H}^{\sigma} \times \dot{H}^{\sigma-1}} \ls \ep $.

The same argument applies to \eqref{decomp6}. The only difference is that one uses the angular-localized Strichartz inequality $ \vn{P_{\tht}^{\omega} A_{j}}_{L^2 L^{\infty}} \ls \tht^{\frac{d-3}{2}} 2^{\frac{j}{2}} \vn{A_{j}}_{\dot{H}^{\sigma} \times \dot{H}^{\sigma-1}} $, which holds for free waves, in addition to the null form which gives an extra $ \tht $.

When $ k=0 $ the same argument goes through without angular projections and with no factors of $ \tht $ in \eqref{Ltwo_estim}.
\end{proof}

\begin{remark} As a consequence of the above we also obtain
\be \label{decomp5}  \vn{\nabla_{\xi} (\Pi^{\omega}_{\leq \delta(k_1-k)}  A_{k_1}(t,x) \cdot \xi) }_{D_k^1 (L^2 L^{\infty})} \ls 2^{-10d \delta (k_1-k)} 2^{\frac{k_1}{2}} \ep.
\ee
\end{remark}

\begin{corollary} For $ k \geq 0 $ we have
\be \label{decomp2} \vn{(\psi^{k}_{j,\pm}, 2^{-j} \nabla_{t,x} \psi^{k}_{j,\pm} )  }_{D_k(L^q L^{\infty})} \ls 2^{-\frac{j}{q}} \ep,  \qquad q>4
\ee
\be \label{decomp3}\vn{\nabla_{t,x} \psi^{k}_{\pm}}_{D_k(L^2 L^{\infty})} \ls 2^{\frac{k}{2}} \ep
\ee
\be \label{decomp4} \vn{\nabla_{t,x} \psi^{k}_{\pm}}_{D_k(L^{\infty} L^{\infty})} \ls 2^{k} \ep.
\ee
\end{corollary}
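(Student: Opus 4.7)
The strategy is to deduce the three estimates from the sector-localized bound \eqref{decomp1} (for $k\geq 1$) and \eqref{decomp0} (for $k=0$) by summing over the dyadic angular scale $\theta$ and, for \eqref{decomp3}--\eqref{decomp4}, also over the dyadic frequency scale $j$. The mechanism is the definition of the $D_k(L^qL^r)$ norm as an infimum over angular decompositions: plugging in the natural decomposition $\psi^k_{j,\pm}=\sum_{\theta}\psi^k_{j,\theta,\pm}$ (dyadic in $2^{\delta(j-k)}<\theta<c$) and $\psi^k_{\pm}=\sum_{j<k-c}\psi^k_{j,\pm}$ gives
$$ \vn{\psi^k_{j,\pm}}_{D_k(L^qL^r)}\ls \sum_{\theta}\vn{\psi^k_{j,\theta,\pm}}_{D_k^\theta(L^qL^r)},\qquad \vn{\psi^k_{\pm}}_{D_k(L^qL^r)}\ls \sum_{j,\theta}\vn{\psi^k_{j,\theta,\pm}}_{D_k^\theta(L^qL^r)}, $$
and the corresponding inequalities with $\nabla_{t,x}$. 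A useful preliminary observation is that, because $\delta<1$, the lower bound $\theta>2^{\delta(j-k)}$ forces $\theta>2^{-k}$ throughout the summation, so the denominator $\theta^2+2^{-2k}$ appearing in \eqref{decomp1} is always $\simeq \theta^2$.

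For \eqref{decomp2}, setting $r=\infty$ in \eqref{decomp1} yields
$$ \vn{\psi^k_{j,\theta,\pm}}_{D_k^\theta(L^qL^\infty)}\ls \ep\, 2^{-j/q}\,\theta^{\frac{d-3}{2}-\frac{2}{q}}. $$
For $q>4$ and $d\geq 4$ the exponent of $\theta$ is strictly positive, hence the dyadic sum over $\theta\in(2^{\delta(j-k)},c)$ is bounded by $O(1)$ and gives the claimed bound $\ep\, 2^{-j/q}$; the same argument applied to $2^{-j}\nabla_{t,x}\psi^k_{j,\theta,\pm}$ (with an identical factor structure in \eqref{decomp1}) handles the gradient piece. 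For $k=0$, \eqref{decomp0} gives the result directly without any angular summation.

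For \eqref{decomp3}--\eqref{decomp4} we apply the $\nabla_{t,x}$ half of \eqref{decomp1} with $(q,r)=(2,\infty)$ and $(q,r)=(\infty,\infty)$ respectively, picking up the extra factor $2^j$ relative to the analysis above. In the always-applicable regime $\theta\geq 2^{-k}$ we obtain, schematically,
$$ \vn{\nabla_{t,x}\psi^k_{j,\theta,\pm}}_{D_k^\theta(L^2L^\infty)}\ls \ep\, 2^{j/2}\,\theta^{\frac{d-5}{2}},\qquad \vn{\nabla_{t,x}\psi^k_{j,\theta,\pm}}_{D_k^\theta(L^\infty L^\infty)}\ls \ep\, 2^{j}\,\theta^{\frac{d-3}{2}}. $$
For \eqref{decomp4} the exponent of $\theta$ is always positive (for $d\geq 4$), so the $\theta$-sum is $O(1)$ and then $\sum_{j\leq k-c}2^j\ls 2^k$. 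The same is true for \eqref{decomp3} when $d\geq 5$ (bounded or logarithmic $\theta$-sum, controlled $j$-sum $\sum 2^{j/2}\ls 2^{k/2}$).

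The delicate case, and the main obstacle, is \eqref{decomp3} in $d=4$: there the $\theta$-sum involves $\theta^{-1/2}$, which is non-integrable at $\theta=0$. The rescue comes from the lower cutoff $\theta>2^{\delta(j-k)}$: summing $\theta^{-1/2}$ dyadically down to this scale yields a factor $\ls 2^{\delta(k-j)/2}$, and the subsequent $j$-sum reads
$$ \sum_{j<k-c} 2^{j/2}\cdot 2^{\delta(k-j)/2}=2^{\delta k/2}\sum_{j<k-c}2^{(1-\delta)j/2}\ls 2^{\delta k/2}\cdot 2^{(1-\delta)k/2}=2^{k/2}, $$
as required. Thus the interplay between the angular lower cutoff $2^{\delta(j-k)}$ and the $2^{j/2}$ gain from differentiating is precisely what balances the endpoint Strichartz exponent in $d=4$ and produces \eqref{decomp3}; all the other cases are soft summations of the estimates already contained in \eqref{decomp1} and \eqref{decomp0}.
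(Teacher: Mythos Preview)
Your approach is essentially the paper's: sum \eqref{decomp1} over $\theta$ (and for \eqref{decomp3}, \eqref{decomp4} also over $j$), with the delicate $d=4$ endpoint for \eqref{decomp3} handled exactly as the paper does---the $\theta$-sum produces $2^{\delta(k-j)/2}$, which is absorbed by the $2^{j/2}$ gain in the $j$-sum. The paper phrases \eqref{decomp4} as summing \eqref{decomp2} over $j$, but this amounts to the same computation.

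One correction: your ``preliminary observation'' that $\delta<1$ forces $\theta>2^{-k}$ is not right---for $j$ sufficiently negative the lower cutoff $2^{\delta(j-k)}$ drops below $2^{-k}$, so the equivalence $\theta^2+2^{-2k}\simeq\theta^2$ fails there. This does not damage the argument, however, because you only ever need the trivial upper bound
\[
\frac{\theta^{\alpha}}{\theta^2+2^{-2k}}\leq\theta^{\alpha-2}
\]
(from $\theta^2+2^{-2k}\geq\theta^2$), and every subsequent summation uses only this direction. Just replace the faulty justification by this inequality.
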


\begin{proof} The bound \eqref{decomp4} follows by summing over \eqref{decomp2}. For $ k=0 $, \eqref{decomp2} and \eqref{decomp3} follow from \eqref{decomp0}.

Now assume  $ k \geq 1$. The condition $ q>4 $ makes the power of $ \tht $ positive in \eqref{decomp1} for any $ d \geq 4 $. Thus \eqref{decomp2} follows by summing in $ \tht $. For \eqref{decomp3}, summing in $ \tht $ gives the factor $ 2^{\frac{\delta}{2}(k-j)} $, which is overcome by the extra factor of $ 2^{j} $ when summing in $ j<k $.
\end{proof}

\subsection{Further properties}

\begin{lemma}
  Let $a(x,\xi)$ and $b(x,\xi)$ be smooth symbols. Then one has
  \begin{equation}
    \vn{a^rb^r- (ab)^r}_{L^r(L^2)
      \to L^q(L^2)}  \lesssim  \vn{(\nabla_x a)^r}_{L^r(L^2)\to L^{p_1}(L^2)}
    \vn{\nabla_\xi b}_{D^1_k L^{p_2}(L^\infty)}  \label{spec_decomp1}
  \end{equation}
  \be
   \vn{a^l b^l- (ab)^l}_{L^r(L^2)
      \to L^q(L^2)}  \lesssim  \vn{\nabla_\xi a}_{D_k^1L^{p_2}(L^\infty)}  \vn{(\nabla_x b)^l}_{L^r(L^2)\to L^{p_1}(L^2)}  \label{spec_decomp3}
  \ee
  where $q^{-1}=\sum p_i^{-1}$. Furthermore, if $b=b(\xi)$ is a smooth multiplier supported on $ \{ \jb{\xi} \simeq 2^k \} $, then for any two translation
  invariant spaces $X,Y$ one has:
  \begin{equation}
    \vn{a^rb^r- (ab)^r}_{X\to Y}  \lesssim  2^{-k}
    \vn{(\nabla_x a)^r}_{X\to Y}
    \ . \label{spec_decomp2}
  \end{equation}
\end{lemma}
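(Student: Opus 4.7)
The starting point is an explicit computation: writing out the kernels from \eqref{left:quant}--\eqref{right:quant} and using $\int e^{iz(\xi-\eta)}\dd z = \delta(\xi-\eta)$ to rewrite $(ab)^r$ with the same oscillatory phase as $a^r b^r$ but with $a(z,\eta)$ replaced by $a(y,\eta)$, one obtains
\[
(a^r b^r - (ab)^r) u(x) = \iiiint e^{i(x-z)\eta + i(z-y)\xi} [a(z,\eta) - a(y,\eta)]\, b(y,\xi)\, u(y)\dd y \dd\xi \dd z \dd\eta.
\]
Taylor's formula $a(z,\eta) - a(y,\eta) = (z-y) \cdot \int_0^1 \nabla_x a(y+s(z-y), \eta)\dd s$ and the identity $(z_j - y_j) e^{i(z-y)\xi} = -i \partial_{\xi_j} e^{i(z-y)\xi}$, followed by integration by parts in $\xi$, produce the identity $(a^r b^r - (ab)^r) u = i \int_0^1 T_s u \dd s$, where
\[
T_s u(x) := \iiiint e^{i(x-z)\eta + i(z-y)\xi} \nabla_x a(y + s(z-y), \eta) \cdot \nabla_\xi b(y, \xi)\, u(y)\dd y \dd\xi \dd z \dd\eta.
\]
At $s = 1$ the $(z,\eta)$ and $(y,\xi)$ integrals decouple and $T_1 = (\nabla_x a)^r \circ (\nabla_\xi b)^r$; at $s = 0$ the $z$-integration forces $\eta = \xi$ and $T_0 = (\nabla_x a \cdot \nabla_\xi b)^r$; for intermediate $s$ the symbol $\nabla_x a$ is evaluated at the convex combination $y + s(z-y)$.

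The main step is a uniform-in-$s$ bound on $T_s$ via the decomposable calculus. Fourier-series expanding $\nabla_\xi b(y,\xi)$ in $\xi$ as in the proof of Lemma~\ref{decomp_lemma} reduces its $\xi$-dependence to a superposition of spatial translations with coefficients controlled by $\vn{\nabla_\xi b}_{D^1_k L^{p_2}(L^\infty)}$. For $s \in [0, 1)$ the change of variables $w = y + s(z-y)$, accompanied by the rescaling $\xi \to \xi/s$, re-expresses $T_s$ with $\nabla_x a$ evaluated at $w$ and a modified symbol in place of $\nabla_\xi b$ whose decomposable norm is stable under the rescaling. Combining this with the hypothesized bound $(\nabla_x a)^r : L^r L^2 \to L^{p_1} L^2$ (the proof of Lemma~\ref{decomp_lemma} goes through unchanged if the base operator maps $L^r L^2 \to L^{p_1} L^2$ instead of $L^2 \to L^2$, since only translation invariance of these spaces is used) and H\"older with $q^{-1} = p_1^{-1} + p_2^{-1}$ yields
\[
\vn{T_s}_{L^r(L^2) \to L^q(L^2)} \ls \vn{(\nabla_x a)^r}_{L^r(L^2) \to L^{p_1}(L^2)} \vn{\nabla_\xi b}_{D^1_k L^{p_2}(L^\infty)}
\]
uniformly in $s$, and integrating in $s$ gives \eqref{spec_decomp1}. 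The symmetric statement \eqref{spec_decomp3} follows from the analogous argument applied to the left-quantization composition, by Taylor expanding $b(z,\xi)$ around $z = x$ and integrating by parts in $\eta$.

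For \eqref{spec_decomp2}, where $b = b(\xi)$ is a smooth multiplier supported on $\{\jb\xi \simeq 2^k\}$, the situation simplifies substantially: $(\nabla_\xi b)(D)$ is a Fourier multiplier whose convolution kernel $K$ obeys $\vn{K}_{L^1} \ls 2^{-k}$ (as $K$ is a rescaled bump on scale $2^k$ with extra $2^{-k}$ amplitude). Performing the change of variables $v = z - y$ and $w = y + sv$ in the formula for $T_s$ above and using that $\nabla_\xi b$ depends only on $\xi$ yields the schematic representation
\[
T_s u(x) = i \int K(v)\, (\nabla_x a)^r\big[u(\cdot - sv)\big](x + (s-1)v)\dd v.
\]
Translation invariance of both $X$ and $Y$ combined with Minkowski's inequality now gives $\vn{T_s u}_Y \ls 2^{-k} \vn{(\nabla_x a)^r}_{X \to Y} \vn{u}_X$ uniformly in $s \in [0,1]$, which integrates to \eqref{spec_decomp2}. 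The main obstacle throughout is handling the shifted argument $y + s(z-y)$ of $\nabla_x a$ for intermediate $s \in (0,1)$; this is exactly what the change of variables $w = y + s(z-y)$ together with translation invariance of the function spaces resolves, preventing any singular behavior as $s \to 0^+$.
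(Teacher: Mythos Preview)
The paper gives no argument here, simply citing \cite[Lemma~7.2]{KST}, so your attempt goes well beyond the paper. Your treatment of \eqref{spec_decomp2} is correct and clean: the Taylor--integration-by-parts identity produces the integral representation $i\int_0^1 T_s\,ds$, and after the change of variables $v=z-y$, $w=y+sv$ the operator $T_s$ becomes an average of spatial translates of $(\nabla_x a)^r$ against the $L^1$ kernel $K$ of $\nabla_\xi b(D)$; translation invariance of $X$ and $Y$ then gives the bound uniformly in $s$.

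There is, however, an ordering issue in your argument for \eqref{spec_decomp1} that the proposed change of variables and rescaling $\xi\to\xi/s$ do not resolve. After Fourier-expanding $\nabla_\xi b(y,\xi)=\sum_j d_j(y)a_j(\xi)$ as in Lemma~\ref{decomp_lemma} and performing your change of variables, one finds
\[
T_s u(x)=\sum_j\int K_j(v)\,(\nabla_x a)^r\big[d_j(\cdot-sv)\,u(\cdot-sv)\big](x-(1-s)v)\,dv,
\]
so the decomposable coefficient $d_j$ multiplies $u$ \emph{before} $(\nabla_x a)^r$ is applied---this is forced by right quantization, where the symbol sits at the input variable. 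H\"older then sends $L^rL^2$ to $L^{r'}L^2$ with $(r')^{-1}=r^{-1}+p_2^{-1}$, but the hypothesis only gives $(\nabla_x a)^r:L^rL^2\to L^{p_1}L^2$, not $L^{r'}L^2\to L^qL^2$. For \eqref{spec_decomp3} this obstruction is absent: in the left-quantized expansion the coefficients $d_j(x)$ from $\nabla_\xi a$ sit at the \emph{output} variable and factor outside $(\nabla_x b)^l$, so H\"older applies after the operator and the exponents match (incidentally, the integration by parts there is in $\xi$, not $\eta$). Since only \eqref{spec_decomp2} and \eqref{spec_decomp3} are used in the paper, this gap is inconsequential for the applications, but \eqref{spec_decomp1} as stated needs a separate argument.
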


\begin{proof}
See \cite[Lemma 7.2]{KST}.
\end{proof}

\begin{lemma} \label{loczsymb} Let $ X, Y $ be translation-invariant spaces of functions on $ \mb{R}^{n+1} $ and consider the symbol $ a(t,x,\xi) $ such that 
$$ a(t,x,D) : X \to Y. $$
Then the $ (t,x)$-frequency localized symbol $ a_{<h} (t,x,\xi) $ also satisfies
$$ a_{<h} (t,x,D): X \to Y .$$
\end{lemma}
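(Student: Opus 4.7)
The plan is to represent $a_{<h}$ as a weighted average of translated copies of $a$ in the $(t,x)$ variables, then reduce to the given bound on $a(t,x,D)$ by combining translation invariance with Minkowski's inequality. This mirrors the averaging formula \eqref{averaging} already used for the phase, and in fact Lemma \ref{loczsymb} is what makes that mechanism transfer estimates from $e^{\pm i \psi}(t,x,D)$ to its $(t,x)$-truncated version.

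Concretely, I would first write, pointwise in $\xi$,
$$ a_{<h}(t,x,\xi) = \int_{\mathbb{R}^{n+1}} m_h(s,y)\, a(t+s, x+y, \xi)\, ds\, dy, $$
where $m_h$ is the convolution kernel (in the $(t,x)$ variables) of the projection to frequencies $\leq 2^{h-C}$, chosen symmetric. Since $m_h = 2^{(n+1)h} m_0(2^h\cdot)$ for a fixed Schwartz function $m_0$, we have $\|m_h\|_{L^1(\mathbb{R}^{n+1})} = \|m_0\|_{L^1}$ uniformly in $h$.

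Next I would interpret the translated symbol as a conjugation. Setting $a^{(s,y)}(t,x,\xi) := a(t+s, x+y, \xi)$ and $T_{(s,y)} u(t,x) := u(t-s, x-y)$, a direct computation from the left-quantization formula \eqref{left:quant}, using $\widehat{T_{(s,y)}u}(t,\xi) = e^{-iy\cdot\xi}\hat u(t-s,\xi)$, yields
$$ a^{(s,y)}(t,x,D) \;=\; T_{(-s,-y)} \circ a(t,x,D) \circ T_{(s,y)}. $$
Since $X$ and $Y$ are translation invariant, each $T_{(\pm s,\pm y)}$ is an isometry on them, so $\|a^{(s,y)}(t,x,D)\|_{X\to Y} = \|a(t,x,D)\|_{X\to Y}$ uniformly in $(s,y)$.

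Finally, since the left quantization is linear in the symbol, we obtain
$$ a_{<h}(t,x,D) u \;=\; \int_{\mathbb{R}^{n+1}} m_h(s,y)\, a^{(s,y)}(t,x,D) u \; ds\, dy, $$
and Minkowski's inequality applied on the Banach space $Y$ gives
$$ \|a_{<h}(t,x,D) u\|_Y \;\leq\; \|m_h\|_{L^1}\, \|a(t,x,D)\|_{X\to Y}\, \|u\|_X. $$
I do not anticipate a substantive obstacle: the only minor care needed is in justifying the exchange of the integral over $(s,y)$ with the quantization (for which it suffices to work on a dense class of Schwartz symbols/inputs and then pass to the limit using the uniform bound), and in ensuring $m_h$ is genuinely integrable with $h$-independent $L^1$ norm, which holds by construction.
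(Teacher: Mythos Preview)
Your proof is correct and is essentially identical to the paper's own argument: both express $a_{<h}(t,x,D)$ as an $m_h$-weighted average of translation-conjugates $T_{(-s,-y)} a(t,x,D) T_{(s,y)}$ and conclude via Minkowski's inequality together with the translation invariance of $X$ and $Y$. You simply spell out the conjugation identity and the uniform $L^1$ bound on $m_h$ in more detail than the paper does.
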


\begin{proof} We write
$$ a_{<h} (t,x,D)u=\int m(s,y) T_{(s,y)} a(t,x,D) T_{-(s,y)} u \dd s \dd y $$
where $ m(s,y) $ is a bump function and $ T_{(s,y)} $ denotes translation by $ (s,y) $. Now the claim follows from Minkowski's inequality and the $ T_{\pm(s,y)} $-invariance of $ X, Y $.
\end{proof}

\section{Oscillatory integrals estimates } \label{sec:Osc-int}	\

In this section we prove estimates for oscillatory integrals that arise as kernels of $ TT^* $ operators used in proofs of the mapping \eqref{renbd3} and \eqref{renbd}, \eqref{renbdt}, \eqref{renbd2}. These bounds are based on stationay and non-stationary phase arguments (see Prop. \ref{nonstationary} and \ref{stationary}).

\subsection{Rapid decay away from the cone}

We consider 
\be \label{kernel:a} K_k^a(t,x,s,y)= \int e^{- i \psi_{\pm}^k(t,x,\xi)}  a\big( \frac{\xi}{2^k} \big) e^{\pm i (t-s) \jb{\xi}+i (x-y) \xi} e^{+ i \psi_{\pm}^k(s,y,\xi)} \dd \xi \ee
where $ a(\xi) $ is a bump function supported on $ \{ \vm{\xi} \simeq 1 \} $ for $ k\geq 1 $ and on $ \{ \vm{\xi} \ls 1 \} $ for $ k=0 $.

\begin{proposition} \label{stat_phase_a}
For $ k \geq 0 $ and any $ N \geq 0 $, we have
\be \vm{K_k^a(t,x,s,y)} \ls 2^{dk} \frac{1}{\jb{2^k ( \vm{t-s}-\vm{x-y})}^N} \ee
whenever $ 2^k \vm{ \vm{t-s}-\vm{x-y}} \gg 2^{-k} \vm{t-s}$.

Moreover, the implicit constant is bounded when $ a(\xi) $ is bounded in $ C^N(\vm{\xi}\ls 1) $.
\end{proposition}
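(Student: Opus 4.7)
The plan is to apply the non-stationary phase principle (Proposition~\ref{nonstationary}). First I would rescale $\xi = 2^k\eta$ to reduce the kernel to an oscillatory integral with amplitude $a$ supported on $\vm{\eta}\simeq 1$ for $k\geq 1$ (resp.\ $\vm{\eta}\ls 1$ for $k=0$):
\begin{equation*}
K_k^a = 2^{dk}\int e^{i\Phi(\eta)}\,a(\eta)\,d\eta, \qquad \Phi(\eta) = \pm(t-s)\jb{2^k\eta} + 2^k(x-y)\cdot\eta + \psi_\pm^k(s,y,2^k\eta) - \psi_\pm^k(t,x,2^k\eta).
\end{equation*}
Set $\Lambda \defeq 2^k\vm{\vm{t-s}-\vm{x-y}}$. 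When $\Lambda \ls 1$ the claim follows from the trivial measure bound $\vm{K_k^a}\ls 2^{dk}$, so I reduce to $\Lambda \gtrsim 1$.

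The next key step is the gradient lower bound $\vm{\nabla_\eta\Phi}\gtrsim \Lambda$ on $\supp a$. Writing $\omega = \eta/\vm{\eta}$, for $\vm{\eta}\simeq 1$ and $k\geq 1$ the Klein--Gordon part of the gradient unfolds as
\begin{equation*}
\pm(t-s)\frac{2^{2k}\eta}{\jb{2^k\eta}} + 2^k(x-y) = 2^k\big(\pm(t-s)\omega + (x-y)\big) + O(2^{-k}\vm{t-s}),
\end{equation*}
whose main term has minimum magnitude $\Lambda$ over the unit sphere, attained at $\omega_\ast = \mp(x-y)/\vm{x-y}$. The $O(2^{-k}\vm{t-s})$ correction is precisely what the hypothesis $\Lambda \gg 2^{-k}\vm{t-s}$ is designed to absorb. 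Derivatives of the phase correction $\psi_\pm^k$ are controlled by Proposition~\ref{phasederProp}: combining \eqref{phaseder}--\eqref{phaseder2} gives $\vm{\nabla_\eta[\psi_\pm^k(s,y,2^k\eta) - \psi_\pm^k(t,x,2^k\eta)]}\ls \epsilon(1+2^kT)^{\delta/2}$ with $T = \vm{t-s}+\vm{x-y}$, which is negligible against $\Lambda$ when $\epsilon$ is small and $\delta$ is tiny.

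Then I would iterate integration by parts with the adjoint operator $L^\ast f = -\nabla_\eta\cdot(\nabla_\eta\Phi\, f/(i\vm{\nabla_\eta\Phi}^2))$, converting $\int e^{i\Phi}a$ into $\int e^{i\Phi}(L^\ast)^N a$ and gaining $N$ factors of $\Lambda^{-1}$. The main obstacle is that the Hessian $\nabla_\eta^2\Phi$ has angular eigenvalues of size $\simeq 2^k\vm{t-s}$, which can far exceed $\Lambda$ when $\vm{t-s}\gg \vm{\vm{t-s}-\vm{x-y}}$, so a naive iteration produces uncontrolled ratios $\vm{\nabla^2\Phi}/\vm{\nabla\Phi}^2$. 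The resolution is a dyadic angular partition of $\supp a$ centered at $\omega_\ast$: on caps of angular size $\simeq 2^{-j}$ away from $\omega_\ast$, the tangential part of $\nabla_\eta\Phi$ has magnitude $\gtrsim 2^k\vm{t-s}\cdot 2^{-j}$, giving an enhanced local gradient bound consistent with the local second-derivative scale; near $\omega_\ast$ itself, $\nabla_\eta\Phi$ is essentially radial in $\eta$ and the relevant directional Hessian is only $\simeq 2^{-k}\vm{t-s}$. Assembling the local IBP estimates (with higher-order phase-correction derivatives bounded via Proposition~\ref{phasederProp}) and summing over dyadic scales produces the rapid decay $\jb{\Lambda}^{-N}$; multiplication by the measure factor $2^{dk}$ yields the claimed bound. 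The case $k=0$ proceeds analogously without angular decomposition, since then $\jb{2^k\eta}=\jb{\eta}$ has uniformly bounded derivatives on $\vm{\eta}\ls 1$ and \eqref{decomp0} provides the required phase-correction bounds.
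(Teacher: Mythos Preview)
Your proposal is correct and follows the same non-stationary phase strategy as the paper, but you are more explicit about a step the paper leaves implicit. The paper's proof first disposes of the cases $\vm{t-s}\not\simeq\vm{x-y}$ and the large-angle region $\angle(-\xi,x-y)>10^{-3}$ (where $\vm{\nabla\phi_k}\gtrsim T$ and Proposition~\ref{nonstationary} applies directly, yielding $\jb{2^kT}^{-N}$), then in the remaining small-angle cone proves the uniform lower bound $\vm{\nabla\phi_k}\geq\tfrac14\lambda$ via a short contradiction argument and simply asserts the decay $\jb{2^k\lambda}^{-N}$. You correctly point out that this last assertion does not follow from Proposition~\ref{nonstationary} as stated, because the angular eigenvalues of the Hessian are of size $\simeq 2^k\vm{t-s}$ and can dominate the gradient; your dyadic angular decomposition around $\omega_\ast$---radial integration by parts on an inner cap of radius $\simeq(\lambda/\vm{t-s})^{1/2}$ (where the $\omega_{xy}$-directional second derivative drops to $O(2^{-k}\vm{t-s}+\lambda)$), tangential integration by parts on the outer annuli---is exactly the standard mechanism that fills this gap. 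The paper's single cutoff at angle $10^{-2}$ is in effect the first shell of your decomposition; the rest is treated as routine and omitted. So the two arguments coincide up to your writing out a step the paper compresses into one sentence.
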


\begin{proof} We first assume $ k\geq 1 $. Suppose without loss of generality that $ t-s \geq 0 $, and $ \pm=+ $. Denoting $ \lmd=\vm{ \vm{t-s}-\vm{x-y}} $ it suffices to consider $ 2^k \lmd \gg 1 $.  By a change of variables we write 
$$ K_k^a= 2^{dk} \int_{\vm{\xi} \simeq 1} e^{i 2^k (\phi_k+\varphi_k)(t,x,s,y,\xi)} a(\xi) \dd \xi $$
where  
$$ \phi_k(t,x,s,y,\xi)=(t-s) \jb{\xi}_k+(x-y) \cdot \xi $$
$$ \varphi_k(t,x,s,y,\xi)=-(\psi_{\pm}^k(t,x,2^k \xi)-\psi_{\pm}^k(s,y,2^k \xi))/2^k.   $$
By Prop \ref{phasederProp} and noting that $ T=\vm{t-s}+\vm{x-y} \ls 2^{2k} \lmd $ we have
$$ \vm{ \nabla  \varphi_k} \ls \ep (2^k T)^{3\delta}/2^k \ls \ep \lmd $$
Furthermore,
$$ \nabla \phi_k=(t-s) \frac{\xi}{\jb{\xi}_k}+(x-y) $$
If $ \vm{x-y} \geq 2 \vm{t-s} $ or $ \vm{t-s} \geq 2 \vm{x-y}  $, by non-stationary phase, we easily estimate $ \vm{ K_k^a}\ls 2^{dk} \jb{2^k T}^{-N} $. 

Now we assume $ \vm{t-s} \simeq \vm{x-y}  \gg 2^{-k} $. On the region $ \angle(-\xi,x-y)> 10^{-3} $ we have $ \vm{  \nabla \phi_k} \gtrsim \vm{t-s} $, thus by a smooth cutoff and non-stationary phase, that component of the integral is $ \ls 2^{dk} \jb{2^k T}^{-N}  $. Now we can assume $ a(\xi) $ is supported on the region $  \angle(-\xi,x-y) \leq 10^{-2} $. 

If $ \vm{  \nabla \phi_k}\geq 1/4 \lmd $ on that region, we get the bound $ 2^{dk} \jb{2^k \lmd }^{-N} $. We claim this is always the case. Suppose the contrary, that there exists $ \xi $ such that  $ \vm{  \nabla \phi_k} \leq 1/4 \lmd $. Then $ (t-s) \frac{\vm{\xi'}}{\jb{\xi}_k} \leq 1/4 \lmd $ writing in coordinates $ \xi=(\xi_1,\xi') $ where $ \xi_1 $ is in the direction $ x-y $ while $ \xi' $ is orthogonal to it.
$$ \nabla \phi_k \cdot \frac{x-y}{\vm{x-y}}=(t-s)\frac{\xi_1}{\jb{\xi}_k} + \vm{x-y}=\pm \lmd+ (t-s)(1+\frac{\xi_1}{\jb{\xi}_k} ) $$ 
Thus $ \xi_1 \leq 0 $ and using that $ \lmd \gg 2^{-2k} (t-s) $ we have
$$ (t-s)(1+\frac{\xi_1}{\jb{\xi}_k})=\frac{t-s}{\jb{\xi}_k}\frac{2^{-2k}+\vm{\xi'}^2}{\jb{\xi}_k+\vm{\xi_1}}<2^{-2k}(t-s)+ \frac{1}{4} \lmd \leq \frac{1}{2} \lmd $$
which implies $ \vm{  \nabla \phi_k} \geq 1/2 \lmd $, a contradiction. This concludes the case $ k \geq 1 $.
 
When $ k=0$ we have $  \vm{x-y} \gg \vm{t-s} $. For the corresponding phase we have $ \vm{\nabla \phi_0} \geq \frac{1}{2} \vm{x-y} $ and thus we get the factor $ \jb{x-y}^{-N} $. 
\end{proof}
\

\subsection{\bf Dispersive estimates} \ 

\

Dispersive estimates for the Klein-Gordon equation are treated in places like \cite[Section 2.5]{NaSch}, \cite{BH1}. The situation here is slightly complicated by the presence of the $ e^{\pm i\psi} $ transformations. To account for this we use Prop. \ref{phasederProp}.
Let 
$$ K^k \defeq \int e^{- i \psi_{\pm}^k (t/2^k,x/2^k,2^k\xi)+i\psi_{\pm}^k(s/2^k,y/2^k,2^k\xi) } e^{\pm i t(t-s) \jb{\xi}_k} e^{i (x-y) \xi} a(\xi)   \dd \xi $$ 
where $ a(\xi) $ is a bump function supported on $ \{ \vm{\xi} \simeq 1 \} $ for $ k \geq 1 $ and on $ \{ \jb{\xi} \simeq 1 \} $ for $ k=0 $.

\begin{proposition} 
For any $ k \geq 0 $ one has the inequalities
\begin{numcases}{\vm{K^k(t,x;s,y)}  \ls }
    \frac{1}{\jb{t-s}^{\frac{d-1}{2}}}  \label{dispestt1}  \\
   \frac{2^k}{\jb{t-s}^{d/2}}  \label{dispestt12}
\end{numcases}  
  
\end{proposition}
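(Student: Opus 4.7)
For $|t-s|\leq 1$ both bounds follow trivially from $|K^k|\ls \int |a(\xi)|\,\ud\xi \ls 1$, so I may assume $|t-s|\gg 1$. Writing the full phase as $\Phi(\xi) = \Phi_0(\xi)+\Psi(\xi)$ with
$$\Phi_0(\xi)=\pm(t-s)\jb{\xi}_k + (x-y)\cdot\xi, \qquad \Psi(\xi) = -\psi_{\pm}^k(t/2^k, x/2^k, 2^k\xi) + \psi_{\pm}^k(s/2^k, y/2^k, 2^k\xi),$$
I note that $\Psi$ is real (so $|e^{i\Psi}|=1$), and a rescaling of Proposition~\ref{phasederProp} yields $|\pt_\xi^\alpha \Psi(\xi)|\ls \ep (1+|t-s|+|x-y|)^{C|\alpha|\delta}$ for $|\alpha|\geq 1$, a tiny power of $T:=|t-s|+|x-y|$.

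My plan is to split into regions based on the size of $|x-y|$ relative to $|t-s|$. When $|x-y|\geq 2|t-s|$, since $|\xi|/\jb{\xi}_k\leq 1$ on the support of $a$, one has $|\nabla_\xi\Phi_0|\geq |x-y|-|t-s|\gtrsim T$. By the slow growth of $\pt^\alpha\Psi$, also $|\nabla_\xi\Phi|\gtrsim T$ while higher derivatives of $\Phi/T$ remain bounded; repeated integration by parts (non-stationary phase) then yields $|K^k|\ls_N T^{-N}$, which beats both claimed bounds. In the remaining region $|x-y|\ls |t-s|$, I would apply the stationary phase method of Proposition~\ref{stationary} to the full phase $\Phi$ with amplitude $a(\xi)e^{i\Psi(\xi)}$. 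A direct computation shows that $\nabla_\xi^2 \Phi_0$ at the stationary point $\xi_*$ has angular eigenvalues $(t-s)/\jb{\xi_*}_k\simeq (t-s)$ (with multiplicity $d-1$) and a radial eigenvalue $(t-s)2^{-2k}/\jb{\xi_*}_k^3\simeq 2^{-2k}(t-s)$; the Klein--Gordon character of the equation enters precisely through this small radial curvature. Proposition~\ref{phasederProp} ensures $\nabla_\xi^2\Psi$ is negligible, so $\nabla_\xi^2\Phi\approx \nabla_\xi^2\Phi_0$.

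Full $d$-dimensional stationary phase then gives $|K^k|\ls |\det\nabla_\xi^2\Phi_0|^{-1/2}\simeq (2^{-2k}(t-s)^d)^{-1/2}=2^k (t-s)^{-d/2}$, establishing \eqref{dispestt12}. For the wave-like bound \eqref{dispestt1}, I would instead integrate trivially in the radial direction (whose support has measure $O(1)$) and apply $(d-1)$-dimensional stationary phase only in the angular variables, whose Hessian block has determinant $\simeq (t-s)^{d-1}$, yielding the decay $(t-s)^{-(d-1)/2}$. The main obstacle will be justifying stationary phase in the presence of the unbounded symbol $e^{i\Psi}$: the $T^{C|\alpha|\delta}$ growth of $\pt_\xi^\alpha\Psi$ enters the stationary phase remainder (and the $\lambda$-derivative bound \eqref{st-phase:est}) through $\sup|D^{2N}(ae^{i\Psi})|\ls T^{CN\delta}$. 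Since these grow slower than any polynomial in $T$, choosing $\delta$ small compared to the stationary phase gain per integration by parts keeps them subleading, so that the leading-order $T^{-d/2}$ and $T^{-(d-1)/2}$ decays survive.
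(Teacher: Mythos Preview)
Your plan follows the same strategy as the paper—non-stationary phase away from $|t-s|\simeq |x-y|$, then stationary phase in the angular variables for \eqref{dispestt1}, with Proposition~\ref{phasederProp} controlling the $\Psi$-derivatives—and your treatment of \eqref{dispestt1} is essentially identical to the paper's Step~1. For \eqref{dispestt12}, however, a direct $d$-dimensional stationary phase is delicate: the Hessian of $\jb{\xi}_k$ is anisotropic, with radial eigenvalue $\simeq 2^{-2k}$ versus angular eigenvalues $\simeq 1$, so the hypothesis in Proposition~\ref{stationary} that $|x-x_0|/|f'(x)|$ be uniformly bounded fails as $k\to\infty$, and the remainder is not obviously controlled uniformly in $k$. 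The paper sidesteps this by working in polar coordinates: it first bounds the angular integral $\Omega(\xi_1)$ and its $\xi_1$-derivatives by $|t-s|^{-(d-1)/2}$ via $(d-1)$-dimensional stationary phase with nondegenerate Hessian, and then applies one-dimensional stationary phase in $\xi_1$ with effective large parameter $2^{-2k}|t-s|$. This parameter is only large when $|t-s|\gg 2^{2k}$; in the complementary regime $|t-s|\ls 2^{2k}$ the paper simply observes that \eqref{dispestt12} already follows from \eqref{dispestt1}. You should also note that for $k\geq 1$ the case $|x-y|\ll |t-s|$ is non-stationary as well (the critical point of $\Phi_0$ then lies outside $\{|\xi|\simeq 1\}$), which the paper covers by invoking Proposition~\ref{stat_phase_a}.
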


\begin{proof}
\pfstep{Step~1} We first prove \eqref{dispestt1} for $ k \geq 1 $. We assume $ \vm{t-s} \simeq \vm{x-y} \gg 1 $ and that $ a(\xi) $ is supported on the region $  \angle(\mp \xi,x-y) \leq 10^{-2} $, since in the other cases the phase is non-stationary and we obtain the bound $ \jb{t-s}^{-N} $ from the proof of Prop. \ref{stat_phase_a}. We denote
$$ \varphi(t,x,s,y,\xi)=- \psi_{\pm}^k (t/2^k,x/2^k,2^k\xi)+\psi_{\pm}^k(s/2^k,y/2^k,2^k\xi)   $$ 
and write
$$ (x-y) \cdot \xi \pm \vm{x-y} \vm{\xi}=\pm 2 \vm{x-y} \vm{\xi} \sin^2(\tht/2) $$
where $ \tht=\angle(\mp \xi,x-y) $. We write $ \xi=(\xi_1,\xi') $ in polar coordinates, where $ \xi_1=\vm{\xi} $ is the radial component. Then
\be \qquad \quad \ K^k=\int_{\xi_1 \simeq 1} \xi_1^3e^{\pm i (t-s) \jb{\xi_1}_k \mp i \vm{x-y} \xi_1 } \Omega(\xi_1)  \dd \xi_1 \label{int_pol} \ee
$$ \text{where} \quad \Omega(\xi_1)= \int  e^{\pm i \vm{x-y} 2 \xi_1 \sin^2(\tht/2)} a(\xi_1,\xi') e^{i \varphi} \dd S(\xi')
$$
For each $ \xi_1 $ we bound 
\be \label{stph:omg1} \vm{\Omega(\xi_1)} \ls \vm{x-y}^{-\frac{d-1}{2}} \ee
as a stationary-phase estimate (see Prop. \ref{stationary}). When derivatives fall on $ e^{i \varphi} $ we get factors of $ \ep \vm{x-y}^{\delta} $ by \eqref{phaseder}; however, these are compensated by the extra factors $ \vm{x-y}^{-1} $ from the expansion \eqref{expansion}. Integrating in $ \xi_1 $ we obtain \eqref{dispestt1}.

Furthermore, using \eqref{st-phase:est} we obtain
\be \label{stph:omg2} 
\vm{\pt_{\xi_1}^j \Omega(\xi_1)} \ls \vm{x-y}^{-\frac{d-1}{2}} \lng 2^{-2k} \vm{x-y}^{4 \delta} \rng  \qquad j=1,2.
\ee
The term $ \lng 2^{-2k} \vm{x-y}^{4 \delta} \rng $ occurs by \eqref{phaseder2} when $ \pt_{\xi_1} $ derivatives  fall on $ e^{i \varphi} $. 
\pfstep{Step~2} Now we prove \eqref{dispestt12}.

First we consider $ k = 0 $ and $ \vm{t-s} \gg 1 $. When $ \vm{t-s} \leq c \vm{x-y} $ the phase is non-stationary and we obtain $ \jb{t-s}^{-N} $. Otherwise, we consider the phase $ \jb{\xi}+\frac{x-y}{\vm{t-s}}\cdot \xi $ and get the bound $ \jb{t-s}^{-d/2} $ as a stationary-phase estimate using Prop. \ref{phasederProp}.

Now we take $ k \geq 1 $ under the assumptions from Step 1. We may also assume $ \vm{t-s} \gg 2^{2k} $ (otherwise \eqref{dispestt12} follows from \eqref{dispestt1}). 

In \eqref{int_pol} we have the phase $ 2^{-2k} \vm{t-s} f(\xi_1) $ where 
$$ f(\xi_1)=2^{2k} \Big( \jb{\xi_1}_k- \frac{\vm{x-y}}{\vm{t-s}} \xi_1 \Big), \quad f'(\xi_1)=2^{2k} \Big( \frac{\xi_1}{\jb{\xi_1}_k}- \frac{\vm{x-y}}{\vm{t-s}} \Big), \quad \vm{f''(\xi_1)} \simeq 1,    
$$
and $ \vm{f^{(m)}(\xi_1)} \ls 1 $ for $ m \geq 3 $. Using stationary phase in $ \xi_1 $ (Prop. \ref{stationary}/\ref{nonstationary}) one has 
$$ \vm{K^k} \ls \frac{1}{\vm{2^{-2k} \vm{t-s}}^{\frac{1}{2}}} \sup \vm{\Omega} + \frac{1}{2^{-2k}\vm{t-s}} \sup_{j \leq 2} \vm{\pt_{\xi^1}^j \Omega},
$$
which, together with \eqref{stph:omg1}, \eqref{stph:omg2}, implies \eqref{dispestt12}.
\end{proof}

Now we consider more localized estimates. 

Let $ \calC $ be a box of size $\simeq 2^{k'} \times ( 2^{k' +l'} )^{d-1} $ located in an annulus $ \{ \jb{\xi} \simeq 2^k \} $ for $ k \geq 0 $.
Suppose $ a_{\calC} $ is a bump function adapted to $ \calC $ and define 
\be K^{k',l'}(t,x;s,y) \defeq \int e^{- i \psi_{\pm}^k (t,x,\xi)+i\psi_{\pm}^k(s,y,\xi) } e^{\pm i t(t-s) \jb{\xi}} e^{i (x-y) \xi} a_{\calC}(\xi)   \dd \xi.   \label{oscInteg} \ee

\begin{proposition} 
Let $ k \geq 0, \ k' \leq k   $ and $ -k \leq l' \leq 0  $. Then, we have
\be \label{dispestt2} 
\vm{K^{k',l'}(t,x;s,y) }  \ls 2^{dk'+(d-1)l'} \frac{1}{\jb{2^{2(k'+l')-k} (t-s)}^{\frac{d-1}{2}}}
\ee
\end{proposition}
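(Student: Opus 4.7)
The strategy is to apply stationary phase in the $(d-1)$ tangential directions of the box $\calC$, following the template of the proof of \eqref{dispestt1} but adapted to the anisotropic localization. Write $\mu \defeq 2^{2(k'+l')-k}|t-s|$. When $\mu \ls 1$, the trivial volume bound $|K^{k',l'}| \leq \vn{a_{\calC}}_{L^\infty} \cdot |\calC| \ls 2^{dk'+(d-1)l'}$ already gives \eqref{dispestt2}, so we may assume $\mu \gg 1$ in what follows.

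Next, I would adopt coordinates adapted to $\calC$. Let $\xi_0$ denote the center of $\calC$ and $\omega_0 = \xi_0/|\xi_0|$. Parameterize $\xi = (\xi_1, \xi')$, where $\xi_1 \in \mathbb{R}$ is the coordinate along $\omega_0$ and $\xi' \in \omega_0^\perp$ is tangential, so that $a_{\calC}$ is supported in a box of sizes $2^{k'}$ in $\xi_1$ and $2^{k'+l'}$ in each of the $d-1$ directions of $\xi'$. Rescale via $\xi_1 = (\xi_0)_1 + 2^{k'}\tilde\xi_1$ and $\xi' = 2^{k'+l'}\tilde\xi'$, which introduces the Jacobian factor $2^{dk' + (d-1)l'}$ and turns $a_{\calC}$ into a standard bump on the unit cube. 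The transported phase $\Phi \defeq \pm(t-s)\jb{\xi(\tilde\xi)} + (x-y)\cdot\xi(\tilde\xi) - \psi_\pm^k(t,x,\xi(\tilde\xi)) + \psi_\pm^k(s,y,\xi(\tilde\xi))$ has tangential Hessian of size $(t-s) \cdot \jb{\xi_0}^{-1} \cdot (2^{k'+l'})^2 \simeq \mu$, which sets the large parameter for stationary phase in $\tilde\xi'$.

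I would then split into cases by the size of $\nabla_{\tilde\xi'} \Phi$. On the region where $|\nabla_{\tilde\xi'}\Phi|$ is comparable to its expected scale $\mu^{1/2}$, apply Proposition \ref{stationary} in the $(d-1)$ tangential directions, yielding a factor $\mu^{-(d-1)/2}$. Off the stationary set (where the gradient is large) integration by parts via Proposition \ref{nonstationary} provides rapid decay. The $\tilde\xi_1$-integral is handled trivially by the support-length bound, since the claimed estimate has no radial gain beyond the $2^{k'}$ volume factor. Combining these yields
\[
|K^{k',l'}(t,x;s,y)| \ls 2^{dk'+(d-1)l'} \cdot \mu^{-(d-1)/2},
\]
which together with the $\mu \ls 1$ case gives \eqref{dispestt2}.

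The main technical obstacle is controlling the contribution of the phase correction $\varphi \defeq -\psi_\pm^k(t,x,\cdot) + \psi_\pm^k(s,y,\cdot)$ inside the stationary-phase analysis. By Proposition \ref{phasederProp}, after rescaling $\xi = 2^k\eta$, tangential derivatives of $\varphi$ pick up factors $\ep(1+2^kT)^{(|\alpha|-1/2)\delta}$ with $T = |t-s|+|x-y|$. Exactly as in Step~1 of the proof of \eqref{dispestt1}, whenever such a factor arises in the asymptotic expansion \eqref{expansion} it is paired with an extra $\mu^{-1}$ (since $T\ls 2^{2k}(t-s)$ in the stationary regime, so $(2^kT)^{C\delta}$ is negligible against $\mu$ once $\delta$ and $\ep$ are taken small). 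The bound \eqref{st-phase:est} from Proposition \ref{stationary} is used to absorb these factors uniformly. The $k=0$ case is handled analogously, replacing the angular stationary phase argument by the one used at the end of the proof of \eqref{dispestt12}, again with the $\psi^0$-corrections absorbed by \eqref{phasederProp}.
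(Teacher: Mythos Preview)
Your approach is essentially the same as the paper's: both do stationary phase in the $d-1$ angular/tangential directions and integrate trivially in the remaining radial direction. The paper rescales to $|\xi|\simeq 1$ and writes the integral in polar coordinates, piggybacking on the computation of $\Omega(\xi_1)$ from the proof of \eqref{dispestt1}; you instead rescale directly to the box $\calC$ in Cartesian coordinates $(\tilde\xi_1,\tilde\xi')$. The two are equivalent --- indeed $(2^{k'+l'-k})^{d-1}\mu^{-(d-1)/2}=(2^k|t-s|)^{-(d-1)/2}$, so your tangential stationary-phase gain matches the paper's angular one exactly.

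There is one small gap. The paper first reduces to $|t-s|\simeq|x-y|$ by invoking the non-stationary radial bound (the mechanism behind Proposition~\ref{stat_phase_a}); this is what makes $T\simeq|t-s|$ and hence lets Proposition~\ref{phasederProp} control the $e^{i\varphi}$ factors. Your assertion ``$T\ls 2^{2k}(t-s)$ in the stationary regime'' is not justified by tangential stationarity alone: if $(x-y)'\approx 0$ but $|(x-y)_{\omega_0}|\gg|t-s|$, the tangential gradient can vanish while $T\simeq|x-y|$ is arbitrarily large, and then $(2^kT)^{C\delta}$ is not controlled by $\mu$. The fix is immediate --- either invoke the $|t-s|\simeq|x-y|$ reduction first as the paper does, or note that in this regime the $\tilde\xi_1$-gradient is $\gtrsim 2^{k'}|x-y|$ and integrate by parts radially. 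Once this is added, your argument is correct.
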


\begin{proof}
We assume $ 2^{2(k'+l')-k} \vm{t-s} \gg 1 $ (otherwise we bound the integrand by absolute values on $ \calC $) and assume $ \vm{t-s} \simeq \vm{x-y} $ (otherwise the phase is non-stationary). Let $ k \geq 1 $. By a change of variable we rescale to $ \vm{\xi} \simeq 1 $ and write $ K^{k',l'} $ as $ 2^{dk} \times $  \eqref{int_pol}- applied to $ 2^k (t,x;s,y) $, with $ a(\cdot) $ supported on a box $ 2^{k'-k} \times ( 2^{k' +l'-k} )^{d-1} $. Like before, for each $ \xi_1 $ we bound the inner integral $ \Omega(\xi_1) $ by $ (2^k \vm{t-s})^{-\frac{d-1}{2}} $ by stationary-phase. Integrating in $ \xi_1 $ on a radius of size $ 2^{k'-k} $ we get $ 2^{dk} 2^{k'-k} (2^k \vm{t-s})^{-\frac{d-1}{2}} $ which gives \eqref{dispestt2}. When $ k=0, \ l'=O(1) $ the estimate is straightforward. 
\end{proof}


\begin{corollary} \label{Cor:L2Linf}  Let $ k \geq 0, \ k' \leq k   $ and $ -k \leq l' \leq 0  $. Then
\be  e^{-i \psi_{\pm}^k} (t,x,D) e^{\pm i t \jb{D}} P_{C_{k'}(l')} :L^2_x \to 2^{\frac{k}{2}+\frac{d-2}{2}k'+\frac{d-3}{2}l' }L^2 L^{\infty}	\label{TT*:L2Linf}
\ee
\end{corollary}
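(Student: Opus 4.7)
The plan is to reduce the stated mapping bound to the pointwise dispersive estimate \eqref{dispestt2} via a standard $TT^{\ast}$ argument. Write $T \defeq e^{-i \psi_{\pm}^k}(t,x,D) \, e^{\pm i t \jb{D}} P_{\calC_{k'}(l')}$ (where $\calC = \calC_{k'}(l')$). Boundedness of $T : L^2_x \to L^2 L^{\infty}$ with operator norm $N$ is equivalent, by duality, to boundedness of $T T^{\ast} : L^2_t L^1_x \to L^2_t L^{\infty}_x$ with norm $N^2$. Since $a(t,x,D)^{\ast} = \bar{a}(D,y,s)$, a direct computation identifies the kernel of $T T^{\ast}$ as exactly the oscillatory integral \eqref{oscInteg} with the bump function $a_{\calC}$ replaced by $|a_{\calC}|^2$ (which is still a bump function adapted to $\calC$), so \eqref{dispestt2} applies verbatim.

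Next, I would bound $TT^{\ast}$ by Minkowski's / Young's inequality in time. Setting
\[
	g(t-s) \defeq \sup_{x,y} \, \vm{K^{k',l'}(t,x;s,y)} \ls \frac{2^{dk'+(d-1)l'}}{\jb{2^{2(k'+l')-k}(t-s)}^{(d-1)/2}},
\]
one has $\vm{TT^{\ast} f(t,x)} \leq (g \ast \vn{f(\cdot,\cdot)}_{L^1_x})(t)$, so
\[
	\vn{T T^{\ast}}_{L^2_t L^1_x \to L^2_t L^{\infty}_x} \leq \vn{g}_{L^1_t}.
\]
Since $d \geq 4$ gives $(d-1)/2 \geq 3/2 > 1$, the change of variable $u = 2^{2(k'+l')-k}(t-s)$ yields
\[
	\vn{g}_{L^1_t} \ls 2^{dk' + (d-1)l'} \cdot 2^{k - 2(k'+l')} \int \jb{u}^{-(d-1)/2} \, du \ls 2^{k + (d-2)k' + (d-3)l'}.
\]
Taking the square root recovers the claimed norm $N \ls 2^{k/2 + \frac{d-2}{2} k' + \frac{d-3}{2} l'}$, which is \eqref{TT*:L2Linf}.

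There is essentially no genuine obstacle here: all the work is in the dispersive estimate \eqref{dispestt2}, which was proved via stationary phase in a manner that correctly tracks the behavior of $e^{\pm i \psi^k_{\pm}}$ through Proposition~\ref{phasederProp}. The only point worth double-checking is that the dimensional threshold $d \geq 4$ is the one ensuring time integrability of the kernel (which is exactly the standard Klein--Gordon / wave-Strichartz endpoint requirement for this $L^2 \to L^2 L^{\infty}$ estimate at the level of angularly-localized blocks); everything else is arithmetic of the exponents.
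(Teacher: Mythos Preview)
Your proof is correct and follows essentially the same approach as the paper: a $TT^{\ast}$ argument reducing to the kernel bound \eqref{dispestt2}, followed by Young's inequality in time using the $L^1_t$ integrability of the decay factor $\jb{2^{2(k'+l')-k}\,\cdot\,}^{-(d-1)/2}$. The paper's proof is more terse but records exactly the same computation.
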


\begin{proof}
By a $ TT^* $ argument this follows from
$$ 2^{-k-(d-2)k'-(d-3)l'} e^{- i \psi_{\pm}^k} (t,x,D)    e^{\pm i t(t-s) \jb{D}} P_{C_{k'}(l')}^2 e^{i \psi_{\pm}^k} (D,s,y) : L^2L^1 \to L^2 L^{\infty} $$
We use $ \eqref{dispestt2} $ to bound the kernel of this operator, and the mapping follows since $ 2^{2k'+2l'-k} \jb{2^{2k'+2l'-k} \vm{r}}^{-(d-1)/2} $ has $ L^1_r L^{\infty}_x $ norm $ \ls 1 $.
\end{proof}
\

\subsection{\bf The PW decay bound, $ d=4 $}\

\

Let $ \calC $ be a box of size $\simeq 2^{k'} \times ( 2^{k'-k} )^3 $ with center $ \xi_0 $ located in an annulus $ \{ \vm{\xi} \sim 2^k \} \subset \mb{R}^4 $. We consider the decay of the integral $ K^{k',-k} $ defined in \eqref{oscInteg}, in the frame \eqref{frame}, \eqref{frame2}, where $ \omega $ is the direction of $ \xi_0 $ and $ \lmd=\frac{\vm{\xi_0}}{\jb{\xi_0}} $. 

This type of bound is similar to the one used by Bejenaru and Herr \cite[Prop. 2.3]{BH1} to establish null-frame $ L^2_{t_{\omega,\lmd}} L^{\infty}_{x_{\omega,\lmd}} $- Strichartz estimates, an idea we will also follow in this paper.

\begin{proposition} When $ \vm{t_{\omega}-s_{\omega}} \gg 2^{k'-3k} \vm{t-s} $, we have
\be \label{PWdecay} 
\vm{K^{k',-k}(t,x;s,y)} \ls 2^{4k'-3k} \frac{1}{\jb{2^{k'} (t_{\omega}-s_{\omega})}^2}
\ee
\end{proposition}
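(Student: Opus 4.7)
The plan is to run a standard non-stationary phase integration by parts in the radial direction $\partial_\rho := \omega\cdot\nabla_\xi$, where $\omega = \xi_0/|\xi_0|$ is the direction of the center of $\calC$. Writing $\rho = \xi\cdot\omega$ for the radial coordinate and $\xi'$ for the transverse part, the full phase in \eqref{oscInteg} is
$$\Phi(t,x,s,y,\xi) = \pm(t-s)\jb{\xi} + (x-y)\cdot\xi + \bigl(\psi^k_\pm(s,y,\xi) - \psi^k_\pm(t,x,\xi)\bigr).$$
A direct computation gives $\partial_\rho(\pm(t-s)\jb{\xi}+(x-y)\cdot\xi) = \pm(t-s)\rho/\jb{\xi} + (x-y)\cdot\omega$, which at $\xi=\xi_0$ equals $\pm(t-s)\lambda + (x-y)\cdot\omega = \sqrt{1+\lambda^2}\,(t_\omega - s_\omega)$, precisely the quantity we wish to extract.

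The first main step is to verify $|\partial_\rho\Phi|\gtrsim |t_\omega-s_\omega|$ uniformly on $\calC$ under the hypothesis. Using $\partial_\rho(\rho/\jb{\xi}) = (1+|\xi'|^2)/\jb{\xi}^3 \sim 2^{-3k}$ at $\xi_0$, a Taylor expansion shows that $|\rho/\jb{\xi}-\lambda| \ls 2^{k'-3k}$ on $\calC$ (the transverse variation contributes $O(2^{2k'-4k})$, which is subdominant for $k'\le k$). Hence
$$\bigl|\partial_\rho\bigl(\pm(t-s)\jb{\xi} + (x-y)\cdot\xi\bigr) - \sqrt{1+\lambda^2}\,(t_\omega-s_\omega)\bigr| \ls 2^{k'-3k}|t-s|.$$
The $\psi$-contribution $\partial_\rho(\psi^k_\pm(s,y,\xi)-\psi^k_\pm(t,x,\xi))$ is controlled by \eqref{phaseder2} in Proposition~\ref{phasederProp}, which after undoing the $\xi\mapsto 2^k\xi$ rescaling gives the bound $\ls \varepsilon\, 2^{-3k}(1+2^kT)^{3\delta/2}$ with $T=|t-s|+|x-y|$. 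WLOG $|t_\omega-s_\omega|\gg 2^{-k'}$ (otherwise \eqref{PWdecay} follows trivially from $|K^{k',-k}|\ls|\calC| = 2^{4k'-3k}$), and WLOG $T\sim|t-s|$ (in the opposite regime, non-stationary phase in the transverse directions yields rapid decay in $T$, dominating \eqref{PWdecay}). Under the hypothesis $|t_\omega-s_\omega|\gg 2^{k'-3k}|t-s|$, both the Taylor error and the $\psi$-contribution are strictly subdominant to $|t_\omega-s_\omega|$, yielding the desired lower bound on $|\partial_\rho\Phi|$.

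The second step is to check that higher radial derivatives of $\Phi$ are mild enough for Leibniz: $|\partial_\rho^2(\pm(t-s)\jb{\xi})| \ls |t-s|\cdot 2^{-3k} = 2^{-k'}\cdot(2^{k'-3k}|t-s|) \ls 2^{-k'}|t_\omega-s_\omega|$ exactly by the hypothesis, and higher $\xi$-derivatives of $\Phi_\psi$ are handled by the remaining cases of \eqref{phaseder2}. Consequently $|\partial_\rho^n(1/\partial_\rho\Phi)| \ls 2^{-(n-1)k'}/|t_\omega-s_\omega|^n$, while $a_\calC$ obeys $|\partial_\rho^n a_\calC|\ls 2^{-nk'}$ on its radial support of length $\sim 2^{k'}$. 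Applying the operator $\calL = (i\partial_\rho\Phi)^{-1}\partial_\rho$ twice and integrating by parts, each application produces a factor $\ls (2^{k'}|t_\omega-s_\omega|)^{-1}$; bounding the residual integrand trivially over $|\calC|\sim 2^{4k'-3k}$ yields $|K^{k',-k}|\ls 2^{4k'-3k}/(2^{k'}|t_\omega-s_\omega|)^2$, and combining with the trivial bound in the complementary regime $2^{k'}|t_\omega-s_\omega|\ls 1$ gives \eqref{PWdecay}.

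The main technical obstacle is the careful bookkeeping showing that the $\psi$-contribution and the Taylor remainder from $\rho/\jb{\xi}$ fit strictly inside the hypothesis $|t_\omega-s_\omega|\gg 2^{k'-3k}|t-s|$; this relies on the precise exponent $2^{-3k}\sim\jb{\xi_0}^{-3}$ appearing both from $\partial_\rho(\rho/\jb{\xi})|_{\xi_0}$ and from the rescaled bound \eqref{phaseder2}, which is why the hypothesis is tight and not improvable by naive means.
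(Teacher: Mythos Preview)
Your approach is essentially the same as the paper's: both run non-stationary phase in the fixed direction $\omega = \xi_0/|\xi_0|$, using that $\omega\cdot\nabla_\xi\phi = \sqrt{1+\lambda^2}\,(t_\omega-s_\omega) + O(2^{k'-3k}|t-s|)$ on $\calC$, which is $\gtrsim |t_\omega-s_\omega|$ under the hypothesis. The difference lies in how the $\psi$-contribution is handled. The paper treats $e^{i\varphi}$ as amplitude, accepts a crude loss of $2^{k'-k}(2^kT)^\delta$ each time a derivative lands on it, and then \emph{compensates} this loss by an additional angular stationary-phase step (as in \eqref{dispestt2}) yielding an extra $(2^{2k'-3k}T)^{-3/2}$. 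You instead absorb $\varphi$ into the phase and argue its $\partial_\rho$-derivatives are subdominant from the outset via \eqref{phaseder2}, thereby avoiding the compensation step. This is a legitimate shortcut, but one point needs tightening.

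Your invocation of \eqref{phaseder2} for $\partial_\rho\varphi$ is not quite right as stated: $\partial_\rho = \omega\cdot\nabla_\xi$ with $\omega$ \emph{fixed}, whereas \eqref{phaseder2} bounds the polar radial derivative $\partial_{|\xi|}$. On $\calC$ these differ by an angular correction of size $\angle(\xi,\omega)\sim 2^{k'-2k}$, so after rescaling one picks up an extra term $\varepsilon\,2^{k'-3k}(1+2^kT)^{\delta/2}$ coming from \eqref{phaseder}. Fortunately this term is also dominated by $|t_\omega-s_\omega|$: using $|t_\omega-s_\omega|\gg\max(2^{-k'},\,2^{k'-3k}T)$ and $T\gg 2^{-k'}$, one checks $\varepsilon(2^kT)^{\delta/2}\ll \max(2^{3k-2k'}, T)$ in both regimes $T\lessgtr 2^{3k-2k'}$. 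The same check goes through for the higher derivatives $\partial_\rho^n\varphi$. So your argument is correct once this angular piece is accounted for; you just cannot cite \eqref{phaseder2} alone.
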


\begin{proof} Denoting $ T=\vm{t-s}+\vm{x-y} $, we clearly have $ \vm{t_{\omega}-s_{\omega}} \leq T $. In the cases when $ \vm{t-s} \geq 2\vm{x-y} $ or $\vm{x-y} \geq 2 \vm{t-s} $ from integrating by parts radially we obtain the decay  $ \jb{2^{k'}T}^{-N} 2^{4k'-3k} $.
Now suppose $  \vm{t-s} \simeq \vm{x-y} $, $ \pm=+ $ and let
$$ \phi(\xi)=(t-s) \jb{\xi}+(x-y) \cdot \xi, \qquad \nabla \phi=(t-s) \frac{\xi}{\jb{\xi}}+x-y.  $$
For $ \xi \in \calC $ we have $ \frac{\vm{\xi}}{\jb{\xi}}=\lmd+O(2^{k'-3k}) $ and 
$$ \frac{\xi}{\vm{\xi}}=\omega+\sum_i O(2^{k'-2k}) \omega_i + O(2^{2(k'-2k)}), \qquad \omega_i \in \omega^{\perp}. $$
Therefore
$$ \omega \cdot \nabla \phi = (t_{\omega}-s_{\omega}) \sqrt{1+\lmd^2} + O(2^{k'-3k} \vm{t-s}). $$
Due to the assumption, the phase is non-stationary $ \vm{ \omega \cdot \nabla \phi} \gtrsim \vm{t_{\omega}-s_{\omega}} $ and we obtain \eqref{PWdecay} by integrating by parts with $ \partial_{\omega}=\omega \cdot \nabla $. 

When derivatives fall on $ e^{- i \psi_{\pm}^k (t,x,\xi)+\psi_{\pm}^k(s,y,\xi) } $ we get extra factors of $ 2^{k'-k} (2^k T)^{\delta} $ from Prop \ref{phasederProp}. However, we compensate this factors by writing the integral in polar coordinates similarly to \eqref{int_pol} and using stationary-phase for the inner integral like in the proof of \eqref{dispestt2}, \eqref{dispestt1}, giving an extra $ (2^{2k'-3k}T)^{-3/2} $, which suffices. \end{proof}

\begin{corollary} \label{corPW} For $ k\geq 1 $ let $ \xi_0 $  be the center of the box $ C_{k'}(-k) $, $ \lmd=\frac{\vm{\xi_0}}{\jb{\xi_0}} $  and $ \omega=\frac{\xi_0}{\vm{\xi_0}} $. Then 
$$ 2^{-\frac{3}{2}(k'-k)} e^{- i \psi_{\pm}^k} (t,x,D) e^{i t \jb{D}} P_k P_{C_{k'}(-k)} : L^2_x \to L^2_{t_{\omega,
\lmd}} L^{\infty}_{x_{\omega,\lmd}} $$ 
\end{corollary}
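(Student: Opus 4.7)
The plan is to establish the mapping via a $TT^{*}$ argument. Set $T \defeq 2^{-\frac{3}{2}(k'-k)} e^{-i \psi_{\pm}^k}(t,x,D)\, e^{i t \jb{D}} P_k P_{\calC}$ with $\calC = \calC_{k'}(-k)$. The bound $T : L^2_x \to L^2_{t_{\omega,\lmd}} L^{\infty}_{x_{\omega,\lmd}}$ is equivalent to the statement that $TT^{*}$ maps $L^{2}_{t_{\omega,\lmd}} L^{1}_{x_{\omega,\lmd}} \to L^{2}_{t_{\omega,\lmd}} L^{\infty}_{x_{\omega,\lmd}}$ with norm $\ls 1$. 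Since $\psi_{\pm}^k$ is real-valued, the kernel of $TT^{*}$ equals $2^{-3(k'-k)}$ times an oscillatory integral of the form \eqref{oscInteg} whose amplitude is $\vm{a_{\calC}}^2$ (still a bump adapted to $\calC$) and which is translation-invariant on $\mb{R}^{1+d}$. Because the change to the null frame is a linear coordinate change, the translation invariance persists in the $(t_{\omega,\lmd},x_{\omega,\lmd})$ coordinates. Applying H\"older in $x_{\omega,\lmd}$ followed by Young's inequality ($L^{1} * L^{2} \to L^{2}$) in $t_{\omega,\lmd}$ then reduces the desired bound to the scalar estimate
\[
\int \sup_{x_{\omega,\lmd}} \vm{K^{k',-k}(u, x_{\omega,\lmd})} \, \dd u \ls 2^{3(k'-k)}, \qquad u \defeq t_{\omega,\lmd}-s_{\omega,\lmd}.
\]

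For fixed $u$, I would split the hyperplane $\{t_{\omega,\lmd} - s_{\omega,\lmd} = u\}$ in $(t-s,x-y)$-space into Region A $ = \{\vm{t-s} \ls 2^{3k-k'}\vm{u}\}$ and Region B $ = \{\vm{t-s} \gtrsim 2^{3k-k'}\vm{u}\}$. On Region A, the hypothesis $\vm{u} \gg 2^{k'-3k}\vm{t-s}$ holds, and the null-frame decay bound \eqref{PWdecay} yields
\[
\vm{K^{k',-k}} \ls \frac{2^{4k'-3k}}{\jb{2^{k'} u}^{2}}.
\]
On Region B, the Klein--Gordon dispersive estimate \eqref{dispestt2} (with $d=4$, $l'=-k$) yields
\[
\vm{K^{k',-k}} \ls \frac{2^{4k'-3k}}{\jb{2^{2k'-3k}(t-s)}^{3/2}} \ls \frac{2^{4k'-3k}}{\jb{2^{k'} u}^{3/2}},
\]
the last inequality using $\vm{t-s} \gtrsim 2^{3k-k'}\vm{u}$. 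Taking the supremum produces the uniform bound $\sup_{x_{\omega,\lmd}} \vm{K^{k',-k}(u,\cdot)} \ls 2^{4k'-3k}\jb{2^{k'} u}^{-3/2}$. Integrating in $u \in \mb{R}$ contributes a factor of $2^{-k'}$, yielding the required $2^{3(k'-k)}$.

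The main obstacle is the small $\vm{u}$ regime: the null-frame decay \eqref{PWdecay} alone does not control the supremum across the full hyperplane, and one is forced to use the dispersive bound \eqref{dispestt2} on Region B. The resulting decay rate $\jb{2^{k'} u}^{-3/2}$ is precisely at the borderline of integrability on $\mb{R}$, reflecting the sharp Klein--Gordon dispersive exponent $(d-1)/2 = 3/2$ in dimension $d=4$; any weaker decay would lose the estimate by a logarithm.
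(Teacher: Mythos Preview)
Your proof is correct and follows essentially the same route as the paper: a $TT^{*}$ argument reducing to the kernel bound $2^{-3(k'-k)}\vm{K^{k',-k}} \ls 2^{k'}\jb{2^{k'}(t_{\omega}-s_{\omega})}^{-3/2}$, established by combining \eqref{PWdecay} on your Region~A and \eqref{dispestt2} with $l'=-k$ on your Region~B, together with the observation that this bound lies in $L^{1}_{t_{\omega}-s_{\omega}}L^{\infty}$.
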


\begin{proof}
By a $ TT^* $ argument this follows from the mapping
$$ 2^{-3(k'-k)}  e^{- i \psi_{\pm}^k} (t,x,D)    e^{ i t(t-s) \jb{D}} P_k^2 a_{\calC}(D) e^{i \psi_{\pm}^k} (D,s,y) :L^2_{t_{\omega,\lmd}} L^1_{x_{\omega,\lmd}} \to L^2_{t_{\omega,\lmd}} L^{\infty}_{x_{\omega,\lmd}} $$
which holds since the kernel of this operator is bounded by $ 2^{k'} \jb{2^{k'} (t_{\omega}-s_{\omega})}^{-3/2} \in L^1_{t_{\omega}-s_{\omega}} L^{\infty}  $. When $ \vm{t_{\omega}-s_{\omega}} \gg 2^{k'-3k} \vm{t-s} $ this follows from \eqref{PWdecay}, while for $ \vm{t_{\omega}-s_{\omega}} \ls 2^{k'-3k} \vm{t-s} $ it follows from \eqref{dispestt2} with $ l'=-k $.
\end{proof}
\

\subsection{\bf The null frame decay bound, $ d=4 $} \

\

Let $ \bar{\omega} \in \mb{S}^3 $ and let $ \kappa_l $ be a spherical cap of angle $ 2^l $ such that $ \angle(\kappa_l,\pm \bar{\omega}) \simeq 2^{l} $.
Let $ \lmd=\frac{1}{\sqrt{1+2^{-2p}}} $, which together with $ \bar{\omega} $ defines the frame \eqref{frame} and the coordinates in this frame
$$ t_{\bar{\omega}}=(t,x) \cdot \bar{\omega}^{\lmd}, \quad  x^1_{\bar{\omega}}=(t,x) \cdot \bar{\bar{\omega}}^{\lmd}, \quad x_{\bar{\omega},i}'=x \cdot  \bar{\omega}_i^{\perp} $$

Suppose $ a_l(\xi) $ is a smooth function adapted to $ \{ \vm{\xi} \simeq 2^k, \ \frac{\xi}{\vm{\xi}} \in \kappa_l  \}$ and  consider 
$$ K_l^{a_l}(t,x;s,y) \defeq \int e^{- i \psi_{\pm}^k (t,x,\xi)+\psi_{\pm}^k(s,y,\xi) } e^{\pm i t(t-s) \jb{\xi}} e^{i (x-y) \xi} a_l(\xi)   \dd \xi. $$

\begin{proposition}
Suppose $ \max(2^{-p},2^{-k})  \ll 2^{l} \simeq \angle(\kappa_l,\pm \bar{\omega}). $ Then, we have
\be \label{NEdecay} \vm{ K_l^{a_l}(t,x;s,y) } \ls 2^{4k+3l} \frac{1}{\jb{2^{k+2l} \vm{t-s}}^N} \frac{1}{\jb{2^{k+l} \vm{x'_{\bar{\omega}}-y'_{\bar{\omega}}}}^N} \jb{2^k \vm{t_{\bar{\omega}}-s_{\bar{\omega}}}}^{2N}  \ee
Moreover, the implicit constant depends on only $ 2N+1 $ derivatives of $ a_l $.
\end{proposition}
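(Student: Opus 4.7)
The plan is to establish \eqref{NEdecay} by combining two families of integrations by parts in $\xi$-space, adapted respectively to the null-frame transverse directions and to the frequency-radial direction. After rescaling $\xi \mapsto 2^k\xi$ so that $|\xi| \simeq 1$, the phase becomes $2^k\phi_0 + \varphi$ with main phase $\phi_0(\xi) = \pm(t-s)\jb{\xi}_k + (x-y)\cdot\xi$ (writing $\jb{\xi}_k = \sqrt{2^{-2k}+r^2}$ with $\xi = r\omega$) and perturbation $\varphi(\xi) = -\psi^k_{\pm}(t,x,2^k\xi) + \psi^k_{\pm}(s,y,2^k\xi)$. Proposition \ref{phasederProp} ensures that all $\xi$-derivatives of $\varphi$ are strictly subdominant to the corresponding derivatives of $2^k\phi_0$ (up to factors $\epsilon(2^kT)^\delta$ with $T = |t-s|+|x-y|$), so the argument effectively reduces to analyzing $2^k\phi_0$.

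For the transverse spatial decay factor $\jb{2^{k+l}|x'_{\bar\omega} - y'_{\bar\omega}|}^{-N}$, write $\omega_0 = \cos(2^l)\bar\omega + \sin(2^l)\hat e$ for the cap center (with $\hat e \in \bar\omega^\perp$) and use $d-2$ angular vector fields $\partial_{\theta_i}$ tangent to $S^{d-1}$ at $\omega_0$ chosen so that $\partial_{\theta_i}\omega \in \bar\omega^\perp$. Since $\omega\cdot\partial_{\theta_i}\omega = 0$, the time-component of $\partial_{\theta_i}\phi_0$ vanishes identically and $\partial_{\theta_i}\phi_0 = r\,\partial_{\theta_i}\omega\cdot(x-y)$, yielding $|\partial_{\theta_i}(2^k\phi_0)| \gtrsim 2^k|(x'_{\bar\omega}-y'_{\bar\omega})_i|$. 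Because $a_l$ has angular scale $2^l$, each IBP gains $(2^{k+l}|x'_{\bar\omega}-y'_{\bar\omega}|)^{-1}$, and $N$ iterations produce the stated transverse factor.

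For the decay factor $\jb{2^{k+2l}|t-s|}^{-N}$, use the radial vector field $\partial_r$. Combining the null-frame identities $\pm(t-s) = (\lambda\Delta t_{\bar\omega} + \Delta x^1_{\bar\omega})/\sqrt{1+\lambda^2}$ and $\bar\omega\cdot(x-y) = (\Delta t_{\bar\omega} - \lambda\Delta x^1_{\bar\omega})/\sqrt{1+\lambda^2}$ with the expansion of $\omega_0$ and eliminating $\Delta x^1_{\bar\omega}$ gives
\begin{equation*}
\partial_r\phi_0 = \cos(2^l)\sqrt{1+\lambda^2}\,\Delta t_{\bar\omega} + [\lambda_0 - \lambda\cos(2^l)]\cdot(\pm(t-s)) + O(2^l|x'_{\bar\omega} - y'_{\bar\omega}|),
\end{equation*}
where $\lambda_0 = r/\jb{\xi}_k$. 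The hypothesis $2^{-k}, 2^{-p} \ll 2^l$ forces $\lambda_0 - \lambda\cos(2^l) = \tfrac{1}{2}\cdot 2^{2l}(1+o(1))$, while the coefficient of $\Delta t_{\bar\omega}$ is of order one. If $2^{2l}|t-s|$ dominates both $|\Delta t_{\bar\omega}|$ and $2^l|x'_{\bar\omega}-y'_{\bar\omega}|$, then $|\partial_r(2^k\phi_0)| \gtrsim 2^{k+2l}|t-s|$ on the rescaled cap (where $r\simeq 1$ and amplitude has radial scale $1$), and $N$ radial IBPs yield the desired time-decay factor. In the complementary regime we have $2^{k+2l}|t-s| \lesssim 2^k|\Delta t_{\bar\omega}| + 2^{k+l}|x'_{\bar\omega}-y'_{\bar\omega}|$, so the growth factor $\jb{2^k|\Delta t_{\bar\omega}|}^{2N}$ combined with the transverse decay already obtained absorbs the missing $\jb{2^{k+2l}|t-s|}^{-N}$, and no radial IBP is required.

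The main obstacle is the interleaving of the $N$ angular and $N$ radial IBPs: each radial IBP generates an error of size $O(2^l|x'_{\bar\omega}-y'_{\bar\omega}|)$ from the third term in the $\partial_r\phi_0$ formula, which must either be absorbed into the leading term using the case hypothesis or immediately balanced by an additional angular IBP, all while preserving the correct exponents. A careful symbol-calculus bookkeeping (using that $a_l$ has scales $2^{-k}$ radially and $2^{-l}$ angularly, and that the phase correction $\varphi$ contributes only $\epsilon(2^kT)^{\delta}$-type losses by \eqref{phasediff}--\eqref{phaseder2}) shows the accumulated losses sum to at most $\jb{2^k|\Delta t_{\bar\omega}|}^{2N}$, exactly the growth allowed by the statement. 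The prefactor $2^{4k+3l}$ is the volume of the rescaled frequency support, and the implicit constant depends on at most $\sim 2N+1$ derivatives of $a_l$, as claimed.
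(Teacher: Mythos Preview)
Your overall strategy is sound and close in spirit to the paper's proof: both arguments identify the phase as non-stationary in well-chosen $\xi$-directions and integrate by parts, using Proposition~\ref{phasederProp} to absorb the contribution of $\psi^k_\pm$. However, there is a genuine gap in your transverse-decay step.

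You use $d-2$ angular vector fields $\partial_{\theta_i}$ with $\partial_{\theta_i}\omega \in \bar\omega^\perp \cap \omega_0^\perp$. This intersection is only $(d-2)$-dimensional, and it is \emph{orthogonal to} $\hat e$ (since $\hat e\cdot\omega_0 = \sin(2^l) \neq 0$). Consequently your angular integrations by parts only see the components of $x'_{\bar\omega}-y'_{\bar\omega}$ orthogonal to $\hat e$, and give no decay whatsoever when $x'_{\bar\omega}-y'_{\bar\omega}$ lies along $\hat e$. In the regime $\vm{\Delta t_{\bar\omega}}=0$, $x'_{\bar\omega}-y'_{\bar\omega} \parallel \hat e$, and $2^l\vm{t-s} \ll \vm{x'_{\bar\omega}-y'_{\bar\omega}}$, your angular step yields nothing, and your ``complementary regime'' paragraph claims no radial IBP is needed---but then you are left with only the trivial bound $2^{4k+3l}$, which is far too weak. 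The fix is easy: in this regime the $O(2^l\vm{x'_{\bar\omega}-y'_{\bar\omega}})$ term in your formula for $\partial_r\phi_0$ is actually dominant, so radial IBP \emph{does} apply and each step gains $(2^{k+l}\vm{x'_{\bar\omega}-y'_{\bar\omega}})^{-1}$; $2N$ iterations then give $\jb{2^{k+l}\vm{x'_{\bar\omega}-y'_{\bar\omega}}}^{-2N}$, which covers both decay factors since $2^{k+2l}\vm{t-s} \ll 2^{k+l}\vm{x'_{\bar\omega}-y'_{\bar\omega}}$.

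By contrast, the paper sidesteps this asymmetry entirely by working with the Cartesian directions $\bar\omega$ and $\bar\omega_i^\perp$ (all $d-1$ of them) rather than radial/angular ones. Differentiating in $\bar\omega_i^\perp$ gives $\nabla\phi\cdot\bar\omega_i^\perp = (t-s)O(2^l) + (x'_{\bar\omega}-y'_{\bar\omega})_i$, so a single direction suffices for both the cases $2^l\vm{t-s}\ll\vm{x'_{\bar\omega}-y'_{\bar\omega}}$ and $\vm{x'_{\bar\omega}-y'_{\bar\omega}}\ll 2^l\vm{t-s}$; only in the residual balanced case (with $\vm{\Delta t_{\bar\omega}}$ small) is the $\bar\omega$-direction needed, where the same algebra you wrote for $\partial_r\phi_0$ shows $\vm{\nabla\phi\cdot\bar\omega}\gtrsim 2^{2l}\vm{t-s}$. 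This avoids the interleaving bookkeeping you flag as the ``main obstacle''.
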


\begin{proof}
We prove that the phase is non-stationary due to the angular separation.
Suppose $ \pm=+ $ and let
$$ \phi(\xi)=(t-s) \jb{\xi}+(x-y) \cdot \xi, \qquad \nabla \phi=(t-s) \frac{\xi}{\jb{\xi}}+x-y.  $$ 
Choosing the right $ \bar{\omega}_i^{\perp} $ we obtain
$$  \nabla \phi \cdot \bar{\omega}_i^{\perp} \simeq 2^{l} (t-s)+\vm{x'_{\bar{\omega}}-y'_{\bar{\omega}}}. $$
When $ 2^{l}  \vm{t-s} \ll \vm{x'_{\bar{\omega}}-y'_{\bar{\omega}}} $ we obtain 
$ \vm{K_l^{a_l}} \ls  2^{4k+3l} \jb{2^{k+l} \vm{x'_{\bar{\omega}}-y'_{\bar{\omega}}}}^{-2N} $, which implies \eqref{NEdecay}. Similarly when $ \vm{x'_{\bar{\omega}}-y'_{\bar{\omega}}}  \ll 2^{l}  \vm{t-s} $ we get $  \vm{K_l^{a_l}} \ls  2^{4k+3l} \jb{2^{k+l} 2^{l} \vm{t-s}}^{-2N} $ which also suffices. 

We use Prop \ref{phasederProp} to control the contribution of $ \psi_{\pm}^k (t,x,\xi)-\psi_{\pm}^k(s,y,\xi) $.

Now assume $ 2^{l}  \vm{t-s} \simeq \vm{x'_{\bar{\omega}}-y'_{\bar{\omega}}} $. When $ ( 2^{2l}  \vm{t-s}\simeq ) \ 2^{l} \vm{x'_{\bar{\omega}}-y'_{\bar{\omega}}} \ls  \vm{t_{\bar{\omega}}-s_{\bar{\omega}}} $ estimating the integrand by absolute values we get $ \vm{ K_l^{a_l}} \ls 2^{4k+3l} $, which suffices in this case.

Now we assume $ \vm{t_{\bar{\omega}}-s_{\bar{\omega}}} \ll 2^{2l}  \vm{t-s} \simeq 2^l \vm{x'_{\bar{\omega}}-y'_{\bar{\omega}}} $. 

Since $(x-y) \cdot \bar{\omega}=- \lmd (t-s)+(t_{\bar{\omega}}-s_{\bar{\omega}}) \sqrt{1+\lmd^2} $, we have

$$  \nabla \phi \cdot \bar{\omega}=(t-s) \lpr \frac{\vm{\xi}}{\jb{\xi}} \frac{\xi}{\vm{\xi}} \cdot \bar{\omega} - \lmd \rpr+(t_{\bar{\omega}}-s_{\bar{\omega}}) \sqrt{1+\lmd^2}  $$
We estimate
$$ \frac{\vm{\xi}}{\jb{\xi}}-1 \simeq -2^{-2k}, \quad \frac{\xi}{\vm{\xi}} \cdot \bar{\omega} -1\simeq -2^{2l}, \quad \lmd-1 \simeq -2^{-2p} $$
From the hypothesis on $ 2^{l} $ we conclude that this term dominates so
$$ \vm{\nabla \phi \cdot \bar{\omega}} \gtrsim 2^{2l} \vm{t-s}, $$
which implies \eqref{NEdecay} as a non-stationary phase estimate.
\end{proof}

Now we consider frequency localized symbols and look at the $ TT^{*} $ operator
\be \label{ttstarop}
 e^{- i \psi_{\pm}^k}_{<k} (t,x,D)    e^{\pm i t(t-s) \jb{D}} a_l(D) e^{i \psi_{\pm}^k}_{<k} (D,s,y) \ee
from $ L^2(\Sigma) \to  L^2(\Sigma) $, where $ \Sigma=(\bar{\omega}^{\lmd})^{\perp} $  with kernel
$$ K_{<k}^l(t,x;s,y) \defeq \int e^{- i \psi_{\pm}^k(t,x,\xi)}_{<k}     e^{\pm i t(t-s) \jb{\xi}} e^{i (x-y) \xi} a_l(\xi) e^{i \psi_{\pm}^k(s,y,\xi)}_{<k}  \dd \xi. $$
for $ (t,x;s,y) \in \Sigma \times \Sigma $, i.e. $ t_{\bar{\omega}}=s_{\bar{\omega}}=0 $.

\begin{proposition} \label{Propnullekernel}
Suppose $ \max(2^{-p},2^{-k})  \ll 2^{l} \simeq \angle(\kappa_l,\pm \bar{\omega}). $ Then,
\be \label{nullekernelbd} \vm{ K_{<k}^l(t,x;s,y) } \ls 2^{4k+3l} \frac{1}{\jb{2^{k+2l}  \vm{x^1_{\bar{\omega}}-y^1_{\bar{\omega}}}}^N} \frac{1}{\jb{2^{k+l} \vm{x'_{\bar{\omega}}-y'_{\bar{\omega}}}}^N}  \ee
holds when $ \lmd (t-s)+(x-y) \cdot \bar{\omega}=0 $.
\end{proposition}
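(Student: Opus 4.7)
The plan is to derive this bound from the already-established unlocalized kernel estimate \eqref{NEdecay} via an averaging argument, exploiting the null hyperplane condition to convert the $|t-s|$-decay into decay in $x^1_{\bar\omega}-y^1_{\bar\omega}$.

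\textbf{Step~1 (Averaging representation).} By \eqref{averaging} with $h=k$, I would write
\begin{equation*}
e^{-i\psi_\pm^k}_{<k}(t,x,\xi) = \int e^{-i\psi_\pm^k(t+s_1,x+y_1,\xi)}\, m_k(s_1,y_1)\, ds_1\, dy_1,
\end{equation*}
and similarly for the right-quantized factor with parameters $(s_2,y_2)$. Inserting these into the integral defining $K_{<k}^l$ and exchanging orders of integration yields
\begin{equation*}
K_{<k}^l(t,x;s,y) = \iint K_l^{a_l}(t+s_1,x+y_1;\, s+s_2,y+y_2)\, m_k(s_1,y_1)\, m_k(s_2,y_2)\, ds_1 dy_1 ds_2 dy_2.
\end{equation*}

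\textbf{Step~2 (Applying the unlocalized bound).} Next I apply \eqref{NEdecay} at the translated arguments. Since $m_k$ is a bump function at scale $2^{-k}$, throughout the support we have $|s_i|,|y_i|\lesssim 2^{-k}$, so each null-frame coordinate of the shifted difference differs from the original by $O(2^{-k})$. In particular, the shifted $\tilde t_{\bar\omega}-\tilde s_{\bar\omega}$ obeys $|\tilde t_{\bar\omega}-\tilde s_{\bar\omega}|\lesssim 2^{-k}$ (using the hypothesis $t_{\bar\omega}=s_{\bar\omega}$), hence the potentially harmful factor $\jb{2^k|\tilde t_{\bar\omega}-\tilde s_{\bar\omega}|}^{2N}$ is just $O(1)$. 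This is the point where the null hyperplane condition is crucial.

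\textbf{Step~3 (Converting $|t-s|$ decay to $|x^1_{\bar\omega}-y^1_{\bar\omega}|$ decay).} From $\lambda(t-s)+(x-y)\cdot\bar\omega=0$ and the frame definitions, a direct computation gives
\begin{equation*}
x^1_{\bar\omega}-y^1_{\bar\omega} = \frac{(t-s)-\lambda(x-y)\cdot\bar\omega}{\sqrt{1+\lambda^2}} = (t-s)\sqrt{1+\lambda^2},
\end{equation*}
so $|t-s|\simeq |x^1_{\bar\omega}-y^1_{\bar\omega}|$ since $\lambda\simeq 1$. For the translated arguments, this relation holds up to an $O(2^{-k})$ error, and since $2^{k+2l}\cdot 2^{-k}=2^{2l}\leq 1$, the bracket $\jb{2^{k+2l}|\tilde t-\tilde s|}^N$ is comparable to $\jb{2^{k+2l}|x^1_{\bar\omega}-y^1_{\bar\omega}|}^N$. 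Similarly, since $2^{k+l}\cdot 2^{-k}=2^l\leq 1$, the factor $\jb{2^{k+l}|\tilde x'_{\bar\omega}-\tilde y'_{\bar\omega}|}^N$ is comparable to $\jb{2^{k+l}|x'_{\bar\omega}-y'_{\bar\omega}|}^N$. Finally, integrating the $L^1$-normalized bumps $m_k$ against the resulting pointwise bound yields \eqref{nullekernelbd}.

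\textbf{Main obstacle.} The subtle point is the $\jb{2^k|t_{\bar\omega}-s_{\bar\omega}|}^{2N}$ factor in the unlocalized estimate \eqref{NEdecay}, which would otherwise destroy the bound. The balance is tight: the phase is localized at scale $2^{-k}$, precisely matching the scale at which this factor is controlled, and only the hypothesis $t_{\bar\omega}=s_{\bar\omega}$ (not just membership in $\Sigma\times\Sigma$) makes it work. All other bookkeeping — that the averaging shifts do not disturb the two remaining decay factors — reduces to the elementary inequalities $2^{k+2l}\cdot 2^{-k}\le 1$ and $2^{k+l}\cdot 2^{-k}\le 1$, which hold because $l\le 0$.
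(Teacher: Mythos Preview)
Your strategy matches the paper's --- average via \eqref{averaging}, apply \eqref{NEdecay} at translated arguments, then use $t_{\bar\omega}=s_{\bar\omega}$ and the concentration of $m_k$ --- but the execution has two gaps. First, the averaging identity in Step~1 is not correct as written: substituting \eqref{averaging} only translates the $\psi$ factors, not the linear phase $e^{\pm i(t-s)\langle\xi\rangle+i(x-y)\xi}$, so the inner kernel is $K_l^{a(z,w)}$ at translated arguments with the modified amplitude $a(z,w)(\xi)=e^{-i(z-w)\cdot(\pm\langle\xi\rangle,\xi)}a_l(\xi)$, not $K_l^{a_l}$. Since the constant in \eqref{NEdecay} depends on $2N+1$ derivatives of the amplitude, this modification contributes a growth factor $\langle 2^k(|z|+|w|)\rangle^{2N+1}$ that your bound omits.

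Second, and more fundamentally, $m_k$ is the spacetime kernel of a frequency projection; its Fourier transform is the bump function, so $m_k$ itself is Schwartz at scale $2^{-k}$ but \emph{not} compactly supported. Your claim that $|s_i|,|y_i|\lesssim 2^{-k}$ on the support is therefore false, and you cannot conclude that the translated factor $\langle 2^k|\tilde t_{\bar\omega}-\tilde s_{\bar\omega}|\rangle^{2N}$ is $O(1)$ --- it is only $\lesssim\langle 2^k|z-w|\rangle^{2N}$. The paper absorbs both this and the amplitude-growth factor into the rapid decay of $m_k(z)m_k(w)$, then integrates via the elementary convolution bound $\int_{\mb R}\langle\alpha|a-r|\rangle^{-N}\,2^k\langle 2^k r\rangle^{-N_2}\,dr\lesssim\langle\alpha a\rangle^{-N}$ (for $\alpha\le 2^k$, $N_2$ large), applied coordinate by coordinate. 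Your Step~3 identity $|t-s|\simeq|x^1_{\bar\omega}-y^1_{\bar\omega}|$ is correct and is what the paper uses.
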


\begin{corollary} \label{Cornullframe}
Suppose $ \max(2^{-p},2^{-k})  \ll 2^{l} \simeq \angle(\kappa_l,\pm \bar{\omega}) $. Then 
\be 2^l  e^{- i \psi_{\pm}^k}_{<k} (t,x,D)    e^{\pm i t \jb{D}} P_k P_{\kappa_l} : L^2_x \to  L^{\infty}_{t_{\bar{\omega}, \lmd}} L^2_{x_{\bar{\omega},\lmd}}. \ee
\end{corollary}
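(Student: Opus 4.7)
The plan is a $TT^{*}$ argument using the kernel bound of Proposition \ref{Propnullekernel} together with Schur's test.

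First I would set up the reduction. Let $T = 2^l e^{-i\psi_{\pm}^k}_{<k}(t,x,D) e^{\pm it\jb{D}} P_k P_{\kappa_l}$, and for each $\tau \in \bbR$ write $\Sigma_\tau = \{(t,x) : t_{\bar\omega,\lmd} = \tau\}$ and $T_\tau : L^2_x \to L^2(\Sigma_\tau)$ for the composition of $T$ with restriction to $\Sigma_\tau$. The claimed bound $L^2_x \to L^\infty_{t_{\bar\omega,\lmd}} L^2_{x_{\bar\omega,\lmd}}$ is equivalent to $\sup_\tau \|T_\tau\|_{L^2_x \to L^2(\Sigma_\tau)} \lesssim 1$, which by $TT^{*}$ reduces to $\sup_\tau \|T_\tau T_\tau^{*}\|_{L^2(\Sigma_\tau) \to L^2(\Sigma_\tau)} \lesssim 1$. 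A direct computation identifies the kernel of $T_\tau T_\tau^{*}$ with $2^{2l} K_{<k}^l(t,x;s,y)$ restricted to $(t,x), (s,y) \in \Sigma_\tau$, where $K_{<k}^l$ is the kernel of \eqref{ttstarop}.

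Next I would extend the kernel bound of Proposition \ref{Propnullekernel}, which is stated on the slice $\tau = 0$, to an arbitrary $\tau \in \bbR$. This is immediate from inspection of the proof: the only analytic inputs are the phase derivative estimates from Proposition \ref{phasederProp}, which depend only on the relative quantity $|t-s| + |x-y|$ and not on the absolute position $(t,x)$. Consequently \eqref{nullekernelbd} holds verbatim, with the same constant, on every $\Sigma_\tau \times \Sigma_\tau$.

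Finally I would apply Schur's test on $\Sigma_\tau$. Parameterizing $\Sigma_\tau$ by $(x^1_{\bar\omega}, x'_{\bar\omega}) \in \bbR \times \bbR^3$ (the change of variables has Jacobian depending only on $\lmd$), the pointwise bound \eqref{nullekernelbd} integrates to
\[
\sup_{(t,x) \in \Sigma_\tau} \int_{\Sigma_\tau} |K_{<k}^l(t,x;s,y)| \, dy^1_{\bar\omega} \, dy'_{\bar\omega} \lesssim 2^{4k+3l} \cdot 2^{-(k+2l)} \cdot 2^{-3(k+l)} = 2^{-2l},
\]
and symmetrically with the roles of $(t,x)$ and $(s,y)$ exchanged. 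Schur's test then yields $\|T_\tau T_\tau^{*}\|_{L^2(\Sigma_\tau) \to L^2(\Sigma_\tau)} \lesssim 2^{2l} \cdot 2^{-2l} = 1$ uniformly in $\tau$, completing the proof. The main obstacle is essentially bookkeeping: verifying explicitly that the proof of Proposition \ref{Propnullekernel} is covariant under joint translation of $(t,s)$ in the $\bar\omega^\lmd$ direction, so that the single slice $\tau = 0$ is no loss of generality; once this is granted, the reduction to Schur's test proceeds exactly as in the proof of Corollary \ref{corPW}.
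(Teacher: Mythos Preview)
Your proposal is correct and essentially identical to the paper's proof. The paper invokes translation invariance upfront to reduce to the single slice $\Sigma = \Sigma_0$, then runs $TT^*$ and Schur's test exactly as you do; your observation that the kernel bound \eqref{nullekernelbd} depends only on $t_{\bar\omega}-s_{\bar\omega}$ (hence holds on every $\Sigma_\tau$) is precisely the content of that translation invariance, so the two reductions are the same step phrased slightly differently.
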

\begin{corollary} \label{CorNE}
Let $ \calC=\calC_{k'}(l') $. Then 
\be e^{- i \psi_{\pm}^k}_{<k} (t,x,D)    e^{\pm i t \jb{D}} P_k P_{\calC} : L^2_x \to NE_{\calC}^{\pm}. \ee
\end{corollary}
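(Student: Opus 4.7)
The plan is to reduce Corollary \ref{CorNE} directly to Corollary \ref{Cornullframe} by enclosing the box $\calC$ inside an appropriate spherical cap. The key observation is that, viewed as a subset of the annulus $\{\jb{\xi}\simeq 2^k\}$, the box $\calC=\calC_{k'}(l')$ has angular extent $\simeq 2^{l'+k'-k}$, and the supremum in the definition \eqref{NE:norm} of $NE_\calC^\pm$ is taken only over frames $(\bar{\omega},\lmd(p))$ satisfying $\angle(\bar{\omega},\pm\calC)\gg \max(2^{-p},2^{-k},2^{l'+k'-k})$. This angular separation, being much larger than the angular diameter of $\calC$, is exactly what is needed to thicken $\calC$ to a cap while keeping the required separation from $\pm\bar{\omega}$.

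More concretely, I will fix an admissible pair $(\bar{\omega},\lmd)$ with $\lmd=\lmd(p)$ and set $2^l\defeq \angle(\bar{\omega},\pm\calC)$. The bounds in the $NE_\calC^\pm$ supremum guarantee $2^l\gg\max(2^{-p},2^{-k})$, which is precisely the hypothesis of Corollary \ref{Cornullframe}, and also $2^l\gg 2^{l'+k'-k}$. Using this last inequality I will choose a spherical cap $\kappa_l\subset\bbS^{d-1}$ of radius $\simeq 2^l$ centered at the angular midpoint of $\calC$, so that the angular support of $P_\calC$ lies inside $\kappa_l$ while $\angle(\kappa_l,\pm\bar{\omega})\simeq 2^l$ by the triangle inequality. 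Then $P_{\kappa_l} P_\calC=P_\calC$, so I may insert the factor $P_{\kappa_l}$ freely and apply Corollary \ref{Cornullframe} to conclude
$$ 2^l \vn{e^{-i\psi_\pm^k}_{<k}(t,x,D)\, e^{\pm i t \jb{D}} P_k P_\calC f}_{L^\infty_{t_{\bar{\omega},\lmd}} L^2_{x_{\bar{\omega},\lmd}}} \aleq \vn{f}_{L^2_x}. $$
Taking the supremum over admissible $(\bar{\omega},\lmd)$ recovers the $NE_\calC^\pm$ bound and completes the proof.

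There is no substantive analytic obstacle, since all the oscillatory-integral work has already been carried out in Proposition \ref{Propnullekernel} and Corollary \ref{Cornullframe}; the only thing to check carefully is the geometric claim that $\calC$ can be enclosed in a cap $\kappa_l$ separated from $\pm\bar{\omega}$ by an angle still comparable to $2^l$. This is an elementary consequence of the scale separation $2^{l'+k'-k}\ll 2^l$ forced by the definition of $NE_\calC^\pm$, so the proof reduces essentially to bookkeeping.
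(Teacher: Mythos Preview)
Your proof is correct and follows essentially the same approach as the paper's own argument: fix an admissible $(\bar{\omega},\lmd(p))$, set $2^{l}\simeq\angle(\bar{\omega},\pm\calC)$, use the condition $2^{l}\gg 2^{l'+k'-k}$ to enclose $\calC$ in a cap $\kappa_{l}$ with $\angle(\kappa_{l},\pm\bar{\omega})\simeq 2^{l}$, and invoke Corollary~\ref{Cornullframe}. The paper's proof is just a one-line version of what you wrote.
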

\begin{proof}[Proof of Prop. \ref{Propnullekernel}] We average using \eqref{averaging} to write $ K_{<k}^l(t,x;s,y) $ as
$$  \iint e^{- i T_z \psi_{\pm}^k(t,x,\xi)}  e^{+ i T_w\psi_{\pm}^k(s,y,\xi)}  a_l(\xi) e^{\pm i (t-s) \jb{\xi}}  e^{i (x-y) \xi}  \dd \xi m_k(z) m_k(w) \dd z \dd w= $$
\be =\int T_z T_w  K^{a(z,w)}_l (t,x;s,y) m_k(z) m_k(w) \dd z \dd w  \label{intermint1}  \ee
where $ a(z,w)(\xi)=e^{-i (z-w)\cdot(\pm \jb{\xi},\xi)} a_l(\xi) $. Since  $ t_{\bar{\omega}}=s_{\bar{\omega}}=0 $ using \eqref{NEdecay} we obtain 
\begin{align*} 
& \vm{T_z T_w  K^{a(z,w)}_l (t,x;s,y)}  \ls \jb{2^k (\vm{z}+\vm{w})}^{2N+1} 2^{4k+3l} \times  \\
& \quad \times \jb{2^{k+2l} \vm{t-s+z_1-w_1}}^{-N} \jb{2^{k+l} \vm{x'_{\bar{\omega}}-y'_{\bar{\omega}}+z'_{\bar{\omega}}-w'_{\bar{\omega}} }}^{-N} \jb{2^k \vm{z-w}}^{2N}
\end{align*}
We obtain \eqref{nullekernelbd} from the integral \eqref{intermint1} using the rapid decay
$$ \jb{2^k (\vm{z}+\vm{w})}^{2N+1} \jb{2^k \vm{z-w}}^{2N}  m_k(z) m_k(w) \ls \jb{2^k \vm{z}}^{-N_2} \jb{2^k \vm{w}}^{-N_2}. $$
for any $ N_2 $, and by repeatedly applying
$$ \int_{\mb{R}} \frac{1}{\jb{\al \vm{a-r}}^{N}} \frac{2^k}{\jb{2^k \vm{r} }^{N_2}} \dd r \ls \frac{1}{\jb{\al \vm{a}}^{N}} $$
for $ \al \leq 2^k $ and $ N_2 $ large enough. Note that here $ \vm{t-s} \simeq \vm{x^1_{\bar{\omega}}-y^1_{\bar{\omega}}} $.
\end{proof}

\begin{proof}[Proof of Corollary \ref{Cornullframe}] By translation invariance, it suffices to prove that the operator is bounded from $ L^2_x \to L^2(\Sigma) $. By a $ TT^* $ argument this follows if we prove $ 2^{2l} \times $  \eqref{ttstarop} $ : L^2(\Sigma) \to  L^2(\Sigma) $, for which we use Schur's test. Indeed, the kernel of this operator is $ 2^{2l} K_{<k}^l(t,x;s,y) $ on $ \Sigma \times \Sigma $, which is integrable on $ \Sigma $ by \eqref{nullekernelbd}.
\end{proof}

\begin{proof}[Proof of Corollary \ref{CorNE}]
Recall definition \eqref{NE:norm}. For any $ \bar{\omega} $, $ \lmd=\lmd(p) $ such that $ \angle(\bar{\omega},\pm \calC)  \gg \max(2^{-p},2^{-k}, 2^{l'+k'-k}) $ we may define $ 2^l \simeq \angle(\bar{\omega},\pm \calC) $ and $ \kappa_l \supset \calC $ so that Corollary \ref{Cornullframe} applies.
\end{proof}
\

\section{Proof of Theorem \ref{Renormalization:thm}} \label{sec:pf:thm:ren}

\subsection{Proof of the fixed time $ L^2_x $ estimates \eqref{renbd}, \eqref{renbdt}, \eqref{renbd2}} \

\

The following proposition establishes the $ L^2_x $ part of \eqref{renbd}, \eqref{renbdt}.

\begin{proposition} \label{fixedtimeprop} For any $ k \geq 0 $, the mappings

\be \label{fixedtime1} e^{\pm i \psi_{\pm}^k} (t_0,x,D) \bar{P}_k : L^2_x \to L^2_x \ee
\be \label{fixedtime2} e_{<h}^{\pm i \psi_{\pm}^k} (t_0,x,D) \bar{P}_k : L^2_x \to L^2_x \ee
\be \label{fixedtime3} \nabla_{t,x} e_{<h}^{\pm i \psi_{\pm}^k} (t_0,x,D) \bar{P}_k : L^2_x \to \ep 2^k L^2_x \ee

hold for any fixed $ t_0 $, uniformly in $ h,t_0\in \mb{R} $. By duality, the same mappings hold for right quantizations.
\end{proposition}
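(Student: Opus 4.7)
The plan is to prove \eqref{fixedtime1} directly via a $TT^{\ast}$ argument using the oscillatory integral bound of Proposition \ref{stat_phase_a}, and then to deduce \eqref{fixedtime2} and \eqref{fixedtime3} from \eqref{fixedtime1} by the averaging device of Lemma \ref{loczsymb} together with the decomposable calculus of Lemma \ref{decomp_lemma}.

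For \eqref{fixedtime1}, setting $T = e^{i\psi_{\pm}^{k}}(t_{0},x,D)\bar{P}_{k}$, the Schwartz kernel of $TT^{\ast}$ is
\begin{equation*}
K(x,y) = \int e^{i\psi_{\pm}^{k}(t_{0},x,\xi) - i\psi_{\pm}^{k}(t_{0},y,\xi)} |\tilde{\chi}_{k}(\xi)|^{2} e^{i(x-y)\cdot \xi}\, d\xi,
\end{equation*}
which coincides (up to complex conjugation) with $K_{k}^{a}$ from \eqref{kernel:a} at $t=s=t_{0}$ and $a=|\tilde{\chi}_{k}|^{2}$. Proposition \ref{stat_phase_a} applies with $|t-s|=0$, and its non-stationary hypothesis $2^{k}||t-s|-|x-y|| \gg 2^{-k}|t-s|$ reduces to $x \neq y$; combined with the trivial size bound on the diagonal, this yields $|K(x,y)| \lesssim 2^{dk}\langle 2^{k}|x-y|\rangle^{-N}$ for all $x,y$. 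Hence $K$ lies in $L^{1}_{y}$ uniformly in $x$ and symmetrically, so Schur's test gives $\|TT^{\ast}\|_{L^{2}\to L^{2}} \lesssim 1$, and the bound is uniform in $t_{0}$ by the $t$-translation invariance of the construction.

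The bound \eqref{fixedtime2} follows at once from \eqref{fixedtime1} via Lemma \ref{loczsymb} applied with $X = Y = L^{\infty}_{t}L^{2}_{x}$, since \eqref{fixedtime1} uniformly in $t_{0}$ is precisely the statement that $e^{i\psi_{\pm}^{k}}(t,x,D)\bar{P}_{k}: L^{\infty}_{t}L^{2}_{x} \to L^{\infty}_{t}L^{2}_{x}$. For \eqref{fixedtime3}, differentiating the averaging formula \eqref{averaging} gives
\begin{equation*}
\partial_{\mu} e_{<h}^{\pm i\psi_{\pm}^{k}}(t,x,\xi) = \bigl[\pm i\, (\partial_{\mu}\psi_{\pm}^{k})\, e^{\pm i\psi_{\pm}^{k}}\bigr]_{<h}(t,x,\xi),
\end{equation*}
so by Lemma \ref{loczsymb} it suffices to show that the operator with unlocalized symbol $(\partial_{\mu}\psi_{\pm}^{k})\, e^{\pm i\psi_{\pm}^{k}}$ composed with $\bar{P}_{k}$ maps $L^{\infty}_{t}L^{2}_{x}$ into $\ep 2^{k}\, L^{\infty}_{t}L^{2}_{x}$. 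This is exactly Lemma \ref{decomp_lemma} applied with $N=1$, $q=\tilde{q}=q_{1}=\infty$, $r=2$, $r_{1}=\infty$: taking $a(t,x,\xi)=e^{\pm i\psi_{\pm}^{k}(t,x,\xi)}$, the operator $A(t,x,D)\bar{P}_{k}$ is uniformly $L^{2}_{x}$-bounded in $t$ by \eqref{fixedtime1}, while $F_{1} = \partial_{\mu}\psi_{\pm}^{k}$ satisfies $\|F_{1}\|_{D_{k}(L^{\infty}L^{\infty})} \lesssim \ep 2^{k}$ by \eqref{decomp4}, producing the desired gain of $\ep 2^{k}$.

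The main technical point is the kernel bound for $TT^{\ast}$ at coincident times underlying \eqref{fixedtime1}; once this is established, \eqref{fixedtime2} and \eqref{fixedtime3} are essentially bookkeeping through the averaging lemma and the decomposable calculus, and the claim for right quantizations is immediate from $a(x,D)^{\ast} = \bar{a}(D,y)$ and the reality of $\psi_{\pm}^{k}$.
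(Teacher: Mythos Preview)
Your proof is correct and follows essentially the same approach as the paper: the $TT^{\ast}$ plus Schur argument for \eqref{fixedtime1}, averaging for \eqref{fixedtime2}, and the decomposable calculus with \eqref{decomp4} for \eqref{fixedtime3}. The only cosmetic difference is that for \eqref{fixedtime3} the paper applies Lemma \ref{decomp_lemma} with $L^{1}L^{2}\to L^{1}L^{2}$ and then evaluates at $\phi=\delta_{t_{0}}\otimes u$, whereas you use the $L^{\infty}_{t}L^{2}_{x}\to L^{\infty}_{t}L^{2}_{x}$ exponents directly; both are valid instances of the same lemma.
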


\begin{proof}

\pfstep{Step~1}
First we prove \eqref{fixedtime1} by considering the $ TT^* $ operator
$$ e^{\pm i \psi_{\pm}^k} (t_0,x,D) \bar{P}_k^2 e^{\mp i \psi_{\pm}^k} (D,y,t_0) $$ 
with kernel $ K_k^a(t_0,x,t_0,y) $ defined by \eqref{kernel:a}. 

Due to the $ (x,y) $ symmetry and Schur's test it suffices to show
$$ \sup_{x} \int \vm{K_k^a(t_0,x,t_0,y)} \dd y \ls 1. $$
This follows from Prop. \ref{stat_phase_a}. 
\pfstep{Step~2} Now we prove \eqref{fixedtime2} using \eqref{averaging} and \eqref{fixedtime1}. For $ u \in L^2_x $ we write 
$$   e^{\pm i \psi_{\pm}^k}_{<h}(t_0,x,D) \bar{P}_k u  =\int_{\mb{R}^{d+1}} m_h(s,y) e^{\pm i \psi_{\pm}^k}(t_0+s,x+y,D) [ \bar{P}_k u_{y} ] \dd s \dd y  $$
where $ \hat{u}_{y}(\xi)=e^{-i y \xi} \hat{u}_0(\xi) $.
By Minkowski's inequality, \eqref{fixedtime1} for $ t_0+s $, translation invariance of $ L^2_x $,  and the bound $ \vn{u_{y}}_{L^2_x} \leq \vn{u}_{L^2_x} $ we obtain 
\begin{align*} \vn{e^{\pm i \psi_{\pm}^k}_{<h}(t_0,x,D) \bar{P}_k u }_{L^2_x}  & \ls \int_{\mb{R}^{d+1}} m_h(s,y) \vn{e^{\pm i \psi_{\pm}^k}(t_0+s,\cdot,D) [ \bar{P}_k u_{y} ]}_{L^2_x} \dd s \dd y \\
& \ls \int_{\mb{R}^{d+1}} m_h(s,y) \vn{ \bar{P}_k u_{y}}_{L^2_x} \dd s \dd y \ls \vn{u}_{L^2_x}.
\end{align*}

\pfstep{Step~3} Since we have
$$ \nabla_{t,x} e^{\pm i \psi_{\pm}^k(t,x,\xi)} =\pm i \nabla_{t,x}\psi_{\pm}^k (t,x,\xi)e^{\pm i \psi_{\pm}^k(t,x,\xi)}  $$ 
using \eqref{decomp4}, \eqref{fixedtime1} and Lemma \ref{decomp_lemma} we obtain
$$ \vn{ \nabla_{t,x} e^{\pm i \psi_{\pm}^k}(t,x,D)P_k}_{L^1L^2 \to L^1 L^2} \ls \ep 2^k $$
Applying this to $ \phi(t,x)=\delta_{t_0}(t) \otimes u(x) $ (or rather with an approximate to the identity $ \eta_{\ep} $ converging to $ \delta_{t_0} $ in $ t $) we obtain
$$ \nabla_{t,x} e^{\pm i \psi_{\pm}^k} (t_0,x,D) P_k : L^2_x \to \ep 2^k L^2_x $$
for any $ t_0 $. By averaging this estimate as in Step 2 we obtain \eqref{fixedtime3}.
\end{proof}

\begin{remark}
The same argument also shows
\be \label{fixedtime1b} e^{\pm i \psi^{k}_{<k'',\pm}} (t_0,x,D) P_k : L^2_x \to L^2_x
\ee 
\end{remark}

\

Now we turn to the proof of \eqref{renbd2}.

\begin{proposition} \label{fixedtime4}  Let $ k \geq 0 $. For any $ t_0 $ we have
$$ e_{<k}^{-i \psi_{\pm}^k} (t_0,x,D) e_{<k}^{i \psi_{\pm}^k} (D,t_0,y)-I : \bar{P}_k L^2_{x} \to \ep^{\frac{1}{2}} L^2_{x}  $$
\end{proposition}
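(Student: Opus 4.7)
My plan is to prove Proposition \ref{fixedtime4} via a Schur-test analysis of the kernel of $TT^{*}-I$ restricted to $\bar{P}_k L^2$, where $T=e_{<k}^{-i\psi_\pm^k}(t_0,x,D)$ and $T^{*}=e_{<k}^{i\psi_\pm^k}(D,t_0,y)$. The kernel of $TT^{*}\bar{P}_k^2$ is
\begin{equation*}
K(x,y)=\int e^{i(x-y)\cdot\xi}\,F(x,\xi)\overline{F(y,\xi)}\,\tilde{\chi}_k^2(\xi)\,\dd\xi,
\end{equation*}
where $F(x,\xi)=e_{<k}^{-i\psi_\pm^k}(t_0,x,\xi)$ and $\tilde{\chi}_k$ is a bump equal to $1$ on the spectrum of $\bar{P}_k$. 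Since $I$ acts on $\bar{P}_k L^2$ with kernel $K_0(x,y)=\int e^{i(x-y)\xi}\tilde{\chi}_k^2(\xi)\dd\xi$, the target is to show $\sup_x\int|\Delta K(x,y)|\dd y\ls\ep^{1/2}$ for $\Delta K:=K-K_0$, with the same bound in the $x$-integral by symmetry.

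Using the averaging representation \eqref{averaging}, I would expand
\begin{equation*}
F(x,\xi)\overline{F(y,\xi)}-1=\iint m_k(z_1)m_k(z_2)\bigl(e^{-i[\psi_\pm^k(T_{z_1}(t_0,x),\xi)-\psi_\pm^k(T_{z_2}(t_0,y),\xi)]}-1\bigr)\dd z_1\dd z_2,
\end{equation*}
where $|z_i|\ls2^{-k}$ on the support of $m_k$. I then split the Schur integral into $|x-y|\ls2^{-k+C}$ and $|x-y|\gg2^{-k+C}$, with $C=C(\ep)$ chosen below. On the near-diagonal region, Proposition \ref{phasederProp} yields the pointwise bound $|\psi_\pm^k(T_{z_1}(t_0,x),\xi)-\psi_\pm^k(T_{z_2}(t_0,y),\xi)|\ls\ep\log(1+2^k(|x-y|+2^{-k}))\ls\ep C$; hence $|\Delta K(x,y)|\ls\ep C\cdot 2^{dk}$ (after the trivial bound $|K|+|K_0|\ls 2^{dk}$ for large phases), giving a near-diagonal contribution of size $\ep C\cdot 2^{dC}$ to the Schur integral. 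On the far-diagonal region, I would apply the stationary-phase analysis of Proposition \ref{stat_phase_a} to both $K$ (with $a=\tilde{\chi}_k^2$ and $t_0=s_0$) and $K_0$, which at coincident times reduces the non-cone condition to $x\ne y$ and produces the uniform decay $|K|+|K_0|\ls 2^{dk}\jb{2^k|x-y|}^{-N}$; integrating over $|x-y|\gg 2^{-k+C}$ contributes $\ls 2^{-C(N-d)}$. Choosing $2^C\simeq\ep^{-1/(2d+1)}$ balances the two regions and gives $\sup_x\int|\Delta K|\dd y\ls\ep^{(d+1)/(2d+1)}|\log\ep|\ll\ep^{1/2}$, so Schur's lemma closes the estimate.

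The main obstacle is the logarithmic growth in Proposition \ref{phasederProp}: away from the diagonal the phase difference $\psi_\pm^k(t_0,x,\xi)-\psi_\pm^k(t_0,y,\xi)$ is only controlled by $\ep\log(1+2^k|x-y|)$, so it is not pointwise small and cannot by itself close the bound on any fixed region. The resolution is the quantitative exchange between the near-diagonal phase smallness (which loses logarithmic factors) and the rapid off-diagonal kernel decay from Proposition \ref{stat_phase_a} (whose exponent $N$ is at our disposal)---a trade-off mediated by the $(t,x)$-mollification width $2^{-k}$ of $e_{<k}^{\pm i\psi_\pm^k}$. This is the same interplay between averaging, phase regularity, and stationary-phase decay that underlies the construction of Section \ref{Constr:phase}, and the generous $\ep^{1/2}$ (rather than $\ep$) in the statement leaves plenty of room to absorb the $|\log\ep|$ losses.
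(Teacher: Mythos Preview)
Your approach is essentially the same as the paper's: a Schur test on the kernel difference, using Proposition~\ref{phasederProp} near the diagonal and Proposition~\ref{stat_phase_a} far from it, then optimizing the splitting radius. The balance you strike and the paper's ($(2^kR)^{d+1}\simeq\ep^{-1/2}$) are the same computation.

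There is one technical gap. The kernel formula you write, with $\tilde{\chi}_k^2(\xi)$ inserted, is the kernel of $T\,\bar{P}_k^2\,T^*$, \emph{not} of $TT^*\bar{P}_k^2$: in the latter the localization acts on the $y$-variable after $\overline{F(\cdot,\xi)}$, so it does not simply become a multiplier in $\xi$. Without the $\xi$-cutoff the integral defining the kernel of $TT^*$ diverges, so some commutation is required. The paper handles this explicitly (its Step~1) by estimating the commutator
\[
e_{<k}^{-i\psi}(t_0,x,D)\bigl[e_{<k}^{i\psi}(D,t_0,y)\bar{P}_k-\bar{P}_k\,e_{<k}^{i\psi}(D,t_0,y)\bigr]
\]
via \eqref{spec_decomp2} and \eqref{fixedtime3}, which costs only $O(\ep)$; after that one may work with the kernel of $T\bar{P}_kT^*$ exactly as you do. A second, more minor point: you treat $m_k$ as if it had compact support of size $2^{-k}$, whereas it is only Schwartz; the paper absorbs the tails in $z,w$ into the rapid decay (its Step~2 and Step~3), and your argument should too. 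Both issues are easily repaired along the lines of the paper, after which your proof coincides with theirs.
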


\begin{proof}
\pfstep{Step~1} First, let us note that  
$$  e_{<k}^{-i \psi_{\pm}^k} (t_0,x,D) [ e_{<k}^{i \psi_{\pm}^k} (D,t_0,y)\bar{P}_k- \bar{P}_k e_{<k}^{i \psi_{\pm}^k} (D,t_0,y)] :  L^2_{x} \to \ep L^2_{x}  $$
This follows from \eqref{fixedtime2} and from \eqref{spec_decomp2}, \eqref{fixedtime3}.

The kernel of $  e_{<k}^{-i \psi_{\pm}^k} (t_0,x,D) \bar{P}_k e_{<k}^{i \psi_{\pm}^k} (D,t_0,y) $ is 
$$  K_{<k}(x,y)= \int e^{- i \psi_{\pm}^k}_{<k} (t_0,x,\xi) a(\xi /2^k) e^{i (x-y) \xi} e^{+ i \psi_{\pm}^k}_{<k} (t_0,y,\xi) \dd \xi  $$
while the kernel of $ \bar{P}_k $ is $ 2^{dk} \check{a}(2^k(x-y)) $. Thus, by Schur's test it remains to prove
\be \label{kerSchur} \sup_x \int \vm{K_{<k}(x,y)-2^{dk} \check{a}(2^k(x-y))} \dd y \ls \ep^{\frac{1}{2}}.  \ee
\pfstep{Step~2} For large $ \vm{x-y} $ we will use
\be  \label{krbd1} 2^{dk} \vm{\check{a}(2^k(x-y))},\ \vm{K_{<k}(x,y)} \ls \frac{2^{dk}}{(1+2^k \vm{x-y})^{2d+1}}.
\ee 
The bound for $  \check{a} $ is obvious. Recalling \eqref{averaging} we write $  K_{<k}(x,y) $ as
$$  \iint e^{- i T_z \psi_{\pm}^k(t_0,x,\xi)}  e^{+ i T_w\psi_{\pm}^k(t_0,y,\xi)}  a(\xi /2^k) e^{i (x-y) \xi}  \dd \xi m_k(z) m_k(w) \dd z \dd w= $$
\be =\int T_z T_w  K^{a(z,w)}_k (t_0,x,t_0,y) m_k(z) m_k(w) \dd z \dd w  \label{intermint}  \ee
where $ z=(t,z'),\ w=(s,w') $,  $ a(z,w)(\xi)=e^{-i2^k (z-w)\cdot(\pm \jb{\xi}_k,\xi)} a(\xi) $ and
$$ K^{a}_k (t,x,s,y)=\int e^{- i \psi_{\pm}^k} (t,x,\xi) a(\xi /2^k) e^{\pm i (t-s) \jb{\xi}+i (x-y) \xi} e^{+ i \psi_{\pm}^k}(s,y,\xi) \dd \xi $$
From Prop. \ref{stat_phase_a}, on the region $ \vm{ \vm{t-s}-\vm{x-y+z'-w'}} \gg 2^{-2k} \vm{t-s} $ we have
$$ \vm{T_z T_w  K^{a(z,w)}_k (t_0,x,t_0,y)} \ls \jb{2^k (\vm{z}+\vm{w})}^N \frac{2^{dk}}{\jb{2^k ( \vm{t-s}-\vm{x-y+z'-w'})}^N} $$
Over this region, the integral \eqref{intermint} obeys the upper bound in \eqref{krbd1}. This can be seen by repeatedly applying
$$ \int_{\mb{R}} \frac{1}{(1+2^k \vm{r-a})^N} \frac{2^k}{(1+2^k r)^{N_1}} \dd r \ls \frac{1}{(1+2^k a)^{N-1}}, \quad N_1 \geq 2 N $$
and for any $ N_2 $
$$ \jb{2^k (\vm{z}+\vm{w})}^N  m_k(z) m_k(w) \ls \jb{2^k \vm{z}}^{-N_2} \jb{2^k \vm{w}}^{-N_2}. $$
On the region $ \vm{ \vm{t-s}-\vm{x-y+z'-w'}} \ls 2^{-2k} \vm{t-s} $, we use the term $ \jb{2^k (t-s)}^{-N} $ from the rapid decay of $ m_k(z), m_k(w) $ and bound 
$$ \frac{1}{\jb{2^k (t-s)}^{N}} \ls \frac{1}{\jb{2^k(  \vm{t-s}-\vm{x-y+z'-w'})}^N}, \quad \vm{T_z T_w  K^{a(z,w)}_k} \ls 2^{dk}  $$ 
which imply the upper bound in \eqref{krbd1} as before.

\pfstep{Step~3} The kernel of $ e_{<k}^{-i \psi_{\pm}^k} (t_0,x,D) \bar{P}_k e_{<k}^{i \psi_{\pm}^k} (D,t_0,y)-\bar{P}_k $ obeys the bound
 \be \label{krbd2} \vm{K_{<k}(x,y)-2^{dk} \check{a}(2^k(x-y))} \ls \ep 2^{dk} (3+2^k \vm{x-y}). \ee
Indeed, we write $ K_{<k}(x,y)-2^{dk} \check{a}(2^k(x-y)) $ as
$$ 2^{dk} \iint ( e^{- i T_z \psi_{\pm}^k(t_0,x,2^k \xi)+ i T_w\psi_{\pm}^k(t_0,y,2^k \xi)}-1)  a(\xi) e^{i 2^k (x-y) \xi}  \dd \xi m_k(z) m_k(w) \dd z \dd w $$
and by \eqref{phasediff}, we bound
\begin{align*} \vm{ e^{- i T_z \psi_{\pm}^k(t_0,x,2^k \xi)+ i T_w\psi_{\pm}^k(t_0,y,2^k \xi)}-1} & \ls \ep \log(1+2^k (\vm{x-y}+ \vm{z}+\vm{w})) \\
& \ls \ep [1+2^k (\vm{x-y}+ \vm{z}+\vm{w})]. 
\end{align*}
Bounding by absolute values and integrating in $ z $ and $ w $ we obtain \eqref{krbd2}.

\pfstep{Step~4} Now we prove \eqref{kerSchur}. We integrate \eqref{krbd2} on $ \{ y \ | \ \vm{x-y} \leq R \} $ and integrate \eqref{krbd1} on the complement of this set, for $ (2^k R)^{d+1} \simeq \ep^{-\frac{1}{2}} $. We obtain
$$ \text{LHS} \ \eqref{kerSchur} \ls \ep (2^kR)^{d+1}+ \frac{1}{(2^k R)^{d+1}} \ls \ep^{\frac{1}{2}}.  $$
\end{proof}

\subsection{Proof of the $ \bar{N}_k $, $ \bar{N}_{k} ^* $ estimates \eqref{renbd}, \eqref{renbdt}, \eqref{renbd2}} \ 

\

In the proof we will need the following lemma.

\begin{lemma} \label{Nspacelemma}
For $ k \geq 0 $, $ k \geq k' \geq j-O(1) $ and for both quantizations, we have:
\be \label{Nlemmabd} 2^{j/2} \vn{\bar{Q}_{j} e^{\pm i \psi_{\pm}^{k}}_{k'}  \bar{P}_k G}_{L^2_{t,x}} \ls \ep 2^{\delta(j-k')} \vn{G}_{\bar{N}_k^{*}}, \ee
and thus, by duality
\be \label{Nlemmaest} 2^{\frac{j}{2}} \vn{\bar{P}_k e^{\pm i \psi_{\pm}^{k}}_{k'} \bar{Q}_{j} F_k}_{\bar{N}_k} \ls \ep 2^{\delta(j-k')}  \vn{F_k}_{L^2_{t,x}}    \ee
\end{lemma}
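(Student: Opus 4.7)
The plan is to establish \eqref{Nlemmabd} for the left quantization; \eqref{Nlemmaest} then follows immediately by duality, using $(\bar N_k)^* = L^\infty L^2 \cap \bar X^{\frac12}_\infty$ and the fact that the adjoint of a left quantization is the right quantization of the complex-conjugate symbol $\overline{e^{\pm i\psi^k_\pm}} = e^{\mp i\psi^k_\pm}$ (the proof for the right quantization of \eqref{Nlemmabd} is entirely symmetric). Since $\psi^k_\pm$ is real, this exchanges the roles of the two quantizations and of the two signs, but produces the same numerology.

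First I would split the input by its modulation:
$$\bar P_k G = \bar Q_{<j-C}\bar P_k G + \bar Q_{\geq j-C}\bar P_k G.$$
For the \emph{high modulation} piece, I would dispose of $\bar Q_j$ (legitimate since $j \leq k'+C \leq k+C$) and use an $L^2_{t,x}\to L^2_{t,x}$ bound on $e^{\pm i\psi^k_\pm}_{k'}$ with an $O(\varepsilon)$ factor. The smallness comes from the fact that the $(t,x)$-frequency localization $P_{k'}^{(t,x)}$ annihilates the identity piece in $e^{\pm i\psi}=1 + (\pm i\psi) + \tfrac12(\pm i\psi)^2+\cdots$, leaving only terms involving $\psi$, which by \eqref{decomp4} (combined with Lemma~\ref{decomp_lemma} and the averaging argument of Lemma~\ref{loczsymb}) give a decomposable operator of size $\varepsilon$. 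Then
$$2^{j/2}\|\bar Q_j e^{\pm i\psi^k_\pm}_{k'} \bar Q_{\geq j-C}\bar P_k G\|_{L^2} \lesssim \varepsilon\, 2^{j/2}\sum_{j_1\geq j-C}2^{-j_1/2}\|G\|_{\bar X^{1/2}_\infty} \lesssim \varepsilon\|G\|_{\bar N_k^*},$$
which is acceptable since $2^{\delta(j-k')} \gtrsim 1$ in the range $k' = j + O(1)$ where this part of the argument will be sharp after summing.

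For the \emph{low modulation} piece the output modulation $\simeq 2^j$ must come from the spacetime variation of the symbol, so I would apply Hölder $L^2L^\infty\times L^\infty L^2\to L^2_{t,x}$ together with the decomposable calculus of Lemma~\ref{decomp_lemma}:
$$2^{j/2}\|\bar Q_j e^{\pm i\psi^k_\pm}_{k'}(t,x,D)\bar Q_{<j-C}\bar P_k G\|_{L^2} \lesssim 2^{j/2}\|e^{\pm i\psi^k_\pm}_{k'}\|_{D_k L^2L^\infty}\|G\|_{L^\infty L^2}.$$
Taylor-expanding gives $e^{\pm i\psi}_{k'} = \pm i P_{k'}\psi + \sum_{n\geq 2}\frac{(\pm i)^n}{n!} P_{k'}(\psi^n)$. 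The leading term is controlled by summing the sector bounds \eqref{decomp1} (with $q=2,\ r=\infty$, giving an angular exponent $(d-1)/2\geq 3/2$) over $\theta$, yielding $\|P_{k'}\psi^k_\pm\|_{D_k L^2L^\infty}\lesssim \varepsilon\, 2^{-k'/2}$. The higher-order products are estimated by placing one factor in $D_k L^2L^\infty$ (via \eqref{decomp1}) and the remaining factors in $D_k L^\infty L^\infty$ (via \eqref{decomp4}), each paying an additional power of $\varepsilon$. Combining these,
$$2^{j/2}\cdot\varepsilon\, 2^{-k'/2}\|G\|_{L^\infty L^2} = \varepsilon\, 2^{(j-k')/2}\|G\|_{L^\infty L^2},$$
and since $j-k' \leq O(1)$, $2^{(j-k')/2}\lesssim 2^{\delta(j-k')}$ provided $\delta\leq 1/2$.

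The main technical obstacle is reconciling the $\bar Q_j$ modulation projection with the decomposable calculus: strictly speaking, placing $\bar Q_j$ on the output in the low-modulation case requires either transferring a sharpened $\bar Q_j$-projection onto the symbol $P_{k'}e^{\pm i\psi}$ (and verifying the geometric compatibility $j\le k'+C$ via Lemma~\ref{geom:cone}), or replacing $\bar Q_j$ by $\tilde Q_j$ so that it becomes disposable while still forcing the symbol to carry the $2^j$ modulation scale. A secondary subtlety is that $\psi^k_\pm$ is built from dyadic pieces $P_{j'}A$ with $j'<k-c$, so in the expansion the operator $P_{k'}^{(t,x)}$ effectively selects $j'\simeq k'$ — this is what makes the factor $2^{-k'/2}$ appear in the $D_k L^2L^\infty$ bound and drives the $2^{\delta(j-k')}$ gain.
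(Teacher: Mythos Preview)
Your outline has the right modulation split, but both branches have genuine gaps in the hard case $d=4$.

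\textbf{High-modulation input.} Your $L^2\to L^2$ bound on $e^{\pm i\psi}_{k'}$ produces only a factor $\ep$, not $\ep\,2^{\delta(j-k')}$; your remark that this ``is acceptable since $2^{\delta(j-k')}\gtrsim 1$ when $k'=j+O(1)$'' is not an argument---the lemma must hold for all $k'\le k$, in particular when $k'\gg j$, and then the decay factor is mandatory. The paper obtains it by a Sobolev shift in time: one trades part of the $\bar Q_j$ for an $L^2\to L^{10/7}$ gain of $2^{j/5}$ (Lemma~\ref{Sobolev_lemma}) and pairs this with the operator bound \eqref{oneestim2}, $\|e^{\pm i\psi}_{k'}\|_{L^2L^2\to L^{10/7}L^2}\lesssim \ep\,2^{-k'/5}$, producing $\ep\,2^{(j-k')/5}$.

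\textbf{Low-modulation input.} Your claimed bound $\|P_{k'}\psi^k_\pm\|_{D_kL^2L^\infty}\lesssim\ep\,2^{-k'/2}$ is false in $d=4$. From \eqref{decomp1} with $(q,r)=(2,\infty)$ one has $\|\psi^k_{k',\theta,\pm}\|_{D_k^\theta L^2L^\infty}\lesssim\ep\,2^{-k'/2}\theta^{-1/2}$ for $\theta\gtrsim 2^{-k}$, and summing the unrestricted angular range $\theta\in[2^{\delta(k'-k)},c]$ gives an extra loss $2^{\delta(k-k')/2}$ which you cannot absorb. This is exactly the ``technical obstacle'' you flag but do not resolve: the $\bar Q_j$ on the output is not decorative, it is what saves the angular sum. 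The paper makes this precise via Lemma~\ref{geom:cone}: when the input has modulation $\prec j$ and the output has modulation $\simeq 2^j$, the angular piece $\psi^k_{k',\theta}$ contributes only for $\theta\gtrsim 2^{(j-k')/2}$ (Remark~\ref{rk:geom:cone}), and then the $\theta^{-1/2}$-sum gives the correct $2^{(j-k')/4}$; see the auxiliary bounds \eqref{lemastep3}--\eqref{lemalstep}. To reach the symbol in a form where this geometry applies, the paper does not Taylor-expand about $1$ but uses an iterated fundamental-theorem-of-calculus decomposition $e^{\pm i\psi}=\mathcal T_0\pm i\mathcal T_1-\mathcal T_2\mp i\mathcal T_3$ (where $\mathcal T_0=e^{i\psi_{<j}}$ is handled by \eqref{oneestim1}), which separates the frequency content of $\psi$ above and below $j$ and allows the modulation constraint to be exploited term by term. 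Your plan would succeed for $d\ge 5$, where the angular exponent is nonnegative; in $d=4$ you need the additional mechanism just described.
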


\begin{corollary} \label{coremb}
For $k \geq 0,\ l\leq 0 $ we have
$$  \bar{Q}_{<k+2l} ( e^{- i \psi_{\pm}^{k}}_{< k}-e^{- i \psi_{\pm}^{k}}_{< k+2l})(t,x,D) \bar{P}_k : \bar{N}_k^{*} \to \bar{X}^{1/2}_1 $$
\end{corollary}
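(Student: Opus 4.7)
The plan is to reduce the claim directly to Lemma~\ref{Nspacelemma} by a telescoping decomposition in the $(t,x)$-frequency scale of the averaged symbol, followed by a modulation decomposition and a geometric summation in the gain $2^{\delta(j-k')}$ afforded by \eqref{Nlemmabd}.

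First, I would write the averaged exponential difference as a Littlewood--Paley telescope. By definition of the $<h$ subscript,
\[
 e^{-i\psi^k_\pm}_{<k}(t,x,\xi) - e^{-i\psi^k_\pm}_{<k+2l}(t,x,\xi)
 = \sum_{k+2l \,\leq\, k' \,<\, k - C} \bigl(e^{-i\psi^k_\pm}\bigr)_{k'}(t,x,\xi),
\]
where $(\cdot)_{k'}$ denotes the $(t,x)$-Littlewood--Paley piece at frequency $2^{k'}$, acting pointwise in $\xi$. Each piece quantizes to a left operator $e^{-i\psi^k_\pm}_{k'}(t,x,D)$ to which Lemma~\ref{Nspacelemma} applies.

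Next, I would unpack the $\bar{X}_1^{1/2}$ norm as $\sum_{j<k+2l} 2^{j/2}\|\bar{Q}_j(\cdot)\|_{L^2_{t,x}}$. For any fixed $G \in \bar{N}_k^*$ and fixed $k' \in [k+2l, k-C]$, we have $k \geq k' \geq k+2l > j$ for every $j$ appearing in the sum, so the hypothesis $k \geq k' \geq j - O(1)$ of Lemma~\ref{Nspacelemma} is automatically satisfied. Applying \eqref{Nlemmabd} gives
\[
 2^{j/2}\bigl\|\bar{Q}_j\, e^{-i\psi^k_\pm}_{k'}(t,x,D)\,\bar{P}_k G\bigr\|_{L^2_{t,x}}
 \;\lesssim\; \varepsilon\, 2^{\delta(j-k')}\,\|G\|_{\bar{N}_k^*}.
\]

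Finally, I would sum. For fixed $k'$, summing $2^{\delta(j-k')}$ over $j < k+2l \leq k'$ yields $\lesssim 2^{\delta(k+2l-k')}$; then summing over $k' \in [k+2l, k-C]$ produces a geometric series in $k'-(k+2l) \geq 0$ with ratio $2^{-\delta}<1$, which is $\lesssim 1$. Combining,
\[
 \bigl\|\bar{Q}_{<k+2l}\bigl(e^{-i\psi^k_\pm}_{<k} - e^{-i\psi^k_\pm}_{<k+2l}\bigr)(t,x,D)\,\bar{P}_k G\bigr\|_{\bar{X}_1^{1/2}}
 \;\lesssim\; \varepsilon\,\|G\|_{\bar{N}_k^*},
\]
which is the asserted embedding (with an $\varepsilon$ gain for free). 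There is no real obstacle here: the only point to check is the triangle of inequalities $j < k+2l \leq k' < k$ that legitimizes the use of \eqref{Nlemmabd}, and the double geometric summation in $j$ and $k'$, both of which are immediate from the exponential gain $2^{\delta(j-k')}$.
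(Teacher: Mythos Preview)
Your proof is correct and is exactly the approach the paper takes: the paper's proof says only ``This follows by summing over \eqref{Nlemmabd},'' and you have simply written out that summation in detail (telescoping in $k'$, then summing the geometric series $2^{\delta(j-k')}$ first in $j<k+2l$ and then in $k'\in[k+2l,k-C]$). There is nothing to add or change.
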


\begin{proof}
This follows by summing over \eqref{Nlemmabd}.
\end{proof}
The proof of this lemma is a bit long and is defered to the end of this section. The following proposition completes the proofs of \eqref{renbd}, \eqref{renbdt}, \eqref{renbd2}.

\begin{proposition}\label{Nspacemapprop} For any $ k \geq 0 $, denoting $ \psi=\psi^{k}_{\pm} $, one has:

\be \label{Nspacemap1}  e_{< k}^{\pm i \psi} (t,x,D), e_{< k}^{\pm i \psi} (D,s,y) : \bar{N}_{k} \to  \bar{N}_{k} \ee

\be \label{Nspacemap2}  \pt_{t,x} e_{< k}^{\pm i \psi} (t,x,D), \pt_{t,x} e_{< k}^{\pm i \psi} (D,s,y) : \bar{N}_{k} \to  \ep 2^k \bar{N}_{k} \ee

\be \label{Nspacemap3}  e_{< k}^{-i \psi} (t,x,D) e_{< k}^{i \psi} (D,s,y)-I :  \bar{N}_{k} \to \ep^{\frac{1}{2}} \bar{N}_{k} \ee
By duality, the same mappings hold for $ \bar{N}_{k} ^* $ in place of $ \bar{N}_{k}  $.
\end{proposition}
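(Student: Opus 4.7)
The plan is to argue separately on the two atomic summands of $\bar{N}_k = L^1 L^2 + \bar{X}^{-1/2}_1$, handling the $L^1 L^2$ atoms by a time-slicewise reduction to Proposition \ref{fixedtimeprop} and \ref{fixedtime4}, and handling the $\bar{X}^{-1/2}_1$ atoms by a dyadic decomposition of the symbol $e_{<k}^{\pm i \psi} = \sum_{k' < k} e_{k'}^{\pm i \psi}$ combined with Lemma \ref{Nspacelemma}. By symmetry (adjointness) it is enough to treat the left quantizations.

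For an $L^1 L^2$ atom $F$, Minkowski's inequality in the time variable together with, respectively, \eqref{fixedtime2}, \eqref{fixedtime3}, and Proposition \ref{fixedtime4} applied at each fixed time slice $t_0$, yield the desired bounds directly into $L^1 L^2 \subset \bar{N}_k$. This disposes of the $L^1L^2$ piece for all three mappings.

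The content is in the $\bar{X}^{-1/2}_1$ atoms. Fix $F = \bar{Q}_j \bar{P}_k F$ with $\|F\|_{L^2_{t,x}} \simeq 2^{-j/2}$, so that $\|F\|_{\bar{X}^{-1/2}_1} \simeq 1$. I decompose
\begin{equation*}
e_{<k}^{\pm i \psi}(t,x,D) = \sum_{k' < j - C} e_{k'}^{\pm i \psi}(t,x,D) + \sum_{j - C \leq k' < k} e_{k'}^{\pm i \psi}(t,x,D),
\end{equation*}
and treat the two sums differently. In the low-frequency range $k' < j - C$, the $(t,x)$-frequency support of the symbol is much smaller than the modulation scale $2^j$ of $F$, so multiplication preserves the space-time Fourier support up to $O(2^{k'}) \ll 2^j$; consequently the output remains (essentially) at modulation $\simeq 2^j$, and the $L^2_{t,x}$ norm is controlled by Minkowski together with \eqref{fixedtime2} (and the Fourier series averaging of Lemma \ref{loczsymb}). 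This keeps the sum inside $\bar{X}^{-1/2}_1$ with norm $\lesssim \|F\|_{\bar{X}^{-1/2}_1}$. In the high-frequency range $k' \geq j - C$, Lemma \ref{Nspacelemma} (eq. \eqref{Nlemmaest}) applies directly with the gain $2^{\delta(j-k')}$, and summing this geometric series over $k' \in [j-C, k)$ yields $\lesssim \epsilon \|F\|_{\bar{X}^{-1/2}_1}$, which proves \eqref{Nspacemap1}.

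The mapping \eqref{Nspacemap2} is obtained by the same procedure applied to $\partial_{t,x} e_{<k}^{\pm i \psi} = \sum_{k'<k} \partial_{t,x} e_{k'}^{\pm i \psi}$; each dyadic piece picks up an extra factor $\epsilon 2^{k'}$ coming from \eqref{decomp4} (via decomposable calculus) and from \eqref{fixedtime3} in the low-$k'$ branch, so the telescoping sum produces the claimed $\epsilon 2^k$ bound. For \eqref{Nspacemap3}, the $L^1 L^2$ atom is handled by Proposition \ref{fixedtime4} slicewise; for an $\bar{X}^{-1/2}_1$ atom $\bar{Q}_j F$, I write
\begin{equation*}
e_{<k}^{-i \psi}(t,x,D) e_{<k}^{i \psi}(D,s,y) - I = \bigl(e_{<j-C}^{-i \psi} e_{<j-C}^{i \psi} - I\bigr) + (\text{high-frequency remainders}),
\end{equation*}
and observe that the low-frequency difference acts at each fixed time via Proposition \ref{fixedtime4}, producing the $\epsilon^{1/2}$ factor on the $L^2_{t,x}$ scale while preserving the modulation location of $F$; the remainders, in which at least one of the two symbols has $(t,x)$-frequency $\geq 2^{j-C}$, are absorbed by Corollary \ref{coremb} together with Lemma \ref{Nspacelemma} applied separately to each factor.

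The main obstacle is the $\bar{X}^{-1/2}_1$ estimate for \eqref{Nspacemap3}: one must ensure that the $\epsilon^{1/2}$-gain of Proposition \ref{fixedtime4} survives after the modulation decomposition, which requires carefully tracking that the $k' < j-C$ pieces of both symbols in the composition fail to shift the modulation of $F$ outside a neighborhood of $2^j$, so that the pointwise-in-$t$ bound indeed controls the $\bar{X}^{-1/2}_1$ norm of the output. This is where the averaging representation \eqref{averaging} and the translation invariance used in Lemma \ref{loczsymb} are essential.
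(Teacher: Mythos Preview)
Your treatment of \eqref{Nspacemap1} and \eqref{Nspacemap2} is essentially the paper's: split $e_{<k}^{\pm i\psi}=e_{<\min(j,k)}^{\pm i\psi}+(e_{<k}^{\pm i\psi}-e_{<\min(j,k)}^{\pm i\psi})$, use the fixed-time bounds \eqref{fixedtime2}, \eqref{fixedtime3} together with modulation preservation on the first piece, and sum \eqref{Nlemmaest} on the second. That part is fine.

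The gap is in \eqref{Nspacemap3}. You invoke Proposition~\ref{fixedtime4} for the composition $e_{<j-C}^{-i\psi}e_{<j-C}^{i\psi}-I$, but that proposition is stated and proved only for the localization $<k$ matching the frequency scale of $\psi^k$. Its proof (Steps~2--4) uses that the averaging kernel $m_k$ lives at scale $2^{-k}$: the $C^N$ norm of the amplitude $a(z,w)$ in \eqref{intermint} grows like $\langle 2^k(|z|+|w|)\rangle^N$, and this is absorbed only by the rapid decay of $m_k$ at that same scale. Replacing $m_k$ by $m_h$ with $h<k$ breaks this balance, so the bound \eqref{krbd1} is not uniform in $h$, and you cannot claim the $\ep^{1/2}$ gain for $e_{<j-C}^{-i\psi}e_{<j-C}^{i\psi}-I$ without further work. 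Your remainder argument is also incomplete as written: applying \eqref{Nlemmaest} to the factor $(e_{<k}^{-i\psi}-e_{<j-C}^{-i\psi})$ requires a modulation-$j$ localized input, but $e_{<k}^{i\psi}\bar Q_j F$ is not modulation-localized. (This can be repaired by writing the remainder as $e_{<k}^{-i\psi}(e_{<k}^{i\psi}-e_{<j-C}^{i\psi})\bar Q_jF+(e_{<k}^{-i\psi}-e_{<j-C}^{-i\psi})e_{<j-C}^{i\psi}\bar Q_jF$ and noting that $e_{<j-C}^{i\psi}\bar Q_jF$ \emph{is} modulation-localized near $2^j$; but the low--low term still needs the $\ep^{1/2}$.)

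The paper sidesteps the whole issue by passing to the dual via self-adjointness and proving $\bar N_k^*\to\ep^{1/2}\bar N_k^*$ instead. On $L^\infty L^2$ this is Proposition~\ref{fixedtime4} at the correct scale $<k$. For the $\bar X^{1/2}_\infty$ part one fixes the \emph{output} modulation $\bar Q_j$ and splits the \emph{input} as $\bar Q_{>j-c}F_k+\bar Q_{\le j-c}F_k$. The high-modulation piece is again Proposition~\ref{fixedtime4} at scale $<k$. For the low-modulation piece the crucial observation is that both $\bar Q_j I\bar Q_{\le j-c}=0$ and $\bar Q_j\, e_{<j}^{-i\psi}e_{<j}^{i\psi}\,\bar Q_{\le j-c}=0$ by frequency support, so the entire low--low contribution \emph{vanishes} and one never needs Proposition~\ref{fixedtime4} below scale $<k$; only the corrections $e_{<k}-e_{<j}$ survive, and those give a full $\ep$ via \eqref{Nlemmabd}.
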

\begin{proof}
\pfstep{Step~1} Since $ \bar{N}_{k}  $ is defined as an atomic space, it suffices to prove \eqref{Nspacemap1}, \eqref{Nspacemap2} applied to $ F $ when $ F $ is an $ L^1 L^2 $-atom ($ \vn{F}_{L^1 L^2} \leq 1 $) or to $ \bar{Q}_j F $ an $ \bar{X}_{1}^{-\frac{1}{2}} $-atom ( $ 2^{-\frac{j}{2}} \vn{ \bar{Q}_j F}_{L^2_{t,x}} \leq 1 $). The first case follows from integrating the pointwise in $ t $  bounds \eqref{fixedtime2}, \eqref{fixedtime3}
$$ \vn{ (e_{< k}^{\pm i \psi}, \ep^{-1}2^{-k} \nabla e_{< k}^{\pm i \psi})F_k(t)}_{L^2_x} \ls \vn{F_k(t)}_{L^2_x} $$
for both the left and right quantizations. 

Now consider the second case. We split 
$$ e_{< k}^{\pm i \psi}=e_{< \min(j,k)}^{\pm i \psi}+( e_{< k}^{\pm i \psi}-e_{< \min(j,k)}^{\pm i \psi}). $$ 
Note that $ e_{< \min(j,k)}^{\pm i \psi} \bar{Q}_j F= \tilde{\bar{Q}}_j e_{< \min(j,k)}^{\pm i \psi} \bar{Q}_j F $ and thus the bound
$$ \vn{e_{< \min(j,k)}^{\pm i \psi} \bar{Q}_j F}_{\bar{N}_{k} } \ls 2^{-\frac{j}{2}} \vn{e_{< \min(j,k)}^{\pm i \psi} \bar{Q}_j F}_{L^2_{t,x}} \ls 2^{-\frac{j}{2}} \vn{\bar{Q}_j F}_{L^2_{t,x}}   $$
follows from integrating \eqref{fixedtime2}. The same argument applies to $ \nabla e_{< \min(j,k)}^{\pm i \psi} $ using  \eqref{fixedtime3}.
 The remaining estimate, for $ j \leq k $
\be \label{intermbd} \vn{(e_{< k}^{\pm i \psi}-e_{< j}^{\pm i \psi})  \bar{Q}_j F_k}_{\bar{N}_{k} }  \ls \ep 2^{-\frac{j}{2}} \vn{\bar{Q}_j F_k}_{L^2_{t,x}} \ee
follows by summing \eqref{Nlemmaest} in $ k'$. Note that \eqref{intermbd} remains true with $ e^{\pm i \psi} $ replaced by $ 2^{-k} \nabla e^{\pm i \psi} $, because \eqref{Nlemmaest} remains true, which concludes \eqref{Nspacemap2}. To see this, one writes $ 2^{-k'} \nabla e^{\pm i \psi}_{k'}=L e^{\pm i \psi}_{k'} $ where $ L $ is disposable and use translation invariance and \eqref{Nlemmaest}.

\pfstep{Step~2} To prove \eqref{Nspacemap3}, since that operator is self-adjoint, we prove that it is bounded from $ \bar{N}_{k} ^* \to \ep^{\frac{1}{2}} \bar{N}_{k} ^* $, where $ \bar{N}_{k} ^* \simeq L^{\infty}L^2 \cap \bar{X}_{\infty}^{\frac{1}{2}} $. The $ L^{\infty}L^2 $ mapping follows from Prop. \ref{fixedtime4}, so it remains to prove
$$ 2^{\frac{j}{2}} \vn{\bar{Q}_j \tilde{P}_k [ e_{< k}^{-i \psi} (t,x,D) e_{< k}^{i \psi} (D,s,y)-I ] F_k}_{L^2_{t,x}} \ls \ep^{\frac{1}{2}} \vn{F_k}_{L^{\infty}L^2 \cap \bar{X}_{\infty}^{\frac{1}{2}}}  $$
For $ \bar{Q}_{>j-c} F_k $ we can discard $ \bar{Q}_j \tilde{P}_k $ and since $ \vn{\bar{Q}_{>j-c} F_k}_{L^2_{t,x}} \ls 2^{-j/2} \vn{F_k}_{\bar{X}_{\infty}^{\frac{1}{2}}} $, the bound follows for this component from Prop. \ref{fixedtime4} by integration . 

For $ \bar{Q}_{\leq j-c} F_k $ the claim follows by adding the following 
$$  2^{\frac{j}{2}} \bar{Q}_j \tilde{P}_k [ e_{< k}^{-i \psi} -e_{< j}^{-i \psi} ](t,x,D) e_{< k}^{i \psi}(D,s,y) \bar{Q}_{\leq j-c} : \bar{N}_{k} ^* \to \ep L^2_{t,x}    $$ 
$$  2^{\frac{j}{2}} \bar{Q}_j \tilde{P}_k e_{< j}^{-i \psi}(t,x,D) [ e_{< k}^{i \psi} -e_{< j}^{i \psi} ](D,s,y)  \bar{Q}_{\leq j-c} : \bar{N}_{k} ^* \to \ep L^2_{t,x}    $$
since
$$ \bar{Q}_j \tilde{P}_k I \bar{Q}_{\leq j-c}=0, \qquad  \bar{Q}_j \tilde{P}_k e_{< j}^{-i \psi} e_{< j}^{i \psi} \bar{Q}_{\leq j-c}=0. $$ 
These mappings follow from \eqref{Nlemmabd}, \eqref{Nspacemap1} for $ \bar{N}_k^* $ and Prop. \ref{fixedtime4} and writing $ \bar{Q}_j \tilde{P}_k e_{< j}^{-i \psi}=\bar{Q}_j \tilde{P}_k e_{< j}^{-i \psi} \bar{Q}_{[j-5,j+5]} $. 
\end{proof}

\

\subsection{Proof of the conjugation bound \eqref{conj}} \ 

\

In general, for pseudodifferential operators one has the composition property $ a(x,D) b(x,D)=c(x,D) $ where, in an asymptotic sense
$$ c(x,\xi) \sim \sum_{\al} \frac{1}{\al !} \pt_{\xi}^{\al} a(x,\xi)  D_x^{\al} b(x,\xi).
$$
In the present case this formula will be exact, as seen by differentiating under the integral in \eqref{left:quant}.
  
By definition \eqref{left:quant}, the symbol of $  e_{< k}^{-i \psi_{\pm}^k} (t,x,D) \Box_m $ is
\be  e_{< k}^{-i \psi_{\pm}^k(t,x,\xi)}  ( \pt_t^2+ \vm{\xi}^2+1).  \label{symb:01} \ee
By differentiating \eqref{left:quant}, we see that the symbol of  $  \Box_m e_{< k}^{-i \psi_{\pm}^k} (t,x,D) $ is
\be  e_{< k}^{-i \psi_{\pm}^k} ( \pt_t^2+ \vm{\xi}^2+1) + \Box e_{< k}^{-i \psi_{\pm}^k} + 2 \lpr \pt_t e_{< k}^{-i \psi_{\pm}^k} \pt_t - i (\nabla e_{< k}^{-i \psi_{\pm}^k}) \cdot \xi \rpr   \label{symb:02} \ee
while the symbol of the operator $ 2i ( A_{< k} \cdot \nabla) e_{< k}^{-i \psi_{\pm}^k} (t,x,D) $ is 
\be -2 e_{< k}^{-i \psi_{\pm}^k (t,x,\xi)} A_{< k}(t,x) \cdot \xi+ 2i \nabla e_{< k}^{-i \psi_{\pm}^k (t,x,\xi)} \cdot A_{< k}(t,x)   \label{symb:03} \ee

Now, the inequality \eqref{conj} follows from the following proposition.

\begin{proposition}
Denoting $ \psi= \psi_{\pm}^k $, we can decompose 
$$ e_{< k}^{-i \psi_{\pm}^k} (t,x,D) \Box_m - \Box_{m}^{A_{< k}} e_{< k}^{-i \psi_{\pm}^k} (t,x,D)=\sum_{i=0}^5 F_i(t,x,D) $$ 
where 
\begin{align*}
	F_0(t,x,\xi) :=&  2 \lpp ((\pm  \jb{\xi} \pt_t -\xi \cdot \nabla )  \psi(t,x,\xi) + A_{< k}(t,x) \cdot \xi ) e^{-i \psi(t,x,\xi)}   \rpp_{< k}  \\
	F_1(t,x,\xi) :=&  - \Box e_{< k}^{-i \psi(t,x,\xi)}\\
	F_2(t,x,\xi) :=& 2i \nabla e_{< k}^{-i \psi(t,x,\xi)} \cdot A_{< k}(t,x) \\
	F_3(t,x,\xi) :=& 2i^{-1}  \pt_t  e_{< k}^{-i \psi(t,x,\xi)} (i \pt_t \pm \jb{\xi})   \\
	F_4(t,x,\xi) :=& 2 \lpp \lpr A_{< k}(t,x) e^{-i \psi(t,x,\xi)} \rpr_{< k} - A_{< k}(t,x) e_{< k}^{-i \psi(t,x,\xi)} \rpp \cdot \xi
\end{align*}

and for all $ i=0,4 $ we have 
\be \label{cjglm} \vn{F_i(t,x,D) u_k}_{\bar{N}_{k}} \ls \ep \vn{u_k}_{L^{\infty} H^1}+ \ep 2^k \vn{(i \pt_t \pm \jb{D}) u_k}_{\bar{N}_k} \ee	
	
\begin{proof} The decomposition follows from \eqref{symb:01}-\eqref{symb:03} and basic manipulations. We proceed to the proof of \eqref{cjglm}. We will make use of the bound
$$  \vn{u_k}_{\bar{N}_{k}^*} \ls \vn{u_k}_{L^{\infty} L^2}+  \vn{(i \pt_t \pm \jb{D}) u_k}_{\bar{N}_k}, $$
for which we refer to the proof of Lemma \ref{waves}. Recall that we identify $ \bar{N}_k^* \simeq L^{\infty} L^2 \cap \bar{X}^{\frac{1}{2}}_{\infty} $.
\pfstep{Step~1}[The main term $F_0$] Recall the identity \eqref{nullvf} and the definitions \eqref{defn1}, \eqref{defn2}. For $ k=0 $ the term $ F_0(t,x,\xi) $ vanishes. Now assume $ k \geq 1 $ and write
$$  F_0(t,x,\xi)=2 ( ( \sum_{k_1<k-c}   \Pi^{\omega}_{\leq \delta(k_1-k)}  A_{k_1} \cdot \xi )e^{-i \psi(t,x,\xi)}   )_{< k} =  2 F'(t,x,\xi)_{< k}  $$
where
$$ F'(t,x,\xi)= a(t,x,\xi) e^{-i \psi(t,x,\xi)}_{< k}, \qquad a(t,x,\xi)  \defeq  \sum_{k_1<k-c}   \Pi^{\omega}_{\leq \delta(k_1-k)}  A_{k_1}(t,x) \cdot \xi  $$
By \eqref{spec_decomp3} we have

$$ \vn{ F'(t,x,D)- a(t,x,D) e^{-i \psi}_{< k}(t,x,D)}_{L^{\infty}L^2 \to L^1 L^2} \ls  \vn{\nabla_\xi a}_{D_k^1L^{2}(L^\infty)}  \vn{\nabla_x e^{-i \psi}_{< k}(t,x,D)}_{L^{\infty} (L^2)\to L^{2}(L^2)} 
$$ 
By lemma \ref{decomp_lemma}, \eqref{decomp3} and lemma \ref{loczsymb} we have
$$ \vn{(\nabla_x \psi e^{-i \psi})_{< k} (t,x,D)}_{L^{\infty} (L^2)\to L^{2}(L^2)}  \ls \vn{\nabla_{x} \psi^{<k}_{\pm}}_{D_k(L^2 L^{\infty})} \ls 2^{\frac{k}{2}} \ep$$
Summing over \eqref{decomp5}, we get $ \vn{\nabla_\xi a}_{D_k^1L^{2}(L^\infty)} \ls 2^{\frac{k}{2}} \ep $. Thus
$$ F'(t,x,D)- a(t,x,D) e^{-i \psi}_{< k}(t,x,D) : \bar{N}_{k}^* \to 2^k \ep \bar{N}_k $$
and it remains to prove
$$ a(t,x,D) e^{-i \psi}_{< k}(t,x,D) : \bar{N}_{k}^* \to 2^k \ep \bar{N}_k $$
By Proposition \ref{Nspacemapprop}, $ e^{-i \psi}_{< k}(t,x,D) $ is bounded on $ \bar{N}_{k}^* $. 

Assume $ -k \leq \delta (k_1-k) $ (the case $ \delta (k_1-k) \leq -k $ is analogous).  We decompose 
$$ a(t,x,\xi)=\sum_{k_1 <k-c} \sum_{\tht\in[2^{-k},2^{\delta (k_1-k)}]} a_{k_1}^{\tht} (t,x,\xi), $$ 
$$  a_{k_1}^{2^{-k}} (t,x,\xi) \defeq \Pi^{\omega}_{\leq 2^{-k}}  A_{k_1}(t,x) \cdot \xi, \qquad a_{k_1}^{\tht} (t,x,\xi) \defeq \Pi^{\omega}_{\tht}  A_{k_1}(t,x) \cdot \xi \quad ( \tht > 2^{-k}),  $$
and it remains to prove
$$ \vn{ a_{k_1}^{\tht} (t,x,D) v_k}_{\bar{N}_k} \ls \tht^{\frac{1}{2}} \ep 2^k \vn{v_k}_{\bar{N}_k^*}. $$
for all $ \tht=2^l,  \ l \geq -k $. First, using \eqref{decomp6} we have
$$ \vn{a_{k_1}^{\tht} (t,x,D) \bar{Q}_{>k_1+ 2l-c} v_k}_{L^1 L^2} \ls \vn{a_{k_1}^{\tht}}_{D_k L^2 L^{\infty}} \vn{\bar{Q}_{>k_1+ 2l-c} v_k}_{L^2_{t,x}} \ls \tht^{\frac{1}{2}} \ep 2^k \vn{v_k}_{\bar{N}_k^*} $$
Then, denoting $ f(t,x)=a_{k_1}^{\tht} (t,x,D) \bar{Q}_{<k_1+ 2l-c} v_k $ we have 
$$ \vn{ f}_{L^2_{t,x}} \ls \vn{a_{k_1}^{\tht}}_{D_k L^2 L^{\infty}} \vn{\bar{Q}_{<k_1+ 2l-c} v_k}_{L^{\infty} L^2} \ls 2^{\frac{3}{2}l}2^{\frac{k_1}{2}} \ep 2^k \vn{v_k}_{\bar{N}_k^*} $$
For each $ \xi $, the term $ \bar{Q}_j[ \Pi^{\omega}_{\tht}  A_{k_1}(t,x) \xi e^{i x \xi} \bar{Q}_{<k_1+ 2l-c} \hat{v_k}(t,\xi)] $ is non-zero only for $ j=k_1+2l+O(1) $ (by Remark \ref{rk:geom:cone} of Lemma \ref{geom:cone}). Thus,
$$ \vn{f}_{\bar{N}_k} \leq \vn{f}_{\bar{X}_1^{-1/2}} \ls \sum_{ j=k_1+2l+O(1)} \vn{\bar{Q}_j f}_{L^2_{t,x}} 2^{-\frac{j}{2}} \ls \tht^{\frac{1}{2}} \ep 2^k \vn{v_k}_{\bar{N}_k^*} $$ 
 
\pfstep{Step~2} [The terms $ F_1 $ and $ F_2 $] Since $ \Box_{t,x} \psi(t,x,\xi)=0 $ we have 
\begin{align*}
 F_1(t,x,\xi)= &\lpp  ( \vm{\pt_t \psi (t,x,\xi)}^2 -  \vm{\nabla \psi (t,x,\xi)}^2 )  e^{-i \psi (t,x,\xi)}   \rpp_{< k}, \\ 
 F_2(t,x,\xi)=& 2i A^j_{< k}(t,x) \lpr \pt_j \psi (t,x,\xi) e^{-i \psi (t,x,\xi)}   \rpr_{< k} 
\end{align*}

By lemma \ref{decomp_lemma} and \eqref{decomp3} we have
\begin{align*}
(\pt_{j} \psi e^{-i \psi}) (t,x,D) &  : L^{\infty} L^2 \to \ep 2^{\frac{k}{2}} L^2 L^2 \\
( \vm{\pt_{\al} \psi}^2 e^{-i \psi}) (t,x,D)  & : L^{\infty} L^2 \to \ep^2 2^k L^1 L^2 
\end{align*}
By lemma \ref{loczsymb} the same mappings hold for the $ < k $ localized symbols, which proves \eqref{cjglm} for $ F_1 $, while for $ F_2 $ we further apply H\" older's inequality together with $ \vn{A_{< k}}_{L^2 L^{\infty}} \ls 2^{k/2} \ep $. 

\pfstep{Step~3} [The term $F_3$] The bound follows by using \eqref{Nspacemap2} to dispose of 
$$ 2^{-k} \ep^{-1} \pt_t e_{< k}^{-i \psi}(t,x,D). $$
\pfstep{Step~4}[The term $ F_4$] Using Lemma  \ref{comm_id} we write
$$ F_4(t,x,\xi)=2^{-k} \xi_j L( \nabla_{t,x}A^j_{< k}(t,x), e^{-i \psi (t,x,\xi)})  $$
As in lemma \ref{loczsymb}, by translation-invariance it suffices to prove
$$ 2^{-k} \vn{ \nabla_{t,x}A^j_{< k} e^{-i \psi} (t,x,D) \partial_j u_k}_{\bar{N}_k} \ls \ep 2^k \vn{e^{-i \psi} (t,x,D) u_k}_{\bar{N}_k^*} \ls \ep 2^k \vn{ u_k}_{\bar{N}_k^*}  $$
which follows from \eqref{est:phi2:freqAx} (observe that the $ \calH_{k'}^* $ term is zero when $ \Box A^j=0 $ and in this case the $ \bar{N}_{k'}^* $ norm of $ \phi $ suffices. One uses the derivative on $ A^j $ to do the $ k' $ summation).
\end{proof}

\end{proposition}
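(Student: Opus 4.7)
My plan is to first derive the decomposition by direct symbol calculus, and then estimate the five pieces $F_0, \ldots, F_4$ using the tools assembled in the preceding subsections. Differentiating under the integral in \eqref{left:quant} gives that $e_{<k}^{-i\psi}(t,x,D)\Box_m$ has symbol $e_{<k}^{-i\psi(t,x,\xi)}(\pt_t^2 + |\xi|^2 + 1)$, while $\Box_m e_{<k}^{-i\psi}(t,x,D)$ produces the additional commutator terms $\Box e_{<k}^{-i\psi}$, $2(\pt_t e_{<k}^{-i\psi})\pt_t$ and $-2i(\nabla e_{<k}^{-i\psi})\cdot\xi$. Subtracting, adding the paradifferential correction $2iA_{<k}\cdot\nabla\,e_{<k}^{-i\psi}(t,x,D)$ that defines $\Box_m^{A_{<k}}$, and regrouping the $-2i\nabla e_{<k}^{-i\psi}\cdot\xi$ with $2iA_{<k}\cdot\nabla e_{<k}^{-i\psi}$ extracts $F_0$; the $\pt_t$-commutator $-2(\pt_t e_{<k}^{-i\psi})\pt_t$ is rewritten via $\pt_t = -i(i\pt_t\pm\jb{D}) \pm i\jb{D}$, yielding $F_3$ plus a remainder that merges into $F_0$. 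Finally, $F_4$ appears as the difference $(A_{<k}e^{-i\psi})_{<k} - A_{<k}\,e^{-i\psi}_{<k}$ produced when commuting the $<k$-localization past the product.

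The main term is $F_0$, which would vanish identically if $\psi$ solved $(\pm\jb{\xi}\pt_t - \xi\cdot\nabla)\psi = -A_{<k}\cdot\xi$. Using \eqref{nullvf} together with $\Box A=0$, one checks that the actual residual is $a(t,x,\xi) = \sum_{k_1<k-c}\Pi^\omega_{\leq \delta(k_1-k)}A_{k_1}(t,x)\cdot\xi$, i.e.\ the small-angle contribution deliberately excluded from the phase in \eqref{defn2}. I would first apply \eqref{spec_decomp3} to decouple $a(t,x,D)$ from $e^{-i\psi}_{<k}(t,x,D)$, bounding the composition error by $\ep 2^k$ as an $L^\infty L^2 \to L^1 L^2$ operator via \eqref{decomp3}, \eqref{decomp5} and Lemma \ref{decomp_lemma}. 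It then remains to estimate $a(t,x,D)$ applied to $v_k = e^{-i\psi}_{<k}u_k \in \bar{N}_k^*$ (admissible by \eqref{Nspacemap1} plus duality). The crucial input is the Coulomb null structure (Remark \ref{rk:Col:nf}): decomposing $a$ dyadically in angle $\theta \in [2^{-k},2^{\delta(k_1-k)}]$, each piece $\Pi^\omega_\theta A_{k_1}\cdot\xi$ enjoys the improved bound \eqref{decomp6}, and a further modulation split of $v_k$ combined with the forced modulation $\sim k_1+2\ell$ from Lemma \ref{geom:cone} (see Remark \ref{rk:geom:cone}) places the main contribution in $\bar{X}_1^{-1/2}$ with a $\theta^{1/2}$ gain that is summable in both $\theta$ and $k_1$.

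The remaining pieces are softer. For $F_1 = -[(|\pt_t\psi|^2 - |\nabla\psi|^2)e^{-i\psi}]_{<k}$, I would apply Lemma \ref{decomp_lemma} together with \eqref{decomp3} twice, placing each factor of $\pt\psi$ in $D_k(L^2L^\infty)$ at cost $\ep 2^{k/2}$, yielding a mapping $L^\infty L^2 \to \ep^2 2^k L^1 L^2$, and then pass to the $<k$-localized symbol using Lemma \ref{loczsymb}. For $F_2 = 2iA^j_{<k}\cdot(\pt_j\psi\,e^{-i\psi})_{<k}$, the same calculus places $\pt\psi\,e^{-i\psi}$ in $D_k(L^2L^\infty)$ and H\"older's inequality with $\|A_{<k}\|_{L^2L^\infty} \ls \ep 2^{k/2}$ closes the estimate. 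For $F_3$, the bound is immediate from \eqref{Nspacemap2}: $2^{-k}\pt_t e^{-i\psi}_{<k}(t,x,D)$ maps $\bar{N}_k$ into $\ep\bar{N}_k$, and it is applied to $(i\pt_t\pm\jb{D})u_k$, which is controlled in $\bar{N}_k$ with an extra $2^k$. For $F_4$, Lemma \ref{comm_id} rewrites the symbol as $2^{-k}\xi_j L(\nabla_{t,x}A^j_{<k}, e^{-i\psi})$; translation invariance then reduces the claim to the paradifferential null-form estimate \eqref{est:phi2:freqAx} applied with $\phi = e^{-i\psi}_{<k}u_k \in \bar{N}_k^*$, the $\calH^*$-subtraction being automatically zero since $\Box A^j = 0$.

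The main obstacle is $F_0$: it is the only piece where neither an off-the-shelf decomposable estimate nor a pre-existing function-space bound closes the estimate directly, and a naive approach produces a logarithmic divergence in $k_1 < k$. Overcoming this divergence requires the full strength of the Coulomb angular null form together with the geometric modulation constraint from Lemma \ref{geom:cone} to match the $\theta^{1/2}$ angular gain from Remark \ref{rk:Col:nf} against the $\theta^{-1}$ cost from the thin-sector support of $a^\theta_{k_1}$, and to properly redistribute modulation between $v_k$ and the output. All other $F_i$ are essentially mechanical applications of the decomposable calculus or of function-space mapping properties already verified earlier in this section.
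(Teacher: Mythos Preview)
Your proposal is correct and follows essentially the same approach as the paper: the decomposition via symbol calculus, the identification of $F_0$ with the small-angle residual $a = \sum_{k_1} \Pi^\omega_{\leq \delta(k_1-k)} A_{k_1}\cdot\xi$, the use of \eqref{spec_decomp3} to decouple $a$ from $e^{-i\psi}_{<k}$, the angular decomposition of $a$ combined with \eqref{decomp6} and the modulation pinning from Remark~\ref{rk:geom:cone}, and the handling of $F_1$--$F_4$ via the decomposable calculus, \eqref{Nspacemap2}, and \eqref{est:phi2:freqAx} all match the paper's proof. The only omissions are cosmetic: the paper notes that $F_0$ vanishes when $k=0$, and it makes explicit the preliminary bound $\|u_k\|_{\bar{N}_k^*} \lesssim \|u_k\|_{L^\infty L^2} + \|(i\pt_t \pm \jb{D})u_k\|_{\bar{N}_k}$ (from the proof of Lemma~\ref{waves}) that you invoke implicitly when placing $v_k$ in $\bar{N}_k^*$.
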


\subsection{Proof of the $ \bar{S}_k $ bound \eqref{renbd3}} \ 

\

We begin by stating a simple lemma that provides bounds for localized symbols.

\begin{lemma} \label{lemma:locz:symb}   Let $ X $ be a translation-invariant space of functions defined on $ \mb{R}^{d+1} $. Let $ P $ be a bounded Fourier multiplier. Suppose we have the bounded map
\be \label{symb:X}  e^{-i \psi}(t,x,D) e^{\pm i t \jb{D}} P:  L^2_x \to  X. \ee
Then, uniformly in $ h $, we also have the bounded map for localized symbols:
\be \label{locz:X}  e^{-i \psi}_{<h}(t,x,D) e^{\pm i t \jb{D}} P:  L^2_x \to  X. \ee
\end{lemma}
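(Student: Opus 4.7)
The plan is to derive \eqref{locz:X} from \eqref{symb:X} by an averaging argument modeled on Lemma \ref{loczsymb}, using the translation invariance of $X$. The starting point is the representation \eqref{averaging}, which expresses $e^{-i\psi}_{<h}(t,x,\xi)$ as a convolution of $e^{-i\psi}(t,x,\xi)$ with the $L^{1}$-normalized bump $m_{h}$ in the $(t,x)$ variables. This immediately yields
\[
e^{-i\psi}_{<h}(t,x,D)\,e^{\pm it\jb{D}}P\,u \;=\; \int_{\mb{R}^{d+1}} m_{h}(s,y)\, \bigl[e^{-iT_{(s,y)}\psi}\bigr](t,x,D)\, e^{\pm it\jb{D}}P\,u \, ds\, dy,
\]
where $T_{(s,y)}\psi(t,x,\xi)=\psi(t+s,x+y,\xi)$ and $\|m_{h}\|_{L^{1}}\le C$ uniformly in $h$.

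The key step is to identify each shifted operator on the right with a spacetime translate of the original operator $A(t,x,D):=e^{-i\psi}(t,x,D)\,e^{\pm it\jb{D}}P$, applied to a modified $L^{2}_{x}$ datum. I would substitute $\hat{u}(\xi)=e^{iy\cdot\xi}\widehat{\tau_{y}u}(\xi)$ with $\tau_{y}u(x):=u(x-y)$, change variables to $(t',x')=(t+s,x+y)$ in the Fourier representation, and use that $P$ commutes with $e^{\mp is\jb{D}}$ (both being Fourier multipliers). The resulting identity is
\[
\bigl[e^{-iT_{(s,y)}\psi}\bigr](t,x,D)\,e^{\pm it\jb{D}}P\,u\,(x) \;=\; \bigl[A(\cdot,\cdot,D)(e^{\mp is\jb{D}}\tau_{y}u)\bigr](t+s,x+y).
\]
By translation invariance of $X$, unitarity of $e^{\mp is\jb{D}}$ on $L^{2}_{x}$, and the hypothesis \eqref{symb:X}, this bounds the $X$-norm of the left-hand side by $C\|u\|_{L^{2}_{x}}$ uniformly in $(s,y)$.

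A final application of Minkowski's inequality to the averaging integral, together with the uniform $L^{1}$-bound on $m_{h}$, produces \eqref{locz:X} with a constant independent of $h$. The only real piece of bookkeeping is the translation identity in the middle step: one must correctly convert the shift in the $(t,x)$-arguments of the symbol $e^{-i\psi}$ into a spacetime translation of the output of $A$ and compensate on the data side via $e^{\mp is\jb{D}}\tau_{y}u$. I expect this to be the main (mild) obstacle; once the identity is in place, the remainder of the argument is mechanical and directly parallels the proof of Lemma \ref{loczsymb}.
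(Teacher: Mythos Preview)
Your proposal is correct and follows essentially the same approach as the paper: the paper writes $e^{-i\psi}_{<h}(t,x,D)e^{\pm it\jb{D}}Pu_0$ as $\int m_h(s,y)\,e^{-i\psi}(t+s,x+y,D)e^{\pm i(t+s)\jb{D}}Pu_{s,y}\,ds\,dy$ with $\hat{u}_{s,y}(\xi)=e^{\mp is\jb{\xi}}e^{-iy\xi}\hat{u}_0(\xi)$ (which is exactly your $e^{\mp is\jb{D}}\tau_y u$), then applies Minkowski, translation invariance of $X$, the hypothesis \eqref{symb:X}, and $\|u_{s,y}\|_{L^2_x}=\|u_0\|_{L^2_x}$. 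Your identification of the translation identity as the only nontrivial step is accurate.
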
 

\begin{proof} Recalling \eqref{averaging}, for $ u_0 \in L^2_x $ we write 
$$   e^{- i \psi}_{<h}(t,x,D) e^{\pm i t \jb{D}} P u_0  =\int_{\mb{R}^{d+1}} m_h(s,y) e^{-i \psi}(t+s,x+y,D) e^{\pm i(t+s) \jb{D}} P u_{s,y} \dd s \dd y  $$
where $ \hat{u}_{s,y}(\xi)=e^{\mp i s \jb{\xi}} e^{-i y \xi} \hat{u}_0(\xi) $.
By Minkowski's inequality, translation invariance of $ X $, \eqref{symb:X} and the bound $ \vn{u_{s,y}}_{L^2_x} \leq \vn{u_0}_{L^2_x} $ we obtain \eqref{locz:X}.
\end{proof}

We will apply this lemma for $ X $ taking the various norms that define $ \bar{S}_k $. 

The next lemma will be used to reduce estimates to the case of free waves.

\begin{lemma} \label{waves}
Let $ k \geq 0 $ and  $ X $ be a space of functions on $ \mb{R}^{1+n} $ with Fourier support in $ \{ \jb{\xi} \simeq 2^k \} $ (or a subset of it, such as a $ 2^{k'} \times ( 2^{k' +l'} )^3 $ box) such that
$$ \vn{e^{i t \sigma} f}_X \ls \vn{f}_X, \quad \forall \sigma \in \mb{R} $$
$$ \vn{1_{t>s} f}_X  \ls \vn{f}_X, \quad \forall s \in \mb{R} $$
\be \label{freesolren} \vn{e^{-i \psi}_{<h} (t,x,D) e^{\pm i t \jb{D}} u_0}_X \ls C_1 \vn{u_0}_{L^2} . \ee
hold for all $ f, u_0 $ and both signs $ \pm $. Then, we have 
\be \label{est:waves}
2^k \vn{e^{-i \psi}_{<h} (t,x,D) u}_X \ls C_1(  \vn{u[0]}_{H^1 \times L^2}+  \vn{\Box_m u}_{\bar{N}_k} ) \ee

If we only assume that \eqref{freesolren} holds for one of the signs $ \pm $, then \eqref{est:waves} still holds for functions $ u $ with Fourier support in $ \{\pm \tau \geq 0 \} $.
\end{lemma}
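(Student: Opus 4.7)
The plan is to reduce \eqref{est:waves} to the hypothesis \eqref{freesolren} via the half-wave decomposition of $\Box_m = (\pt_t - i\jb{D})(\pt_t + i\jb{D})$ combined with Duhamel's formula. I set
\[ u_\pm \defeq \tfrac{1}{2}\bigl(u \mp i\jb{D}^{-1} \pt_t u\bigr), \qquad G_\pm \defeq \mp \tfrac{i}{2}\jb{D}^{-1}\Box_m u, \]
so that $u = u_+ + u_-$ and $(\pt_t \mp i\jb{D}) u_\pm = G_\pm$. The frequency localization $\jb{\xi} \simeq 2^k$ yields $\vn{u_\pm(0)}_{L^2} \lesssim 2^{-k}\vn{u[0]}_{H^1\times L^2}$ and $\vn{G_\pm}_{\bar N_k} \lesssim 2^{-k}\vn{\Box_m u}_{\bar N_k}$. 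After extracting the $2^k$ prefactor and using the Duhamel representation $u_\pm(t) = e^{\pm it\jb{D}}u_\pm(0) + \int_0^t e^{\pm i(t-s)\jb{D}} G_\pm(s)\,\ud s$, matters reduce to proving, for both signs, the free-wave bound \eqref{freesolren} (which is given) together with the Duhamel bound
\[ \vn{e^{-i\psi}_{<h}(t,x,D) \int_0^t e^{\pm i(t-s)\jb{D}} G(s)\,\ud s}_X \lesssim C_1 \vn{G}_{\bar N_k}, \]
for which I use the atomic decomposition of $\bar N_k$.

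For an $L^1L^2$ atom $G$, the key observation is that $e^{-i\psi}_{<h}(t,x,D)$ acts pointwise in $t$ and therefore commutes with the time cutoff $1_{t>s}$ in the Duhamel integral. Writing $e^{\pm i(t-s)\jb{D}}G(s) = e^{\pm it\jb{D}}[e^{\mp is\jb{D}}G(s)]$ and applying Minkowski in $s$ together with the hypotheses $\vn{1_{t>s}f}_X \lesssim \vn{f}_X$ and \eqref{freesolren}, the desired bound $C_1 \vn{G}_{L^1L^2}$ follows at once.

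For an $\bar X_1^{-1/2}$ atom $G = \bar Q_j^\sigma G$ with $\vn{G}_{L^2_{t,x}} \simeq 2^{j/2}$, I adapt the representation strategy of Proposition~\ref{Xembedding}: the change of variables $\tau = \rho \pm \jb{\xi}$ in the Fourier inversion formula for a particular solution of $(\pt_t \mp i\jb{D}) v = G$ yields
\[ v^{part}(t,x) = \int_{|\rho|\simeq 2^j} e^{it\rho}\, e^{\pm it\jb{D}}\bigl[ G_\rho \bigr]\,\ud\rho, \qquad \widehat{G_\rho}(\xi) \defeq -i\rho^{-1}\,\mathcal{F} G(\rho \pm \jb{\xi},\xi), \]
and the true Duhamel solution differs from $v^{part}$ only by the free wave $e^{\pm it\jb{D}}[v^{part}(0)]$ enforcing zero initial data, which is of the same form. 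Since $e^{-i\psi}_{<h}(t,x,D)$ commutes with multiplication by the $x$-independent factor $e^{it\rho}$, and since $\vn{e^{it\rho}f}_X \lesssim \vn{f}_X$, Minkowski in $\rho$ combined with \eqref{freesolren} reduces the estimate to bounding $\int_{|\rho|\simeq 2^j}\vn{G_\rho}_{L^2_x}\,\ud\rho$; Cauchy--Schwarz in $\rho$ and Plancherel give this quantity $\lesssim 2^{j/2}\bigl(\int \vn{G_\rho}_{L^2_x}^2\,\ud\rho\bigr)^{1/2} \simeq 2^{-j/2}\vn{G}_{L^2_{t,x}} = \vn{G}_{\bar X_1^{-1/2}}$.

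The main obstacle is precisely the $\bar X_1^{-1/2}$ atom step, where the direct Minkowski-in-$s$ argument used for the $L^1L^2$ atom breaks down and one must instead realize the Duhamel solution as a continuous superposition of free Klein--Gordon waves through Fourier inversion, as above. The single-sign claim at the end follows by applying the same Fourier-inversion representation directly to $u$ rather than to $G_\pm$: for $u$ with Fourier support in $\{\pm \tau \geq 0\}$, the substitution $\tau = \rho \pm \jb{\xi}$ respects this support and expresses $u$ as a $\rho$-superposition of $e^{\pm it\jb{D}}$ waves of the single sign $\pm$, with the factorization $\mathcal{F}\Box_m u(\rho \pm \jb{\xi},\xi) = \mp \rho(2\jb{\xi} \pm \rho)\,\mathcal{F}u(\rho \pm \jb{\xi},\xi)$ splitting the $\rho$-integral into a near-cone contribution bounded by $\vn{u[0]}_{H^1 \times L^2}$ and an off-cone contribution bounded by $\vn{\Box_m u}_{\bar N_k}$ via the same Cauchy--Schwarz argument.
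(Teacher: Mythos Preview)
Your argument is correct and follows essentially the same route as the paper: reduce to free waves via Duhamel, treat $L^1L^2$ atoms by Minkowski in $s$ using that $e^{-i\psi}_{<h}(t,x,D)$ is local in time, and treat $\bar X_1^{-1/2}$ atoms by the Fourier substitution $\tau=\rho\pm\jb{\xi}$ to write the (particular) solution as a $\rho$-superposition of free Klein--Gordon waves. The only organizational difference is that you half-wave decompose $u=u_++u_-$ first and then apply the atomic decomposition to each $G_\pm$, whereas the paper decomposes $\Box_m u=F^1+F^2$ first and uses the second-order Duhamel formula and $v=\Box_m^{-1}F^2$; these are equivalent. One small imprecision: your claim ``$|\rho|\simeq 2^j$'' for an atom $\bar Q_j^\sigma G$ only holds when $\sigma$ matches the half-wave sign; for the opposite sign one has $|\rho|\simeq\max(2^j,2^k)$, but the Cauchy--Schwarz bound still yields $\lesssim 2^{-j/2}\vn{G}_{L^2_{t,x}}$, so the argument goes through unchanged.
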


\begin{proof} We decompose $ \Box_m u=F^1+F^2 $ such that $ \vn{\Box_m u}_{\bar{N}_k} \simeq \vn{F^1}_{L^1 L^2}+\vn{F^2}_{\bar{X}_1^{-\frac{1}{2}}} $. By \eqref{freesolren} we can subtract free solutions from $ u $ and so we may assume that $ u[0]=(0,0) $. We may also assume that $ F^2 $ is modulation-localized to $ \vm{\tau-\jb{\xi}} \simeq 2^j,\ \tau \geq 0 $.   We define $ v=\frac{1}{\Box_m}F^2 $ and write $ u=u^1+u^2 $ where $ u^1 $ is the Duhamel term
$$ u^1(t)=\int_{\mb{R}} \frac{\sin( (t-s) \jb{D})}{\jb{D}} 1_{t>s} F^1(s)  \dd s - \sum_{\pm} \pm e^{\pm it \jb{D}} \int_{-\infty}^0 e^{\mp is \jb{D}} \frac{F^1(s)}{2i\jb{D}} \dd s  $$
$$ \text{and} \qquad  u^2=v-e^{it \jb{D}} w^1-e^{-i t \jb{D}} w^2 $$
so that $ \Box_m u^2 =0 $ and $ w^1,w^2 $ are chosen such that $ u^2[0]=(0,0) $.  

For the second part of $ u^1 $ we use \eqref{freesolren} together with
$$ \vn{\int_{-\infty}^0 e^{\mp is \jb{D}} \frac{F^1(s)}{2i\jb{D}} \dd s}_{L^2} \leq \int_{-\infty}^0 \vn{e^{\mp is \jb{D}} \frac{F^1(s)}{2i\jb{D}}}_{L^2}   \dd s \ls 2^{-k} \vn{F^1(s)}_{L^1 L^2}. $$
For the first part of $ u^1 $ we again write $ \sin( (t-s) \jb{D}) $ in terms of $ e^{\pm i (t-s) \jb{D}} $, and
$$ \vn{e^{-i \psi_{k,\pm}}_{<h} \int_{\mb{R}} \frac{e^{\pm i (t-s) \jb{D}}}{\jb{D}} 1_{t>s} F^1(s)  \dd s}_{X} \leq \int_{\mb{R}} \vn{1_{t>s} e^{-i \psi_{k,\pm}}_{<h} e^{\pm i (t-s) \jb{D}} \frac{F^1(s)}{\jb{D}}  }_X  \dd s $$
$$ \ls 2^{-k} C_1 \int_{\mb{R}} \vn{e^{\mp is \jb{D}} F^1(s)}_{L^2}   \dd s \leq 2^{-k} C_1 \vn{F^1(s)}_{L^1 L^2}.  $$
Now we turn to $ u^2$. For $ w^1,w^2 $ we use \eqref{freesolren} and, using Lemma \eqref{Sobolev_lemma}
$$ \vn{w^i}_{L^2} \ls \vn{(v,\frac{\pt_t}{\jb{D}}v)}_{L^{\infty}L^2} \ls 2^{\frac{j}{2}} \vn{(\frac{1}{\Box_m},\frac{i\pt_t- \jb{D}}{\jb{D} \Box_m}) F^2}_{L^2_{t,x}} \ls 2^{\frac{-j}{2}}2^{-k} \vn{F^2}_{L^2_{t,x}}   $$

Next, we write $ \tau=\rho+\jb{\xi}$ in the Fourier inversion formula
$$ v(t)=\int e^{i t \tau+ ix \xi} \mathcal{F}v (\tau, \xi) \dd \xi \dd \tau=\int_{\vm{\rho} \simeq 2^j} e^{i t \rho} e^{i t \jb{D}} \phi_{\rho} \dd \rho $$
for $ \hat{\phi_{\rho}}(\xi)=\mathcal{F}v (\rho+\jb{\xi}, \xi) $. Then
$$ \vn{e^{-i \psi_{k,\pm}}_{<h} v}_{X} \ls \int_{\vm{\rho} \simeq 2^j} \vn{ e^{i t \rho} e^{-i \psi_{k,\pm}}_{<h}(t,x,D) e^{i t \jb{D}} \phi_{\rho} }_X  \dd \rho \ls C_1 \int_{\vm{\rho} \simeq 2^j}  \vn{\phi_{\rho} }_{L^2_x} \dd \rho $$
By Cauchy-Schwarz we bound this by $ 2^{\frac{j}{2}} C_1 \vn{v}_{L^2_{t,x}} \ls 2^{\frac{-j}{2}}2^{-k} C_1 \vn{F^2}_{L^2_{t,x}} $.

If we only assume that \eqref{freesolren} holds for one of the signs $ \pm $, then we have the following variant
$$  \vn{e^{-i \psi}_{<h} (t,x,D) u}_X \ls C_1 ( \vn{u(0)}_{L^2}+\vn{(i \pt_t \pm \jb{D})u}_{\bar{N}_k})  $$
Now the Duhamel term is expressed in terms of one of the $ e^{\pm i t\jb{D}} $. For functions with Fourier support in $ \{\pm \tau \geq 0 \} $ we have $ \vn{(i \pt_t \pm \jb{D})u}_{N_k} \simeq 2^{-k} \vn{\Box_m u}_{N_k} $.  \end{proof} 

Now we are ready to begin the proof of \eqref{renbd3}. We will implicitly use Prop. \ref{Nk:orthog}.

For brevity, we drop the $ k $ and $ \pm $ subscripts and denote $ \psi=\psi^{k}_{\pm} $.

\subsubsection{\bf The Strichartz norms}

By Lemma \ref{waves}, the bound for $ \bar{S}_k^{Str} $ reduces to 

\begin{lemma}
For all $ k \geq 0 $ we have
$$ \vn{e^{-i \psi}_{<k} (t,x,D) e^{\pm i t \jb{D}} v_k}_{\bar{S}_k^{Str}} \ls \vn{v_k}_{L^2_x}  
$$
\end{lemma}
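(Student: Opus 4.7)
The first move is to invoke Lemma~\ref{lemma:locz:symb} (applied with $X$ equal to each of the translation-invariant Strichartz norms appearing in $\bar{S}_k^{Str}$) to reduce the problem to the non-localized symbol; that is, it suffices to establish
\begin{equation*}
  \|e^{-i \psi}(t,x,D)\, e^{\pm i t\langle D\rangle} v_k\|_{\bar{S}_k^{Str}} \lesssim \|v_k\|_{L^2_x}.
\end{equation*}

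The strategy is a straightforward $TT^{*}$ argument based on the dispersive machinery of Section~\ref{sec:Osc-int}. Setting $U = e^{-i\psi}(t,x,D)\, e^{\pm i t\langle D\rangle}\bar{P}_k$, the kernel of $UU^{*}$ is (up to the bump~$a$) exactly the oscillatory integral $K^k$ appearing in \eqref{dispestt1}--\eqref{dispestt12}, and consequently obeys
\begin{equation*}
  |(UU^{*})(t,x;s,y)| \lesssim \min\!\Bigl(\langle t-s\rangle^{-(d-1)/2},\ 2^k \langle t-s\rangle^{-d/2}\Bigr),
\end{equation*}
uniformly in the spatial variables. Since this is precisely the mixed wave/Schr\"odinger kernel behaviour enjoyed by the free Klein--Gordon group, a standard application of Hardy--Littlewood--Sobolev (in the non-endpoint range) and the Keel--Tao $TT^{*}$ machinery (at the endpoint) yields all Strichartz bounds for $U$ with exactly the same exponents as for the free flow.

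Concretely, for every wave-admissible pair $(q,r)$ with $\tfrac{2}{q}+\tfrac{d-1}{r}\le \tfrac{d-1}{2}$, the first dispersive bound delivers $\|Uf\|_{L^q L^r}\lesssim 2^{(\frac{d}{2}-\frac1q-\frac dr)k}\|f\|_{L^2_x}$, which controls $S_k^{Str,W}$ and thus handles the case $d\ge 5$ entirely. In dimension $d=4$, the additional norms $L^4 L^{8/3}$ and $L^2 L^4$ correspond to Schr\"odinger-admissible pairs (i.e.\ $\tfrac{2}{q}+\tfrac{d}{r}=\tfrac{d}{2}$), and are obtained from the second, stronger decay estimate $\langle t-s\rangle^{-d/2}$ via the same $TT^{*}$ + HLS computation; the prefactor $2^k$ in that bound is exactly what produces the declared $2^{3k/8}$ and $2^{3k/4}$ weights in the definition \eqref{Str:KG} of $\bar{S}_k^{Str}$.

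The one genuinely delicate piece is the Lorentz endpoint $2^{3k/4}L^2 L^{4,2}$ in $d=4$. Here $(q,r)=(2,4)$ saturates the Schr\"odinger $TT^{*}$ summation, so the $L^2L^4$ argument loses a logarithm; this is repaired by the Keel--Tao endpoint procedure adapted to Lorentz spaces, atomically decomposing the input in $L^{2}L^{4/3,2}$ and dyadically splitting the time separation into $|t-s|\lesssim 2^{2k}$ (where \eqref{dispestt1} is used) and $|t-s|\gtrsim 2^{2k}$ (where \eqref{dispestt12} is used), and then summing the two geometric series against the Lorentz atoms so that the logarithmic divergence cancels. This is the main technical point; the remaining bounds are formal transcriptions of the free-wave/KG Strichartz proof, the phase $e^{-i\psi}$ having already been absorbed by the time the kernel estimates were derived.
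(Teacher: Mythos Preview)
Your proposal is correct and follows essentially the same approach as the paper: reduce to the non-localized symbol via Lemma~\ref{lemma:locz:symb}, then apply the Keel--Tao $TT^{*}$ machinery using the dispersive bounds \eqref{dispestt1}--\eqref{dispestt12}, with the Lorentz refinement for the $L^{2}L^{4,2}$ endpoint. The paper's version is terser---it rescales to unit frequency, cites Prop.~\ref{fixedtimeprop} explicitly for the $L^{2}_{x}\to L^{2}_{x}$ energy bound (which you left implicit), and simply invokes the Lorentz-space extension already noted in Keel--Tao and \cite{ShSt} rather than sketching the atomic/dyadic mechanism---but the content is the same.
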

\begin{proof} Using Lemma \ref{lemma:locz:symb} this bound follows from
\be  e^{-i \psi}(t,x,D) e^{\pm i t \jb{D}} : \bar{P}_k L^2_x \to  \bar{S}_k^{Str}.  \label{reducedStr}     \ee
 We use the result of Keel-Tao on Strichartz estimates from \cite{Keel_endpointstrichartz}. As noticed in that paper (see sec. 6 and the end of sec. 5; see also \cite[sec. 5]{ShSt}, the $ L^2 L^r $ estimate also holds with $ L^r $ replaced by the Lorentz space $ L^{r,2} $. We need this only when $ d=4 $ for the $ L^2 L^{4,2} $ norm in \eqref{Str:KG}.
 
By change of variable, we rescale at frequency $ 2^0 $:
$$  U(t) \defeq  e^{- i \psi(\cdot/2^k,\cdot/2^k, 2^k \cdot)} (t,x,D) e^{\pm i t \jb{D}_k}$$

The $ L^2_x \to L^2_x $ bound follows from Prop. \ref{fixedtimeprop}. The $ L^1 \to L^{\infty} $ bound for $ U(t) U(s)^* $ follows from \eqref{dispestt1} for $ S_k^{Str,W} $ and from \eqref{dispestt12} for the other $ \bar{S}_k^{Str} $ norms in \eqref{Str:KG} when $ d=4$.
\end{proof}

\subsubsection{\bf The $ \bar{X}_{\infty}^{\frac{1}{2}} $ norms.} For any $ j \in \mb{Z} $ we show
\be
2^{\frac{1}{2}j} \vn{\bar{Q}_j e^{-i \psi}_{< k}(t,x,D) \phi_k}_{L^2_{t,x}} \ls \vn{\phi_k}_{L^{\infty}(H^1 \times L^2)} + \vn{\Box_m \phi_k}_{\bar{N}_k}. 
\ee
We separate 
$$ e^{-i \psi}_{< k}=e^{-i \psi}_{< \min(j,k)}+ \sum_{k'+C \in [j,k]} e^{-i \psi}_{k'} $$
For the first term we write 
$$ \bar{Q}_j e^{-i \psi}_{< \min(j,k)} \phi_k= \bar{Q}_j e^{-i \psi}_{< \min(j,k)} \bar{Q}_{[j-1,j+1]} \phi_k $$
Then we discard $ \bar{Q}_j e^{-i \psi}_{< \min(j,k)} $ and the estimate becomes trivial. The second term follows by summing over \eqref{Nlemmabd}.

\subsubsection{\bf The $ S_{box(k')} $ norms in \eqref{Sbar0}, $ k =0 $} For $ k'<0 $ we prove
$$ 2^{-\sg k'} ( \sum_{ C_{k'}} \vn{\bar{Q}_{<k'}^{\pm} P_{C_{k'}} e_{<0}^{-i \psi_{\pm}^0} (t,x,D) \phi}_{L^2 L^{\infty}}^2)^{1/2} \ls \vn{ (\phi,\pt_t \phi)(0)}_{L^2_x}+\vn{ \Box_m \phi}_{\bar{N}_0}  $$
We may assume $ \phi $ is Fourier supported in $ \pm \tau \geq 0 $. We split
$$  e^{-i \psi_{\pm}^0}_{<0}= (e^{-i \psi_{\pm}^0}_{<0}-e^{-i \psi_{\pm}^0}_{<k'})+ e^{-i \psi_{\pm}^0}_{<k'} $$
The estimate for the first term follows from Prop. \ref{Xembedding} and Cor. \ref{coremb}. For the second term we write 
$$ P_{C_{k'}} e^{-i \psi_{\pm}^0}_{<k'}=P_{C_{k'}} e^{-i \psi_{\pm}^0}_{<k'} \tilde{P}_{C_{k'}}. $$
Then we can discard $ \bar{Q}_{<k'}^{\pm} P_{C_{k'}} $ and prove
$$ 2^{-\sg k'} \vn{ e^{-i \psi_{\pm}^0}_{<k'} (t,x,D) \tilde{P}_{C_{k'}} \phi }_{L^2 L^{\infty}} \ls \vn{\tilde{P}_{C_{k'}} (\phi,\pt_t \phi)(0)}_{L^2_x}+\vn{\tilde{P}_{C_{k'}} \Box_m \phi}_{\bar{N}_0}  $$
By Lemma \ref{waves}, this reduces to 
$$  2^{-\sg k'} e^{-i \psi_{\pm}^0}_{<k'}(t,x,D) e^{\pm i t \jb{D}} \tilde{P}_{C_{k'}} \bar{P}_{0} :L^2_x \to L^2 L^{\infty}
$$
which follows from Corollary \ref{Cor:L2Linf} using Lemma \ref{lemma:locz:symb}.

\subsubsection{\bf The square summed $ \bar{S}_k^{\omega \pm}(l) $ norms, $ k \geq 1 $, first part} For any fixed $ l<0 $ we split
$$  e^{-i \psi}_{<k}= (e^{-i \psi}_{<k}-e^{-i \psi}_{<k+2l})+ e^{-i \psi}_{<k+2l}. $$
Here we treat the first term, while the second one is considered below. The bound
$$ 2^k ( \sum_{\omega}  \vn{P_l^{\omega} \bar{Q}_{<k+2l}^{\pm} (e^{-i \psi}_{<k}-e^{-i \psi}_{<k+2l}) \phi}_{\bar{S}_k^{\omega,\pm}(l)}^2 )^{\frac{1}{2}} \ls \vn{ \nabla_{t,x} \phi(0)}_{L^2_x}+\vn{\Box_m \phi}_{\bar{N}_k} $$
follows from Prop. \ref{Xembedding} and Cor. \ref{coremb}.

\subsubsection{\bf The square-summed $ L^2 L^{\infty} $ and $ \bar{S}_k^{Str} $ norms, $ k \geq 1 $}
Let $  l<0 $. It remains to consider $ e^{-i \psi}_{<k+2l} $. We fix $ \omega $ and the estimate we need boils down to square-summing the following over $ \omega $, after taking supremum over $ k'\leq k, \ l'<0 $, for $ k+2l \leq k'+l' \leq k+l $
$$ 2^{-\frac{k}{2}-\frac{d-2}{2}k'-\frac{d-3}{2}l' }  ( \sum_{\calC=\calC_{k'}(l')}  \vn{P_{\calC} P_l^{\omega} \bar{Q}_{<k+2l}^{\pm} e^{-i \psi}_{<k+2l}  \phi}_{L^2L^{\infty}}^2 )^{\frac{1}{2}} \ls \vn{\tilde{P}_l^{\omega} \nabla_{t,x} \phi(0)}_{L^2_x}+\vn{\tilde{P}_l^{\omega} \Box_m \phi}_{\bar{N}_k}    $$
Fix $ \calC=\calC_{k'}(l') $. Since $ k+2l \leq k'+l' $, one can write 
\be P_{\calC} e^{-i \psi}_{<k+2l}(t,x,D) \phi= P_{\calC} e^{-i \psi}_{<k+2l}(t,x,D) \tilde{P}_{\calC} \phi.  \label{loczcomuting} \ee
Then one can can discard $ P_{\calC} P_l^{\omega} \bar{Q}_{<k+2l} $ and prove
$$ 2^{-\frac{k}{2}-\frac{d-2}{2}k'-\frac{d-3}{2}l' } \vn{e^{-i \psi}_{<k+2l}(t,x,D) \tilde{P}_{\calC} \phi }_{L^2 L^{\infty}}\ls \vn{\tilde{P}_{\calC} \nabla_{t,x} \phi(0)}_{L^2_x}+ \vn{\tilde{P}_{\calC} \Box_m \phi}_{\bar{N}_k} $$
By Lemma \ref{waves}, this reduces to 
\be  e^{-i \psi}_{<k+2l}(t,x,D) e^{\pm i t \jb{D}} \tilde{P}_{\calC} :L^2_x \to 2^{\frac{k}{2}+\frac{d-2}{2}k'+\frac{d-3}{2}l' } L^2 L^{\infty}	\label{red:L2Linf}
\ee
which follows by Lemma \ref{lemma:locz:symb} from Corollary \ref{Cor:L2Linf}.

The same argument applies to $ \bar{S}_k^{Str} $ except that one uses \eqref{reducedStr} and Lemma \ref{lemma:locz:symb} instead of \eqref{red:L2Linf}.

\subsubsection{\bf The PW norms ($ d=4, k \geq 1$)} We fix $ l, -k \leq l', k', \omega, \calC=\calC_{k'}(l') $ as before and use \eqref{loczcomuting}. We discard $ P_{\calC} P_l^{\omega} \bar{Q}^{\pm}_{<k+2l} $ and prove 
$$ 2^{-\frac{3}{2}(k'+l')+k} \vn{e^{-i \psi}_{<k+2l}(t,x,D) \bar{Q}^{\pm}_{<k+2l} \tilde{P}_{\calC} \phi}_{PW^{\pm}_{\calC}} \ls \vn{\tilde{P}_{\calC} \nabla_{t,x} \phi(0)}_{L^2_x}+ \vn{\tilde{P}_{\calC} \Box_m \phi}_{\bar{N}_k}
$$
Let's assume $ \pm=+ $. By Lemma \ref{waves}, we reduce to
\be \label{PWwaves}  \vn{e^{-i \psi}_{<k+2l}(t,x,D) e^{it \jb{D}} \tilde{P}_{\calC} u_k}_{PW^{\pm}_{\calC} } \ls 2^{\frac{3}{2}(k'+l')} \vn{\tilde{P}_{\calC} u_k}_{L^2_x} \ee
From Corollary \ref{corPW} and Lemma \ref{lemma:locz:symb} we deduce that
$$ 2^{-\frac{3}{2}(k'-k)} e^{- i \psi}_{<k+2l} (t,x,D) e^{i t \jb{D}} P_k P_{C_{k'}^i(-k)} : L^2_x \to L^2_{t_{\omega_i, \lmd}} L^{\infty}_{x_{\omega_i,\lmd}} $$ 
holds for $ C_{k'}^i(-k) \subset \calC  $ where $ \omega_i $ is the direction of the center of $ C_{k'}^i(-k) $.

We can cover $ \calC=\calC_{k'}(l') $ by roughly $ 2^{3(l'+k)} $ boxes of size $ 2^{k'} \times ( 2^{k'-k} )^3 $:
$$ \calC=\cup_{i=1}^{O(2^{3(l'+k)})} C_{k'}^i(-k).  $$

Notice that $ \lmd $ can be chosen the same for all $ i $. By the definition of $ PW_{\calC}^{\pm} $ \eqref{PW:norm}
$$ \text{LHS } \eqref{PWwaves} \leq  \sum_i \vn{ e^{-i \psi}_{<k+2l}(t,x,D) e^{it \jb{D}} P_{C_{k'}^i(-k)} u_k}_{L^2_{t_{\omega_i,\lmd}} L^{\infty}_{x_{\omega_i,\lmd}} }  \ls $$
$$ \ls 2^{\frac{3}{2}(k'-k)} \sum_i \vn{\tilde{P}_{C_{k'}^i(-k)} u_k}_{L^2_x} \ls 2^{\frac{3}{2}(k'-k)} 2^{\frac{3}{2}(l'+k)} ( \sum_i  \vn{\tilde{P}_{C_{k'}^i(-k)} u_k}_{L^2_x}^2)^{\frac{1}{2}} \ls 2^{\frac{3}{2}(k'+l')} \vn{\tilde{P}_{\calC} u_k}_{L^2_x}   $$ 
where we have used Cauchy-Schwarz and orthogonality. This proves \eqref{PWwaves}.

\subsubsection{\bf The NE norms ($ d=4, k \geq 1$)} We fix $ l, -k \leq l', k', \calC=\calC_{k'}(l') $ as before and use \eqref{loczcomuting}. We prove 
$$ 2^k \vn{P_{\calC} P_l^{\omega} \bar{Q}^{\pm}_{<k+2l} e^{-i \psi}_{<k+2l} \bar{Q}^{\pm}_{<k+2l} \tilde{P}_{\calC} \phi}_{NE^{\pm}_{\calC}} \ls \vn{\tilde{P}_{\calC} \nabla_{t,x} \phi(0)}_{L^2_x}+ \vn{\tilde{P}_{\calC} \Box_m \phi}_{\bar{N}_k}
$$
Now we split again $ e^{-i \psi}_{<k+2l}=(e^{-i \psi}_{<k+2l}- e^{-i \psi}_{<k})+    e^{-i \psi}_{<k} $. The first term 
$$ P_{\calC} P_l^{\omega} \bar{Q}^{\pm}_{<k+2l} (e^{-i \psi}_{<k+2l}- e^{-i \psi}_{<k}) \bar{Q}_{<k+2l}^{\pm}\tilde{P}_{\calC} \phi $$
  is estimated by appropriately applying Prop. \ref{Xembedding} and Cor. \ref{coremb}.

For the second term we may discard $ P_{\calC} P_l^{\omega} \bar{Q}^{\pm}_{<k+2l} $ and prove
$$ 2^k \vn{e^{-i \psi}_{<k} \bar{Q}^{\pm}_{<k+2l} \tilde{P}_{\calC} \phi }_{NE^{\pm}_{\calC} }\ls  \vn{\tilde{P}_{\calC} \nabla_{t,x} \phi(0)}_{L^2_x}+ \vn{\tilde{P}_{\calC} \Box_m \phi}_{\bar{N}_k}. $$
This is reduced by Lemma \ref{waves} to
$$ e^{-i \psi}_{<k}(t,x,D) e^{\pm it \jb{D}} \tilde{P}_{\calC} : L^2_x \to NE^{\pm}_{\calC},  $$
which follows from Corollary \ref{CorNE}.

\subsection{Proof of Lemma \ref{Nspacelemma}} \ 

\

We follow the method from \cite{KST} based on Moser-type estimates. The more difficult case is $ d=4 $ and the argument can be simplified for $ d \geq 5 $. In the proof we will need the following lemmas.

\begin{lemma}
Let $ 1 \leq q \leq p \leq \infty $ and $ k \geq 0 $. Then for all $ j-O(1) \leq k' \leq k $ we have 
\begin{align}
\label{oneestim1} \vn{e^{\pm i \psi^{k}_{<j,\pm}}_{k'} (t,x,D) \bar{P}_k}_{L^p L^2 \to L^q L^2} & \ls \ep 2^{(\frac{1}{p}-\frac{1}{q})j} 2^{2(j-k')} \\
 \label{oneestim2} \vn{e^{\pm i \psi^{k}_{\pm}}_{k'} (t,x,D) \bar{P}_k}_{L^2 L^2 \to L^{\frac{10}{7}} L^2} & \ls \ep 2^{-\frac{1}{5}k'} 
\end{align} 

By duality, the same bounds holds for the right quantization. 
\end{lemma}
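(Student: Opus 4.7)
The plan is to Taylor expand the exponential and reduce to a symbol estimate in the decomposable class, so that Lemma~\ref{decomp_lemma} converts it into the desired operator bound. By H\"older's inequality $L^p L^2 \cdot L^r L^\infty \to L^q L^2$ with $1/r = 1/q - 1/p \ge 0$, the first bound follows from the symbol estimate
\be \label{aux:target}
\vn{[e^{\pm i \psi^{k}_{<j,\pm}}]_{k'}}_{D_k(L^r L^\infty)} \ls \ep \, 2^{-j/r}\, 2^{2(j-k')},
\ee
and the second from $\vn{[e^{\pm i \psi^{k}_{\pm}}]_{k'}}_{D_k(L^5 L^\infty)} \ls \ep \, 2^{-k'/5}$, using \eqref{decomp2} with $q=5>4$. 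I would start by writing $e^{\pm i \psi_{<j}} = \sum_{n \geq 0} (\pm i \psi_{<j})^n / n!$ and noting that $\vn{\psi_{<j}}_{L^\infty L^\infty} \ls \ep$ follows by summing \eqref{decomp2}, so the series converges uniformly provided each term obeys an $n!$-summable bound.

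\textbf{Main paradifferential contribution.} The semigroup identity $e^{\pm i \psi_{<j}} = e^{\pm i \psi_{<k'-c}} \cdot e^{\pm i \psi_{[k'-c,j)}}$ splits the symbol into a low-frequency factor, which behaves like an $L^\infty L^\infty$ multiplier of size $O(1)$, and a factor carrying the $(t,x)$-frequencies comparable to $k'$. Taylor expanding the second factor to first order, the $k'$-projected main term is
$$
\pm i \, e^{\pm i \psi_{<k'-c}} \cdot (\psi_{[k'-c,j)})_{k'} \;\; + \;\; \text{commutator},
$$
where the commutator is controlled via Lemma~\ref{comm_id} and \eqref{spec_decomp2} using the $2^{-k'}\nabla_x$ gain against the low-frequency multiplier. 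The first factor has $D_k(L^\infty L^\infty)$ norm $\ls 1$ by iterating the decomposable product rule against \eqref{decomp4}, and the high-frequency factor contributes $\vn{\psi_{j'}}_{D_k(L^r L^\infty)} \ls \ep 2^{-j'/r}$ by \eqref{decomp2} (valid for $r > 4$; the endpoints $r=\infty$ and $r=4$ are handled by interpolation or by using slightly shifted exponents). Summing over $j' \in [k'-c, j)$ and then over the position of the high-frequency factor produces the main contribution to \eqref{aux:target}, bounded by $\ep 2^{-k'/r} \ls \ep 2^{-j/r} 2^{2(j-k')}$ since $k'\geq j-O(1)$.

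\textbf{High-high corrections and the main obstacle.} The remaining contributions come from $n$-fold products $\psi_{j_1} \cdots \psi_{j_n}$ with $n \geq 2$, all $j_i < j$, whose $k'$-projection is nonvanishing only when at least two factors share comparable frequency $\geq k' - O(1)$ (and the remaining ones live at lower scales). Applying the decomposable estimate \eqref{decomp2} to the two designated high-frequency factors and the $L^\infty L^\infty$ bound $\ls \ep$ to the remaining $n-2$ low-frequency ones, together with Littlewood-Paley orthogonality to sum over admissible configurations, yields a per-term bound of the form $\frac{1}{n!} \ep^n 2^{-j/r} \cdot N(k',j)$, where $N(k',j)$ counts the number of dyadic configurations $(j_1,j_2)$ with $j_1, j_2 \in [k'-O(1), j)$ compatible with producing output at frequency $k'$. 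A standard counting gives $N(k',j) \ls 2^{2(j-k')}$, matching the claimed gain. Summation in $n$ converges by the $(n!)^{-1}$ factor. \textbf{The main obstacle} is to keep this orthogonality-plus-counting argument clean while tracking the $D_k$ decomposable structure (rather than merely $L^r L^\infty$ norms), so that Lemma~\ref{decomp_lemma} can be invoked at the end; once the combinatorial bookkeeping is organized as in \cite{KST}, the bound \eqref{oneestim2} follows from the same scheme with $r = 5$ in \eqref{decomp2} and no $\psi$-frequency truncation.
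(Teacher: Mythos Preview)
Your Taylor-expansion approach differs substantially from the paper's, and there is a genuine gap in your combinatorics.

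For \eqref{oneestim1} the paper does not expand at all. It uses the elementary identity
\[
e^{\pm i\psi_{<j}}_{k'} = 2^{-2k'} L_{k'}\,\Delta_{t,x}\, e^{\pm i\psi_{<j}}
= 2^{-2k'} L_{k'}\bigl(-|\partial_{t,x}\psi_{<j}|^{2} \pm i\,\Delta_{t,x}\psi_{<j}\bigr) e^{\pm i\psi_{<j}},
\]
where $L_{k'}$ is disposable. The factor $e^{\pm i\psi_{<j}}$ is $L^2\to L^2$ bounded by \eqref{fixedtime1b}, and the two explicit decomposable symbols are controlled by \eqref{decomp2}: since $\psi_{<j}$ lives at $(t,x)$-frequencies $<j$, the derivatives contribute $2^{2j}$ and the loss in time integrability is $2^{(1/p-1/q)j}$. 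The gain $2^{2(j-k')}$ is thus produced in one step by the $2^{-2k'}$ from inverting $\Delta_{t,x}$ at frequency $k'$, with no combinatorics whatsoever.

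Your argument, by contrast, asserts that the $k'$-projection of $\psi_{j_1}\cdots\psi_{j_n}$ with all $j_i<j$ is nonzero ``only when at least two factors share comparable frequency $\ge k'-O(1)$''. This is false in the main regime $k'>j+O(1)$: no factor can reach frequency $k'-O(1)$ since each has frequency $<j$. The actual mechanism is that the product has Fourier support in $\{|\eta|\le n\cdot 2^{j}\}$, so output at scale $2^{k'}$ requires $n\gtrsim 2^{k'-j}$; the gain then comes from the $\ep^{n}/n!$ with $n$ exponentially large, not from any pair of high-frequency factors. Consequently your ``counting'' $N(k',j)\lesssim 2^{2(j-k')}$ is not a count of anything (the set $[k'-O(1),j)$ is empty), and the argument as written does not establish the bound. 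A correct Taylor-expansion proof would have to track this high-$n$ suppression, which is considerably more work than the paper's two-line Laplacian trick. Your semigroup split $e^{\pm i\psi_{<j}}=e^{\pm i\psi_{<k'-c}}e^{\pm i\psi_{[k'-c,j)}}$ is likewise degenerate once $k'>j$, since the second factor is then identically $1$.

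For \eqref{oneestim2} the paper is closer to your idea: it writes
\[
e^{\pm i\psi}_{k'} = e^{\pm i\psi_{<k'-C}}_{k'} \pm i\int_{k'-C}^{k-C}\bigl(\psi_{l}\,e^{\pm i\psi_{<l}}\bigr)_{k'}\,dl,
\]
handles the first term by the already-proved \eqref{oneestim1}, and bounds the integrand via Lemma~\ref{decomp_lemma} using \eqref{decomp2} with $q=5$ together with \eqref{fixedtime1b}. So here your outline is on the right track, but you should feed it the clean \eqref{oneestim1} rather than another layer of Taylor expansion.
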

\begin{remark} \label{rk:heuristic}
To motivate the proof, we note that applying the $ k'(>j ) $ localization in the power series
$$ e^{i \psi_{<j}(t,x,\xi)} =1+i \psi_{<j}(t,x,\xi)+O\big(\psi_{<j}^2(t,x,\xi) \big) $$
makes the linear term $ 1+i\psi $ vanish. For the higher order terms H\" older inequality applies (in the form of decomposable lemma \ref{decomp_lemma}).
\end{remark}
\begin{proof}
To prove \eqref{oneestim1}, let $ L_{k'} $ be a disposable multiplier in the $ (t,x) $-frequencies such that 
$$  e^{\pm i \psi^{k}_{<j,\pm}}_{k'} = 2^{-2k'} L_{k'} \Delta_{t,x} e^{\pm i \psi^{k}_{<j,\pm}}=2^{-2k'} L_{k'}(- \vm{\pt_{t,x}  \psi^{k}_{<j,\pm}}^2\pm i \Delta_{t,x} \psi^{k}_{<j,\pm}   )e^{\pm i \psi^{k}_{<j,\pm}}.    $$
We may dispose of $ L_{k'} $ by translation invariance. Then \eqref{oneestim1} follows from \eqref{decomp2}.

To  prove \eqref{oneestim2} we write 
$$ e^{\pm i \psi^{k}_{\pm}}_{k'}=e^{\pm i \psi^{k}_{<k'-C,\pm}}_{k'} \pm i \int_{[k'-C,k-C]} \lpr  \psi^{k}_{\pm,l} e^{\pm i \psi^{k}_{<l,\pm}}  \rpr_{k'} \dd l  $$
For the first term we use \eqref{oneestim1}. For the second term, we have
$$ \vn{  \psi^{k}_{\pm,l} e^{\pm i \psi^{k}_{<l,\pm}}(t,x,D)}_{L^2 L^2 \to L^{\frac{10}{7}} L^2}  \ls \ep 2^{-\frac{l}{5}} $$
by Lemma \ref{decomp_lemma}, \eqref{decomp2} and \eqref{fixedtime1b}, from which \eqref{oneestim2} follows.
\end{proof}

\begin{lemma} For $k \geq 0,\ k \geq k' \geq j-O(1) $, $ j-C\leq l' \leq l \leq k-C $ and for both quantizations, we have:
 \begin{align} \label{lemastep3} 
\vn{\bar{Q}_j [ (\psi_{k'}^k e^{\pm i \psi^{k}_{< j,\pm}}_{< j})_{k'}  \bar{Q}_{\prec j} G_k]}_{L^2_{t,x}} 2^{\frac{j}{2}} & \ls \ep 2^{\frac{1}{4}(j-k')} \vn{G}_{L^{\infty} L^2} \\
 \label{lemalstep}
\vn{\bar{Q}_j [ (\psi_{l}^k \psi_{l'}^k e^{\pm i \psi^{k}_{< j,\pm}}_{< j})_{k'}  \bar{Q}_{\prec j} G_k]  }_{L^2_{t,x}} 2^{\frac{j}{2}} & \ls \ep^2  2^{\frac{1}{12}(j-l)} 2^{\frac{1}{6}(j-l')}  \vn{G}_{L^{\infty} L^2} . 
\end{align} 
\end{lemma}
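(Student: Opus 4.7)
Following the Moser-type heuristic of Remark \ref{rk:heuristic}, the outer $(t,x)$-frequency projection $(\cdot)_{k'}$ with $k' \geq j - O(1)$ kills the constant and linear Taylor terms of the exponential $e^{\pm i \psi^k_{<j,\pm}}_{<j}$ (whose $(t,x)$-frequency is $<j \leq k'$), so the behavior is governed by the explicit $\psi$-factors in the symbol. Since $\bar{Q}_j$ is $L^2_{t,x}$-bounded and $\bar{Q}_{\prec j} G_k$ is dominated by $\vn{G}_{L^\infty L^2}$, both bounds reduce to an $L^2_{t,x}$ mapping estimate for the pseudodifferential operator with the given product symbol, which I plan to establish through the decomposable calculus (Lemma \ref{decomp_lemma}).

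\textbf{Plan for \eqref{lemastep3}.} I would decompose $\psi^k_{k'} = \sum_\theta \psi^k_{k',\theta,\pm}$ into angular sectors, and for each $\theta$ apply Lemma \ref{decomp_lemma} to the product symbol $\psi^k_{k',\theta,\pm} \cdot e^{\pm i \psi^k_{<j,\pm}}_{<j}$. Using \eqref{decomp1} to bound $\psi^k_{k',\theta,\pm}$ in $D_k^\theta(L^2 L^\infty)$, and treating the exponential factor as a bounded operator on $L^\infty L^2$ via the fixed-time estimate \eqref{fixedtime2} (together with translation invariance), the H\"older pairing $L^2 L^\infty \times L^\infty L^2 \to L^2_{t,x}$ yields, after summing in $\theta$,
\[
\vn{(\psi^k_{k'}\, e^{\pm i\psi^k_{<j,\pm}}_{<j})_{k'}(t,x,D)\,\bar{Q}_{\prec j}G_k}_{L^2_{t,x}} \lesssim \ep\, 2^{-k'/2}\,\vn{G}_{L^\infty L^2}.
\]
The outer $(\cdot)_{k'}$ projection is disposable because $\psi^k_{k'}$ already has $(t,x)$-frequency $\simeq 2^{k'}$ while the exponential has $(t,x)$-frequency $<2^j\leq 2^{k'}$, so the product naturally sits at frequency $\simeq 2^{k'}$. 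Multiplying by $2^{j/2}$ gives $\ep\, 2^{(j-k')/2}\leq \ep\, 2^{(j-k')/4}$ since $j\leq k'+O(1)$, proving \eqref{lemastep3}.

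\textbf{Plan for \eqref{lemalstep} and the main obstacle.} The analogous argument applies with two $\psi$-factors: since $l\geq l'$, we may restrict to $k'\leq l+O(1)$ (otherwise $(\psi^k_l\psi^k_{l'})_{k'}=0$). Applying Lemma \ref{decomp_lemma} with both $\psi^k_l,\psi^k_{l'}$ decomposed angularly and estimated in $D_k^\theta(L^4 L^\infty)$ via \eqref{decomp1} (the angular exponent is $(d+1)/2-1\geq 3/2$ for $d\geq 4$, ensuring angular summability), the exponential handled as above, and the H\"older pairing $L^4 L^\infty\times L^4 L^\infty\times L^\infty L^2 \to L^2_{t,x}$ yields $\lesssim \ep^2\, 2^{-(l+l')/4}\,\vn{G}_{L^\infty L^2}$. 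Since $l\geq l'\geq j-O(1)$, we have $l/6+l'/12\geq j/4-O(1)$, hence $2^{-l/4-l'/4}\lesssim 2^{-j/4-l/12-l'/6}$, and multiplying by $2^{j/2}$ produces the target exponents $\ep^2\, 2^{(j-l)/12+(j-l')/6}$. The main technical obstacle I anticipate is reconciling the pointwise product of symbols with the outer $(\cdot)_{k'}$ localization: the product does not correspond cleanly to a composition of quantizations, so the composition errors \eqref{spec_decomp1}/\eqref{spec_decomp3} must be invoked, involving $\nabla_\xi\psi^k_{l,\theta,\pm}$ (bounded by a variant of \eqref{decomp1}) and $\nabla_x e^{\pm i\psi}_{<j}$ (handled via the product rule and \eqref{decomp3}); one must verify that these errors do not spoil the claimed exponents, in particular that the angular summations remain convergent in the $D_k^\theta$ framework.
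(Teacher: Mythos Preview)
Your plan has a genuine gap: you treat $\bar Q_j$ as merely disposable and then try to sum the angular pieces $\psi^{k}_{k',\tht}$ (resp.\ $\psi^{k}_{l,\tht},\psi^{k}_{l',\tht'}$) over \emph{all} admissible $\tht$. For $d=4$ this fails. From \eqref{decomp1} with $(q,r)=(2,\infty)$ the net $\tht$-exponent after dividing by $\tht^2$ is $\frac{d-1}{2}-2=-\tfrac12$, so $\sum_\tht \vn{\psi^{k}_{k',\tht}}_{D_k^\tht(L^2L^\infty)}$ is dominated by the smallest angle $\tht\sim 2^{\delta(k'-k)}$ and picks up an extra factor $2^{\frac{\delta}{2}(k-k')}$; your claimed intermediate bound $\ep\,2^{-k'/2}$ is therefore not true (cf.\ the comment after \eqref{decomp3}, which needs $q>4$ precisely to avoid this). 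Likewise, for \eqref{lemalstep} your choice $(q,r)=(4,\infty)$ gives net $\tht$-exponent $\frac{d}{2}-2=0$, so each angular sum diverges logarithmically; the sentence ``the angular exponent is $(d+1)/2-1\ge 3/2$'' forgets the $\tht^{-2}$ from the denominator of \eqref{decomp1}.

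The missing idea is that the pair $(\bar Q_j,\bar Q_{\prec j})$ is not disposable: since for each fixed $\xi$ the symbol $\psi^{k}_{k',\tht}(t,x,\xi)$ is a free wave at $(t,x)$-frequency $\simeq 2^{k'}$ in a sector of angle $\tht$ around $\xi/|\xi|$, the geometry of Lemma~\ref{geom:cone} (cone $\times$ hyperboloid $\to$ hyperboloid at modulation $2^j$) forces $\tht\gtrsim 2^{\frac12(j-k')}$. This truncation is what makes the $\tht$-sum finite and produces exactly the $2^{\frac14(j-k')}$ gain. For \eqref{lemalstep} the same mechanism forces at least one of the two angular parameters to exceed $\tht_0\simeq 2^{\frac12(j-k')}$; the paper then places that large-angle factor in $D_k^\tht(L^3L^\infty)$ (net $\tht$-exponent $-\tfrac16$, summed from $\tht_0$ upward) and the other factor in $D_k^\tht(L^6L^\infty)$ (net exponent $+\tfrac16$, absolutely summable), which is what yields the claimed $2^{\frac1{12}(j-l)}2^{\frac16(j-l')}$. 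The ``composition errors'' you worry about at the end are a secondary issue; the essential input you are missing is this angular restriction from the modulation geometry.
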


\begin{proof} \pfstep{Step~1} By translation invariance we may discard the outer $ k' $ localization. By Lemma \ref{geom:cone} we deduce that in \eqref{lemastep3} the contribution of $ \psi^{k,\pm}_{k',\tht} $ (which define $ \psi^{k}_{k',\pm} $ in \eqref{phase_piece}) is zero unless $ \tht \gtrsim 2^{\frac{1}{2}(j-k')} $ and $ j \geq k'-2k-C $. For these terms, from \eqref{decomp1} we get
$$ 2^{\frac{j}{2}} \sum_{\tht \gtrsim 2^{\frac{1}{2}(j-k')}}  \vn{\psi^{k,\pm}_{k',\tht}}_{D_{k}^{\tht}(L^2 L^{\infty})} \ls  \ep 2^{\frac{1}{4}(j-k')} $$ 
from which \eqref{lemastep3} follows by Lemma \ref{decomp_lemma}. When $ k=0 $ no angular localization are needed.

\pfstep{Step~2} Now we prove \eqref{lemalstep}. First we consider the case $ l'+c \leq l=k'+O(1) $ and define $ \tht_0 \defeq 2^{\frac{1}{2}(j-l)} $.  By appropriately applying Lemma \ref{geom:cone} we deduce that the terms that contribute to \eqref{lemalstep} are $ \psi_{l}^k \psi_{l',\tht'}^k  $ for $ \tht' \gtrsim \tht_0  $ and $ \psi_{l,\tht}^k \psi_{l',\tht'}^k $ for $ \tht' \ll \tht_0, \ \tht \gtrsim \tht_0 $. We use \eqref{decomp1} with $ q=3 $ for the large angle terms and with $ q=6 $ for the other term, obtaining 
\be \label{psi:angl}  \sum_{ \tht' \gtrsim \tht_0  } \vn{\psi_{l}^k \psi_{l',\tht'}^k  e^{i\psi}   }_{L^{\infty} L^2 \to L^2_{t,x}}+ \sum_{\substack{ \tht' \ll \tht_0 \\ \tht \gtrsim \tht_0   }} \vn{\psi_{l,\tht}^k \psi_{l',\tht'}^k e^{i\psi}}_{L^{\infty} L^2 \to L^2_{t,x}} \ls \ep^2 2^{-\frac{j}{2}} 2^{\frac{1}{12}(j-l)} 2^{\frac{1}{6}(j-l')} 
\ee
In the high-high case $ l=l'+O(1) \geq k' $ the same argument applies with $ \tht_0 \defeq 2^{\frac{1}{2}(j-k')}2^{k'-l} $ and \eqref{psi:angl} also follows in this case.
\end{proof}

\begin{proof}[Proof of Lemma \ref{Nspacelemma}]
For brevity, we supress the $ k $ superscript and write $ \psi $ to denote $ \psi_{\pm}^{k} $.

\pfstep{Step~1} [The contribution of $ \bar{Q}_{>j-c}G_k $]

We use Lemma \ref{Sobolev_lemma} and \eqref{oneestim2}
$$ \vn{\bar{Q}_{j} e^{\pm i \psi}_{k'}  \bar{Q}_{>j-c} G_k}_{L^2_{t,x}} \ls 2^{\frac{j}{5}} \vn{e^{\pm i \psi}_{k'}  \bar{Q}_{>j-c} G_k}_{ L^{\frac{10}{7}} L^2} \ls \ep 2^{\frac{j-k'}{5}} \vn{\bar{Q}_{>j-c} G_k}_{L^2_{t,x}}. $$
and the last norm is $ \ls 2^{-j/2} \vn{G}_{\bar{N}_k^*} $.

\pfstep{Step~2} [The contribution of $ \bar{Q}_{\prec j}G_k $]
Motivated by remark \ref{rk:heuristic}, by iterating the fundamental theorem of calculus, we decompose the symbol
$$ e^{\pm i \psi}(t,x,\xi)=\mathcal{T}_0 \pm i \mathcal{T}_1 - \mathcal{T}_2 \mp  i\mathcal{T}_3 $$
where $ \mathcal{T}_0=e^{ i \psi_{<j}} $ and
\begin{align*}
 \mathcal{T}_1 &=\int_{[j-C,k-C]}   \psi_{l} e^{ i \psi_{<j}}  \dd l ,\qquad   \mathcal{T}_2 =\iint_{j-C \leq l' \leq l \leq k-C} \psi_{l} \psi_{l'} e^{ i \psi_{<j}}  \dd l \dd l'  \\
  \mathcal{T}_3 &=\iiint_{j-C \leq l'' \leq l' \leq l \leq k-C}  \psi_{l} \psi_{l'}  \psi_{l''}  e^{ i \psi_{<l''}}    \dd l \dd l' \dd l''   
\end{align*}
The term $ \mathcal{T}_0 $ is estimated by \eqref{oneestim1}:
$$ \vn{(\mathcal{T}_0 )_{k'}(t,x,D)  \bar{Q}_{\leq k}  G_k}_{L^2_{t,x}} \ls \ep 2^{\frac{-j}{2}} 2^{2 (j-k')} \vn{\bar{Q}_{\leq k} G_k}_{L^{\infty} L^2} $$ 
Next, we split $  \mathcal{T}_1= \mathcal{T}_1^1+\mathcal{T}_1^2 $ where
$$  \mathcal{T}_1^1=\int_{[j-C,k-C]}   \psi_{l} e^{ i \psi_{<j}}_{<j}  \dd l, \qquad \mathcal{T}_1^2= \int_{[j-C,k-C]}   \psi_{l} e^{ i \psi_{<j}}_{>j-c/2}  \dd l 
$$
By appying the $ k' $ localization, the integral defining $ \mathcal{T}_1^1 $ vanishes unless $ l=k'+O(1) $ for which we may apply \eqref{lemastep3}.
To estimate $ \mathcal{T}_1^2  $ we use Lemma \ref{decomp_lemma} with \eqref{decomp2} for $ q=6 $ and \eqref{oneestim1} with $ L^{\infty} L^2 \to L^{3} L^2 $.

We turn to  $ \mathcal{T}_2 $ and separate $ e^{ i \psi_{<j}}=e^{ i \psi_{<j}}_{<j}+e^{ i \psi_{<j}}_{>j-c/2} $ as before. For the first component we use \eqref{lemalstep}. For the second, we use \eqref{decomp2} with $ q=6 $ for $  \psi_{l}, \psi_{l'} $ and \eqref{oneestim1} with $ L^{\infty} L^2 \to L^{6} L^2 $ obtaining: 
\begin{align*}
& 2^{\frac{1}{2}j} \vn{\psi_{l} \psi_{l'} e^{ i \psi_{<j}}_{>j-c/2}}_{L^{\infty}L^2 \to L^2 L^2} \ls \ep 2^{\frac{1}{6} (j-l)} 2^{\frac{1}{6} (j-l')}  \qquad \qquad \text{for} \  l>k'-c \\  
& 2^{\frac{1}{2}j} \vn{\psi_{l} \psi_{l'} e^{ i \psi_{<j}}_{k'} }_{L^{\infty}L^2 \to L^2 L^2} \ls \ep 2^{\frac{1}{6} (j-l)} 2^{\frac{1}{6} (j-l')} 2^{2(j-k')} \qquad \text{for} \  l<k'-c. 
\end{align*}
For $ \mathcal{T}_3 $ we use \eqref{decomp2} for $ q=6 $. When $ l<k'-C $ we use \eqref{oneestim1} with $ p=q=\infty $ and it remains to integrate
$$ 2^{\frac{1}{2}j}  \vn{ \psi_{l} \psi_{l'}  \psi_{l''}  e^{ i \psi_{<l''}}_{k'} }_{L^{\infty}L^2 \to L^2 L^2} \ls \ep 2^{\frac{1}{2}j} 2^{-\frac{1}{6}l} 2^{-\frac{1}{6}l'} 2^{-\frac{1}{6}l''} 2^{2(l''-k')}.
$$
On $ l \geq k'-C $ it suffices to integrate
$$ 2^{\frac{1}{2}j}  \vn{ \psi_{l} \psi_{l'}  \psi_{l''}  e^{ i \psi_{<l''}} }_{L^{\infty}L^2 \to L^2 L^2} \ls \ep 2^{\frac{1}{2}j} 2^{-\frac{1}{6}l} 2^{-\frac{1}{6}l'} 2^{-\frac{1}{6}l''}.
$$
\end{proof}


\nocite{*}
\bibliographystyle{abbrv}
\bibliography{4dmMKG}

\begin{thebibliography}{10}

\bibitem{BH3}
I.~Bejenaru and S.~Herr.
\newblock On global well-posedness and scattering for the massive
  {D}irac-{K}lein-{G}ordon system.
\newblock {\em arXiv preprint arXiv:1409.1778}, 2014.

\bibitem{BH2}
I.~Bejenaru and S.~Herr.
\newblock The {C}ubic {D}irac equation: Small initial data in
  {$H^{1/2}(\mathbb{R}^{2})$}.
\newblock {\em Communications in Mathematical Physics}, pages 1--48, 2015.

\bibitem{BH1}
I.~Bejenaru and S.~Herr.
\newblock The cubic {D}irac equation: small initial data in {$H^1(\mb{R}^3)$}.
\newblock {\em Comm. Math. Phys.}, 335(1):43--82, 2015.

\bibitem{Cucc}
S.~Cuccagna.
\newblock On the local existence for the {M}axwell-{K}lein-{G}ordon system in
  $\mb{R}^{3+1}$.
\newblock {\em Communications in Partial Differential Equations},
  24(5-6):851--867, 1999.

\bibitem{eardley1982}
D.~M. Eardley and V.~Moncrief.
\newblock The global existence of {Y}ang-{M}ills-{H}iggs fields in
  $4$-dimensional {M}inkowski space.
\newblock {\em Comm. Math. Phys.}, 83(2):171--191, 1982.

\bibitem{Foschi2000}
D.~Foschi and S.~Klainerman.
\newblock Bilinear space-time estimates for homogeneous wave equations.
\newblock {\em Annales scientifiques de l'Ecole Normale Superieure},
  33(2):211--274, 2000.

\bibitem{MD}
C.~Gavrus and S.-J. Oh.
\newblock Global well-posedness of high dimensional {M}axwell-{D}irac for small
  critical data.
\newblock {\em arXiv preprint arXiv:1604.07900}, 2016.

\bibitem{Grafakos}
L.~Grafakos.
\newblock {\em Classical Fourier Analysis}.
\newblock Springer, 2008.

\bibitem{hormander2003introduction}
L.~H{\"o}rmander.
\newblock {\em The Analysis of Linear Partial Differential Operators I}.
\newblock Springer, 2003.

\bibitem{Keel2011573}
M.~Keel, T.~Roy, and T.~Tao.
\newblock Global well-posedness of the {M}axwell-{K}lein-{G}ordon equation
  below the energy norm.
\newblock {\em Discrete and Continuous Dynamical Systems}, 30(3):573--621,
  2011.

\bibitem{Keel_endpointstrichartz}
M.~Keel and T.~Tao.
\newblock Endpoint strichartz estimates.
\newblock {\em Amer. J. Math}, 120:955--980.

\bibitem{KlainermanMacHedon}
S.~Klainerman and M.~Machedon.
\newblock Space-time estimates for null forms and the local existence theorem.
\newblock {\em Comm. Pure Appl. Math.}, 46(9):1221--1268, 1993.

\bibitem{KlMc}
S.~Klainerman and M.~Machedon.
\newblock On the {M}axwell-{K}lein-{G}ordon equation with finite energy.
\newblock {\em Duke Math. J.}, 74(1):19--44, 1994.

\bibitem{klainerman1997}
S.~Klainerman and M.~Machedon.
\newblock On the optimal local regularity for gauge field theories.
\newblock {\em Differential Integral Equations}, 10(6):1019--1030, 1997.

\bibitem{KlaTat}
S.~Klainerman and D.~Tataru.
\newblock On the optimal local regularity for the {Y}ang-{M}ills equations in $
  \mb{R}^{4+1} $.
\newblock {\em Journal of the American Mathematical Society}, 12(1):93--116,
  1999.

\bibitem{KL}
J.~Krieger and J.~L{\"u}hrmann.
\newblock Concentration compactness for the critical {M}axwell-{K}lein-{G}ordon
  equation.
\newblock {\em Annals of PDE}, 1(1):1--208, 2015.

\bibitem{KS}
J.~Krieger and J.~Sterbenz.
\newblock Global regularity for the {Y}ang-{M}ills equations on high
  dimensional {M}inkowski space.
\newblock {\em Mem. Amer. Math. Soc.}, 223(1047):vi+99, 2013.

\bibitem{KST}
J.~Krieger, J.~Sterbenz, and D.~Tataru.
\newblock Global well-posedness for the {M}axwell-{K}lein-{G}ordon equation in
  {$4+1$} dimensions: small energy.
\newblock {\em Duke Math. J.}, 164(6):973--1040, 2015.

\bibitem{KT}
J.~Krieger and D.~Tataru.
\newblock Global well-posedness for the {Y}ang-{M}ills equation in $4+1$
  dimensions. {S}mall energy.
\newblock {\em http://arxiv.org/abs/1509.00751}, 09 2015.

\bibitem{MachedonSterbenz}
M.~Machedon and J.~Sterbenz.
\newblock Almost optimal local well-posedness for the {$(3+1)$}-dimensional
  {M}axwell-{K}lein-{G}ordon equations.
\newblock {\em J. Amer. Math. Soc.}, 17(2):297--359 (electronic), 2004.

\bibitem{NaSch}
K.~Nakanishi and W.~Schlag.
\newblock {\em Invariant manifolds and dispersive {H}amiltonian evolution
  equations}.
\newblock European Mathematical Society (EMS), Z\"urich, 2011.

\bibitem{Oneil}
R.~O-Neil.
\newblock Convolution operators and $ {L}(p,q)$ spaces.
\newblock {\em Duke Math. J.}, 30(1):129--142, 03 1963.

\bibitem{OhYM}
S.-J. OH.
\newblock Gauge choice for the {Y}ang {M}ills equations using the {Y}ang
  {M}ills heat flow and local well-posedness in $ {H}^1$.
\newblock {\em Journal of Hyperbolic Differential Equations}, 11(01):1--108,
  2014.

\bibitem{oh2015}
S.-J. Oh.
\newblock Finite energy global well-posedness of the {Y}ang {M}ills equations
  on $\mathbb{R}^{1+3}$ : An approach using the {Y}ang {M}ills heat flow.
\newblock {\em Duke Math. J.}, 164(9):1669--1732, 06 2015.

\bibitem{OT2}
S.-J. Oh and D.~Tataru.
\newblock Energy dispersed solutions for the (4+1)-dimensional
  {M}axwell-{K}lein-{G}ordon equation.
\newblock {\em Amer. J. Math.}, to appear, 2016.

\bibitem{OT3}
S.-J. Oh and D.~Tataru.
\newblock Global well-posedness and scattering of the (4+1)-dimensional
  {M}axwell-{K}lein-{G}ordon equation.
\newblock {\em Invent. Math.}, to appear, 2016.

\bibitem{OT1}
S.-J. Oh and D.~Tataru.
\newblock Local well-posedness of the (4+1)-dimensional
  {M}axwell-{K}lein-{G}ordon equation at energy regularity.
\newblock {\em Ann. PDE}, 2(2), 2016.

\bibitem{pecher2014low}
H.~Pecher.
\newblock Low regularity local well-posedness for the maxwell-klein-gordon
  equations in lorenz gauge.
\newblock {\em Advances in Differential Equations}, 19(3/4):359--386, 2014.

\bibitem{RT}
I.~Rodnianski and T.~Tao.
\newblock Global regularity for the {M}axwell-{K}lein-{G}ordon equation with
  small critical {S}obolev norm in high dimensions.
\newblock {\em Comm. Math. Phys.}, 251(2):377--426, 2004.

\bibitem{SSel}
S.~Selberg.
\newblock Almost optimal local well-posedness of the {M}axwell-{K}lein-{G}ordon
  equations in $1 + 4$ dimensions.
\newblock {\em Communications in Partial Differential Equations},
  27(5-6):1183--1227, 2002.

\bibitem{Selb}
S.~Selberg and A.~Tesfahun.
\newblock Finite-energy global well-posedness of the
  {M}axwell--{K}lein--{G}ordon system in {L}orenz gauge.
\newblock {\em Communications in Partial Differential Equations},
  35(6):1029--1057, 2010.

\bibitem{ShSt}
J.~Shatah and M.~Struwe.
\newblock The {C}auchy problem for wave maps.
\newblock {\em Int. Math. Res. Not.}, (11):555--571, 2002.

\bibitem{sterbenz2007global}
J.~Sterbenz.
\newblock Global regularity and scattering for general non-linear wave
  equations ii.(4+ 1) dimensional {Y}ang-{M}ills equations in the {L}orentz
  gauge.
\newblock {\em American journal of mathematics}, 129(3):611--664, 2007.

\bibitem{Tao2}
T.~Tao.
\newblock Global regularity of wave maps. {II}. {S}mall energy in two
  dimensions.
\newblock {\em Comm. Math. Phys.}, 224(2):443--544, 2001.

\bibitem{TaoMultilinear}
T.~Tao.
\newblock Multilinear weighted convolution of $ {L}^{2}$ functions, and
  applications to nonlinear dispersive equations.
\newblock {\em American Journal of Mathematics}, 123(5):839--908, 2001.

\bibitem{Tat}
D.~Tataru.
\newblock On global existence and scattering for the wave maps equation.
\newblock {\em Amer. J. Math.}, 123(1):37--77, 2001.

\bibitem{TataruWM}
D.~Tataru.
\newblock Rough solutions for the wave maps equation.
\newblock {\em American Journal of Mathematics}, 127(2):293--377, 2005.

\end{thebibliography}


\end{document}